\newtheorem{thm}{Theorem}[section]
\newtheorem*{thm*}{Theorem}
\newtheorem{cor}[thm]{Corollary}
\newtheorem{lem}[thm]{Lemma}
\newtheorem{pro}[thm]{Proposition}
\theoremstyle{definition}
\newtheorem{defin}[thm]{Definition}
\newtheorem{rem}[thm]{Remark}
\newtheorem{exa}[thm]{Example}
\numberwithin{equation}{section}
\newcommand{\N}{\mathbb{N}}
\newcommand{\K}{\mathbb{K}}
\newcommand{\ad}{\mathrm{ad}\,}
\newcommand{\End}{\mathrm{End}}
\newcommand{\id}{\mathrm{id}}
\newcommand{\NA}{\mathfrak{B}}
\newcommand{\re}{\mathrm{re}}
\newcommand{\supp}{\mathrm{supp}\,}
\newcommand{\Sym}{\mathbb{S}}
\newcommand{\ydG}{\prescript{G}{G}{\mathcal{YD}}}
\newcommand{\Y}{\mathcal{Y}}
\newcommand{\charK}{\mathrm{char}\,\K}
\begin{document}


\baselineskip=17pt



\title[Nichols algebras with finite root system of rank two]{
The classification of Nichols algebras over groups with finite root system of rank two}

\author{I. Heckenberger}
\author{L. Vendramin}

\address{I. Heckenberger:
Philipps-Universit\"at Marburg,
FB Mathematik und Informatik,
Hans-Meer\-wein-Stra\ss e,
35032 Marburg, Germany.}
\email{heckenberger@mathematik.uni-marburg.de}

\address{L. Vendramin:
Departamento de Matem\'atica, FCEN, Universidad de Buenos Aires, Pabell\'on 1,
Ciudad Universitaria (1428), Buenos Aires, Argentina.}
\email{lvendramin@dm.uba.ar}

\date{}

\maketitle



\begin{abstract}
    We classify all groups $G$ and all pairs $(V,W)$ of absolutely simple
    Yetter-Drinfeld modules over $G$ such that the support of $V\oplus W$
    generates $G$, $c_{W,V}c_{V,W}\ne\id$, and the Nichols algebra of the
    direct sum of $V$ and $W$ admits a finite root system. As a byproduct, we
    determine the dimensions of such Nichols algebras, and several new families
    of finite-dimensional Nichols algebras are obtained. Our main tool is the
    Weyl groupoid of pairs of absolutely simple Yetter-Drinfeld modules over
    groups.
\end{abstract}

\setcounter{tocdepth}{1}
\tableofcontents{}

\section*{Introduction}

In the last years, Nichols algebras turned out to be important in many branches
of mathematics such as Hopf algebras and quantum groups \cite{MR1637096},
\cite{MR1913436}, \cite{MR2759715}, \cite{MR1632802}, \cite{MR901157,
MR994499}, Schubert calculus \cite{MR1667680}, \cite{MR2209265}, and
mathematical physics \cite{MR2106930}, \cite{MR2965674}. Nichols algebras
appeared first in a work of Nichols \cite{MR0506406}, where he studies and
classifies certain pointed Hopf algebras.  Pointed Hopf algebras have
applications in conformal field theory \cite{MR2030633}. 

Let $\K $ be a field and let $G$ be a group. The Lifting Method of
Andruskiewitsch and Schneider \cite{MR1659895} (see also \cite{MR1913436})
provides the best known approach
to the classification of finite-dimensional pointed Hopf algebras. At the first
place, the method asks to determine all finite-dimensional Nichols algebras
over $G$ and to provide a presentation by generators and relations. Whereas for
abelian groups the situation is understood to a great extent \cite{MR2207786},
\cite{MR2462836}, \cite{MR2630042}, \cite{Ang, Ang_AS},
less is known for non-abelian groups.

One idea to approach the problem is to adapt the method applied for abelian
groups. The problem here is that the structure of the Nichols algebra of a
simple Yetter-Drinfeld module over $G$ is very complicated. Only few
finite-dimensional examples are known, \cite{MR2803792}, \cite{MR2891215}, and
for the important examples of Fomin-Kirillov algebras 
\cite{MR1667680} it is not even known whether they are
Nichols algebras or whether they are finite-dimensional.  Nevertheless, any direct
sum of simple Yetter-Drinfeld modules having a finite-dimensional Nichols algebra
gives rise to the structure of a Weyl
groupoid \cite{MR2766176}, and surprisingly, the finiteness of the Weyl
groupoid implies strong restrictions on the direct summands. Therefore it is
reasonable to attack the classification of semi-simple Yetter-Drinfeld modules
with finite-dimensional Nichols algebras before looking at the simple objects.
The situation is even more astonishing: The functoriality of the Nichols
algebra \cite[Cor. 2.3]{MR1913436} allows to look at Yetter-Drinfeld submodules
of simple objects, which are semi-simple with respect to a smaller group. Then
information in the semi-simple setting can be used for simple objects
\cite{MR2799090}, \cite{MR2786171, MR2745542}.

First ideas to analyze in detail the Nichols algebra of a semi-simple
Yetter-Drinfeld module were developed in \cite{MR2732989}. That work is based
on the notion of the Weyl groupoid of tuples of simple Yetter-Drinfeld modules
over arbitrary Hopf algebras with bijective antipode \cite{MR2766176},
\cite{MR2734956}, \cite{MR3096611}.  Using the classification
of finite Weyl groupoids of rank two \cite{MR2525553}, a breakthrough in the approach
was achieved in \cite{partII}.

Recall that for any group $G$,
a $\K G$-module $V$ is \emph{absolutely simple} if $\mathbb{L}\otimes_\K V$ is a
simple $\mathbb{L}G$-module for any field extension $\mathbb{L}$ of $\K$.
	We say that a Yetter-Drinfeld module $V$ over a group algebra $\mathbb{K}G$
	is absolutely simple, if 
	$\mathbb{L}\otimes_\K V$ is a simple Yetter-Drinfeld module over $\mathbb{L}G$
	for any field extension $\mathbb{L}$ of $\K$.
Recall from \cite{MR2732989} that the groups $\Gamma _n$ are central extensions
of dihedral groups, whereas the group $T$, defined in
	\cite{partII}, is a central extension of
$\mathbf{SL}(2,3)$. 

\begin{thm*}\cite[Thm.\,4.5]{partII}.
	Let $G$ be a non-abelian group, and $V$ and $W$ be two absolutely simple
	Yetter-Drinfeld modules over $G$ such that $G$ is generated by the support of
	$V\oplus W$. Assume that the Nichols algebra of $V\oplus W$ is
  finite-dimensional. If $c_{W,V}c_{V,W}\not=\id _{V\otimes W}$, then $G$ is
	an epimorphic image of $T$ or of $\Gamma_{n}$ for $n\in\{2,3,4\}$.
\end{thm*}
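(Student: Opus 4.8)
\emph{Proof sketch.}
The plan is to run the Weyl groupoid of the pair $(V,W)$ and to confront it with the classification of finite Weyl groupoids of rank two.

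Because $V$ and $W$ are absolutely simple, their supports are single conjugacy classes, say $\supp V=\cO_{g}$ and $\supp W=\cO_{h}$ with $g,h\in G$, and $G=\langle\cO_{g}\cup\cO_{h}\rangle$ by hypothesis. Finite-dimensionality of $\NA(V\oplus W)$ implies that the iterated adjoint actions $(\ad V)^{k}(W)$ and $(\ad W)^{k}(V)$ vanish for $k\gg0$, so the root system of $(V,W)$ is defined and finite, and hence the Weyl groupoid $\mathcal{W}(V,W)$ of $(V,W)$ is a connected finite Cartan scheme of rank two in the sense of \cite{MR2766176}. By \cite{MR2525553} such a Cartan scheme lies in an explicitly understood family, so in particular the Cartan entries $c^{M}_{12},c^{M}_{21}$ occurring along the sequence of reflections, and the number of positive roots, are controlled. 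The first technical point I would isolate is that each reflection $R_{1},R_{2}$ again produces an absolutely simple pair; this is precisely where \emph{absolute} simplicity, rather than mere simplicity over $\K$, is used, since it forces the support of each reflected module to be again a \emph{single} conjugacy class, with a representative computable from $g$ and $h$.

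Next I would make the reflections explicit on two kinds of data. On supports, following the method of \cite{MR2732989}: $R_{1}$ replaces $\cO_{g}$ by $\cO_{g^{-1}}$ and $\cO_{h}$ by $\cO_{g^{-c_{12}}h}$ for a compatible choice of representatives, and symmetrically for $R_{2}$. On the braiding: for fixed representatives $g,h$ the element $g$ acts by a scalar $q_{gg}$ on the relevant isotypic component of $V$, likewise $h$ by $q_{hh}$ on $W$, and $c_{W,V}c_{V,W}$ contributes a scalar $q_{gh}q_{hg}$; finite-dimensionality of the rank-one and rank-two Nichols algebras forces all these scalars to be roots of unity satisfying the Cartan-type identity $q_{gh}q_{hg}=q_{gg}^{c_{12}}=q_{hh}^{c_{21}}$, together with its analogues at every object of $\mathcal{W}(V,W)$. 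Here the hypothesis $c_{W,V}c_{V,W}\ne\id_{V\otimes W}$ enters: it guarantees $q_{gh}q_{hg}\ne1$ for some choice of representatives, hence $c_{12}$ and $c_{21}$ are not both zero, ruling out the degenerate case in which $\NA(V\oplus W)$ decomposes essentially as $\NA(V)\otimes\NA(W)$ and $G$ could be an unrelated product.

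The heart of the argument is to feed the support rule back into the finiteness of $\mathcal{W}(V,W)$. The finitely many objects of the groupoid correspond to the finitely many conjugacy classes obtained from $\cO_{g},\cO_{h}$ by iterating the moves $g\mapsto g^{-1}$, $h\mapsto h^{-1}$, $h\mapsto g^{a}h$, $g\mapsto h^{b}g$, and the longest reflection word must return to $\{\cO_{g},\cO_{h}\}$. Tracking this return, and using that the relations so obtained hold for every representative of $\cO_{g}$ and of $\cO_{h}$, one deduces that $g$ and $h$ have finite order, that $g^{2}$ and $h^{2}$ equal fixed central elements, and that $(gh)^{n}$ is central, with one object pattern instead yielding the analogous bounded set of relations defining a central extension of $\mathbf{SL}(2,3)$. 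Confronting these relations with the admissible Cartan schemes of \cite{MR2525553} and with the root-of-unity constraints on the $q$'s leaves precisely $n\in\{2,3,4\}$, and these are exactly the defining relations of $\Gamma_{n}$, respectively of $T$ in the exceptional case. Since $\Gamma_{n}$ ($n\in\{2,3,4\}$) and $T$ are, by construction, the groups presented by these generators and relations, $G$ is an epimorphic image of one of them. The step I expect to be the main obstacle is the case analysis compressed into this last paragraph: for each admissible rank-two Cartan scheme one has to carry out the reflection bookkeeping simultaneously on the conjugacy classes and on the braiding scalars, discard the many schemes for which the resulting scalar equations are inconsistent, and match the survivors — including the precise central parts of the extensions — with the presentations of $T$ and of $\Gamma_{2},\Gamma_{3},\Gamma_{4}$.
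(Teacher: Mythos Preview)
The present paper does not contain a proof of this theorem; it is quoted from \cite{partII} as background, and the paper's own contribution starts from it. So there is no in-paper proof to compare against, but your sketch can still be assessed against what is known about the argument in \cite{partII} and its companion \cite{examples}.

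Your overall architecture --- finiteness of $\NA(V\oplus W)$ forces a finite Weyl groupoid of rank two, confront this with the classification in \cite{MR2525553}, and track what reflections do to the supports --- is the right one. The gap is in the middle paragraph. You model the braiding by scalars $q_{gg},q_{hh},q_{gh}q_{hg}$ attached to representatives $g,h$, and then impose Cartan-type identities on them. That picture is borrowed from the diagonal (abelian) case and does not survive the passage to non-abelian $G$: for $v\in V_g$, $w\in W_h$ one has $c_{W,V}c_{V,W}(v\otimes w)=(ghg^{-1})v\otimes gw\in V_{ghg^{-1}g(ghg^{-1})^{-1}}\otimes W_{ghg^{-1}}$, which in general does not lie in $V_g\otimes W_h$, so there is no scalar $q_{gh}q_{hg}$ to speak of, and the identity $q_{gh}q_{hg}=q_{gg}^{c_{12}}$ has no meaning here. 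Likewise the support move you write, $\cO_h\mapsto\cO_{g^{-c_{12}}h}$, is too coarse: the support of $(\ad V)^m(W)$ is governed by the \emph{rack} structure on $\supp V\cup\supp W$ (conjugation), not just by powers of a single representative.

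The approach actually taken in \cite{partII} replaces your scalar bookkeeping by a combinatorial analysis of the quandle $\supp(V\oplus W)$: one shows that it decomposes into two conjugacy classes of bounded size, uses the Weyl-groupoid constraints (via \cite{MR2525553}) to bound these sizes and the Hurwitz orbit structure, and arrives at a short explicit list of indecomposable quandles --- $Z_2^{2,2}$, $Z_3^{3,1}$, $Z_3^{3,2}$, $Z_4^{4,2}$, $Z_T^{4,1}$ --- whose enveloping groups are precisely $\Gamma_2,\Gamma_3,\Gamma_3,\Gamma_4,T$; see \cite[Thm.\,7.3]{examples} and the remark after the statement of the theorem in the introduction of the present paper. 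So the ``case analysis'' you anticipate in your last paragraph is carried out at the level of finite quandles, not at the level of root-of-unity equations for braiding scalars; if you want to push your sketch through, that is the framework you need to adopt.
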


We remark that if the square of the braiding between $V$ and $W$ is
the identity, then $\NA(V\oplus W)$ is isomorphic to $\NA(V)\otimes\NA(W)$
as $\N _0$-graded objects in $\ydG$ by \cite[Thm.  2.2]{MR1779599}.
On the other hand, the assumption that $G$ is
generated by $\supp (V\oplus W)$ is natural since the braiding of $V\oplus W$,
and hence the structure of $\NA (V\oplus W)$ as a braided Hopf algebra,
depends only on the action and coaction of the subgroup of $G$ generated by
$\supp (V\oplus W)$.

Already in \cite{MR2732989}, Nichols algebras of pairs of simple
Yetter-Drinfeld modules over non-abelian epimorphic images of $\Gamma_2$ were
studied and new Nichols algebras of dimension $1296$ over fields of
characteristic $3$ were found. In \cite{examples}, the Nichols algebras over
non-abelian epimorphic images of $T$ and $\Gamma_4$ were studied and new
Nichols algebras of dimensions $80621568$, $262144$ (if $\charK\neq 2$) and
$1259712$, $65536$ (if $\charK=2$) were found.  The situation is more
complicated when $G$ is a non-abelian epimorphic image of $\Gamma_3$, and it is
studied in this work. It is the first case where one meets a
finite-dimensional Nichols algebra not of diagonal type
which has a non-standard Weyl
groupoid. We obtain several new families of Nichols algebras, the ranks and
dimensions of which can be read off from Table~\ref{tab:classification}.

    \begin{table}[h]
        \caption{Nichols algebras with finite root system of rank two}
\begin{center}
        \begin{tabular}{|c|c|c|c|c|c|}
            \hline
            rank & group & dimension & $\charK$ & support & reference\tabularnewline
            \hline 
						$4$ & $\Gamma_{2}$ & $64$ && $Z_2^{2,2}$\rule{0pt}{3ex} & Example~\ref{exa:G2a} \tabularnewline
            \hline
						$4$ & $\Gamma_{2}$ & $1296$ & $3$ & $Z_2^{2,2}$\rule{0pt}{3ex}  & Example~\ref{exa:G2b} \tabularnewline
            \hline
						$4$ & $\Gamma_{3}$ & $10368$ & $\ne2,3$  & $Z_3^{3,1}$ \rule{0pt}{3ex} & Thm.~\ref{thm:P1andP4b}
            \tabularnewline
            \hline
						$4$ & $\Gamma_{3}$ & $5184$ & $2$ & $Z_3^{3,1}$\rule{0pt}{3ex}  & Thm.~\ref{thm:P1andP4b}
            \tabularnewline
            \hline
						$4$ & $\Gamma_{3}$ & $1152$ & $3$ & $Z_3^{3,1}$\rule{0pt}{3ex}  & Thm.~\ref{thm:P1andP4b}
            \tabularnewline
            \hline
						$4$ & $\Gamma_{3}$ & $2239488$ & $2$ & $Z_3^{3,1}$\rule{0pt}{3ex}  & Thms.~\ref{thm:P5},
            \ref{thm:P5''}\tabularnewline
            \hline
						$5$ & $\Gamma_{3}$ & $10368$ & $\ne2,3$ & $Z_3^{3,2}$\rule{0pt}{3ex}  & Thm.~\ref{thm:P1andP4a}
            \tabularnewline
            \hline
						$5$ & $\Gamma_{3}$ & $5184$ & $2$ & $Z_3^{3,2}$\rule{0pt}{3ex}  & Thm.~\ref{thm:P1andP4a}
            \tabularnewline
            \hline
						$5$ & $\Gamma_{3}$ & $1152$ & $3$ & $Z_3^{3,2}$\rule{0pt}{3ex}  & Thm.~\ref{thm:P1andP4a}
            \tabularnewline
            \hline
						$5$ & $\Gamma_{3}$ & $2304$ & & $Z_3^{3,2}$\rule{0pt}{3ex} 
						& Thm.~\ref{thm:P2andP3a}\tabularnewline
						\hline
						$5$ & $\Gamma_{3}$ & $2304$ & & $Z_3^{3,1}$\rule{0pt}{3ex} 
						& Thm.~\ref{thm:P2andP3b}\tabularnewline
            \hline
						$5$ & $\Gamma_{3}$ & $2239488$ & $2$ & $Z_3^{3,2}$\rule{0pt}{3ex}  & Thm.~\ref{thm:P5'}
            \tabularnewline
            \hline
						$5$ & $T$ & $80621568$ & $\ne2$ & $Z_T^{4,1}$\rule{0pt}{3ex}  & Example~\ref{exa:T}\tabularnewline
            \hline
						$5$ & $T$ & $1259712$ & $2$ & $Z_T^{4,1}$\rule{0pt}{3ex}  & Example~\ref{exa:T}\tabularnewline
            \hline
						$6$ & $\Gamma_{4}$ & $262144$ & $\ne2$ & $Z_4^{4,2}$\rule{0pt}{3ex}  &
            Example~\ref{exa:G4}\tabularnewline
            \hline
						$6$ & $\Gamma_{4}$ & $65536$ & $2$ & $Z_4^{4,2}$\rule{0pt}{3ex}  &
            Example~\ref{exa:G4}\tabularnewline
            \hline
        \end{tabular}
        \label{tab:classification}
\end{center}
    \end{table}

Having studied Nichols algebras over non-abelian epimorphic images of
$\Gamma_2$, $\Gamma_3$, $\Gamma_4$, and $T$, we are able to classify all pairs
$(V,W)$ of absolutely simple Yetter-Drinfeld modules over a non-abelian group
$G$ such that the Nichols algebra of $V\oplus W$ is finite-dimensional.
Moreover, we determine
the Hilbert series and the decomposition of the Nichols algebra of $V\oplus W$
into the tensor product of Nichols algebras of simple Yetter-Drinfeld modules.
The finite-dimensional Nichols algebras appearing in our
classification are listed in Table \ref{tab:classification}. The 
pairs $(V,W)$ of absolutely simple Yetter-Drinfeld modules over $G$ appear in Section \ref{section:examples}.
Our main theorem is the following.

\begin{thm*}
    Let $G$ be a non-abelian group and $V$ and $W$ be two absolutely simple Yetter-Drinfeld
    modules over $G$ such that $G$ is generated by the support of $V\oplus W$. 
    Assume that 
    $(\id-c_{W,V}c_{V,W})(V\otimes W)$ is non-zero 
    and that the Nichols algebra $\NA(V\oplus W)$ is
    finite-dimensional. Then $\NA(V\oplus W)$ is one of the
    Nichols algebras of Table \ref{tab:classification}.
\end{thm*}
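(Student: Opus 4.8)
The plan is to reduce, via the reduction theorem quoted above, to a finite list of candidate groups and then to settle each of them with the Weyl groupoid as the organizing structure. By \cite[Thm.\,4.5]{partII}, the hypotheses on $G$, $V$, $W$ force $G$ to be a non-abelian epimorphic image of $T$ or of $\Gamma_n$ for some $n\in\{2,3,4\}$. For each such group and each pair $(V,W)$ of absolutely simple Yetter-Drinfeld modules with $G=\langle\supp(V\oplus W)\rangle$ and $(\id-c_{W,V}c_{V,W})(V\otimes W)\neq0$, the finiteness of $\NA(V\oplus W)$ yields a finite Weyl groupoid of rank two: the two reflection functors attached to the summands carry $(V,W)$ to new pairs of absolutely simple Yetter-Drinfeld modules over $G$, again with finite-dimensional Nichols algebra, and iterating them produces the whole root system. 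Since the finite Weyl groupoids of rank two are classified in \cite{MR2525553}, one obtains finitely many admissible Cartan schemes and root systems, and the Cartan entries then bound, along each reflection orbit, the degrees of the defining relations and constrain the possible Yetter-Drinfeld structure of the summands.

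For $G$ a non-abelian epimorphic image of $\Gamma_2$, $\Gamma_4$, or $T$ this analysis has essentially already been carried out: the admissible pairs, the finite-dimensional Nichols algebras, their Hilbert series, and their decompositions as tensor products of Nichols algebras of simple Yetter-Drinfeld modules were obtained in \cite{MR2732989} and \cite{examples}, and they account for exactly the $\Gamma_2$, $\Gamma_4$, and $T$ rows of Table~\ref{tab:classification}. Hence the real work is the case $G$ a non-abelian epimorphic image of $\Gamma_3$.

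Here I would proceed in three stages. First, using that $\Gamma_3$ is a central extension of a dihedral group, list its conjugacy classes and centralizers and thereby classify the absolutely simple Yetter-Drinfeld modules over its epimorphic images that can occur as a summand at some node of a finite Weyl groupoid. Second, for each resulting candidate pair $(V,W)$ compute the braiding $c_{V,W}$, the reflected pairs, and the Cartan entries, and match these data against the list of \cite{MR2525553}; this discards all but finitely many configurations and fixes the rank ($4$ or $5$) and the type of the root system, including the non-standard one that appears in this case. Third, for each surviving configuration verify that $\NA(V\oplus W)$ is genuinely finite-dimensional — not merely that its Weyl groupoid is finite — by exhibiting a PBW-type spanning set in terms of the root vectors and computing the Hilbert series; where a clean argument is unavailable, notably in characteristics $2$ and $3$, I would fall back on an explicit computation of the defining ideal over $\F_p$. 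This produces the dimensions $10368$, $5184$, $1152$, $2239488$, and $2304$ of the $\Gamma_3$ rows of Table~\ref{tab:classification} together with the tensor-product decompositions, and completes the classification.

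The main obstacle is this last stage in the $\Gamma_3$ case: the Nichols algebra there is not of diagonal type and has a non-standard Weyl groupoid, so the groupoid by itself controls only the shape of the root system and the multiplicities, not the finite-dimensionality. One must still rule out a collapse caused by extra relations and, conversely, confirm that the conjectured relations already generate the whole defining ideal; this step is characteristic-sensitive, and it is precisely where the genuinely new finite-dimensional Nichols algebras of Table~\ref{tab:classification} — in particular those of dimension $1152$ and $2239488$ in characteristics $3$ and $2$ — arise.
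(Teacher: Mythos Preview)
Your overall strategy matches the paper's: reduce to epimorphic images of $\Gamma_2$, $\Gamma_3$, $\Gamma_4$, $T$, cite \cite{MR2732989} and \cite{examples} for all but $\Gamma_3$, and for $\Gamma_3$ classify the admissible pairs by computing Cartan entries and reflections. Your first two stages for $\Gamma_3$ are exactly what the paper does in Sections~\ref{section:1}--\ref{section:reflections}: it computes $(\ad V)^m(W)$ and $(\ad W)^m(V)$ explicitly, extracts the Cartan matrices, and shows that the reflections permute a finite list of classes $\wp_1,\dots,\wp_5,\wp_5',\wp_5''$.

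The difference is in your third stage, and here the paper has a cleaner argument that dissolves the obstacle you flag in your last paragraph. You propose to verify finite-dimensionality by building PBW spanning sets and, in bad characteristic, by direct computation of the defining ideal over $\F_p$. The paper instead invokes the general decomposition theorem \cite[Cor.\,6.16]{MR3096611}: once the pair admits all reflections and the real root system is finite, $\NA(V\oplus W)$ is automatically isomorphic, as an $\N_0^2$-graded object in $\ydG$, to an ordered tensor product $\NA(W_k)\otimes\cdots\otimes\NA(W_1)$, one factor per positive root, where each $W_i$ is the absolutely simple Yetter-Drinfeld module sitting at that root. Finite-dimensionality and the Hilbert series then reduce to knowing $\NA(W_i)$ for each factor separately; these are the small rank-one Nichols algebras listed in Table~\ref{tab:Yconditions}, all of which are already known. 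So there is no need to ``rule out a collapse caused by extra relations'' or to check that conjectured relations generate the ideal: the tensor decomposition is an isomorphism, not merely a spanning statement, and the characteristic dependence enters only through the Hilbert series of the individual rank-one factors. Your approach would succeed, but with substantially more work than is actually required.
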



See Theorem~\ref{thm:big} for the more precise statement.
Let us explain briefly how the proof of Theorem~\ref{thm:big} goes.  We have
to study in detail Nichols algebras over non-abelian epimorphic images of the
groups $\Gamma_2$, $\Gamma _3$, $\Gamma_4$ and $T$. The analysis concerning the
group $\Gamma_2$ was done in \cite{MR2732989} and the groups $\Gamma_4$ and $T$
were studied in \cite{examples}. The classification of finite-dimensional
Nichols algebras associated with $\Gamma_3$ is one of the main results of this
paper and requires several steps. We need to deal with three different pairs
$(V,W)$ of absolutely simple Yetter-Drinfeld modules over non-abelian
epimorphic images of $\Gamma_3$.  We first determine when $(\ad V)^m(W)$ and
$(\ad W)^m(V)$ are absolutely simple or zero and then we compute the Cartan
matrix of $(V,W)$. Then we prove that these pairs are essentially the only
pairs which we need to consider, and the reflections of these pairs are
computed.  With this information we compute the finite root systems of rank two
associated with Nichols algebras over non-abelian epimorphic images of
$\Gamma_3$. This information allows us to determine the structure of such
Nichols algebras.

The main result of our paper is expected to lead to powerful applications.  We
intend to attack the classification of finite-dimensional Nichols algebras of
finite direct sums of absolutely simple Yetter-Drinfeld modules over groups.
For this project it is very useful that the reflections of the absolutely
simple pairs are already calculated.  On the other hand, we are confident that
our classification will be useful to study Nichols algebras over simple
Yetter-Drinfeld modules, as it was done for example in \cite{MR2786171,
MR2745542}.

We do not know the defining relations of the Nichols algebras appearing in our
classification.  In the spirit of \cite[Question 5.9]{MR1907185}, one then has the
following problem: Give a nice presentation by generators and relations of
the Nichols algebras appearing in Table~\ref{tab:classification}. 
To attack this, the ideas of \cite{Ang, Ang_AS} could be useful.

\medskip
The paper is organized as follows.  In Section \ref{section:examples} we list
all the finite-dimensional Nichols algebras appearing in our classification.
In Section \ref{section:main} we state the main result of the paper, Theorem
\ref{thm:big}. This theorem classifies Nichols algebras of group type over the
sum of two absolutely simple Yetter-Drinfeld modules. Sections
\ref{section:preliminaries}, \ref{section:1}, \ref{section:2}, \ref{section:3},
\ref{section:reflections} and \ref{section:NicholsG3} are devoted to understand
the structure and the root systems of finite-dimensional Nichols algebras over
non-abelian epimorphic images of $\Gamma_3$.  Finally, in Section
\ref{section:bigproof}, we prove our 
main result, Theorem \ref{thm:big}.

\section{The examples}
\label{section:examples}

Before stating our main result, we collect all the examples of Nichols algebras
with finite root systems obtained over non-abelian epimorphic images of $\Gamma_2$,
$\Gamma_3$, $\Gamma_4$ and $T$. These are the examples which appear in our
classification in Theorem~\ref{thm:big}.

Recall from~\cite{MR2732989} that the group $\Gamma _n$ for $n\ge 2$ is the
group given by generators $a,b,\nu $ and relations
\[ ba=\nu ab,\quad \nu a=a\nu^{-1},\quad \nu b=b\nu,\quad \nu ^n=1, \]
and $T$ is the group
given by generators $\zeta ,\chi _1,\chi _2$ and relations
\[
  \zeta \chi_1=\chi_1\zeta ,\quad
  \zeta \chi_2=\chi_2\zeta ,\quad
  \chi_1\chi_2\chi_1=\chi_2\chi_1\chi_2,\quad
  \chi_1^3=\chi_2^3.
\]
\begin{rem}
The groups $\Gamma_2$, $\Gamma_3$, $\Gamma_4$ and $T$ are
isomorphic to the enveloping groups of the quandles $Z_2^{2,2}$, $Z_3^{3,1}$,
$Z_3^{3,2}$, $Z_4^{4,2}$ and $Z_T^{4,1}$, see \cite[\S2]{partII} and
\cite{examples} for an alternative description of these quandles. An
epimorphic image $G$ of any of these enveloping groups $G_X$ is non-abelian if and
only if the restriction of the epimorphism $G_X\to G$ to the quandle $X$ is injective.
\end{rem}

By \cite{MR2732989}, the group $\Gamma_3$ is
isomorphic to the group given by generators $\nu$, $\zeta$ and $\gamma$
and relations
\begin{align*}
  \gamma\nu=\nu^2\gamma, \quad \zeta\gamma=\gamma\zeta, \quad
  \zeta\nu=\nu\zeta, \quad \nu^3=1.
\end{align*}

\subsection{Epimorphic images of $\Gamma_2$}
\label{subsection:G2}

In \cite[\S4]{MR2732989}, Nichols algebras over non-abelian epimorphic images of
$\Gamma_2$ were studied. Let $G$
be a non-abelian group. Let
$g,h,\epsilon\in G$, and assume that there is a group epimorphism
\[ \Gamma_2\to G ,\qquad a\mapsto g,\quad b\mapsto h,\quad \nu \mapsto
\epsilon .
\]

\begin{exa}
	\label{exa:G2a}
	Let $V,W\in \ydG $. Assume that $V\simeq M(g,\rho)$,
  where $\rho$ is a character of $G^g=\langle \epsilon,g,h^2\rangle$,
  and $W\simeq M(h,\sigma)$,
  where $\sigma$ is a character of $G^h=\langle \epsilon,h,g^2\rangle$.  
	Let $v\in V_g$ with $v\ne0$. Then $\{v,hv\}$ is a basis of $V$ and the degrees
	of these basis vectors are $g$ and $\epsilon g$, respectively.  Similarly, let
	$w\in W_h$ with $w\ne0$. Then $\{w,gw\}$ is a basis of $W$ and the degrees of
	these vectors are $h$ and $\epsilon h$, respectively.  
	The action of $G$ on $V$ and $W$ is given by the following tables:
	\begin{center}
		\begin{tabular}{c|cc}
			$V$ & $v$ & $hv$ \tabularnewline
			\hline
			$\epsilon$ & $\rho(\epsilon)v$ & $\rho(\epsilon)hv$ \tabularnewline
			$h$ & $hv$ & $\rho(h^2)v$ \tabularnewline
			$g$ & $\rho(g)v$ & $\rho(\epsilon)\rho(g)hv$ \tabularnewline
		\end{tabular}
		\qquad
		\begin{tabular}{c|cc}
			$W$ & $w$ & $gw$ \tabularnewline
			\hline
			$\epsilon$ & $\sigma(\epsilon)w$ & $\sigma(\epsilon)gw$ \tabularnewline
			$h$ & $\sigma(h)w$ & $\sigma(\epsilon)\sigma(h)gw$ \tabularnewline
			$g$ & $gw$ & $\sigma(g^2)w$ \tabularnewline
		\end{tabular}
	\end{center}
	Assume that 
	\[
		\rho(\epsilon h^2)\sigma(\epsilon g^2)=1,\quad\rho(g)=\sigma(h)=-1.
	\]
    Then, by \cite[Thm.\,4.6]{MR2732989},
    $\dim\NA(V\oplus W)=64$ and the Hilbert series of the Nichols algebra
    $\NA(V\oplus W)$ is
    \[
    \mathcal{H}(t_1,t_2)=(1+t_1)^2(1+t_1t_2)^2(1+t_2)^2.
    \]
    A special case of this example appeared first in \cite[Example 6.5]{MR1800714}.
\end{exa}

\begin{exa}
	\label{exa:G2b}
	Let $V,W\in \ydG $. Assume that $V\simeq M(g,\rho)$,
  where $\rho$ is a character of $G^g=\langle \epsilon,g,h^2\rangle$,
  and $W\simeq M(h,\sigma)$,
  where $\sigma$ is a character of $G^h=\langle \epsilon,h,g^2\rangle$.
  Assume also that $\charK=3$ and that
	\begin{align}
		\rho(\epsilon h^2)\sigma(\epsilon g^2)=1,\quad \rho(g)=1,\quad \sigma(h)=-1.
	\end{align}
  Then, by \cite[Thm.\,4.7]{MR2732989},
	$\dim\NA(V\oplus W)=1296$ and
	the Hilbert series of the Nichols algebra $\NA(V\oplus W)$ is
	\[
		\mathcal{H}(t_1,t_2)=(1+t_1+t_1^2)^2(1+t_2)^2
    (1+t_1t_2+t_1^2t_2^2)^2(1+t_1^2t_2)^2.
	\]
\end{exa}

\begin{rem}
    The braiding of the Yetter-Drinfeld modules of Examples~\ref{exa:G2a} and
    \ref{exa:G2b} can be obtained from the following table:
    \begin{center}
        \begin{tabular}{c|cccc}
            & $v$ & $hv$ & $w$ & $gw$\tabularnewline
            \hline 
            $g$ & $\rho(g)v$ & $\rho(\epsilon g)hv$ & $gw$ & $\sigma(g^{2})w$\tabularnewline
            $\epsilon g$ & $\rho(\epsilon g)v$ & $\rho(g)hv$ &
						$\sigma(\epsilon)gw$ & $\sigma(\epsilon g^{2})w$\tabularnewline
            $h$ & $hv$ & $\rho(h^{2})v$ & $\sigma(h)w$ & $\sigma(\epsilon h)gw$\tabularnewline
            $\epsilon h$ & $\rho(\epsilon)hv$ & $\rho(\epsilon h^{2})v$ &
						$\sigma(\epsilon h)w$ & $\sigma(h)gw$\tabularnewline
        \end{tabular}
    \end{center}
\end{rem}

\subsection{Epimorphic images of $\Gamma_4$}
\label{subsection:G4}

Finite-dimensional Nichols algebras over non-abelian epimorphic images of
$\Gamma_4$ were computed in \cite[\S5]{examples}.  Let $G$ be a non-abelian
group and let $g,h,\epsilon\in G$. Assume that there is a group epimorphism
\[ \Gamma_4\to G ,\qquad a\mapsto g,\quad b\mapsto h,\quad \nu \mapsto
\epsilon 
\]
such that $\epsilon^2\ne1$.

\begin{exa} \label{exa:G4}
	Let $V,W\in \ydG $. Assume that $V\simeq M(h,\rho)$,
  where $\rho$ is a character of the
	centralizer $G^h=\langle \epsilon,h,g^2\rangle$ with $\rho(h)=-1$.
  Let $v\in V_h$ with $v\ne0$. Then the elements $v$, $gv$ form a basis of
  $V$, and the degrees of
	these basis vectors are $h$ and $ghg^{-1}=\epsilon^{-1}h$, respectively.  The
	action of $G$ on $V$ is given by the following table: 

	\begin{center}
		\begin{tabular}{c|cc}
			$V$ & $v$ & $gv$ \tabularnewline
			\hline
			$\epsilon$ & $\rho(\epsilon)v$ & $\rho(\epsilon)^{-1}gv$ \tabularnewline
			$h$ & $-v$ & $-\rho(\epsilon)^{-1}gv$ \tabularnewline
			$g$ & $gv$ & $\rho(g^2)v$ \tabularnewline
		\end{tabular}
	\end{center}

	Assume that $W\simeq M(g,\sigma)$, where $\sigma$ is a
	character of $G^g=\langle \epsilon^2,\epsilon^{-1}h^2,g\rangle$ with
	$\sigma(g)=-1$. Let $w\in W_g$ with $w\ne0$. The elements $w$, $hw$, $\epsilon
	w$, $\epsilon hw$ form a basis of $W$. The degrees of these basis vectors are
	$g$, $\epsilon g$, $\epsilon^2g$ and $\epsilon^3 g$, respectively.  The action
	of $G$ on $W$ is given by the following table:

	\begin{center}
		\begin{tabular}{c|cccc}
			$W$ & $w$ & $hw$ & $\epsilon w$ & $\epsilon hw$ \tabularnewline
			\hline
			$\epsilon$ & $\epsilon w$ & $\epsilon hw$ & $\sigma(\epsilon^2)w$ &
			$\sigma(\epsilon^2)hw$ \tabularnewline
			$h$ & $hw$ & $\sigma(\epsilon^{-1}h^2)\epsilon w$ & $\epsilon hw$ &
			$\sigma(\epsilon h^2)w$ \tabularnewline
			$g$ & $-w$ & $-\sigma(\epsilon^2)\epsilon hw$ &
			$-\sigma(\epsilon^2)\epsilon w$ & $-\sigma(\epsilon^2)hw$
			\tabularnewline
		\end{tabular}
	\end{center}
	Assume further that 
	\[
	\rho(\epsilon)=\rho(g^2)\sigma(\epsilon^{-1}h^2),
	\quad
	\rho(\epsilon)^2=-1.
	\]
  Then, by \cite[Thm.\,5.4]{examples}, 
	\begin{align*}
		\mathcal{H}(t_1,t_2)=
		(1+t_2)^4(1+t_2^2)^2(1+t_1t_2)^4(1+t_1^2t_2^2)^2q(t_1t_2^2)q(t_1),
	\end{align*}
	where
	\begin{gather*}
		q(t)=\begin{cases}
			(1+t)^2(1+t^2) & \text{if $\charK\ne2$},\\
			(1+t)^2 & \text{if $\charK=2$}.
		\end{cases}
	\end{gather*}
	In particular, 
	\[
	\dim \NA (V\oplus W)=\begin{cases}
		8^264^2=262144 & \text{if $\charK\ne2$},\\
		4^264^2=65536 & \text{if $\charK=2$}.
	\end{cases}
	\]
\end{exa}

\begin{rem}
    The braiding of the Yetter-Drinfeld module of Example~\ref{exa:G4} can be
		obtained from the following table:
    \begin{center}
        \begin{tabular}{c|cccccc}
            & $v$ & $gv$ & $w$ & $hw$ & $\epsilon w$ & $\epsilon hw$\tabularnewline
            \hline 
            $h$ & $-v$ & $-\rho(\epsilon)^{-1}gv$ & $hw$ & $\sigma(\epsilon^{-1}h^{2})\epsilon w$ & $\epsilon hw$ & $\sigma(\epsilon h^{2})w$\tabularnewline
            $\epsilon^{3}h$ & $-\rho(\epsilon)^{3}v$ & $-gv$ & $\sigma (\epsilon
						^2)\epsilon hw$ & $\sigma (\epsilon ^{-1}h^2)w$ & $hw$ &
						$\sigma(\epsilon ^{-1}h^2)\epsilon w$\tabularnewline
            $g$ & $gv$ & $\rho(g^{2})v$ & $-w$ & $-\sigma(\epsilon^{2})\epsilon hw$ & $-\sigma(\epsilon^{2})\epsilon w$ & $-\sigma(\epsilon^{2})hw$\tabularnewline
            $\epsilon g$ & $\rho(\epsilon^3)gv$ & $\rho(\epsilon g^2)v$ & $-\epsilon w$ & $-hw$ & $-w$ & $-\sigma(\epsilon^{2})\epsilon hw$\tabularnewline
            $\epsilon^{2}g$ & $\rho(\epsilon^2)gv$ & $\rho(\epsilon^{2}g^2)v$ & $-\sigma(\epsilon^{2})w$ & $-\epsilon hw$ & $-\epsilon w$ & $-hw$\tabularnewline
            $\epsilon^{3}g$ & $\rho(\epsilon)gv$ & $\rho(\epsilon^{3}g^2)v$ & $-\sigma(\epsilon^{2})\epsilon w$ & $-\sigma(\epsilon^{2})hw$ & $-\sigma(\epsilon^{2})w$ & $-\epsilon hw$\tabularnewline
        \end{tabular}
    \end{center}
\end{rem}

\subsection{Epimorphic images of $T$}
\label{subsection:T}

Nichols algebras over non-abelian epimorphic images of the group $T$ were
studied in \cite[\S2]{examples}. Let $G$ be a non-abelian group, and let
$x_1,x_2,z\in G$. Assume that there is a group epimorphism
\[
  T\mapsto G,\qquad \zeta \mapsto z, \quad \chi_1\mapsto x_1,\quad
  \chi_2\mapsto x_2.
\]
Clearly, $z$ is a central element of $G$. Moreover, the elements 
\[
	x_1,\;
	x_2,\;
	x_3\coloneqq x_2x_1x_2^{-1},\;
	x_4\coloneqq x_1x_2x_1^{-1}
\]
form a conjugacy class of $G$. 

\begin{exa}
	\label{exa:T}
	Let $V,W\in \ydG $. Assume that $V\simeq M(z,\rho)$,
  where $\rho$ is a character of the centralizer $G^z=G$, and
  $W=M(x_1,\sigma)$, where $\sigma$ is a character of
  $G^{x_1}=\langle x_1,x_2x_3,z\rangle$ with
  $\sigma(x_1)=-1$ and $\sigma(x_2x_3)=1$.  Let $v\in V_z\setminus \{0\}$. Then
	$\{v\}$ is basis of $V$.  The action of $G$ on $V$ is given by
	\[
	zv=\rho(z)v,\quad 
	x_iv=\rho (x_1)v\quad\text{for all $i\in\{1,2,3,4\}$.}
	\]
	Let $w_1\in W_{x_1}$ such that $w_1\ne0$. Then the vectors 
	\[
	w_1,\;
	w_2\coloneqq -x_4w_1,\;
	w_3\coloneqq -x_2w_1,\;
	w_4\coloneqq -x_3w_1
	\]
	form a basis of $W$. The degrees of these vectors are $x_1$, $x_2$, $x_3$ and
	$x_4$, respectively. The action of $G$ on $W$ is given by the following table:
	\begin{center}
		\begin{tabular}{c|cccc}
			$W$ & $w_{1}$ & $w_{2}$ & $w_{3}$ & $w_{4}$\tabularnewline
			\hline
			$x_{1}$ & $-w_{1}$ & $-w_{4}$ & $-w_{2}$ & $-w_{3}$\tabularnewline
			$x_{2}$ & $-w_{3}$ & $-w_{2}$ & $-w_{4}$ & $-w_{1}$\tabularnewline
			$x_{3}$ & $-w_{4}$ & $-w_{1}$ & $-w_{3}$ & $-w_{2}$\tabularnewline
			$x_{4}$ & $-w_{2}$ & $-w_{3}$ & $-w_{1}$ & $-w_{4}$\tabularnewline
			$z$ & $\sigma(z) w_1$ & $\sigma(z) w_2$ & $\sigma(z) w_3$ & $\sigma(z) w_4$\tabularnewline 
		\end{tabular}
	\end{center}
	Assume further that 
	\begin{align*}
		(\rho (x_1)\sigma (z))^2-\rho (x_1)\sigma (z)+1=0, \quad
		\rho (x_1z)\sigma (z)=1.
	\end{align*}
  Then, by \cite[Thm.\,2.8]{examples},
  $\NA(V\oplus W)$ is finite-dimensional. If $\charK\ne 2$, then
  \[
    \mathcal{H}(t_1,t_2)=(6)_{t_1}(6)_{t_1t_2^3}(6)_{t_1^2t_2^3}(2)_{t_2}^2(3)_{t_2}(6)_{t_2}
    (2)_{t_1t_2}^2 
    (3)_{t_1t_2}(6)_{t_1t_2}(2)_{t_1t_2^2}^2(3)_{t_1t_2^2}(6)_{t_1t_2^2}
  \]
	and $\dim \NA (V\oplus W)=6^3\,72^3=80621568$,
	and if $\charK=2$, then
	\begin{align*}
		\mathcal{H}(t_1,t_2)=(3)_{t_1}(3)_{t_1t_2^3}
    (3)_{t_1^2t_2^3}(2)_{t_2}^2(3)_{t_2}^2
		(2)_{t_1t_2}^2(3)_{t_1t_2}^2 (2)_{t_1t_2^2}^2(3)_{t_1t_2^2}^2
	\end{align*}
	and $\dim \NA (V\oplus W)=3^3\,36^3=1259712$.
\end{exa}

\begin{rem}
    The structure of the Yetter-Drinfeld module of Example~\ref{exa:T} can be
    read off the following table:
    \begin{center}
        \begin{tabular}{c|cccccc}
            & $v$ & $w_{1}$ & $w_{2}$ & $w_{3}$ & $w_{4}$ & \tabularnewline
            \hline 
            $z$ & $\rho(z)v$ & $\sigma(z)w_{1}$ & $\sigma(z)w_{2}$ & $\sigma(z)w_{3}$ & $\sigma(z)w_{4}$ & \tabularnewline
            $x_{1}$ & $\rho(x_{1})v$ & $-w_{1}$ & $-w_{4}$ & $-w_{2}$ & $-w_{3}$ & \tabularnewline
            $x_{2}$ & $\rho(x_{1})v$ & $-w_{3}$ & $-w_{2}$ & $-w_{4}$ & $-w_{1}$ & \tabularnewline
            $x_{3}$ & $\rho(x_{1})v$ & $-w_{4}$ & $-w_{1}$ & $-w_{3}$ & $-w_{2}$ & \tabularnewline
            $x_{4}$ & $\rho(x_{1})v$ & $-w_{2}$ & $-w_{3}$ & $-w_{1}$ & $-w_{4}$ & \tabularnewline
        \end{tabular}
    \end{center}
\end{rem}

\subsection{Epimorphic images of $\Gamma_3$}
\label{subsection:G3}

The results of this section will be proved in Section \ref{section:NicholsG3}.
Let $G$ be a non-abelian group. Let $g,\epsilon ,z\in G$, and assume that there
is a group epimorphism
\[
  \Gamma_3\to G,\qquad
  \gamma \mapsto g,\quad \nu \mapsto \epsilon,\quad \zeta \mapsto z.
\]

\begin{exa}
    \label{exa:Z32}
    Let $V\in \ydG $. Assume that $V\simeq M(g,\rho)$,
    where $\rho$ is a character of
    $G^g=\langle g,z\rangle$. Let $v\in V_g$ with $v\ne0$.  The elements $v$,
    $\epsilon v$ and $\epsilon^2v$ form a basis of $V$. The degrees of these
    vectors are $g$, $g\epsilon$ and $g\epsilon^2$, respectively.  

    Similarly, let $W\in \ydG $ such that $W\simeq M(\epsilon z,\sigma)$, where
    $\sigma$ is a character of the centralizer $G^\epsilon=\langle\epsilon, z,
    g^2\rangle$.  Let $w\in W_{\epsilon z}$ with $w\ne0$.  Then $w$, $gw$ is a
    basis of $W$.  The degrees of these basis vectors are $\epsilon z$ and
    $\epsilon^2z$, respectively. The actions of $G$ on $V$ and $W$ are given in
    the following tables:
    \begin{center}
        \begin{tabular}{c|ccc}
            $V$ & $v$ & $\epsilon v$ & $\epsilon^2 v$ \tabularnewline
            \hline
            $\epsilon$ & $\epsilon v$ & $\epsilon^2 v$ & $v$\tabularnewline
            $z$ & $\rho(z)v$ & $\rho(z)\epsilon v$ & $\rho(z)\epsilon^2v$\tabularnewline
            $g$ & $\rho(g)v$ & $\rho(g)\epsilon^2v$ & $\rho(g)\epsilon v$\tabularnewline
        \end{tabular}
        \qquad
        \begin{tabular}{c|cc}
            $W$ & $w$ & $gw$\tabularnewline
            \hline
            $\epsilon$ & $\sigma(\epsilon)w$ & $\sigma(\epsilon)^2gw$ \tabularnewline
            $z$ &  $\sigma(z)w$ & $\sigma(z)gw$ \tabularnewline
            $g$ &  $gw$ & $\sigma(g^2)w$ \tabularnewline
        \end{tabular}
    \end{center}

    If $\rho (g)=\sigma (\epsilon z)=-1$, $\rho (z)^2\sigma (\epsilon
    g^2)=1$, and $1+\sigma(\epsilon)+\sigma(\epsilon)^2=0$, then 
    \[
    \dim\NA(V\oplus W)=\begin{cases}
        10368 & \text{if $\charK\not\in\{2,3\}$},\\
        5184 & \text{if $\charK=2$},\\
        1152 & \text{if $\charK=3$},
    \end{cases}
    \]
    and 
    \[
		 \mathcal{H}(t_1,t_2)=(2)_{t_2}(h'_p)_{t_2}(2)^2_{t_1t_2}
     (3)_{t_1t_2}(h_p)_{t_1^2t_2}(2)_{t_1}^2(3)_{t_1},
    \]
    where $p=\charK $,
    $h_2=3$, $h_3=2$, and $h_p=6$ for all $p\not\in\{2,3\}$, and $h'_3=2$,
    $h'_p=6$ for all $p\ne3$, by Theorem~\ref{thm:P1andP4a}.
    
    If $\rho (g)=\sigma (\epsilon z)=-1$, $\rho (z)^2\sigma (\epsilon
    g^2)=\sigma(\epsilon)=1$, and $\charK \ne 3$, then 
    \[
    \mathcal{H}(t_1,t_2)=(2)^2_{t_2}(2)^2_{t_1t_2}(3)_{t_1t_2}(2)^2_{t_1^2t_2}(2)^2_{t_1}(3)_{t_1} 
    \]
    and $\dim\NA(V\oplus W)=2304$,
    by Theorem~\ref{thm:P2andP3a}.

    If $\charK=2$, $\rho (g)=1$, $(3)_{\sigma (\epsilon )}=0$,
    $\sigma (z)=\sigma (\epsilon )$, and $\rho (z)^2\sigma (\epsilon g^2)=1$,
    then 
    $\dim\NA(V\oplus W)=2239488$ and 
    \[
    \mathcal{H}(t_1,t_2)=
    (3)_{t_2}^2 
    (2)_{t_1t_2^2}^2(3)_{t_1t_2^2}
    (2)_{t_1^2t_2^3}
    (3)_{t_1t_2}(4)_{t_1t_2}(6)_{t_1t_2}(6)_{t_1^2t_2^2}
    (2)_{t_1^2t_2}
    (2)_{t_1}^2(3)_{t_1}, 
    \]
    by Theorem~\ref{thm:P5'}.
\end{exa}

\begin{exa} \label{exa:Z31a}
    Let $V\in \ydG $. Assume that $V\simeq M(g,\rho)$,
    where $\rho$ is a character of
    $G^g=\langle g,z\rangle$. Let $v\in V_g$ with $v\ne0$.  The elements $v$,
    $\epsilon v$ and $\epsilon^2v$ form a basis of $V$. The degrees of these
    vectors are $g$, $g\epsilon$ and $g\epsilon^2$, respectively.  

    Let $W\in \ydG $ such that $W\simeq M(z,\sigma)$,
    where $\sigma$ is a character of $G$.
    Let $w\in W_z$ with $w\ne0$.  Then $w$ is a basis of $W$.
    The action of $G$ on $V$ can be obtained from Example~\ref{exa:Z32}.

    Let $p=\charK $. If $\rho (g)=-1$, $(3)_{-\rho (z)\sigma (g)}=0$, and
    $\rho (z)\sigma (gz)=1$, then
    \[
    \dim\NA(V\oplus W)=\begin{cases}
        10368 & \text{if $\charK\not\in\{2,3\}$},\\
        5184 & \text{if $\charK=2$},\\
        1152 & \text{if $\charK=3$},
    \end{cases}
    \]
    and 
    \[
    \mathcal{H}(t_1,t_2)=(h_p)_{t_2}(2)_{t_1t_2}^2(3)_{t_1t_2}
    (2)_{t_1^2t_2}(h'_p)_{t_1^2t_2}(2)_{t_1}^2(3)_{t_1}
    \]
    by Theorem~\ref{thm:P1andP4b}, where $h_p$ and $h'_p$ are as in Example~\ref{exa:Z32}.

    If $\charK=2$, $\sigma (z)=1$, $(3)_{\rho (z)\sigma (g)}=0$, and $(\rho
    (g)-1)(\rho (gz)\sigma (g)-1)=0$, then $\dim\NA(V\oplus W)=2239488$, and 
    \[
    \mathcal{H}(t_1,t_2)=(2)_{t_2}(3)_{t_1t_2}(4)_{t_1t_2}(6)_{t_1t_2}
    (6)_{t_1^2t_2^2}(2)_{t_1^4t_2^3}
    (2)_{t_1^3t_2^2}^2(3)_{t_1^3t_2^2}(3)_{t_1^2t_2}^2
    (2)_{t_1}^2(3)_{t_1},
    \]
    or 
    \[
    \mathcal{H}(t_1,t_2)=
    (2)_{t_2}
    (2)_{t_1t_2}^2(3)_{t_1t_2}
    (3)_{t_1^2t_2}^2
    (2)_{t_1^3t_2}^2(3)_{t_1^3t_2}
    (2)_{t_1^4t_2}
    (3)_{t_1}(4)_{t_1}(6)_{t_1}(6)_{t_1^2}, 
    \]
    by Theorems~\ref{thm:P5} and \ref{thm:P5''}.
\end{exa}

\begin{exa} \label{exa:Z31b}
    Let $V\in \ydG $. Assume that $V\simeq M(g,\rho)$,
    where $\rho$ is a character of
    $G^g=\langle g,z\rangle$. Let $v\in V_g$ with $v\ne0$.  The elements $v$,
    $\epsilon v$ and $\epsilon^2v$ form a basis of $V$. The degrees of these
    vectors are $g$, $g\epsilon$ and $g\epsilon^2$, respectively.  
    The action of $G$ on $V$ can be obtained from Example~\ref{exa:Z32}.

    Let $W\in \ydG $ such that $W\simeq M(z,\sigma)$,
    where $\sigma$ is an absolutely irreducible representation of $G$
    of degree $\ge 2$.
    Then $\charK \not=3$, $\deg \sigma =2$,
    $\sigma (1+\epsilon +\epsilon ^2)=0$, and the
    isomorphism class of $W$ is uniquely determined by the constants
    $\sigma (g^2)$ and $\sigma (z)$, see
    Lemma~\ref{lem:simpleG3modules}.

    Assume that $\rho (g)=\sigma (z)=-1$ and $\rho (z^2)\sigma (g^2)=1$.  Then 
    \[
       \mathcal{H}(t_1,t_2)=(2)_{t_2}^2(2)_{t_1t_2}^2(3)_{t_1t_2}(2)^2_{t_1^2t_2}(2)^2_{t_1}(3)_{t_1},
    \]
    and $\dim\NA(V\oplus W)=2304$ by Theorem~\ref{thm:P2andP3b}.
\end{exa}

\begin{rem}
    The braidings of the Yetter-Drinfeld modules of Examples~\ref{exa:Z32},
    \ref{exa:Z31a}, and
	  \ref{exa:Z31b}, respectively,
		can be obtained from the following tables:
    \begin{center}
        \begin{tabular}{c|ccccc}
            & $v$ & $\epsilon v$ & $\epsilon^{2}v$ & $w$ & $gw$\tabularnewline
            \hline 
            $g$ & $\rho(g)v$ & $\rho(g)\epsilon^{2}v$ & $\rho(g)\epsilon v$ & $gw$ & $\sigma(g^{2})w$\tabularnewline
            $g\epsilon$ & $\rho(g)\epsilon^{2}v$ & $\rho(g)\epsilon v$ & $\rho(g)v$ & $\sigma(\epsilon)gw$ & $\sigma(\epsilon ^{2} g^{2})w$\tabularnewline
            $g\epsilon^{2}$ & $\rho(g)\epsilon v$ & $\rho(g)v$ &
						$\rho(g)\epsilon^{2}v$ & $\sigma(\epsilon)^{2}gw$ & $\sigma(
						\epsilon g^{2})w$\tabularnewline
            $\epsilon z$ & $\rho(z)\epsilon v$ & $\rho(z)\epsilon^{2}v$ &
						$\rho(z)v$ & $\sigma(\epsilon z)w$ & $\sigma(\epsilon^{2}z)gw$\tabularnewline
            $\epsilon^{2}z$ & $\rho(z)\epsilon^{2}v$ & $\rho(z)v$ &
						$\rho(z)\epsilon v$ & $\sigma(\epsilon^{2}z)w$ & $\sigma(\epsilon z)gw$\tabularnewline
        \end{tabular}
    \end{center}
    \begin{center}
        \begin{tabular}{c|cccc}
            & $v$ & $\epsilon v$ & $\epsilon^{2}v$ & $w$\tabularnewline
            \hline 
            $g$ & $\rho(g)v$ & $\rho(g)\epsilon^{2}v$ & $\rho(g)\epsilon v$ & $\sigma(g)w$\tabularnewline
            $g\epsilon$ & $\rho(g)\epsilon^{2}v$ & $\rho(g)\epsilon v$ & $\rho(g)v$ & $\sigma(g\epsilon)w$\tabularnewline
            $g\epsilon^{2}$ & $\rho(g)\epsilon v$ & $\rho(g)v$ & $\rho(g)\epsilon^{2}v$ & $\sigma(g\epsilon^{2})w$\tabularnewline
            $z$ & $\rho(z)v$ & $\rho(z)\epsilon v$ & $\rho(z)\epsilon^{2}v$ & $\sigma(z)w$\tabularnewline
        \end{tabular}
    \end{center}
    \begin{center}
        \begin{tabular}{c|ccccc}
            & $v$ & $\epsilon v$ & $\epsilon^{2}v$ & $w$ & $gw$\tabularnewline
            \hline 
            $g$ & $\rho(g)v$ & $\rho(g)\epsilon^{2}v$ & $\rho(g)\epsilon v$ & $gw$ & $\sigma(g^{2})w$\tabularnewline
            $g\epsilon$ & $\rho(g)\epsilon^{2}v$ & $\rho(g)\epsilon v$ & $\rho(g)v$ & $\lambda gw$ & $\lambda^2\sigma(g^{2})w$\tabularnewline
            $g\epsilon^{2}$ & $\rho(g)\epsilon v$ & $\rho(g)v$ & $\rho(g)\epsilon^{2}v$ & $\lambda^2gw$ & $\lambda\sigma(g^{2})w$\tabularnewline
            $z$ & $\rho(z)v$ & $\rho(z)\epsilon v$ & $\rho(z)\epsilon^2v$ & $\sigma(z)w$ & $\sigma(z)gw$\tabularnewline
        \end{tabular}
    \end{center}
\end{rem}

\section{The Classification Theorem}
\label{section:main}

Now we state the main theorem of the paper.
It provides the classification of a class of
finite-dimensional Nichols algebras of group type.
We list important data of these Nichols algebras
in Table \ref{tab:classification}.

\begin{thm}
    \label{thm:big}
    Let $G$ be a non-abelian group and $V$ and $W$ be two
    finite-dimensional absolutely simple Yetter-Drinfeld modules over $G$.
    Assume that $c_{W,V}c_{V,W}\ne \id _{V\otimes W}$ and that the support of
    $V\oplus W$ generates the group $G$. Then the following are equivalent:
		\begin{enumerate}
            \item The Nichols algebra $\NA(V\oplus W)$ is finite-dimensional. 
            \item The pair $(V,W)$ admits all reflections and the Weyl groupoid of
				$(V,W)$ is finite.
             \item Up to permutation of its entries, the pair $(V,W)$ is one of
                 the pairs of Examples \ref{exa:G2a},
                 \ref{exa:G2b}, \ref{exa:G4}, \ref{exa:T}, \ref{exa:Z32},
                 \ref{exa:Z31a}, and \ref{exa:Z31b} of
                 Section~\ref{section:examples}.
    \end{enumerate}
    In this case, the rank and the dimension of $\NA(V\oplus
    W)$ appear in Table \ref{tab:classification}.
\end{thm}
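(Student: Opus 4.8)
The plan is to prove the equivalence of the three statements in Theorem~\ref{thm:big} by the cycle $(3)\Rightarrow(1)\Rightarrow(2)\Rightarrow(3)$. The implication $(3)\Rightarrow(1)$ is essentially bookkeeping: for each of the seven families of pairs listed in Examples~\ref{exa:G2a}--\ref{exa:Z31b} one reads off from the cited results (\cite[Thm.\,4.6]{MR2732989}, \cite[Thm.\,4.7]{MR2732989}, \cite[Thm.\,5.4]{examples}, \cite[Thm.\,2.8]{examples}, and Theorems~\ref{thm:P1andP4a}--\ref{thm:P5''} of the present paper) that $\NA(V\oplus W)$ has a finite Hilbert series, hence is finite-dimensional; this simultaneously yields the last sentence of the theorem about rank and dimension by comparing with Table~\ref{tab:classification}. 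The implication $(1)\Rightarrow(2)$ is the general structure theory: by \cite{MR2766176} (and its refinements \cite{MR2734956, MR3096611}) a finite-dimensional $\NA(V\oplus W)$ forces the pair $(V,W)$ to admit all reflections, and the associated root system is finite; since $c_{W,V}c_{V,W}\ne\id$, the root system has rank two, and a finite rank-two root system has a finite Weyl groupoid by the classification in \cite{MR2525553}. So the genuine content is $(2)\Rightarrow(3)$.

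For $(2)\Rightarrow(3)$ I would argue as follows. By the Theorem of \cite[Thm.\,4.5]{partII} quoted in the introduction, the hypotheses (non-abelian $G$ generated by $\supp(V\oplus W)$, $c_{W,V}c_{V,W}\ne\id$, finite Weyl groupoid — which, as in \cite[Thm.\,4.5]{partII}, is what one extracts from ``admits all reflections and finite Weyl groupoid'') imply that $G$ is an epimorphic image of $T$ or of $\Gamma_n$ for $n\in\{2,3,4\}$. Strictly one should check that the finiteness hypothesis of \cite[Thm.\,4.5]{partII} (finite-dimensionality) can be replaced by (2); this is where I would invoke that a pair admitting all reflections with finite Weyl groupoid has the same combinatorial data used in the proof of \cite[Thm.\,4.5]{partII}, namely finitely many Cartan matrices bounded in size, so the argument of \cite{partII} goes through verbatim. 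Having reduced to these four groups, one then treats each group in turn: for $\Gamma_2$, $\Gamma_4$, $T$ the classification of the relevant pairs $(V,W)$ is already in \cite{MR2732989} and \cite{examples} and produces exactly Examples~\ref{exa:G2a}, \ref{exa:G2b}, \ref{exa:G4}, \ref{exa:T}; for $\Gamma_3$ one uses the results developed in Sections~\ref{section:1}--\ref{section:NicholsG3} of this paper, which enumerate the absolutely simple pairs over non-abelian epimorphic images of $\Gamma_3$ admitting all reflections with finite Weyl groupoid, and show the only ones are those of Examples~\ref{exa:Z32}, \ref{exa:Z31a}, \ref{exa:Z31b}.

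More concretely, for the $\Gamma_3$ case the steps are: first, classify the absolutely simple Yetter-Drinfeld modules over non-abelian epimorphic images of $\Gamma_3$ (Lemma~\ref{lem:simpleG3modules} and its companions) — each is of the form $M(u,\tau)$ for $u$ in the support and $\tau$ an (absolutely) irreducible representation of the centralizer; second, for an ordered pair $(V,W)$ with $c_{W,V}c_{V,W}\ne\id$ determine the $m$ for which $(\ad V)^m(W)$ is nonzero and absolutely simple, yielding the entries of the Cartan matrix of $(V,W)$; third, show that finiteness of the Weyl groupoid together with the rank-two root system classification of \cite{MR2525553} restricts the Cartan matrix and the characters/representations so severely that $(V,W)$ must be one of the three families, up to the normalizations described in the examples; fourth, compute the reflections of these pairs (done in Section~\ref{section:reflections}) and the root systems, and hence read off the Hilbert series and dimensions in Section~\ref{section:NicholsG3}. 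The main obstacle is precisely this third step for $\Gamma_3$: unlike $\Gamma_2$, $\Gamma_4$, $T$, here one encounters a finite-dimensional Nichols algebra not of diagonal type with a non-standard Weyl groupoid, so the case analysis (over the various characteristics $2$, $3$, and $\ne 2,3$) is delicate and is the reason Sections~\ref{section:1}--\ref{section:NicholsG3} are needed. Once that is in place, assembling the four groups' classifications into the single list of statement~(3), and matching the resulting Hilbert series with Table~\ref{tab:classification}, completes the proof; see Theorem~\ref{thm:big} for the precise combined statement and Section~\ref{section:bigproof} for the full details.
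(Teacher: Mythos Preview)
Your proposal is correct and follows essentially the same route as the paper's proof in Section~\ref{section:bigproof}. Two small refinements worth noting: the paper invokes \cite[Thm.\,7.3]{examples} (which already identifies the support quandle $\supp(V\oplus W)$ as one of $Z_2^{2,2}$, $Z_3^{3,1}$, $Z_3^{3,2}$, $Z_4^{4,2}$, $Z_T^{4,1}$ under hypothesis~(2)) rather than \cite[Thm.\,4.5]{partII}, so the hypothesis mismatch you flagged is a non-issue; and for the $\Gamma_3$ case the paper explicitly uses \cite[Prop.\,4.3]{partII} to pass to an object of the Weyl groupoid whose Cartan matrix is of finite type, then Proposition~\ref{pro:pairs} to land in one of the classes $\wp_1,\dots,\wp_5$, with the reflection lemmas of Section~\ref{section:reflections} showing these classes (together with $\wp_5'$, $\wp_5''$) are stable under reflections.
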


Theorem \ref{thm:big} will be proved in Section \ref{section:bigproof}.

\section{Preliminaries} 
\label{section:preliminaries}

Let us first state some useful results from \cite{MR2734956,MR2732989}.  Recall
that $S_n\in \End (V^{\otimes n})$, where $n\in \N $, denotes the quantum
symmetrizer. 


\begin{lem}{\cite[Thm.~1.1]{MR2732989}}
  \label{lem:X_n}
  Let $V$ and $W$ be Yetter-Drinfeld modules over a Hopf algebra $H$ with
  bijective antipode.  Let $\varphi _0=0$, $X_0^{V,W}=W$, and 
  \begin{align*}
        \varphi_m &= \id-c_{V^{\otimes(m-1)}\otimes W,V}\,c_{V,V^{\otimes(m-1)}
			\otimes W}+(\id\otimes\varphi_{m-1})c_{1,2}\in \End (V^{\otimes m}\otimes
			W),\\
			X_m^{V,W} &= \varphi_m(V\otimes X_{m-1})\subseteq V^{\otimes m}\otimes W
  \end{align*}
  for all $m\geq1$. Then
$(\ad V)^n(W)\simeq X_n^{V,W}$ for all $n\in \N _0$.
\end{lem}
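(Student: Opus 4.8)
The plan is to argue by induction on $n$, transporting the problem to the tensor algebra $T(M)$ of $M:=V\oplus W$ and comparing the braided adjoint action with the quantum symmetrizers $S_k$. Recall that every $v\in V\subseteq T(M)$ is primitive, so the braided adjoint action is $(\ad v)(u)=vu-(v_{(-1)}\cdot u)\,v_{(0)}$; on a homogeneous element $u\in M^{\otimes k}$ this equals $v\otimes u-c_{V,M^{\otimes k}}(v\otimes u)$, where the first summand lies in $V\otimes M^{\otimes k}$ and the second has its last tensor leg in $V$. Since $(\ad V)^n(W)$ is spanned by the elements $(\ad v_1)\cdots(\ad v_n)(w)$ with $v_i\in V$ and $w\in W$, the basic object to study is the Yetter-Drinfeld morphism $\tilde f_n\colon V^{\otimes n}\otimes W\to M^{\otimes(n+1)}$, $v_1\otimes\cdots\otimes v_n\otimes w\mapsto(\ad v_1)\cdots(\ad v_n)(w)$, whose image is $(\ad V)^n(W)$ as computed inside $T(M)$.

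First I would work in $T(M)$. The formula for $\ad v$ gives the recursion $\tilde f_n=(\iota-c_{V,M^{\otimes n}})\circ(\id_V\otimes\tilde f_{n-1})$ with $\iota\colon V\otimes M^{\otimes n}\hookrightarrow M^{\otimes(n+1)}$; composing with the projection $p_{n+1}\colon M^{\otimes(n+1)}\to V^{\otimes n}\otimes W$ onto the summand carrying $W$ in the last slot yields $p_{n+1}\circ\tilde f_n=\id$. Hence $\tilde f_n$ is a split monomorphism, so inside $T(M)$ one has $(\ad V)^n(W)\cong V^{\otimes n}\otimes W$.

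Next I would pass to $\NA(M)=T(M)/\bigoplus_k\ker S_k$, with quotient map $\pi$, and set $f_n:=\pi\circ\tilde f_n$, so that $(\ad V)^n(W)=\operatorname{im}f_n$; since $\operatorname{im}\tilde f_n\subseteq M^{\otimes(n+1)}$ and $\ker\pi=\bigoplus_k\ker S_k$, we get $\ker f_n=\ker(S_{n+1}\circ\tilde f_n)$. Writing $\Phi_n:=\varphi_n\circ(\id\otimes\varphi_{n-1})\circ\cdots\circ(\id^{\otimes(n-1)}\otimes\varphi_1)$, a short induction from the definition shows $X_n^{V,W}=\operatorname{im}\Phi_n$. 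The lemma then reduces to the equality of Yetter-Drinfeld submodules $\ker(S_{n+1}\circ\tilde f_n)=\ker\Phi_n$ of $V^{\otimes n}\otimes W$: granting it, $(\ad V)^n(W)=\operatorname{im}f_n\cong(V^{\otimes n}\otimes W)/\ker\Phi_n\cong\operatorname{im}\Phi_n=X_n^{V,W}$ as Yetter-Drinfeld modules over $H$, the case $n=0$ being the tautology $X_0^{V,W}=W$. I would prove this kernel identity by induction on $n$ (expecting in fact the sharper identity $S_{n+1}\circ\tilde f_n=\iota_n\circ\Phi_n$, $\iota_n\colon V^{\otimes n}\otimes W\hookrightarrow M^{\otimes(n+1)}$, whose case $n=1$ is $S_2\bigl(v\otimes w-c_{V,W}(v\otimes w)\bigr)=(\id-c_{W,V}c_{V,W})(v\otimes w)=\varphi_1(v\otimes w)$), using the coset factorization $S_{n+1}=\Omega'_{n+1}\circ(\id_M\otimes S_n)$ with $\Omega'_{n+1}=\sum_{j=0}^{n}c_{j,j+1}c_{j-1,j}\cdots c_{1,2}$, plugging in the recursion for $\tilde f_n$ and the inductive hypothesis $S_n\circ\tilde f_{n-1}=\iota_{n-1}\circ\Phi_{n-1}$, and rewriting the resulting braid words via the braid relations: the monomials landing outside $V^{\otimes n}\otimes W$ should cancel between the two summands of $\tilde f_n$, and the remaining part should reassemble into $\varphi_n\circ(\id\otimes\Phi_{n-1})=\Phi_n$, with the three summands $\id$, $-c_{V^{\otimes(n-1)}\otimes W,V}c_{V,V^{\otimes(n-1)}\otimes W}$ and $(\id\otimes\varphi_{n-1})c_{1,2}$ of the definition of $\varphi_n$ appearing in turn.

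The step I expect to be the main obstacle is exactly this braid-word bookkeeping: verifying the cancellation of the misplaced monomials and that the part of $S_{n+1}\circ\tilde f_n$ supported on $V^{\otimes n}\otimes W$ is governed precisely by the recursively defined operators $\varphi_m$. The rest — the recursion for $\tilde f_n$, the splitting, and the reduction to a statement about $\ker(S_{n+1}\circ\tilde f_n)$ — is formal. This combinatorial identity is the technical core of \cite[Thm.~1.1]{MR2732989}.
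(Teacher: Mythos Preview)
The paper does not prove this lemma; it is quoted verbatim from \cite[Thm.~1.1]{MR2732989} and used throughout as a black box, so there is no proof in the present paper to compare against. Your proposal is, however, a faithful outline of the argument in the cited reference: lift the iterated adjoint action to a map $\tilde f_n$ into $T(V\oplus W)$, observe that it splits via the projection onto $V^{\otimes n}\otimes W$, and then identify $\ker(S_{n+1}\circ\tilde f_n)$ with $\ker\Phi_n$ by establishing the pointwise identity $S_{n+1}\circ\tilde f_n=\Phi_n$ on $V^{\otimes n}\otimes W$. Your base case $n=1$ is correct, and the inductive mechanism you propose---the coset factorization $S_{n+1}=\bigl(\sum_{j=0}^{n}c_{j,j+1}\cdots c_{1,2}\bigr)(\id\otimes S_n)$ combined with the recursion for $\tilde f_n$---is exactly how the cited proof proceeds. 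You have correctly identified the braid-word bookkeeping (cancellation of the terms landing outside $V^{\otimes n}\otimes W$ and regrouping of the rest into the three summands defining $\varphi_n$) as the only nontrivial step; this is indeed the technical core of \cite[Thm.~1.1]{MR2732989}, and the remainder of your reduction is sound.
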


Now we collect information on $\Gamma _3$ which will be needed in
the paper.  By \cite[\S3]{MR2732989}, the center of $\Gamma_3$ is
$Z(\Gamma_3)=\langle\zeta,\gamma^2\rangle$, and the conjugacy classes of
$\Gamma_3$ are \begin{align} \label{eq:classes} \delta^G = \{\delta\},\quad
    (\nu \delta)^G=\{\nu \delta,\nu^2\delta\},\quad (\gamma
    \delta)^G=\{\nu^j\gamma\delta\mid 0\leq j\leq2\}, \end{align} where $\delta
    $ is running over all elements of $Z(\Gamma_3)$.  
    In this section,
    let $G$ be a group. Assume that there exist $g,\epsilon,z\in G$, $\epsilon
    \ne1$, such that there is a group epimorphism $\Gamma_3\to G$ with
    $\gamma\mapsto g$, $\nu\mapsto\epsilon$ and $\zeta\mapsto z$.  Note that
    the condition $\epsilon \not=1$ just means that $G$ is non-abelian.


%

\begin{lem} 
    \label{lem:simpleG3modules}
    Assume that $\K$ is algebraically closed. Let $V$ be a simple
    $\K G$-module and let $\rho :\K G \to \End (V)$ be the corresponding
    representation of $\K G$. Then $\dim V\le 2$. Moreover, if $\dim V=2$ then $\charK
    \not=3$, $\rho (1+\epsilon +\epsilon ^2)=0$, and the
    isomorphism class of $V$ is uniquely determined by the scalars $\rho (g^2)$
    and $\rho (z)$.
\end{lem}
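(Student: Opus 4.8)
The plan is to exploit the structure of $G$ as a central extension of $S_3$. From the relations of $\Gamma_3$ one reads off that $z$ and $g^2$ are central in $G$, that $\epsilon^3=1$, and that $g\epsilon g^{-1}=\epsilon^{-1}$; in particular $g^2$ commutes with $\epsilon$, and $N:=\langle\epsilon\rangle$ is a normal subgroup of $G$ of order dividing $3$ (it is normalized by $g$ and centralized by $z$, and $G=\langle g,\epsilon,z\rangle$). First I would dispose of the case $\charK=3$: then $N$ is a normal $3$-subgroup, hence acts trivially on $V$, because $V^N=\ker(\rho(\epsilon)-\id)$ is nonzero (as $(\rho(\epsilon)-\id)^3=0$) and is $G$-stable (as $N\trianglelefteq G$), so $V^N=V$ by simplicity. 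Thus $V$ is a simple module over the abelian group $G/N$ and $\dim V=1$, consistent with the claim, so from now on $\charK\ne3$; we may also assume $\epsilon\ne1$.

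By Schur's lemma $z$ and $g^2$ act on $V$ by scalars $\rho(z)$, $\rho(g^2)$. Since $X^3-1$ is separable over $\K$, the operator $\rho(\epsilon)$ is diagonalizable with eigenvalues in $\{1,\omega,\omega^2\}$ for $\omega$ a primitive cube root of unity, and $\rho(g)$ preserves the $1$-eigenspace $V_1$ while interchanging the $\omega$- and $\omega^2$-eigenspaces. As $\rho(g),\rho(\epsilon),\rho(z),\rho(g^2)$ all preserve $V_1$ and these generate $\rho(\K G)$, the subspace $V_1$ is a $G$-submodule; by simplicity either $V_1=V$, whence $\epsilon$ acts trivially and $\dim V=1$ as before, or $V_1=0$.

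In the remaining case $V_1=0$ we get $V=V_\omega\oplus V_{\omega^2}$ and $\rho(1+\epsilon+\epsilon^2)=0$. Put $H:=\langle\epsilon,z,g^2\rangle\le G$; all three generators centralize $\epsilon$, so $H\le G^\epsilon$, hence $g\notin H$ (otherwise $\epsilon=\epsilon^{-1}$, forcing $\epsilon=1$), while $g^2\in H$, so $H$ is normal of index $2$ and abelian. The nonzero $H$-stable subspace $V_\omega$ contains a one-dimensional $H$-submodule $U$, on which $\epsilon$ acts by $\omega$; then ${}^gU$ has $\epsilon$ acting by $\omega^{-1}\ne\omega$, so $U\not\cong{}^gU$, and Clifford theory for the index-two subgroup $H$ shows $\mathrm{Ind}_H^G U$ is a simple $\K G$-module of dimension $2$. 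Frobenius reciprocity provides a nonzero $\K G$-map $\mathrm{Ind}_H^G U\to V$, necessarily an isomorphism since both sides are simple; hence $\dim V=2$. For uniqueness, note that $U$ is the character of $H$ with $U(\epsilon)=\omega$, $U(z)=\rho(z)$, $U(g^2)=\rho(g^2)$ (the latter two because $z$ and $g^2$ are central in $G$ and so act on $\mathrm{Ind}_H^G U$ by these scalars). Since $\epsilon,z,g^2$ generate $H$, the character $U$—and with it $V\cong\mathrm{Ind}_H^G U$—is determined by $(\rho(z),\rho(g^2))$ once $\omega$ is fixed; replacing $\omega$ by $\omega^2$ replaces $U$ by ${}^gU$ and leaves $\mathrm{Ind}_H^G U$ unchanged.

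The step needing the most care is the Clifford-theoretic one: one must check that $\mathrm{Ind}_H^G U$ really is simple when $U\not\cong{}^gU$, valid over $\K$ of arbitrary characteristic (including $2$)—this is exactly where the index being the prime $2$ and $\K$ algebraically closed enter—so that comparing dimensions through this induction is enough to rule out $\dim V>2$ without any extra bound. Everything else is bookkeeping with central elements and eigenspaces.
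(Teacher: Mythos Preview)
Your proof is correct but takes a genuinely different route from the paper's. The paper's argument is more elementary and direct: pick any eigenvector $v$ of $\rho(\epsilon)$ and observe immediately that $\K v+\K gv$ is $G$-stable (since $\epsilon(gv)=g\epsilon^2v$ is again a scalar multiple of $gv$, $z$ acts by a scalar, and $g(gv)=\rho(g^2)v$), hence equals $V$ by simplicity; this gives $\dim V\le2$ in one line, with no case split on the characteristic and no Clifford theory. The remaining claims are then handled by noting that if $\dim V=2$ and the eigenvalue were $1$ then $\rho(\epsilon)=\id$ and an eigenvector of $\rho(g)$ would span a proper submodule, forcing $\rho(1+\epsilon+\epsilon^2)=0$ and $\charK\ne3$; uniqueness is proved by writing down an explicit isomorphism $v\mapsto w$, $gv\mapsto gw$ between two such modules with matching $\rho(g^2)$, $\rho(z)$.

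Your approach instead separates off $\charK=3$ via the normal $3$-subgroup $\langle\epsilon\rangle$, then uses the index-two abelian subgroup $H=\langle\epsilon,z,g^2\rangle$ and Clifford theory to realize any simple $V$ with $V_1=0$ as $\mathrm{Ind}_H^G U$ for a one-dimensional $U$. This is heavier machinery for the same conclusion, but it has the advantage of being more structural: it explains \emph{why} the two-dimensional simples exist and are parametrized by $(\rho(z),\rho(g^2))$, namely because they are induced from characters of $H$ and the two choices of $\epsilon$-eigenvalue give $G$-conjugate characters. The paper's bare-hands argument is shorter and self-contained; yours would transport more readily to analogous situations with a known abelian normal subgroup of small index.
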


\begin{proof}
    Let $v\in V$ be an eigenvector of $\rho (\epsilon )$.  Then $V=\K v+\K gv$
    and hence $\dim V\le 2$.

    Assume that $\dim V=2$. Then $gv\notin \K v$. If $\epsilon v=1$, then
    $\epsilon gv=g\epsilon ^2v=gv$. Then $\rho (\epsilon )=\id _V$, and
    $\K w$ for an eigenvector $w$ of $\rho (g)$ is a
    $\K G$-invariant subspace of $V$. This is a contradiction to the simplicity
    of $V$.
    Since $\epsilon ^3=1$ and $1+\epsilon +\epsilon ^2\in Z(\K G)$, we conclude
    that $\rho (1+\epsilon +\epsilon ^2)=0$ and $\charK\not=3$.
    Thus $v,gv$ is a basis of $V$ consisting of eigenvectors of $\rho (\epsilon
    )$.

    Let now $W$ be a simple $\K G $-module with $\dim W=2$, and let $w\in
    W\setminus \{0\}$ and $\lambda \in \K $ such that $\epsilon v=\lambda v$,
    $\epsilon w=\lambda w$. Assume that $g^2w=\rho (g^2)w$ and $zw=\rho (z)w$.
    Then the map $f:V\to W$, $v\mapsto w$, $gv\mapsto gw$, is an isomorphism of
    $\K G $-modules. This proves the lemma.
\end{proof}

\section{Reflections of the first pair}
\label{section:1}

Let $G$ be a non-abelian group, and let $g,\epsilon ,z\in G$. Assume that
there is an epimorphism $\Gamma_3\to G$ with $\gamma \mapsto g$,
$\nu \mapsto \epsilon $, $\zeta \mapsto z$.
Let $V\in \ydG $ such that $V\simeq M(g,\rho)$, where $\rho$ is an absolutely
irreducible representation of the centralizer $G^g=\langle z,g\rangle$. Since
this centralizer is abelian, $\deg\rho=1$. Let $v\in V_g$ with $v\ne0$. The
elements $v$, $\epsilon v$, $\epsilon^2 v$ form a basis of $V$. The degrees of
these basis vectors are $g$, $g\epsilon $ and $g\epsilon ^2$, respectively. 

\begin{rem}
	\label{rem:1:G_on_V}
	The action of $G$ on $V$ is given by the following table:
	\begin{center}
		\begin{tabular}{c|ccc}
			$M(g,\rho)$ & $v$ & $\epsilon v$ & $\epsilon^2 v$ \tabularnewline
			\hline
			$\epsilon$ & $\epsilon v$ & $\epsilon^2 v$ & $v$\tabularnewline
			$z$ & $\rho(z)v$ & $\rho(z)\epsilon v$ & $\rho(z)\epsilon^2v$\tabularnewline
			$g$ & $\rho(g)v$ & $\rho(g)\epsilon^2v$ & $\rho(g)\epsilon v$\tabularnewline
		\end{tabular}
	\end{center}
\end{rem}

Let $W\in \ydG $ such that $W\simeq M(\epsilon z,\sigma )$,
where $\sigma$ is an absolutely irreducible
representation of $G^{\epsilon z}=G^\epsilon=\langle \epsilon,z,g^2\rangle$.
Since $G^\epsilon$ is abelian, $\deg\sigma=1$. Let $w\in W_{\epsilon z}$ with
$w\ne0$.  Then $w$, $gw$ is a basis of $W$. The degrees of these basis vectors
are $\epsilon z$ and $\epsilon^2z$, respectively.

\begin{rem}
    \label{rem:1:G_on_W}
    The action of $G$ on $W$ is given by the following table:
    \begin{center}
        \begin{tabular}{c|cc}
            $W$ & $w$ & $gw$\tabularnewline
            \hline
            $\epsilon$ & $\sigma(\epsilon)w$ & $\sigma(\epsilon)^2gw$ \tabularnewline
            $z$ &  $\sigma(z)w$ & $\sigma(z)gw$ \tabularnewline
            $g$ &  $gw$ & $\sigma(g^2)w$ \tabularnewline
        \end{tabular}
    \end{center}
\end{rem}

In order to calculate $R_1(V,W)$,
we first compute the modules $(\ad V)^n(W)$ for $n\in\N$. For
$n\in\N$ we write $X_n=X_n^{V,W}$ and $\varphi_n=\varphi_n^{V,W}$.

\begin{lem}
	\label{lem:1:X1}
	The Yetter-Drinfeld module $X_1^{V,W}$
  is absolutely simple if and only if $\rho(z)^2\sigma(\epsilon g^2)=1$.
	In this case, $X_1^{V,W}\simeq M(gz,\sigma_1)$, where $\sigma_1$ is the
  character of $G^{gz}=\langle g,z\rangle$ with
	\begin{equation*}
		\sigma_1(g)=-\rho(gz^{-1})\sigma(\epsilon),\quad \sigma_1(z)=\rho(z)\sigma(z).
	\end{equation*}
	Let $w'=\varphi_1(\epsilon ^2v\otimes w)$.
  Then $w'\in(V\otimes W)_{gz}$ is non-zero. Moreover, 
	$w'$, $\epsilon w'$ and $\epsilon^2 w'$
  form a basis of $X_1^{V,W}$. The degrees of these
	basis vectors are $gz$, $g\epsilon z$ and $g\epsilon ^2z$, respectively.
\end{lem}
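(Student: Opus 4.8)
The plan is to compute $X_1^{V,W}$ explicitly from its definition in Lemma~\ref{lem:X_n}. By that lemma, $X_1^{V,W}=\varphi_1(V\otimes W)\subseteq V\otimes W$, where $\varphi_1=\id-c_{W,V}c_{V,W}+(\id\otimes\varphi_0)c_{1,2}=\id-c_{W,V}c_{V,W}$ since $\varphi_0=0$. So the first step is to write down, using Remarks~\ref{rem:1:G_on_V} and~\ref{rem:1:G_on_W} and the Yetter--Drinfeld braiding formula $c(x\otimes y)=(\deg x)\,y\otimes x$, the action of $c_{W,V}c_{V,W}$ on the six basis vectors $\epsilon^i v\otimes gw$ and $\epsilon^i v\otimes w$ of $V\otimes W$. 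Since $\deg(\epsilon^i v)\in\{g,g\epsilon,g\epsilon^2\}$ acts on $W$ by the formulas in Remark~\ref{rem:1:G_on_W}, and each degree-$\epsilon^j z$ component of $W$ acts on $V$ cyclically permuting the $\epsilon^i v$ up to the scalar $\rho(z)$, the operator $c_{W,V}c_{V,W}$ is block-diagonal with respect to the $\Z/3$-grading coming from the $\epsilon$-action, and on each block it is a scalar multiple of a cyclic permutation. A direct computation should show that $w'\coloneqq\varphi_1(\epsilon^2 v\otimes w)$ lies in $(V\otimes W)_{gz}$ (its degree is $\deg(\epsilon^2 v)\deg(w)=g\epsilon^2\cdot\epsilon z=gz$ since $\epsilon^3=1$), and that $\varphi_1(\epsilon^i v\otimes w)$, $\varphi_1(\epsilon^i v\otimes gw)$ all lie in the span of $w'$, $\epsilon w'$, $\epsilon^2 w'$ together with possibly vectors that vanish precisely when $\rho(z)^2\sigma(\epsilon g^2)=1$.

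The second step is the simplicity criterion. I expect $X_1^{V,W}$ to be three-dimensional with basis $w',\epsilon w',\epsilon^2 w'$ exactly when the "extra" contributions cancel; the relevant scalar obstruction should be $1-\rho(z)^2\sigma(\epsilon g^2)$ (or a unit times it), which is what forces the stated condition. When $\rho(z)^2\sigma(\epsilon g^2)\ne 1$, either $w'=0$ or $X_1^{V,W}$ acquires extra summands and fails to be absolutely simple; one checks this by exhibiting that $\varphi_1$ then has larger rank, or that the module it produces has a proper submodule over some field extension. Conversely, assuming $\rho(z)^2\sigma(\epsilon g^2)=1$, one verifies that $w'\ne 0$, that $gw'\in\K w'+\K\epsilon w'+\K\epsilon^2 w'$ (so the span is $G$-stable), and that it is generated by $w'$ as a Yetter--Drinfeld module, hence is $M(gz,\sigma_1)$ for a character $\sigma_1$ of $G^{gz}=\langle g,z\rangle$. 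Note $G^{gz}=\langle g,z\rangle$ because $gz$ has the same centralizer as $g$ (as $z$ is central), matching Lemma~\ref{lem:simpleG3modules}'s description of $G^g$.

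The third step is to read off $\sigma_1$ from the action: one computes $z\cdot w'$ and $g\cdot w'$ in terms of $w'$ using the tables, obtaining $\sigma_1(z)=\rho(z)\sigma(z)$ and $\sigma_1(g)=-\rho(gz^{-1})\sigma(\epsilon)$. The degrees of $w',\epsilon w',\epsilon^2 w'$ are then $gz,g\epsilon z,g\epsilon^2 z$ by the same $\epsilon$-grading argument as for $V$. Absolute simplicity follows since $M(gz,\sigma_1)$ with $\sigma_1$ a $1$-dimensional character is absolutely simple whenever the conjugacy class of $gz$ and the character behave as here (indeed it is $3$-dimensional and irreducible over $\K$, and remains so over any extension since everything is defined by the same scalar data). \emph{The main obstacle} is the bookkeeping in the first step: correctly tracking the scalars $\rho(g),\rho(z),\sigma(\epsilon),\sigma(z),\sigma(g^2)$ through the double braiding on all six basis vectors, and identifying the precise scalar whose vanishing is equivalent to $\rho(z)^2\sigma(\epsilon g^2)=1$ — everything else is then routine linear algebra and the identification of a character on an abelian centralizer.
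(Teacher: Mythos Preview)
Your plan is essentially the paper's approach, but with extra work that the paper avoids. The paper does not compute $\varphi_1$ on all six basis vectors of $V\otimes W$; instead it invokes \cite[Lemma\,1.7]{MR2732989} to obtain immediately that $X_1^{V,W}=\K G\,\varphi_1(\epsilon^2 v\otimes w)=\K G\,w'$, so a single evaluation suffices. The explicit formula $w'=\epsilon^2 v\otimes w-\rho(z)\sigma(\epsilon)^2\,\epsilon v\otimes gw$ shows at once that $w'\ne 0$ (so your alternative ``$w'=0$'' does not occur). The simplicity criterion is then sharper than what you describe: since $X_1^{V,W}=\K G\,w'$ with $w'$ of degree $gz$, and $G^{gz}=\langle g\rangle Z(G)$ is abelian, absolute simplicity is equivalent to the single condition $gw'\in\K w'$ (not merely $gw'$ lying in the span of $w',\epsilon w',\epsilon^2 w'$, which is automatic by $G$-equivariance). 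Computing $gw'$ directly and comparing with $w'$ produces the equation $\rho(z)^2\sigma(\epsilon g^2)=1$ and the eigenvalue $-\rho(gz^{-1})\sigma(\epsilon)$ in one stroke. Your route would arrive at the same answer, but the reduction to one orbit representative and the ``$gw'\in\K w'$'' test is what makes the argument two lines instead of a page.
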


\begin{proof}
  By \cite[Lemma\,1.7]{MR2732989},
  $X_1^{V,W}\simeq \K G\varphi_1(\epsilon ^2v\otimes w)$. Using the actions
	of $G$ on $V$ and $W$ we obtain that
	\begin{equation}
		\begin{aligned}
			\label{eq:1:x1}
      w'=(\id -c_{W,V}c_{V,W})(\epsilon^2v\otimes w)
			=\epsilon^2v\otimes w-\rho(z)\sigma(\epsilon)^2\epsilon v\otimes gw,
		\end{aligned}
	\end{equation}
	and hence $w'\in(V\otimes W)_{gz}$ is non-zero. We compute
	\begin{align*}
		gw' &= g\epsilon^2v\otimes gw-\sigma(\epsilon)^2\rho(z) g\epsilon
    v\otimes g^2w\\
		&=\rho(g)\epsilon v\otimes gw
    -\sigma(\epsilon)^2\rho(gz)\sigma(g^2)\epsilon^2v \otimes w.
	\end{align*}
  Since $V$ and $W$ are absolutely simple and
  $G^{gz}=\langle g\rangle Z(G)$, the Yetter-Drinfeld module $X_1^{V,W}$
  is absolutely simple if and only if $gw'\in \K w'$. By the above
  calculations, this is equivalent to $\rho(z)^2\sigma(\epsilon g^2)=1$,
	and in this case $gw'=-\rho(gz^{-1})\sigma(\epsilon)w'$. The remaining
  claims are clear.
\end{proof}

Now we compute $X_2^{V,W}$. Since $(g,gz)$ and $(g\epsilon ^2,gz)$
represent the two orbits of $g^G\times (gz)^G$ under the diagonal
action of $G$, we conclude that
\begin{align} \label{eq:1:X2:decomp}
	X_2^{V,W}=\varphi_2(V\otimes X_1^{V,W})=\K G\{\varphi_2(v\otimes w'),\varphi_2(\epsilon^2v\otimes w')\}.
\end{align}
For the computation of $\varphi _2(v\otimes w')$ and
$\varphi _2(\epsilon^2 v\otimes w')$
we need the following. 

\begin{lem}
	\label{lem:1:phi1}
	Assume that $\rho (z)^2\sigma (\epsilon g^2)=1$.
  Let $w'=\varphi _1(\epsilon ^2v\otimes w)$. Then
	\begin{align}
		\label{eq:1:phi1(v,w)}&\varphi_1(v\otimes w)
    =\sigma(\epsilon)^{-1}\epsilon w',\\
		\label{eq:1:phi1(v,gw)}&\varphi_1(v\otimes gw)=-\rho(z)^{-1}\epsilon^2 w',\\
		\label{eq:1:phi1(e2v,gw)}&\varphi_1(\epsilon^2 v\otimes gw)
    =-\rho(z)^{-1}\sigma(\epsilon)^{-1}\epsilon w'.
	\end{align}
\end{lem}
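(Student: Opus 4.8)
The plan is to prove the three identities of Lemma~\ref{lem:1:phi1} by direct computation from the defining recursion $\varphi_1=\id-c_{W,V}c_{V,W}$, using the explicit braiding recorded in the braiding table for Example~\ref{exa:Z32} (equivalently from Remarks~\ref{rem:1:G_on_V} and~\ref{rem:1:G_on_W}), together with the already-known formula~\eqref{eq:1:x1} for $w'=\varphi_1(\epsilon^2 v\otimes w)$. First I would record, for each of the three input tensors $v\otimes w$, $v\otimes gw$, $\epsilon^2 v\otimes gw$, its image under $c_{V,W}$ and then under $c_{W,V}$; this only uses that $v$, $\epsilon v$, $\epsilon^2 v$ have degrees $g$, $g\epsilon$, $g\epsilon^2$ and that $w$, $gw$ have degrees $\epsilon z$, $\epsilon^2 z$, plus the action tables. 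Then $\varphi_1$ applied to each input is the input minus the computed double-braiding, and I expect each result to land in a one-dimensional degree-$h$ component of $V\otimes X_1^{V,W}$ for the appropriate $h\in\{g\epsilon z,\,g\epsilon^2 z\}$, where it must be a scalar multiple of the corresponding basis vector $\epsilon w'$ or $\epsilon^2 w'$ of $X_1^{V,W}$ (whose degrees are $gz$, $g\epsilon z$, $g\epsilon^2 z$ by Lemma~\ref{lem:1:X1}).

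The key bookkeeping step is to pin down the scalars. Since $\varphi_1(v\otimes w)$ has degree $\deg(v)\deg(w)=g\epsilon z$, it must be proportional to $\epsilon w'$; one reads off the coefficient by comparing, say, the coefficient of $\epsilon v\otimes w$ on both sides — from~\eqref{eq:1:x1}, $\epsilon w'=\epsilon v\otimes w - \rho(z)\sigma(\epsilon)^2\,\epsilon^2 v\otimes gw$ (after applying $\epsilon$ to~\eqref{eq:1:x1} and using $\epsilon^3=1$), so a single coefficient comparison determines the scalar $\sigma(\epsilon)^{-1}$ claimed in~\eqref{eq:1:phi1(v,w)}. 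The same method handles~\eqref{eq:1:phi1(v,gw)}: $\varphi_1(v\otimes gw)$ has degree $g\epsilon^2 z$, hence is proportional to $\epsilon^2 w'$, and matching one coefficient gives $-\rho(z)^{-1}$. Likewise $\varphi_1(\epsilon^2 v\otimes gw)$ has degree $g\epsilon^2\cdot\epsilon^2 z=g\epsilon z$, so it is proportional to $\epsilon w'$, giving $-\rho(z)^{-1}\sigma(\epsilon)^{-1}$. Throughout, the hypothesis $\rho(z)^2\sigma(\epsilon g^2)=1$ is exactly what is needed for $X_1^{V,W}$ to be absolutely simple (so that $w'$, $\epsilon w'$, $\epsilon^2 w'$ really form a basis and the degree argument pins down the image uniquely), and it may also be used to simplify the scalars that show up.

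The main (mild) obstacle is purely notational: keeping the braiding computations consistent, since the double braiding $c_{W,V}c_{V,W}$ mixes the $\mathbb{Z}/3$-action on $V$ (where $g$ permutes $v,\epsilon v,\epsilon^2 v$ with a twist by $\rho(g)$ and $\epsilon$ cyclically permutes them) with the simpler action on the two-dimensional $W$. A clean way to organize this is to first compute $c_{V,W}$ and $c_{W,V}$ on the four tensors $\epsilon^i v\otimes w$ and $\epsilon^i v\otimes gw$ once and for all, tabulate the six relevant double-braidings, and then subtract. I would also double-check the answers by verifying internal consistency: applying $\epsilon$ and $g$ to~\eqref{eq:1:phi1(v,w)}--\eqref{eq:1:phi1(e2v,gw)} and comparing with what the recursion gives for the other basis vectors of $V\otimes W$, and checking that the three formulas are compatible with $gw'=-\rho(gz^{-1})\sigma(\epsilon)w'$ from Lemma~\ref{lem:1:X1}. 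Once the scalars are confirmed, the proof is complete; the lemma is then used in~\eqref{eq:1:X2:decomp} to compute $\varphi_2(v\otimes w')$ and $\varphi_2(\epsilon^2 v\otimes w')$, hence $X_2^{V,W}$.
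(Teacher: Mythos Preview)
Your proposal is correct, but it takes a more computational route than the paper. The paper's proof is a three-line equivariance argument: since $\varphi_1$ is a morphism in $\ydG$, one has
\[
w'=\varphi_1(\epsilon^2 v\otimes w)=\epsilon^2\varphi_1(v\otimes \epsilon w)=\sigma(\epsilon)\,\epsilon^2\varphi_1(v\otimes w),
\]
which gives \eqref{eq:1:phi1(v,w)} immediately; then acting with $g$ on \eqref{eq:1:phi1(v,w)} and using $gw'=-\rho(gz^{-1})\sigma(\epsilon)w'$ from Lemma~\ref{lem:1:X1} gives \eqref{eq:1:phi1(v,gw)}; finally acting with $\epsilon^2$ on \eqref{eq:1:phi1(v,gw)} gives \eqref{eq:1:phi1(e2v,gw)}. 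No braiding tables or coefficient matching are needed.

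You do mention exactly this $G$-equivariance argument, but only as a post-hoc ``internal consistency'' check after the main direct computation. Promoting it to the main argument, as the paper does, eliminates the bookkeeping you flag as the chief obstacle. Your direct approach has the minor advantage of making the role of the hypothesis $\rho(z)^2\sigma(\epsilon g^2)=1$ more visible (it is needed for both coefficients to match in \eqref{eq:1:phi1(v,gw)}, not just for $X_1^{V,W}$ to be simple), whereas in the paper's proof the hypothesis enters only through the formula for $gw'$ in Lemma~\ref{lem:1:X1}.
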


\begin{proof}
	We first prove Equation \eqref{eq:1:phi1(v,w)}. Since
	\begin{align*}
		w'&=\varphi_1(\epsilon^2v\otimes w)=\epsilon^2\varphi_1(v\otimes\epsilon
    w)=\epsilon^2\sigma(\epsilon)\varphi_1(v\otimes w),
	\end{align*}
	Equation \eqref{eq:1:phi1(v,w)} holds.
  Now act with $g$ on Equation \eqref{eq:1:phi1(v,w)} and use
	Lemma \ref{lem:1:X1}
  to obtain Equation \eqref{eq:1:phi1(v,gw)}.
  Finally, to obtain Equation
	\eqref{eq:1:phi1(e2v,gw)} act with $\epsilon^2$ on Equation
	\eqref{eq:1:phi1(v,gw)}. 
\end{proof}

\begin{lem}
	\label{lem:1:phi2}
	Assume that $\rho (z)^2\sigma (\epsilon g^2)=1$.
	Then
	\begin{align}
		\label{eq:1:phi2(e2v,x1)}
    \varphi_2(\epsilon^2v\otimes w') = &\;
    (1+\rho(g))(\epsilon^2v\otimes
    w'+\rho(g)\sigma(\epsilon)v\otimes\epsilon w'),\\
		\label{eq:1:phi2(v,x1)}
		\varphi_2(v\otimes w')= &\;
      (1+\rho(g)^2\sigma(\epsilon))v\otimes w'
      +\rho(g)\sigma(\epsilon)^2\epsilon v\otimes \epsilon w'\\
			&\phantom{(1+\rho(g)^2\sigma(\epsilon))v\otimes w'\;}
      +\rho(g)\sigma(\epsilon)^2 \epsilon^2 v\otimes \epsilon^2 w'.
      \notag
	\end{align}
	In particular,
  $\varphi_2(\epsilon^2v\otimes w')=0$ if and only if $\rho(g)=-1$. Let
	$w''=\varphi_2(v\otimes w')$. Then $w''\in (V\otimes V\otimes W)_{g^2z}$
  is non-zero. 
\end{lem}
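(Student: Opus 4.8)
The strategy is a direct computation using the recursive definition of $\varphi_2$ from Lemma~\ref{lem:X_n}, namely $\varphi_2 = \id - c_{V\otimes W,V}c_{V,V\otimes W} + (\id\otimes\varphi_1)c_{1,2}$ acting on $V^{\otimes 2}\otimes W$, together with the explicit braiding formulas recorded in Remarks~\ref{rem:1:G_on_V} and \ref{rem:1:G_on_W} and the identities for $\varphi_1$ from Lemma~\ref{lem:1:phi1}. I would first expand $w' = \varphi_1(\epsilon^2 v\otimes w) = \epsilon^2 v\otimes w - \rho(z)\sigma(\epsilon)^2\epsilon v\otimes gw$ using \eqref{eq:1:x1}, so that $v\otimes w'$ and $\epsilon^2 v\otimes w'$ become explicit elements of $V\otimes V\otimes W$ written in the basis $\{\epsilon^i v\otimes \epsilon^j v\otimes w,\ \epsilon^i v\otimes \epsilon^j v\otimes gw\}$. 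The degree bookkeeping is straightforward: $w'$ has degree $gz$, so $v\otimes w'$ has degree $g\cdot gz = g^2 z$ (using that $z$ is central and $g\epsilon = \epsilon^2 g$ type relations only permute the $\epsilon$-tensor factors), which already gives the claimed homogeneity of $w''$.

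Next I would compute the three pieces of $\varphi_2$ separately on each of $v\otimes w'$ and $\epsilon^2 v\otimes w'$. The term $(\id\otimes\varphi_1)c_{1,2}$ requires moving the first $V$-factor past the second via $c_{V,V}$ (read off from the $V$-table, which is essentially the regular action of $\epsilon$ twisted by $\rho(g)$), and then applying the $\varphi_1$-identities \eqref{eq:1:phi1(v,w)}--\eqref{eq:1:phi1(e2v,gw)} to collapse the last two tensor factors back into multiples of $w'$. The term $c_{V\otimes W,V}c_{V,V\otimes W}$ is the square of the braiding between the first $V$ and the block $V\otimes W\simeq X_1$, which by Lemma~\ref{lem:1:X1} acts on the $X_1$-side through the character data $\sigma_1(g) = -\rho(gz^{-1})\sigma(\epsilon)$; this is where the factors $\rho(g)^2\sigma(\epsilon)$ and $\rho(g)\sigma(\epsilon)^2$ enter. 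Collecting terms and simplifying using $\rho(z)^2\sigma(\epsilon g^2)=1$ should yield exactly \eqref{eq:1:phi2(e2v,x1)} and \eqref{eq:1:phi2(v,x1)}. The vanishing criterion $\varphi_2(\epsilon^2 v\otimes w')=0 \iff \rho(g)=-1$ then reads off immediately from the factor $(1+\rho(g))$ in \eqref{eq:1:phi2(e2v,x1)}.

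Finally, for the non-vanishing of $w'' = \varphi_2(v\otimes w')$: I would argue that in \eqref{eq:1:phi2(v,x1)} the coefficient of $\epsilon v\otimes\epsilon v\otimes gw$ (coming only from the $-\rho(z)\sigma(\epsilon)^2\epsilon v\otimes gw$ part of $w'$ in the leading $\id$-term, after re-expanding) is a product of roots of unity times nonzero scalars and cannot be cancelled by the other summands, which live in different basis directions; alternatively one checks that the coefficient of the pure term $v\otimes v\otimes w$, namely $1+\rho(g)^2\sigma(\epsilon)$ plus contributions, combined with at least one other monomial, cannot all simultaneously vanish. The main obstacle I anticipate is purely organizational: keeping track of the many $\epsilon$-permutations and $\rho,\sigma$ scalars consistently through the two applications of the braiding in $c_{V\otimes W,V}c_{V,V\otimes W}$, since a single sign or exponent error propagates. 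A clean way to manage this is to first record $c_{V,X_1}$ and $c_{X_1,V}$ as explicit $3\times 3$ (resp.\ $3\times 3$) monomial matrices in the $\{w',\epsilon w',\epsilon^2 w'\}$ basis using Lemma~\ref{lem:1:X1}, and only then assemble $\varphi_2$; with that reduction the rest is bookkeeping.
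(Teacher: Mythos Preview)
Your proposal is correct and follows essentially the same route as the paper: apply the recursion $\varphi_2=\id-c_{X_1,V}c_{V,X_1}+(\id\otimes\varphi_1)c_{1,2}$, compute the double-braiding term via the character $\sigma_1$ of Lemma~\ref{lem:1:X1}, and reduce the $(\id\otimes\varphi_1)c_{1,2}$ term using the identities of Lemma~\ref{lem:1:phi1}. The only minor inefficiency is your non-vanishing argument for $w''$: rather than unpacking back into the $V\otimes V\otimes W$ basis, you can simply observe that $v\otimes w'$, $\epsilon v\otimes\epsilon w'$, $\epsilon^2 v\otimes\epsilon^2 w'$ are linearly independent in $V\otimes X_1^{V,W}$ and that the coefficient $\rho(g)\sigma(\epsilon)^2$ is a unit.
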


\begin{proof}
  Recall that
	$\varphi_2=\id-c_{X_1^{V,W},V}c_{V,X_1^{V,W}}+(\id\otimes\varphi_1)c_{1,2}$.
  Thus Lemma~\ref{lem:1:X1} and Equation \eqref{eq:1:x1} imply that
	\begin{multline*}
		\varphi_2(\epsilon^2v\otimes w') = \epsilon^2v\otimes w'
    +\rho(g)^2\sigma(\epsilon) v\otimes\epsilon w'\\
		+\rho(g)\epsilon^2v\otimes\varphi_1(\epsilon^2v\otimes w)
		-\rho(gz)\sigma(\epsilon)^2 v\otimes\varphi_1(\epsilon^2v\otimes gw).
	\end{multline*}
	Hence Equation \eqref{eq:1:phi2(e2v,x1)} follows from Lemma
	\ref{lem:1:phi1}. Similarly, we compute
	\begin{align*}
		\varphi_2(v\otimes w')=&\;(1+\rho(g)^2\sigma(\epsilon))v\otimes w'\\
		&+\rho(g)\epsilon v\otimes\varphi_1(v\otimes w)
		-\rho(gz)\sigma(\epsilon)^2 \epsilon^2v\otimes\varphi_1(v\otimes gw),
	\end{align*}
	and then use Lemma \ref{lem:1:phi1} to obtain Equation
	\eqref{eq:1:phi2(v,x1)}. From this the lemma follows.
\end{proof}

\begin{lem}
	\label{lem:1:X2}
	Assume that $\rho (z)^2\sigma (\epsilon g^2)=1$.
  Let $w''_1=\varphi _2(v\otimes w'_1)$.
	Then $X_2^{V,W}$ is absolutely simple if and only if $\rho(g)=-1$.
  In this case one of the following holds.
	\begin{enumerate}
		\item If $\sigma(\epsilon)^2+\sigma(\epsilon)+1=0$, then
      $X_2^{V,W}=\K w''_1\simeq M(g^2z,\sigma_2)$, where 
			$\sigma_2$ is the character	of $G$ given by
			\begin{align*}
				\sigma_2(g)=-\rho(z)^{-1}\sigma(\epsilon),\quad
				\sigma_2(\epsilon)=1,\quad
				\sigma_2(z)=\rho(z)^2\sigma(z).
			\end{align*}
		\item If $\sigma(\epsilon)=1$ and $\charK\ne3$, then
      $X_2^{V,W}\simeq M(g^2z,\sigma_2)$, where 
			$\sigma_2$ is the two-dimensional absolutely irreducible representation
			of $G$ with basis $\{w'',\epsilon w''\}$ and 
			\begin{align*}
				gw''=-\rho(z)^{-1}w'',\quad
				\epsilon^2 w''=-w''-\epsilon w'',\quad
				zw'' = \rho(z)^2\sigma(z)w''.  
			\end{align*}
	\end{enumerate}
\end{lem}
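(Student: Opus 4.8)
The plan is to read off everything from Equation~\eqref{eq:1:X2:decomp} together with Lemma~\ref{lem:1:phi2}. Recall that $X_2^{V,W}$ is generated as a $\K G$-module by $\varphi_2(v\otimes w')$ and $\varphi_2(\epsilon^2 v\otimes w')$. Using the relation $g\epsilon^2=\epsilon g$ in $G$ one computes that their $\K G$-degrees are $g^2z$ and $\epsilon g^2z$, respectively. Since $g^2=\gamma^2$ and $z=\zeta$ are central, $g^2z\in Z(G)$, whereas by \eqref{eq:classes} the conjugacy class of $\epsilon g^2z$ is $\{\epsilon g^2z,\epsilon^2 g^2z\}$, which is disjoint from $\{g^2z\}$. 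Moreover $\varphi_2(v\otimes w')=w''\neq 0$ always by Lemma~\ref{lem:1:phi2}, and the two summands in the formula for $\varphi_2(\epsilon^2v\otimes w')$ there have different first tensor legs, hence are linearly independent, so $\varphi_2(\epsilon^2v\otimes w')\neq0$ precisely when $\rho(g)\neq-1$.

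From this the ``only if'' direction is immediate: if $\rho(g)\neq-1$, then $X_2^{V,W}$ has nonzero homogeneous components in the two non-conjugate degrees $g^2z$ and $\epsilon g^2z$, so its support is not a single conjugacy class; since an absolutely simple Yetter-Drinfeld module over $\K G$ is supported on one conjugacy class, $X_2^{V,W}$ is not absolutely simple. Conversely, if $\rho(g)=-1$, then $\varphi_2(\epsilon^2v\otimes w')=0$, so $X_2^{V,W}=\K G\,w''$ lies entirely in the central degree $g^2z$; hence its (absolute) simplicity as a Yetter-Drinfeld module is the same as that of the underlying $\K G$-module. Because $\sigma(\epsilon)^3=\sigma(\epsilon^3)=1$, either $\sigma(\epsilon)^2+\sigma(\epsilon)+1=0$ or $\sigma(\epsilon)=1$ with $\charK\neq3$; these are exactly cases (1) and (2).

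For the identification I would proceed case by case. In case (1), plugging $\rho(g)=-1$ and $1+\sigma(\epsilon)=-\sigma(\epsilon)^2$ into the formula for $\varphi_2(v\otimes w')$ shows that $w''$ is a nonzero multiple of $v\otimes w'+\epsilon v\otimes\epsilon w'+\epsilon^2v\otimes\epsilon^2 w'$, which is $\epsilon$-invariant; hence $X_2^{V,W}=\K w''$ is one-dimensional, and computing $gw''=\varphi_2(gv\otimes gw')$ and $zw''=\varphi_2(zv\otimes zw')$ via $G$-equivariance of $\varphi_2$ and Lemma~\ref{lem:1:X1} yields the character $\sigma_2$. In case (2) the same formula gives $(1+\epsilon+\epsilon^2)w''=0$; I would compute the $G$-action on $\{w'',\epsilon w''\}$, note that (here using $\charK\neq3$) these are linearly independent, hence form a basis of $X_2^{V,W}$ with the stated module structure, and check absolute simplicity by extending scalars to $\bar\K$: on the two-dimensional module $\epsilon$ has the two distinct eigenvalues from $x^2+x+1$ and $g$ interchanges the corresponding eigenlines (as $g\epsilon=\epsilon^2 g$), so there is no proper submodule. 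The bulk of the work is the bookkeeping of the $G$-action on $w''$ and $\epsilon w''$; the only conceptually delicate point is the ``only if'' direction, where one must pin down the degrees $g^2z$ and $\epsilon g^2z$ of the two generators correctly and invoke that absolutely simple Yetter-Drinfeld modules are supported on a single conjugacy class --- this disposes of all $\rho(g)\neq-1$ at once.
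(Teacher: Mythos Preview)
Your proof is correct and follows essentially the same approach as the paper's: both hinge on Equation~\eqref{eq:1:X2:decomp} and Lemma~\ref{lem:1:phi2}, use the fact that $g^2z$ and $\epsilon g^2z$ lie in distinct conjugacy classes to force $\rho(g)=-1$, and then split into the two cases according to whether $(3)_{\sigma(\epsilon)}=0$. The only cosmetic difference is that the paper packages the case analysis through the two auxiliary identities $(1-\epsilon)w''=(3)_{\sigma(\epsilon)}(v\otimes w'-\epsilon v\otimes\epsilon w')$ and $(1+\epsilon+\epsilon^2)w''=(1-\sigma(\epsilon)^2)^2(1+\epsilon+\epsilon^2)(v\otimes w')$, whereas you substitute the case hypotheses directly into the formula for $w''$; your explicit verification of absolute irreducibility in case~(2) via the eigenline-swapping argument for $g$ is in fact more detailed than what the paper writes out.
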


\begin{proof}
	Since $z\in Z(G)$, we conclude that $zw''=\rho(z)^2\sigma(z)w''$.
  Further, 	
	\[
		gw''=\varphi_2(gv\otimes gw')
    =-\rho(g^2z^{-1})\sigma(\epsilon)w''
	\]
  by Lemma~\ref{lem:1:X1}.
	Since $g^2z$ and $\epsilon g^2z$
	are not conjugate in $G$,
  Equation~\eqref{eq:1:X2:decomp} and
  Lemma~\ref{lem:1:phi2} imply that
  $X_2^{V,W}$ is absolutely simple if and
	only if $\rho(g)=-1$ and $\K Gw''$ is absolutely simple.
	
	Assume that $\rho (g)=-1$.
  Since $\epsilon^3=1$ in $G$, we know that $\sigma(\epsilon)^3=1$.
  Hence	$\sigma(\epsilon)^2+\sigma(\epsilon)+1=0$ or
  $\sigma(\epsilon)=1$.
	Using Equation \eqref{eq:1:phi2(v,x1)} and Lemma \ref{lem:1:X1} one directly
	computes 
	\begin{align}
		\label{eq:(1-e)x2}
		(1-\epsilon)w'' &= 
		(1+\sigma(\epsilon)+\sigma(\epsilon)^2)
    (v\otimes w'-\epsilon v\otimes\epsilon w'),
	\end{align}
	and similarly
	\begin{equation}
		\label{eq:(1+e+e2)x2}
			(1+\epsilon+\epsilon^2)w'' = (1-\sigma(\epsilon)^2)^2
      (1+\epsilon +\epsilon ^2)(v\otimes w').
	\end{equation}

	Suppose first that $\sigma(\epsilon)^2+\sigma(\epsilon)+1=0$. Then
	Equation \eqref{eq:(1-e)x2} becomes $(1-\epsilon)w''=0$ and
	hence the claim follows. 

	Suppose now that $\sigma(\epsilon)=1$ and $\charK\ne3$. Then
  $\{w'',\epsilon w''\}$ is linearly independent and
  $(1+\epsilon+\epsilon^2)w''=0$ by
	Equation \eqref{eq:(1+e+e2)x2}. Hence
	the lemma follows.
\end{proof}

\begin{lem}
	\label{lem:1:X3}
	Assume that $\rho(z)^2\sigma(\epsilon g^2)=1$, $\rho(g)=-1$.  Then
	$X_3^{V,W}=0$.
\end{lem}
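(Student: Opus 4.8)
The plan is to use the recursive description from Lemma~\ref{lem:X_n}: $X_3^{V,W}=\varphi_3(V\otimes X_2^{V,W})$, where under the hypotheses $\rho(z)^2\sigma(\epsilon g^2)=1$ and $\rho(g)=-1$ we already know from Lemma~\ref{lem:1:X2} that $X_2^{V,W}$ is absolutely simple, equal to $\K G w''_1$ with $w''_1=\varphi_2(v\otimes w'_1)$, and that it lives in degree $g^2z$ (or rather its conjugates $g^2z$, $g\epsilon g^2z$, $g\epsilon^2 g^2z$). Since $(g,g^2z)$ is (up to the diagonal $G$-action) essentially the only orbit-representative we need — the degrees appearing in $V$ are $g,g\epsilon,g\epsilon^2$, all conjugate, and the degrees in $X_2^{V,W}$ are $g^2z,g\epsilon g^2z,g\epsilon^2g^2z$, again all conjugate — it suffices to show that $\varphi_3(u\otimes w'')=0$ for $u$ ranging over a set of orbit representatives of the diagonal $G$-action on $g^G\times(g^2z)^G$. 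First I would identify these representatives; as in Equation~\eqref{eq:1:X2:decomp}, $g^G\times(g^2z)^G$ decomposes into a small number of orbits (here, since $g^2z$ is central-times-$g^2$, essentially the orbits are parametrized by the relative position of the two $\Gamma_3$-indices), so one reduces to checking $\varphi_3(v\otimes w'')=0$ and $\varphi_3(\epsilon^2v\otimes w'')=0$, say.

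The computation itself runs through the recursion $\varphi_3=\id-c_{X_2^{V,W},V}c_{V,X_2^{V,W}}+(\id\otimes\varphi_2)c_{1,2}$. To evaluate this I would first record the action of $c_{V,X_2^{V,W}}$ and $c_{X_2^{V,W},V}$ using that $V$ has degrees in $g^G$ and $X_2^{V,W}$ has degrees in $(g^2z)^G$, together with the scalar $gw''_1=-\rho(g^2z^{-1})\sigma(\epsilon)w''_1$ from the proof of Lemma~\ref{lem:1:X2} and, in the case $\sigma(\epsilon)^2+\sigma(\epsilon)+1=0$, the relation $(1-\epsilon)w''=0$, i.e. $\epsilon w''=w''$. (In the other case, $\sigma(\epsilon)=1$, one instead uses $(1+\epsilon+\epsilon^2)w''=0$ together with $gw''=-\rho(z)^{-1}w''$.) Next I would need $\varphi_2(v\otimes w')$, $\varphi_2(\epsilon v\otimes w')$, $\varphi_2(\epsilon^2 v\otimes w')$ and their $g$-translates, all of which are given explicitly by Equations~\eqref{eq:1:phi2(e2v,x1)} and~\eqref{eq:1:phi2(v,x1)} specialized to $\rho(g)=-1$ — note that then $\varphi_2(\epsilon^2v\otimes w')=0$, which considerably shortens the $(\id\otimes\varphi_2)c_{1,2}$ term. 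Plugging everything in, the three summands of $\varphi_3$ should cancel identically, giving $\varphi_3(V\otimes X_2^{V,W})=0$ and hence $X_3^{V,W}=0$.

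The main obstacle I anticipate is purely bookkeeping: $X_2^{V,W}\subseteq V^{\otimes2}\otimes W$, so $V\otimes X_2^{V,W}\subseteq V^{\otimes3}\otimes W$ is spanned by up to $27$ tensors, and one must keep the $\epsilon$-powers and the accumulated $\rho,\sigma$-scalars straight while applying $c_{1,2}$, which permutes the first two $V$-factors and picks up a braiding scalar depending on their degrees. The trick that should make this tractable is to work not with raw tensors but with the basis $\{w'',\epsilon w'',\epsilon^2w''\}$ of $X_2^{V,W}$ produced in Lemma~\ref{lem:1:X2}, to exploit the strong constraint $\rho(g)=-1$ (which kills one of the two $\varphi_2$-values outright), and to use the degree relation $g\cdot g^2z=g^3z$ with $g^2\in Z(G)$ to collapse many of the braiding scalars. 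A secondary point to handle carefully is that Lemma~\ref{lem:1:X2} splits into two cases according to whether $\sigma(\epsilon)$ is a primitive cube root of unity or equals $1$; both cases must be treated, but in each the relevant relation on $w''$ ($\epsilon w''=w''$, resp. $(1+\epsilon+\epsilon^2)w''=0$) reduces the verification to a finite scalar identity that one checks directly.
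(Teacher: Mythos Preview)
Your overall strategy is the same as the paper's: reduce $X_3^{V,W}=\varphi_3(V\otimes X_2^{V,W})$ to a few explicit evaluations of $\varphi_3$ via $G$-equivariance, feed in the recursion $\varphi_3=\id-c_{X_2,V}c_{V,X_2}+(\id\otimes\varphi_2)c_{1,2}$, exploit that $\rho(g)=-1$ kills $\varphi_2(\epsilon^2v\otimes w')$ (and its $G$-translates), and finish with the case split $\epsilon w''=w''$ versus $(1+\epsilon+\epsilon^2)w''=0$ from Lemma~\ref{lem:1:X2}. This works.

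One factual slip to correct: $g^2$ and $z$ are both central in $G$, so $g^2z\in Z(G)$ and $\supp X_2^{V,W}=\{g^2z\}$ is a single point, not the three-element set you describe. Consequently $g^G\times(g^2z)^G$ has exactly one diagonal $G$-orbit, not two. The reason two checks are still needed in case~(2b) is different: there $X_2^{V,W}$ is two-dimensional over the centralizer, so you need generators of $V\otimes X_2^{V,W}$ as a $\K G$-module, not just support-orbit representatives. Your chosen pair $\{v\otimes w'',\ \epsilon^2v\otimes w''\}$ happens to do this (the two lie in distinct $\langle\epsilon\rangle$-orbits that together span), so the reduction is valid, but for the right reason.

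The paper streamlines this one step further: it picks $\epsilon v\otimes w''$ and observes that $\epsilon^2g\in G$ fixes $\K\epsilon v$ while sending $w''$ to a multiple of $\epsilon^2w''$, so the single vector $\epsilon v\otimes w''$ already $G$-generates $\epsilon v\otimes X_2^{V,W}$ in both cases. After using $\varphi_2(\epsilon v\otimes w')=\varphi_2(\epsilon v\otimes\epsilon^2w')=0$ (obtained from $\varphi_2(\epsilon^2v\otimes w')=0$ by acting with $g$ and $\epsilon^2$), the computation collapses to
\[
\varphi_3(\epsilon v\otimes w'')=\epsilon v\otimes\bigl(w''+\sigma(\epsilon)\epsilon^2w''+\sigma(\epsilon)^2\epsilon w''\bigr),
\]
which vanishes immediately in each case. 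Your two-representative version reaches the same conclusion with twice the bookkeeping.
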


\begin{proof}
  Lemma~\ref{lem:1:X2} implies that
  $X_3^{V,W}=\K G\varphi_3(\epsilon v\otimes X_2^{V,W})$. Now observe that
  $\epsilon ^2g(\epsilon v\otimes w'')\in \K \epsilon v\otimes \epsilon
  ^2w''$, and hence it is
	enough to prove that $\varphi _3(\epsilon v\otimes w'')=0$.

  Since $\rho(g)=-1$,
	Equation \eqref{eq:1:phi2(e2v,x1)} implies that
  $\varphi_2(\epsilon^2v\otimes w')=0$.
  Act on this equation by $g$ and by $\epsilon^2$ and use Lemma~\ref{lem:1:X1}
  to conclude	that $\varphi_2(\epsilon v\otimes w')=0$ and
  $\varphi_2(\epsilon v\otimes\epsilon ^2w')=0$. A direct calculation using
  Lemmas~\ref{lem:1:phi2} and \ref{lem:1:X2} then shows that
	\begin{align*}
			\varphi_3(\epsilon v\otimes w'')=&\;\epsilon v\otimes w''
      +\sigma (\epsilon )\epsilon v\otimes \epsilon ^2 w''
			+(1+\sigma(\epsilon)) g\epsilon v\otimes\varphi_2(\epsilon v\otimes w')\\
			&\quad 
      -\sigma(\epsilon)^2g\epsilon^2v\otimes\varphi_2(\epsilon v\otimes\epsilon w')
			-\sigma(\epsilon)^2gv\otimes\varphi_2(\epsilon v\otimes\epsilon^2
      w')\\
      =&\;\epsilon v\otimes (w''+\sigma (\epsilon )\epsilon ^2w''
      +\sigma (\epsilon )^2\epsilon w'').
	\end{align*}
  By Lemma~\ref{lem:1:X2}, if $\sigma (\epsilon )^2+\sigma (\epsilon )+1=0$
  then $\epsilon w''=w''$, and otherwise $\sigma (\epsilon )=1$ and
  $(1+\epsilon +\epsilon ^2)w''=0$.
	Hence $\varphi_3(\epsilon v\otimes w'')=0$. 
\end{proof}

Now we compute the modules $(\ad W)^n(V)$ for $n\in\N$. For $n\in\N$ let
$\varphi_n=\varphi_n^{W,V}$.

Let $X_1^{W,V}=\varphi_1(W\otimes V)$.  By \cite[Lemma\,1.7]{MR2732989},
$X_1^{W,V}=\K G\varphi_1(w\otimes\epsilon v)$.
Let $v_1'=\varphi_1(w\otimes\epsilon v)$.  A direct calculation yields 
\begin{equation} \label{eq:v1'}
	v_1'=w\otimes\epsilon v-\rho(z)\sigma(\epsilon ^2) gw\otimes\epsilon^2 v.
\end{equation}
Hence $v_1'\in(W\otimes V)_{gz}$ is non-zero. 

\begin{lem}
	\label{lem:1:Y1}
	Assume that $\rho(z)^2\sigma(\epsilon g^2)=1$. 
	Then $X_1^{W,V}$ is an absolutely simple Yetter-Drinfeld module, and
  $X_1^{W,V}\simeq M(gz,\rho_1)$, where
	$\rho_1=\sigma_1$. Moreover, $v_1'$, $\epsilon v_1'$, $\epsilon^2 v_1'$
  is a basis of $X_1^{W,V}$, and the degrees of these basis vectors are $gz$,
	$g\epsilon z$, and $g\epsilon ^2z$, respectively. 
\end{lem}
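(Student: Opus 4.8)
The plan is to mirror the proof of Lemma~\ref{lem:1:X1}, with the roles of $V$ and $W$ exchanged. By~\eqref{eq:v1'} the element $v_1'=\varphi_1(w\otimes\epsilon v)$ is a nonzero vector of $(W\otimes V)_{gz}$, and $X_1^{W,V}=\K Gv_1'$ by \cite[Lemma~1.7]{MR2732989}. As in Lemma~\ref{lem:1:X1}, the relevant centralizer $G^{gz}=\langle g,z\rangle$ is abelian, since $gz=\gamma\zeta$ lies in a conjugacy class of size three (see~\eqref{eq:classes}) and $\langle g\rangle$ together with the central elements $z$ and $g^2$ generates this centralizer. Absolute simplicity of $V$ and $W$ together with the abelianness of $G^{gz}$ then reduces the problem to deciding whether $gv_1'\in\K v_1'$: the $gz$-homogeneous component of $X_1^{W,V}$ equals $\K G^{gz}v_1'=\K v_1'+\K gv_1'$, and $X_1^{W,V}$ is absolutely simple precisely when this component is one-dimensional, in which case $X_1^{W,V}\simeq M(gz,\rho_1)$.

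Next I would compute the action of $g$ and of $z$ on $v_1'$ from the tables in Remarks~\ref{rem:1:G_on_V} and~\ref{rem:1:G_on_W}. Applying $g$ diagonally to~\eqref{eq:v1'} and using $g\cdot\epsilon v=\rho(g)\epsilon^2v$, $g\cdot\epsilon^2v=\rho(g)\epsilon v$, $g\cdot gw=\sigma(g^2)w$, one obtains
\[
gv_1'=\rho(g)\bigl(gw\otimes\epsilon^2v-\rho(z)\sigma(\epsilon^2g^2)\,w\otimes\epsilon v\bigr).
\]
Comparing this with $v_1'=w\otimes\epsilon v-\rho(z)\sigma(\epsilon^2)\,gw\otimes\epsilon^2v$ shows that $gv_1'\in\K v_1'$ if and only if $\rho(z)^2\sigma(\epsilon g^2)=1$ (here one uses $\sigma(\epsilon)^3=1$), and in that case $gv_1'=-\rho(gz^{-1})\sigma(\epsilon)\,v_1'=\sigma_1(g)v_1'$. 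Since $z$ is central, it scales $v_1'$ by $\rho(z)\sigma(z)=\sigma_1(z)$. Hence, under the hypothesis $\rho(z)^2\sigma(\epsilon g^2)=1$, the line $\K v_1'$ carries the character $\sigma_1=\rho_1$ of $G^{gz}$, and therefore $X_1^{W,V}\simeq M(gz,\rho_1)$.

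Finally, for the basis and degree claims I would use the relation $\gamma\nu=\nu^2\gamma$, which gives $\epsilon g\epsilon^{-1}=g\epsilon$, hence $\epsilon(gz)\epsilon^{-1}=g\epsilon z$ and $\epsilon^2(gz)\epsilon^{-2}=g\epsilon^2z$. Thus $v_1'$, $\epsilon v_1'$, $\epsilon^2v_1'$ are homogeneous of the three distinct degrees $gz$, $g\epsilon z$, $g\epsilon^2z$, so they are linearly independent; since $\dim M(gz,\rho_1)=[G:G^{gz}]=3$, they form a basis. I expect no genuine obstacle here: the only point needing a word of justification is the reduction of absolute simplicity of $X_1^{W,V}$ to the single scalar condition $gv_1'\in\K v_1'$, which is exactly the mechanism already used in the proof of Lemma~\ref{lem:1:X1} and can be invoked from there; everything else is a short direct computation with the two action tables.
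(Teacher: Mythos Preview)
Your argument is correct, but the paper takes a much shorter route. Rather than redoing the computation, the paper observes that the braiding $c_{V,W}:V\otimes W\to W\otimes V$ is an isomorphism of Yetter-Drinfeld modules which intertwines $\varphi_1^{V,W}$ and $\varphi_1^{W,V}$ (indeed, $c_{V,W}(\id-c_{W,V}c_{V,W})=(\id-c_{V,W}c_{W,V})c_{V,W}$), so $c_{V,W}$ restricts to an isomorphism $X_1^{V,W}\simeq X_1^{W,V}$. Every claim about $X_1^{W,V}$ then follows immediately from the corresponding statement for $X_1^{V,W}$ in Lemma~\ref{lem:1:X1}. Your direct computation is perfectly valid and has the minor advantage of being self-contained and explicitly producing the basis $v_1',\epsilon v_1',\epsilon^2 v_1'$; the paper's approach buys brevity and makes transparent that $\rho_1=\sigma_1$ without any calculation.
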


\begin{proof}
  Since $X_1^{V,W}\simeq X_1^{W,V}$ via $c_{V,W}$, the claim follows from
  Lemma~\ref{lem:1:X1}.
\end{proof}

\begin{lem}
	\label{lem:1:Y2}
	Assume that $\rho(z)^2\sigma(\epsilon g^2)=1$ and $\rho(g)=-1$.
  Then $X_2^{W,V}=0$
  if and only if $\sigma(\epsilon z)=-1$. Moreover, $X_2^{W,V}$ is
	absolutely simple if and only if $\sigma(\epsilon z)\ne-1$ and
	$\sigma(\epsilon z^2)=1$.  In this case $X_2^{W,V}\simeq M(gz^2,\rho_2)$,
	where $\rho_2$ is the character of $G^g$ given by 
	\begin{align*}
    \rho_2(g)=-\rho(z)^{-2}\sigma(z)^{-1},\quad
		\rho_2(z)=\rho(z)\sigma(z)^2.
	\end{align*}
\end{lem}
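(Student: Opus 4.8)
The argument parallels the computation of $X_2^{V,W}$ in Lemmas~\ref{lem:1:phi2}--\ref{lem:1:X3}, carried out now for $(\ad W)^2(V)$. Write $X_1=X_1^{W,V}$ and $\varphi_n=\varphi_n^{W,V}$, and recall from Lemma~\ref{lem:1:Y1} that $X_1\simeq M(gz,\sigma_1)$ with basis $v_1',\epsilon v_1',\epsilon^2 v_1'$ of degrees $gz$, $g\epsilon z$, $g\epsilon^2 z$. Using $\epsilon g\epsilon^{-1}=g\epsilon$ and $g\epsilon g^{-1}=\epsilon^2$ one checks that the diagonal $G$-action on $\supp W\times\supp X_1=\{\epsilon z,\epsilon^2 z\}\times\{gz,g\epsilon z,g\epsilon^2 z\}$ is transitive, so \cite[Lemma~1.7]{MR2732989} gives $X_2^{W,V}=\K G\,\varphi_2(w\otimes\epsilon v_1')$. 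Put $v_2:=\varphi_2(w\otimes\epsilon v_1')$; since $\epsilon g\epsilon=g$ and $z$ is central we have $\epsilon z\cdot g\epsilon z=gz^2$, so $v_2$ is homogeneous of degree $gz^2$, and $z$ acts on it by $\sigma(z)\sigma_1(z)=\rho(z)\sigma(z)^2$, which will be the value $\rho_2(z)$.

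To compute $v_2$ I would first prove the analogue of Lemma~\ref{lem:1:phi1}: from \eqref{eq:v1'}, Lemma~\ref{lem:1:Y1} and $G$-equivariance of $\varphi_1$, one writes $\varphi_1(w\otimes v)$, $\varphi_1(gw\otimes v)$ and $\varphi_1(gw\otimes\epsilon^j v)$ as scalar multiples of $v_1'$ and its $\epsilon$-translates. Together with the braiding of $W$ with $X_1\simeq M(gz,\sigma_1)$, read off from Remarks~\ref{rem:1:G_on_V}, \ref{rem:1:G_on_W} and Lemma~\ref{lem:1:Y1}, the recursion $\varphi_2=\id-c_{X_1,W}c_{W,X_1}+(\id\otimes\varphi_1)c_{1,2}$ then yields a closed formula for $v_2$ in the standard basis of $W^{\otimes 2}\otimes V$. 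I expect it to come out in the form $v_2=(1+\sigma(\epsilon z))u$ with $u\neq0$, whence $X_2^{W,V}=\K G v_2=0$ if and only if $\sigma(\epsilon z)=-1$.

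Now suppose $\sigma(\epsilon z)\neq-1$, so $v_2\neq0$. Since $\supp X_2^{W,V}\subseteq(gz^2)^G$, a single conjugacy class, and $G^{gz^2}=G^g=\langle g,z\rangle$ is abelian, $X_2^{W,V}$ is absolutely simple precisely when its degree-$gz^2$ component is one-dimensional, i.e.\ when $gv_2\in\K v_2$, and otherwise $\dim X_2^{W,V}=6$. Computing $gv_2=(1+\sigma(\epsilon z))\,gu$ directly from the $G$-action on $W^{\otimes 2}\otimes V$, I would check that $gu\in\K u$ exactly when $\sigma(\epsilon z^2)=1$. In that case $X_2^{W,V}=\K G v_2\simeq M(gz^2,\rho_2)$, where $\rho_2$ is the character of $\langle g,z\rangle$ with $\rho_2(z)=\rho(z)\sigma(z)^2$ as above and $\rho_2(g)$ read off from $gv_2=\rho_2(g)v_2$; using $\rho(g)=-1$, $\rho(z)^2\sigma(\epsilon g^2)=1$, $\sigma(\epsilon)^3=1$ and $\sigma(\epsilon z^2)=1$, this should simplify to $\rho_2(g)=-\rho(z)^{-2}\sigma(z)^{-1}$.

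The main obstacle is the explicit evaluation of the three-term recursion for $\varphi_2$ in the middle step: it is routine but heavy in scalar bookkeeping, and one needs some care to factor out $1+\sigma(\epsilon z)$ cleanly and to see that the collapse $gu\in\K u$ is controlled by exactly the relation $\sigma(\epsilon z^2)=1$ rather than by some other identity among $\sigma(\epsilon)$ and $\sigma(z)$.
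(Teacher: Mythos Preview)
Your outline is correct and matches the paper's proof essentially step for step: the paper also reduces to a single generator via transitivity on $(\epsilon z)^G\times(gz)^G$, computes the needed values of $\varphi_1$ by acting with $\epsilon$ and $\epsilon^2$ on \eqref{eq:v1'}, obtains $\varphi_2(w\otimes\epsilon v_1')=(1+\sigma(\epsilon z))\bigl(w\otimes\epsilon v_1'-\rho(z)\sigma(\epsilon z)\,gw\otimes\epsilon^2 v_1'\bigr)$, and then checks that $g$ stabilizes the bracketed vector exactly when $\sigma(\epsilon z^2)=1$. The only practical difference is that the paper carries out the computation in $W\otimes X_1^{W,V}$ using the basis $v_1',\epsilon v_1',\epsilon^2 v_1'$ rather than descending to $W^{\otimes2}\otimes V$; this keeps the bookkeeping lighter and makes the factor $1+\sigma(\epsilon z)$ and the condition $\sigma(\epsilon z^2)=1$ fall out with fewer terms.
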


\begin{proof}
	First we act with $\epsilon$ and $\epsilon^2$ on
	$v_1'=\varphi_1(w\otimes\epsilon v)$ to obtain the following formulas:
	\begin{align}
		\label{eq:1:phi1(w,v),phi1(w,e2v)}
		\varphi_1(w\otimes\epsilon^2v) = \sigma(\epsilon)^2\epsilon v_1', \quad
		\varphi_1(w\otimes v) = \sigma(\epsilon)\epsilon^2 v_1'.
	\end{align}
	Since $G$ acts transitively on $(\epsilon z)^G\times (gz)^G$ via the diagonal
  action, we obtain from
  \cite[Lemma\,1.7]{MR2732989} that $X_2^{W,V}=\K G\varphi _2(w\otimes \epsilon
  v_1')$. We compute
	\begin{multline*}
		\varphi_2(w\otimes\epsilon v_1')
		=w\otimes\epsilon v_1'-\rho(z)\sigma(\epsilon ^2z^2)gw\otimes\epsilon^2v_1'\\
		+\sigma(\epsilon ^2z) w\otimes\varphi_1(w\otimes\epsilon^2v)
    -\rho(z)\sigma(z)gw\otimes\varphi_1(w\otimes v).
	\end{multline*}
	Equations \eqref{eq:1:phi1(w,v),phi1(w,e2v)} imply that
	\begin{align}
		\varphi_2(w\otimes\epsilon v_1')=(1+\sigma(\epsilon z))
    (w\otimes\epsilon v_1'-\rho(z)\sigma(\epsilon z)gw\otimes\epsilon^2v_1').
	\end{align}
	Hence $\varphi_2(w\otimes\epsilon v_1')=0$ if and only if
  $\sigma(\epsilon z)=-1$. 
	Let 
	\[
		v_1''= w\otimes\epsilon v_1'-\rho(z)\sigma(\epsilon z)gw\otimes\epsilon^2v_1'. 
	\]
	Then $v_1''\in(W\otimes X_1^{W,V})_{gz^2}$ is non-zero.
  Since $\rho(z)^2\sigma(\epsilon g^2)=1$, 
	\begin{align*}
		gv_1'' &= gw\otimes\epsilon^2gv_1'-\rho(z)\sigma(\epsilon z)
    \sigma(g)^2w\otimes\epsilon gv_1'\\
		&=gw\otimes\rho_1(g)\epsilon^2 v_1'
    -\rho(z)^{-1}\sigma(z)\rho_1(g)w\otimes\epsilon v_1'\\
		&=-\rho_1(g)\rho(z)^{-1}\sigma(z)(w\otimes\epsilon v_1'
    -\rho(z)\sigma(z)^{-1}gw\otimes\epsilon^2 v_1').
	\end{align*}
	Thus $X_2^{W,V}$ is absolutely simple if and only if
	$\sigma(\epsilon z)\ne1$ and $gv_1''\in \K v_1''$. This
	is equivalent to $\sigma(\epsilon z)\ne1$ and
  $\sigma(\epsilon z^2)=1$. 
	Finally, the equation $zv_1'=\rho(z)\sigma(z)^2v_1'$
  follows from $v_1'\in W\otimes W\otimes V$ and $z\in Z(G)$. 
\end{proof}

\begin{lem}
	\label{lem:1:Yn_auxiliar}
	Assume that $\rho(z)^2\sigma(\epsilon g^2)=1$, $\rho(g)=-1$,
	$\sigma(\epsilon z)\ne-1$ and $\sigma(\epsilon z^2)=1$.  
  Define inductively $y_0=v$ and 
	\begin{align*}
    y_n=w\otimes\epsilon y_{n-1}-\rho(z)\sigma(z^{3n-1})gw\otimes\epsilon^2 y_{n-1}
	\end{align*}
	for all $n\geq1$. 
  Then $y_{n}\in(W^{\otimes n}\otimes V)_{gz^n}$ and 
	\begin{align}
		\label{eq:1:phin}
		\varphi_{n}(w\otimes\epsilon y_{n-1})=
    (1+\sigma(z)^{-1}\cdots+\sigma(z)^{-n+1})y_{n}
	\end{align}
	for all $n\geq 1$. 
\end{lem}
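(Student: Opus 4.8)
The plan is to prove both assertions together by induction on $n$. For $n=1$ one has $y_1=w\otimes\epsilon v-\rho(z)\sigma(z^2)gw\otimes\epsilon^2 v$, and I claim this equals $v_1'=\varphi_1(w\otimes\epsilon v)$ from~\eqref{eq:v1'}: since $\sigma(\epsilon z^2)=1$ gives $\sigma(\epsilon)=\sigma(z)^{-2}$, and then $\sigma(z)^{-6}=\sigma(\epsilon)^3=1$, we get $\sigma(z)^2=\sigma(z)^{-4}=\sigma(\epsilon)^2=\sigma(\epsilon^2)$, so $y_1=v_1'$. By the paragraph preceding Lemma~\ref{lem:1:Y1} we have $v_1'\in(W\otimes V)_{gz}$, and $\varphi_1(w\otimes\epsilon y_0)=y_1$ is the claimed identity (the coefficient being $1$ for $n=1$).

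For the degree statement in the inductive step I would use $z\in Z(G)$ together with the relations $\gamma\nu=\nu^2\gamma$ and $\nu^3=1$ of $\Gamma_3$, which yield $\epsilon g\epsilon=g$; from $\deg y_{n-1}=gz^{n-1}$ one then obtains $\deg(\epsilon y_{n-1})=g\epsilon z^{n-1}$ and $\deg(\epsilon^2 y_{n-1})=g\epsilon^2 z^{n-1}$, and multiplying by $\deg w=\epsilon z$, resp.\ $\deg(gw)=\epsilon^2 z$, shows that both summands of $y_n$ have degree $gz^n$, so $y_n\in(W^{\otimes n}\otimes V)_{gz^n}$.

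The core of the argument is the inductive computation of $\varphi_n(w\otimes\epsilon y_{n-1})$ through the recursion $\varphi_n^{W,V}=\id-c_{W^{\otimes(n-1)}\otimes V,W}\,c_{W,W^{\otimes(n-1)}\otimes V}+(\id\otimes\varphi_{n-1}^{W,V})c_{1,2}$ of Lemma~\ref{lem:X_n}. The identity summand gives $w\otimes\epsilon y_{n-1}$. The double-braiding summand, evaluated using that $\epsilon y_{n-1}$ is homogeneous of degree $g\epsilon z^{n-1}$ and the $G$-actions on $V$ and $W$, is a scalar multiple of $gw\otimes\epsilon^2 y_{n-1}$. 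For the last summand I would substitute the two-term expression for $\epsilon y_{n-1}$, so that $c_{1,2}$ braids $w$ past either $w$ or $gw$, after which $\varphi_{n-1}$ is applied to $w\otimes\epsilon^2 y_{n-2}$ and $w\otimes y_{n-2}$; these are read off from the induction hypothesis $\varphi_{n-1}(w\otimes\epsilon y_{n-2})=(1+\sigma(z)^{-1}+\cdots+\sigma(z)^{-n+2})y_{n-1}$ by the $G$-equivariance trick used in Lemma~\ref{lem:1:phi1}, acting by $\epsilon$ and by $\epsilon^2$ to identify $\varphi_{n-1}(w\otimes\epsilon^2 y_{n-2})$ and $\varphi_{n-1}(w\otimes y_{n-2})$ with multiples of $\epsilon y_{n-1}$ and $\epsilon^2 y_{n-1}$. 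Collecting, the coefficient of $w\otimes\epsilon y_{n-1}$ comes out as $1+\sigma(z)^{-1}(1+\cdots+\sigma(z)^{-n+2})=1+\sigma(z)^{-1}+\cdots+\sigma(z)^{-n+1}$, and the coefficient of $gw\otimes\epsilon^2 y_{n-1}$ matches $-\rho(z)\sigma(z^{3n-1})$ times the same quantity after using $\sigma(z)^6=1$ and the telescoping identity $(1+\cdots+\sigma(z)^{-n+1})-(1+\cdots+\sigma(z)^{-n+2})=\sigma(z)^{-n+1}$; this yields exactly $\varphi_n(w\otimes\epsilon y_{n-1})=(1+\sigma(z)^{-1}+\cdots+\sigma(z)^{-n+1})y_n$.

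The main obstacle is the scalar bookkeeping in the inductive step: one must track the values of $\rho$ and $\sigma$ at products of $g$, $\epsilon$, $z$ and powers of $\sigma(z)$, invoking repeatedly $\rho(z)^2\sigma(\epsilon g^2)=1$ and $\sigma(\epsilon z^2)=1$ (hence $\sigma(\epsilon)=\sigma(z)^{-2}$ and $\sigma(z)^6=1$), while keeping straight the two contributions produced by the two-term shape of $y_{n-1}$. The remaining hypotheses $\rho(g)=-1$ and $\sigma(\epsilon z)\ne-1$ serve only to place us in the regime where $X_1^{W,V}$ and $X_2^{W,V}$ have the form established earlier, and do not enter the final identity directly.
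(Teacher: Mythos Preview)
Your approach is essentially the same as the paper's: both proceed by induction on $n$, verify the base case via~\eqref{eq:v1'} together with $\sigma(\epsilon)=\sigma(z)^{-2}$, and in the inductive step expand $\varphi_{n}$ via the recursion of Lemma~\ref{lem:X_n}, then evaluate $\varphi_{n-1}(w\otimes\epsilon^{2}y_{n-2})$ and $\varphi_{n-1}(w\otimes y_{n-2})$ by acting with $\epsilon$ and $\epsilon^{2}$ on the inductive hypothesis before collecting the two coefficients using $\sigma(\epsilon z^{2})=1$. Your observation that $\rho(g)=-1$ and $\sigma(\epsilon z)\ne-1$ are not used in the computation itself is also correct.
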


\begin{proof}
	We proceed by induction on $n$. For $n=1$ the claim holds by
  Equation~\eqref{eq:v1'} and since $\sigma (\epsilon )^2=\sigma (\epsilon
  )^{-1}=\sigma (z^2)$. It is also clear that $y_n\in (W^{\otimes n}\otimes
  V)_{gz^n}$ for all $n\ge 0$.
  
  Assume
  that Equation~\eqref{eq:1:phin} holds for some $n\geq1$.
  Apply $\epsilon$ and $\epsilon^2$ to
	Equation \eqref{eq:1:phin} to obtain 
	\begin{align}
		\label{eq:1:phin(w,e2yn-1)}\varphi_n(w\otimes\epsilon^2 y_{n-1})=&\;
    (1+\sigma(z)^{-1}+\cdots+\sigma(z)^{-n+1})\sigma(\epsilon)^2\epsilon y_n,\\
		\label{eq:1:phin(w,eyn-1)}\varphi_n(w\otimes y_{n-1})=&\;
    (1+\sigma(z)^{-1}+\cdots+\sigma(z)^{-n+1})\sigma(\epsilon)\epsilon^2 y_n.
	\end{align}
	Since $\sigma(\epsilon z^2)=1$, 
	\begin{multline*}
		\varphi_{n+1}(w\otimes\epsilon y_n) = w\otimes\epsilon
    y_n-\rho(z)\sigma(z)^{2n}\sigma(\epsilon)^2 gw\otimes\epsilon^2 y_n\\
    +\sigma(\epsilon ^2z) w\otimes\varphi_n(w\otimes\epsilon^2y_{n-1})
    -\rho(z)\sigma(z^{3n-1})\sigma(\epsilon z)gw\otimes\varphi_n(w\otimes y_{n-1}).
	\end{multline*}
	Using Equations \eqref{eq:1:phin(w,e2yn-1)} and \eqref{eq:1:phin(w,eyn-1)} and
	$\sigma(\epsilon z^2)=1$ one obtains 
	\[
		\varphi_{n+1}(w\otimes\epsilon y_n)=(1+\sigma(z)^{-1}+\cdots+\sigma(z)^{-n})y_{n+1},
	\]
	as desired.
\end{proof}

\begin{lem}
	\label{lem:1:Yn}
	Assume that $\rho(z)^2\sigma(\epsilon g^2)=1$, $\rho(g)=-1$,
	$\sigma(\epsilon z)\ne-1$, and $\sigma(\epsilon z^2)=1$.
	Then $X_n^{W,V}\simeq M(gz^n,\rho_n)$ for all $n\ge 1$,
  where $\rho_n$ is the character of $G^g$ given by
	\begin{align*}
	  \rho_n(g)=(-1)^{n+1}\rho(z)^{-n}\sigma(z)^{(3n+5)n/2},\quad
	  \rho_n(z)=\rho(z)\sigma(z)^n.
	\end{align*}
	Moreover, $X^{W,V}_n=0$ if and only if $(n)^!_{\sigma (z)}=0$.
\end{lem}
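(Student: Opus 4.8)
The plan is to argue by induction on $n$. The base case $n=1$ is Lemma~\ref{lem:1:Y1}; note that $(1)^!_{\sigma(z)}=1\ne 0$, consistent with $X_1^{W,V}\ne 0$, and that $X_1^{W,V}=\K G\,y_1$ since $y_1=v_1'$ (the $n=1$ case of Lemma~\ref{lem:1:Yn_auxiliar}). So assume the assertion for some $n\ge 1$, in the slightly strengthened form that $X_n^{W,V}=\K G\,y_n\simeq M(gz^n,\rho_n)$ whenever it is non-zero. If $(n)^!_{\sigma(z)}=0$, then $X_n^{W,V}=0$ by the inductive hypothesis, hence $X_{n+1}^{W,V}=\varphi_{n+1}(W\otimes X_n^{W,V})=0$, and $(n+1)^!_{\sigma(z)}=(n)^!_{\sigma(z)}(n+1)_{\sigma(z)}=0$; so both claims hold. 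Otherwise $X_n^{W,V}=\K G\,y_n$ with $y_n$ of degree $gz^n$, and I would first observe that the diagonal $G$-action on $(\epsilon z)^G\times(gz^n)^G$ is transitive (the same computation as for $(\epsilon z)^G\times(gz)^G$ in the proof of Lemma~\ref{lem:1:Y2}, using $g\epsilon g^{-1}=\epsilon^{-1}$, $z\in Z(G)$ and $\epsilon^3=1\ne\epsilon$), so \cite[Lemma\,1.7]{MR2732989} yields $X_{n+1}^{W,V}=\K G\,\varphi_{n+1}(w\otimes\epsilon y_n)$.

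At this stage Lemma~\ref{lem:1:Yn_auxiliar} carries the bulk of the work: it identifies $\varphi_{n+1}(w\otimes\epsilon y_n)$ with $(1+\sigma(z)^{-1}+\cdots+\sigma(z)^{-n})\,y_{n+1}=\sigma(z)^{-n}(n+1)_{\sigma(z)}\,y_{n+1}$, and $y_{n+1}\ne 0$ since $w$ and $gw$ are linearly independent in $W$. Hence $X_{n+1}^{W,V}=\K G\,y_{n+1}$ if $(n+1)_{\sigma(z)}\ne 0$, and $X_{n+1}^{W,V}=0$ otherwise; in both cases the equivalence $X_{n+1}^{W,V}=0\iff(n+1)^!_{\sigma(z)}=0$ follows from $(n+1)^!_{\sigma(z)}=(n)^!_{\sigma(z)}(n+1)_{\sigma(z)}$, which settles the ``moreover'' part.

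It then remains to identify $\K G\,y_{n+1}$ when it is non-zero. As $z$ is central, $z\,y_{n+1}=\rho(z)\sigma(z)^{n+1}\,y_{n+1}=\rho_{n+1}(z)\,y_{n+1}$. For the $g$-action I would expand $g\,y_{n+1}$ from the recursion defining $y_{n+1}$, using $g\cdot gw=\sigma(g^2)w$ (Remark~\ref{rem:1:G_on_W}) and the action of $g$ on $M(gz^n,\rho_n)$; this gives $g\,y_{n+1}=-\rho_n(g)\rho(z)\sigma(z)^{3n+2}\sigma(g^2)\,y_{n+1}$. Substituting $\sigma(g^2)=\rho(z)^{-2}\sigma(\epsilon)^{-1}$ (from $\rho(z)^2\sigma(\epsilon g^2)=1$) and $\sigma(\epsilon)^{-1}=\sigma(z)^2$ (from $\sigma(\epsilon z^2)=1$) turns this into $g\,y_{n+1}=-\rho_n(g)\rho(z)^{-1}\sigma(z)^{3n+4}\,y_{n+1}$, which equals $\rho_{n+1}(g)\,y_{n+1}$ by the identity $(3n+5)n/2+3n+4=(3(n+1)+5)(n+1)/2$. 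Thus $G^{gz^{n+1}}=G^g$ acts on $\K y_{n+1}$ through $\rho_{n+1}$; since $V$ and $W$ are absolutely simple, it follows (as in the proof of Lemma~\ref{lem:1:X1}) that $X_{n+1}^{W,V}=\K G\,y_{n+1}$ is absolutely simple and isomorphic to $M(gz^{n+1},\rho_{n+1})$, which closes the induction.

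The conceptual skeleton is thus a short induction resting on Lemma~\ref{lem:1:Yn_auxiliar}; the step I expect to be the real obstacle is purely computational --- carrying the scalars correctly through the three constraints $\rho(z)^2\sigma(\epsilon g^2)=1$, $\sigma(\epsilon z^2)=1$ and $\sigma(z)^6=1$ (the last from $\sigma(\epsilon)^3=1$), and checking that the closed form of $\rho_n(g)$ satisfies the induced recursion $\rho_{n+1}(g)=-\rho_n(g)\rho(z)^{-1}\sigma(z)^{3n+4}$. This in turn hinges on pinning down the exponent $(3n+5)n/2$ of $\sigma(z)$ exactly, not merely modulo $6$, and it is in this scalar arithmetic, rather than in any structural point, that a sign or index slip would most easily go unnoticed.
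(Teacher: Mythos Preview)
Your argument is correct and follows essentially the same route as the paper: an induction resting on Lemma~\ref{lem:1:Yn_auxiliar}, together with a direct computation of $gy_{n+1}$ from the recursion defining $y_{n+1}$ and verification that the closed form for $\rho_n(g)$ satisfies the resulting recurrence. The only cosmetic differences are that the paper separates the induction for $gy_n=\rho_n(g)y_n$ from the vanishing criterion (rather than running them simultaneously), and records the recursion as $\rho_{n+1}(g)=-\rho_n(g)\rho(z)^{-1}\sigma(z)^{3n-2}$, which agrees with your $\sigma(z)^{3n+4}$ since $\sigma(z)^6=1$.
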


\begin{proof}
  For all $n\ge 0$ let $y_n$ be as in Lemma~\ref{lem:1:Yn_auxiliar}.
  Then $zy_n=\rho(z)\sigma(z)^ny_n$, since $y_n\in W^{\otimes n}\otimes V$ and
  $z\in Z(G)$. To
	prove that $gy_n=\rho_n(g)y_n$ for all $n\ge 1$ we proceed by induction.
  For $n=1$ this equation holds by Lemmas~\ref{lem:1:Y1} and \ref{lem:1:X1}.
  Suppose now that
	$gy_n=\rho_n(g)y_n$ for some $n\geq1$. Then 
	\begin{align*}
		gy_{n+1} &= gw\otimes\epsilon^2gy_n
    -\rho(z)\sigma(z^{3n+2})\sigma(g^2)w\otimes\epsilon gy_n\\
		&=\rho_n(g)(gw\otimes\epsilon^2y_n
    -\rho(z)\sigma(z^{3n+2})\sigma(g^2) w\otimes\epsilon y_n).
	\end{align*}
	Since $\sigma(\epsilon z^2)=1$ and
	$\rho(z)^2\sigma(\epsilon g^2)=1$, we conlude that the expression for
	$\rho_{n+1}(g)$ given in the claim can be written as 
	\[
  \rho_{n+1}(g)=-\rho_n(g)\rho(z)^{-1}\sigma(z^{3n-2}).
	\]
	This implies the claim.

	To complete the proof of the lemma we observe that 
	\[
	X^{W,V}_n=\K G\varphi_n(w\otimes\epsilon y_n)
	\]
	and hence Equation \eqref{eq:1:phin} of Lemma \ref{lem:1:Yn_auxiliar} implies
  that $X^{W,V}_n=0$ if and only if $(k)_{\sigma (z)}=0$ for
	some $k\le n$, which is equivalent to say that $(n)^!_{\sigma (z)}=0$.
\end{proof}

We collect all the results of this section in the following proposition. 
We write $a_{i,j}^{(V,W)}$ for the entries of the Cartan matrix of the pair $(V,W)$.

\begin{pro}
	\label{pro:1}
	Let $V,W\in \ydG $ such that $V\simeq M(g,\rho)$,
        where $\rho$ is a character of $G^g$,
        and $W\simeq M(\epsilon z,\sigma)$,
        where $\sigma$ is a character of $G^{\epsilon z}$. Then the following hold:
	\begin{enumerate}
		\item $(\ad V)^m(W)$ and $(\ad W)^m(V)$ are absolutely simple or zero for
			all $m\in\N_0$ if and only if 
			\begin{equation*}
				\rho(z)^2\sigma(\epsilon g^2)=1,\quad
				\rho(g)=-1,\quad
				(\sigma(\epsilon z)+1)(\sigma(\epsilon z^2)-1)=0.
			\end{equation*}
			In particular, these equations imply that $\sigma(z)^6=1$. 
		\item Assume that the equations for $\rho $ and $\sigma $ in (1) hold.
            Then the Cartan matrix of $(V,W)$ satisfies
      $a_{1,2}^{(V,W)}=-2$ and $X_2^{V,W}\simeq
			M(g^2z,\sigma_2)$, where
			\begin{enumerate}
				\item if $1+\sigma(\epsilon)+\sigma(\epsilon)^2=0$, then 
					$\sigma_2$ is the character of $G$ given
					by $\sigma_2(\epsilon)=1$,
					$\sigma_2(g)=-\rho(z)^{-1}\sigma(\epsilon)$,
					$\sigma_2(z)=\rho(z)^2\sigma(z)$, and
				\item if $\sigma(\epsilon)=1$ and $\charK\ne3$, then 
					$\sigma_2$ is a two-dimensional absolutely irreducible representation
					of $G$ with
					$\sigma_2(g^2)=\rho(z)^{-2}$
          and  $\sigma_2(z)=\rho(z)^2\sigma(z)$.
			\end{enumerate}
            Moreover, 
			\begin{equation*}
				a_{2,1}^{(V,W)}=\begin{cases}
					-1 & \text{if $\sigma(\epsilon z)=-1$},\\
					-2 & \text{if $\sigma(z)=\sigma(\epsilon)$ and $(3)_{\sigma(\epsilon)}=0$},\\
					-5 & \text{if $\sigma(z)+\sigma(\epsilon)=(3)_{\sigma(\epsilon)}=0$ and $\charK\ne2,3$},\\
					1-p & \text{if $\charK=p\ge 5$ and $\sigma(\epsilon)=\sigma(z)=1$},
				\end{cases}
			\end{equation*}
			and otherwise $(\ad W)^m(V)\neq0$ for all $m\in\N_0$. In these cases
			$X_m^{W,V}\simeq M(gz^m,\rho_m)$ for $m=-a_{2,1}^{(V,W)}$, where $\rho_m$
			is the character of $G^g$ with 
			$\rho_1(z)=\rho(z)\sigma(z)$, $\rho_1(g)=\rho(z)^{-1}\sigma(\epsilon)$ and 
			\begin{align*}
				\rho_m(z)=\rho(z)\sigma(z)^m,
			  	\quad \rho_m(g)=(-1)^{m+1}\rho(z)^{-m}\sigma(z)^{(3m+5)m/2}
			\end{align*}
			for all $m\geq2$.
	\end{enumerate}
\end{pro}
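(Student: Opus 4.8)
The plan is to assemble the lemmas of this section and read off the two chains $(\ad V)^m(W)$ and $(\ad W)^m(V)$. For the first assertion, note that $X_1^{V,W}$ is always non-zero by Lemma~\ref{lem:1:X1} (the vector $w'$ of \eqref{eq:1:x1} is non-zero), and that lemma shows $X_1^{V,W}$ is absolutely simple if and only if $\rho(z)^2\sigma(\epsilon g^2)=1$; so if this fails the $V$-chain already contains a non-zero module that is not absolutely simple. Assuming $\rho(z)^2\sigma(\epsilon g^2)=1$, Lemma~\ref{lem:1:phi2} gives $w''=\varphi_2(v\otimes w')\ne0$, hence $X_2^{V,W}\ne0$, and Lemma~\ref{lem:1:X2} shows $X_2^{V,W}$ is absolutely simple exactly when $\rho(g)=-1$; if $\rho(g)\ne-1$ the formulas of Lemma~\ref{lem:1:phi2} together with \eqref{eq:1:X2:decomp} exhibit $X_2^{V,W}$ as non-zero and not absolutely simple. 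Granting also $\rho(g)=-1$, Lemma~\ref{lem:1:X3} yields $X_3^{V,W}=0$, so the $V$-chain consists entirely of absolutely simple or zero modules. For the $W$-chain, $X_1^{W,V}\simeq X_1^{V,W}$ via $c_{V,W}$ by Lemma~\ref{lem:1:Y1}; under $\rho(z)^2\sigma(\epsilon g^2)=1$ and $\rho(g)=-1$, Lemma~\ref{lem:1:Y2} shows $X_2^{W,V}$ is zero iff $\sigma(\epsilon z)=-1$, is absolutely simple iff $\sigma(\epsilon z)\ne-1$ and $\sigma(\epsilon z^2)=1$, and is non-zero but not absolutely simple otherwise; in the absolutely simple case Lemma~\ref{lem:1:Yn} shows every further $X_n^{W,V}$ is again absolutely simple or zero. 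This gives the equivalence in~(1), and cubing either factor of $(\sigma(\epsilon z)+1)(\sigma(\epsilon z^2)-1)=0$, using $\sigma(\epsilon)^3=1$, gives $\sigma(z)^3=-1$ or $\sigma(z)^6=1$, so in all cases $\sigma(z)^6=1$.

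For~(2), assume the equations of~(1). The Cartan entry $a_{1,2}^{(V,W)}$ is minus the largest $m$ with $X_m^{V,W}\ne0$, which by the above is $m=2$; hence $a_{1,2}^{(V,W)}=-2$. The structure of $X_2^{V,W}\simeq M(g^2z,\sigma_2)$ is read off from Lemma~\ref{lem:1:X2}: if $1+\sigma(\epsilon)+\sigma(\epsilon)^2=0$ one gets the one-dimensional $\sigma_2$ with $\sigma_2(\epsilon)=1$, $\sigma_2(g)=-\rho(z)^{-1}\sigma(\epsilon)$, $\sigma_2(z)=\rho(z)^2\sigma(z)$; if $\sigma(\epsilon)=1$ and $\charK\ne3$ one gets the two-dimensional representation, and $gw''=-\rho(z)^{-1}w''$ together with $g^2,z\in Z(G)$ gives $\sigma_2(g^2)=\rho(z)^{-2}$ and $\sigma_2(z)=\rho(z)^2\sigma(z)$, which by Lemma~\ref{lem:simpleG3modules} determines $\sigma_2$ up to isomorphism.

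It remains to determine $a_{2,1}^{(V,W)}=-\max\{m : X_m^{W,V}\ne0\}$ and the corresponding $\rho_m$. In the case $\sigma(\epsilon z)=-1$, Lemma~\ref{lem:1:Y2} gives $X_2^{W,V}=0$ while $X_1^{W,V}\ne0$, so $a_{2,1}^{(V,W)}=-1$ and $\rho_1=\sigma_1$ is as in Lemma~\ref{lem:1:X1}, which with $\rho(g)=-1$ simplifies to $\rho_1(g)=\rho(z)^{-1}\sigma(\epsilon)$, $\rho_1(z)=\rho(z)\sigma(z)$. In the remaining case $\sigma(\epsilon z)\ne-1$, $\sigma(\epsilon z^2)=1$, Lemma~\ref{lem:1:Yn} says $X_m^{W,V}=0$ if and only if $(m)^!_{\sigma(z)}=0$, i.e. if and only if $(k)_{\sigma(z)}=0$ for some $k\le m$; hence $-a_{2,1}^{(V,W)}=k_0-1$ with $k_0=\min\{k\ge2 : (k)_{\sigma(z)}=0\}$, or else $(\ad W)^m(V)\ne0$ for all $m$. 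Since $\sigma(z)^6=1$ and $\sigma(\epsilon z^2)=1$ forces $\sigma(z)\in\{\sigma(\epsilon),-\sigma(\epsilon)\}$, a short case analysis pins $k_0$ down: $\sigma(z)=\sigma(\epsilon)$ with $(3)_{\sigma(\epsilon)}=0$ gives $k_0=3$; $\sigma(z)=-\sigma(\epsilon)$ with $(3)_{\sigma(\epsilon)}=0$ and $\charK\ne2,3$ makes $\sigma(z)$ a primitive sixth root of unity and gives $k_0=6$; $\charK=p\ge5$ with $\sigma(\epsilon)=\sigma(z)=1$ gives $k_0=p$; and otherwise $\sigma(\epsilon)=\sigma(z)=1$ in characteristic zero and no $k_0$ exists. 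The resulting $X_m^{W,V}\simeq M(gz^m,\rho_m)$ for $m=-a_{2,1}^{(V,W)}$, with $\rho_m$ exactly as stated, then follows from Lemmas~\ref{lem:1:Y2} (for $m=2$) and~\ref{lem:1:Yn} (for $m\ge2$), using $\sigma(z)^6=1$ to normalize the exponent of $\sigma(z)$.

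The main obstacle is this last case analysis: one must check that the four listed conditions on $(\sigma(\epsilon),\sigma(z),\charK)$ are mutually exclusive, that together they cover exactly the cases where the $W$-chain eventually vanishes, and that the first vanishing of $(k)_{\sigma(z)}=1+\sigma(z)+\dots+\sigma(z)^{k-1}$ occurs at the claimed $k_0$. The delicate points are the small characteristics: $(3)_{\sigma(\epsilon)}=0$ behaves differently when $\charK=3$ (where it forces $\sigma(\epsilon)=1$), and the identity $-1=1$ in characteristic $2$ collapses the subcases with $\sigma(z)\in\{1,-1\}$ into $\sigma(\epsilon z)=-1$. One should also verify throughout that the hypotheses of the lemmas invoked (notably $\rho(z)^2\sigma(\epsilon g^2)=1$, $\rho(g)=-1$, and $\sigma(\epsilon z^2)=1$) remain in force.
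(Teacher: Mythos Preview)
Your proof is correct and follows essentially the same approach as the paper: both assemble Lemmas~\ref{lem:1:X1}--\ref{lem:1:X3} and \ref{lem:1:Y1}--\ref{lem:1:Yn} to establish (1) and the value of $a_{1,2}^{(V,W)}$, and then perform the case analysis on $(\sigma(\epsilon),\sigma(z),\charK)$ using the vanishing criterion $(m)^!_{\sigma(z)}=0$ from Lemma~\ref{lem:1:Yn} to determine $a_{2,1}^{(V,W)}$. Your write-up is more detailed than the paper's terse version (which simply lists the five cases and says the rest is easy), and your remarks about the collapse of cases in characteristics $2$ and $3$ are accurate; the only superfluous comment is the appeal to $\sigma(z)^6=1$ to ``normalize'' the exponent in $\rho_m(g)$, since the formula in Lemma~\ref{lem:1:Yn} already coincides verbatim with the one in the proposition.
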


\begin{proof}
	The first claim follows from Lemmas \ref{lem:1:X1}, \ref{lem:1:X2},
	\ref{lem:1:X3}, \ref{lem:1:Y1}, \ref{lem:1:Y2} and \ref{lem:1:Yn}. Further,
	Lemmas \ref{lem:1:X2} and \ref{lem:1:X3}
        yield the claim concerning $a_{1,2}^{(V,W)}$. Now we
	prove the claim concerning $a_{2,1}^{(V,W)}$. Since $(\sigma(\epsilon
	z)+1)(\sigma(\epsilon z^2)-1)=0$ and $\sigma(\epsilon)^3=1$, we need to
	consider the following cases:
	\begin{enumerate}
		\item $\sigma(\epsilon z)=-1$ and $\sigma(\epsilon)^3=1$, 
		\item $\sigma(\epsilon)=\sigma(z)$ and $\sigma(\epsilon)^2+\sigma(\epsilon)+1=0$,
		\item $\sigma(\epsilon)=\sigma(z)=1$ and $\charK=p\ge 5$,
		\item $\sigma(\epsilon)=\sigma(z)=1$ and $\charK=0$,
		\item $\sigma(\epsilon)=-\sigma(z)$, $\sigma(\epsilon)^2+\sigma(\epsilon)+1=0$ and $\charK\ne2,3$.
	\end{enumerate}
    Using the equivalence between $(\ad W)^m(V)=0$ and
    $(m)^!_{\sigma(z)}=0$ of Lemma \ref{lem:1:Yn}, one easily obtains the
    second claim.
\end{proof}

\begin{cor}
	\label{cor:R1:A}
	Let $V,W\in \ydG $ with $V\simeq M(g,\rho)$ and $W\simeq M(\epsilon z,\sigma)$,
	where $\rho$ is a character of $G^g$ and $\sigma$ is a character of
	$G^{\epsilon }$. Assume that 
	\[
	\rho(g)=\sigma(\epsilon z)=-1,\quad\rho(z)^2\sigma(\epsilon g^2)=1,
  \quad (3)_{\sigma(\epsilon)}=0.
	\]
	Let $g'=g^{-1}$, $\epsilon'=\epsilon $, $z'=g^2z$, $\rho'$ be the
	representation of $G^{g'}$ dual to $\rho$, and $\sigma'$ be the character of
	$G$ given by $\sigma'(g)=-\rho(z)^{-1}\sigma(\epsilon)$,
	$\sigma'(\epsilon)=1$, $\sigma'(z)=\rho(z)^2\sigma(z)$. Then 
  $a_{1,2}^{(V,W)}=-2$ and
	\[
	R_1(V,W)=\left(V^*,X_2^{V,W}\right)
	\]
	with $V^*\simeq M(g',\rho')$, $X_2^{V,W}\simeq M(z',\sigma')$ and 
	\begin{align*}
	  \rho'(g')=-1,\quad
	  \rho'(z')\sigma'(z'g')=1,\quad
	  1-\rho'(z')\sigma'(g')+\rho'(z')^2\sigma'(g')^2=0.
	\end{align*}
\end{cor}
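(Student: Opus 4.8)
The plan is to read off the Cartan entry and the module $X_2^{V,W}$ from Proposition~\ref{pro:1}, combine this with the definition of the reflection $R_1$, and then verify the displayed identities for $\rho'$ and $\sigma'$ by a short direct computation.

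First I would observe that the hypotheses place us in the situation of Proposition~\ref{pro:1}: the equations $\rho(z)^2\sigma(\epsilon g^2)=1$ and $\rho(g)=-1$ hold by assumption, and $\sigma(\epsilon z)=-1$ gives $(\sigma(\epsilon z)+1)(\sigma(\epsilon z^2)-1)=0$, so part~(1) applies and $(\ad V)^m(W)$ and $(\ad W)^m(V)$ are absolutely simple or zero for all $m\in\N_0$; in particular the pair $(V,W)$ admits the reflection $R_1$. Since $(3)_{\sigma(\epsilon)}=1+\sigma(\epsilon)+\sigma(\epsilon)^2=0$, Proposition~\ref{pro:1}(2) puts us in case~(a): $a_{1,2}^{(V,W)}=-2$ and $X_2^{V,W}\simeq M(g^2z,\sigma_2)$, where $\sigma_2$ is the character of $G$ with $\sigma_2(\epsilon)=1$, $\sigma_2(g)=-\rho(z)^{-1}\sigma(\epsilon)$ and $\sigma_2(z)=\rho(z)^2\sigma(z)$. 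By the definition of the reflection, $R_1(V,W)=\bigl(V^*,(\ad V)^{-a_{1,2}^{(V,W)}}(W)\bigr)=(V^*,X_2^{V,W})$.

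Next I would match the two summands with the asserted data. The dual of $M(g,\rho)$ in $\ydG$ is $M(g^{-1},\rho')$ with $\rho'$ the character of $G^{g^{-1}}=G^g$ dual to $\rho$, so $V^*\simeq M(g',\rho')$ with $g'=g^{-1}$. For the other summand put $z'=g^2z$ and $\sigma'=\sigma_2$; since $z'\in Z(G)$ the character $\sigma'$ is defined on all of $G$, the element $\epsilon'=\epsilon$ is unchanged, and $X_2^{V,W}\simeq M(z',\sigma')$ as claimed. It then remains to check the three identities, and for this I would compute directly using $z\in Z(G)$, $\rho(g)=-1$, $\rho(z)^2\sigma(\epsilon g^2)=1$, $\sigma(\epsilon z)=-1$ and $1+\sigma(\epsilon)+\sigma(\epsilon)^2=0$. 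Explicitly: $\rho'(g')=\rho(g)=-1$; from $z'g'=g^2zg^{-1}=gz$ and $\rho'(z')=\rho(g^2z)^{-1}=\rho(z)^{-1}$ one gets $\rho'(z')\sigma'(z'g')=\rho(z)^{-1}\bigl(-\rho(z)\sigma(\epsilon)\sigma(z)\bigr)=-\sigma(\epsilon z)=1$; and, writing $\omega=\sigma(\epsilon)$, one has $\rho'(z')\sigma'(g')=\rho(z)^{-1}\bigl(-\rho(z)\omega^{-1}\bigr)=-\omega^{-1}$, so $1-\rho'(z')\sigma'(g')+\rho'(z')^2\sigma'(g')^2=1+\omega^{-1}+\omega^{-2}=0$.

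I do not expect a genuine obstacle: the substance is already contained in Proposition~\ref{pro:1} and the definition of $R_1$, and what remains is bookkeeping. The only delicate points are keeping the conventions for the dual character straight and noting that the new generators $(g',\epsilon',z')=(g^{-1},\epsilon,g^2z)$ again generate $G$ and satisfy the $\Gamma_3$-relations (as $g^2\in\langle g\rangle$ and $z$ is central), so that the output pair is of the shape expected in the sequel.
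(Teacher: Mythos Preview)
Your proposal is correct and follows essentially the same approach as the paper: invoke Proposition~\ref{pro:1} to obtain $a_{1,2}^{(V,W)}=-2$ and the structure of $X_2^{V,W}$, then verify the three displayed identities by direct computation (the paper's proof records exactly the same intermediate values $\rho'(z')\sigma'(z'g')=-\sigma(\epsilon z)$ and $\rho'(z')\sigma'(g')=-\sigma(\epsilon)^{-1}$). Your write-up simply supplies a bit more detail on the bookkeeping for the dual module and the new generators $(g',\epsilon',z')$, which the paper leaves implicit.
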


\begin{proof}
	Using Proposition \ref{pro:1} one obtains that $a_{1,2}^{(V,W)}=-2$ and hence
	the description of $R_1(V,W)$ follows.  It is clear that
	$\rho'(g')=\rho(g)=-1$. A direct calculation yields 
	\begin{align*}
		&\rho'(z')\sigma'(z'g')=-\sigma(\epsilon z)=1,\\
		&1-\rho'(z')\sigma'(g')+\rho'(z')^2\sigma'(g')^2=1+\sigma(\epsilon)^{-1}+\sigma(\epsilon)^{-2}=0.
	\end{align*}
	This completes the proof.
\end{proof}

\begin{cor}
	\label{cor:R2:A}
	Let $V,W\in \ydG $ with $V\simeq M(g,\rho)$ and $W\simeq M(\epsilon z,\sigma)$,
	where $\rho$ is a character of $G^g$ and $\sigma$ is a character of
	$G^{\epsilon }$. Assume that 
	\[
		\rho(g)=\sigma(\epsilon z)=-1,\quad\rho(z)^2\sigma(\epsilon g^2)=1,
    \quad (3)_{\sigma(\epsilon)}=0.
	\]
	Let $g''=gz$, $\epsilon''=\epsilon^{-1}$, $z''=z^{-1}$, $\rho''$ be
	the character of $G^g$ given by
	$\rho''(g)=\rho(z)^{-1}\sigma(\epsilon)$ and
	$\rho''(z)=\rho(z)\sigma(z)$, and $\sigma''$ be the representation of
	$G^{\epsilon z}$ dual to $\sigma$. Then 
  $a_{2,1}^{(V,W)}=-1$ and
	\[
			R_2(V,W)=\left(X_1^{W,V},W^*\right)
	\]
	with $X_1^{W,V}\simeq M(g'',\rho'')$, 
	$W^*\simeq M(\epsilon ''z'',\sigma'')$, and  		
	\begin{align*}
    \rho''(g'')=\sigma''(\epsilon''z'')=-1,\quad
    \rho''(z'')^2\sigma''(\epsilon''g''^2)=1,\quad
    (3)_{\sigma''(\epsilon'')}=0.
	\end{align*}
\end{cor}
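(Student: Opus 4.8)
The plan is to follow the same pattern as in the proof of Corollary~\ref{cor:R1:A}, but now working on the second vertex. First I would invoke Proposition~\ref{pro:1}: since the hypotheses $\rho(g)=-1$, $\rho(z)^2\sigma(\epsilon g^2)=1$, $\sigma(\epsilon z)=-1$ and $(3)_{\sigma(\epsilon)}=0$ are exactly those of part~(1) with the case $\sigma(\epsilon z)=-1$ of part~(2), we get immediately $a_{2,1}^{(V,W)}=-1$. By the definition of the reflection $R_2$ at the second vertex, this forces $R_2(V,W)=(X_1^{W,V},W^*)$, and Lemma~\ref{lem:1:Y1} (or Proposition~\ref{pro:1}) identifies $X_1^{W,V}\simeq M(gz,\rho_1)=M(g'',\rho'')$ with $\rho''(g)=\rho(z)^{-1}\sigma(\epsilon)$, $\rho''(z)=\rho(z)\sigma(z)$. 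The module $W^*$ is the dual of $W\simeq M(\epsilon z,\sigma)$, so its degree is $(\epsilon z)^{-1}=\epsilon^{-1}z^{-1}=\epsilon''z''$ and its character is $\sigma''$, the dual of $\sigma$ on $G^{\epsilon z}=G^{\epsilon''z''}$.

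The substance of the proof is then to check that the new pair $(X_1^{W,V},W^*)$ again satisfies the hypotheses of the corollary, i.e.\ that $(\rho'',\sigma'')$ relative to the data $(g'',\epsilon'',z'')$ satisfy
\[
\rho''(g'')=\sigma''(\epsilon''z'')=-1,\quad
\rho''(z'')^2\sigma''(\epsilon''g''^2)=1,\quad
(3)_{\sigma''(\epsilon'')}=0.
\]
This is a direct computation. For the first identity, $\rho''(g'')=\rho''(g)$ applied to $gv_1'\in\K v_1'$ — but more cleanly, from Lemma~\ref{lem:1:Y1} and Lemma~\ref{lem:1:X1} one reads $\rho_1(g)=-\rho(gz^{-1})\sigma(\epsilon)$, and since $\rho(g)=-1$ this is $\rho(z)^{-1}\sigma(\epsilon)$; combined with $\rho(z)^2\sigma(\epsilon g^2)=1$, which gives $\rho(z)^{-2}=\sigma(\epsilon g^2)$, and $\sigma(\epsilon z)=-1$, a short manipulation shows $\rho''(g'')=\rho_1(g)$ equals $-1$ after using $\sigma(\epsilon)^2=\sigma(\epsilon)^{-1}$ from $(3)_{\sigma(\epsilon)}=0$. (I expect the cleanest route is: $\rho''(g'')^{?}=-1 \iff \rho(z)^{-1}\sigma(\epsilon)=-1$, and this should follow by relating $\sigma(z)$ to $\sigma(\epsilon)$; note $\sigma(\epsilon z)=-1$ gives $\sigma(z)=-\sigma(\epsilon)^{-1}$, whence $\rho''(z)=\rho(z)\sigma(z)=-\rho(z)\sigma(\epsilon)^{-1}$, etc.) For $\sigma''(\epsilon''z'')=\sigma(\epsilon z)^{-1}=(-1)^{-1}=-1$ the claim is immediate. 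For $(3)_{\sigma''(\epsilon'')}=0$: $\sigma''(\epsilon'')=\sigma(\epsilon)^{-1}$, and since $(3)_{\sigma(\epsilon)}=0$ means $\sigma(\epsilon)$ is a primitive cube root of unity, so is its inverse, hence $(3)_{\sigma''(\epsilon'')}=0$ as well. The remaining relation $\rho''(z'')^2\sigma''(\epsilon''g''^2)=1$ reduces, after substituting $g''^2=(gz)^2=g^2z^2$, $\epsilon''=\epsilon^{-1}$, $z''=z^{-1}$, to an identity in $\rho(z),\sigma(z),\sigma(\epsilon),\sigma(g^2)$ which should collapse to $1$ upon using $\rho(z)^2\sigma(\epsilon g^2)=1$ and $\sigma(\epsilon z)=-1$; concretely $\rho''(z^{-1})^2=\rho(z)^{-2}\sigma(z)^{-2}$ and $\sigma''(\epsilon^{-1}g^2z^2)=\sigma(\epsilon)\sigma(g^2)^{-1}\sigma(z)^{-2}$, and the product equals $\rho(z)^{-2}\sigma(\epsilon g^{-2})\sigma(z)^{-4}$, which is seen to be $1$ using $\rho(z)^{-2}=\sigma(\epsilon g^2)$ and $\sigma(z)^2=\sigma(\epsilon)^{-2}=\sigma(\epsilon)$ (so $\sigma(z)^{-4}=\sigma(\epsilon)^{-2}=\sigma(\epsilon)$) together with $\sigma(\epsilon)^3=1$.

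The main obstacle, as usual in these reflection computations, is purely bookkeeping: keeping track of which centralizer each character lives on (here the move is $G^g\to G^g$ on one factor, $G^{\epsilon z}\to G^{\epsilon^{-1}z^{-1}}=G^{\epsilon z}$ on the other, so the domains are unchanged, which helps), correctly transporting $\sigma$ through inversion of the group element, and using the three relations $\rho(z)^2\sigma(\epsilon g^2)=1$, $\rho(g)=-1$, $\sigma(\epsilon z)=-1$ together with $\sigma(\epsilon)^3=1$ in the right order so that each of the three target relations collapses. No genuinely new idea is needed beyond Proposition~\ref{pro:1}; the proof is a verification, exactly parallel to that of Corollary~\ref{cor:R1:A}.
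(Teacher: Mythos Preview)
Your approach is the same as the paper's: invoke Proposition~\ref{pro:1} to get $a_{2,1}^{(V,W)}=-1$, read off $R_2(V,W)=(X_1^{W,V},W^*)$ and the description of $\rho''$ from Lemma~\ref{lem:1:Y1}/\ref{lem:1:X1}, and then verify the three relations for $(\rho'',\sigma'')$ by direct substitution. The checks of $\sigma''(\epsilon''z'')=-1$, $(3)_{\sigma''(\epsilon'')}=0$, and $\rho''(z'')^2\sigma''(\epsilon''g''^2)=1$ are fine (the paper shortens the last one by recognizing the expression as $(\rho(z)^2\sigma(\epsilon g^2))^{-1}$ after noting $\sigma(\epsilon z^{-4})=\sigma(\epsilon)^{-1}$, but your route also works).

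Your treatment of $\rho''(g'')=-1$, however, is tangled and contains an error: you write that this is equivalent to $\rho(z)^{-1}\sigma(\epsilon)=-1$, which would mean $\rho''(g'')=\rho''(g)$. But $g''=gz$, not $g$, and since $\rho''$ is a character of the abelian group $G^g=\langle g,z\rangle$ you have
\[
\rho''(g'')=\rho''(gz)=\rho''(g)\,\rho''(z)=\rho(z)^{-1}\sigma(\epsilon)\cdot\rho(z)\sigma(z)=\sigma(\epsilon z)=-1,
\]
which is immediate from the hypothesis. This is exactly how the paper does it, and it replaces your attempted ``short manipulation'' entirely.
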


\begin{proof}
	The assumptions on $\rho$ and $\sigma$ and Proposition \ref{pro:1} yield
	$a_{2,1}^{(V,W)}=-1$ and hence the description of $R_2(V,W)$ follows. Then we compute 
	\begin{align*}
		&\sigma''(\epsilon''z'')=\sigma''(\epsilon^{-1}z^{-1})=\sigma(\epsilon z)=-1,\\
		&\rho''(g'')=\rho''(gz)=\rho''(g)\rho''(z)=-1,\\
		&1+\sigma''(\epsilon'')+\sigma''(\epsilon'')^2=1+\sigma(\epsilon)+\sigma(\epsilon)^2=0.
	\end{align*}
	Using that $g$ and $z$ commute, we obtain that 
	\begin{align*}
		\rho''(z'')^2\sigma''(\epsilon''g''^2)
    =\rho''(z)^{-2}\sigma''(\epsilon^{-1}g^2z^2)
		=\rho(z)^{-2}\sigma(\epsilon)\sigma(g)^{-2}\sigma(z)^{-4}.
	\end{align*}
	Since $\sigma(\epsilon z)=-1$ we obtain that $\sigma(\epsilon
	z^{-4})=\sigma(\epsilon)^{-1}$. Hence we conclude that 
	\[
        \rho''(z'')^2\sigma''(\epsilon''g''^2)=(\rho(z)^2\sigma(\epsilon g^2))^{-1}=1. 
    \]
    This completes the proof.
\end{proof}

\begin{cor}
	\label{cor:R1:B}
	Let $V,W\in \ydG $ with $V\simeq M(g,\rho)$ and $W\simeq M(\epsilon z,\sigma)$,
	where $\rho$ is a character of $G^g$ and $\sigma$ is a character of
	$G^{\epsilon }$.
	Assume that $\charK\ne3$ and
	\[
		\rho(g)=-1,\quad
		\rho(z^2)\sigma(\epsilon g^2)=1,\quad
		\sigma(\epsilon)=1,\quad
		\sigma(z)=-1.
	\]
	Further, let $g'=g^{-1}$, $\epsilon'=\epsilon $, $z'= g^2z$, let $\rho'$ be
	the irreducible representation of $G^g$ dual to $\rho$, and let $\sigma'$ be
  an absolutely irreducible representation of $G$ with $\deg\sigma'=2$,
	$\sigma'(g^2)=\rho(z)^{-2}$, and $\sigma'(z)=-\rho(z)^2$.  Then 
  $a_{1,2}^{(V,W)}=-2$ and
	\[
		R_1(V,W)=\left(V^*,X_2^{V,W}\right)
	\]
	with $V^*\simeq M(g',\rho')$, $X_2^{V,W}\simeq M(z',\sigma')$, and 
	\begin{align*}
	  \rho'(g')=-1,\quad \rho'(z')^2\sigma'(g'^2)=1,\quad \sigma'(z')=-1.
	\end{align*}
\end{cor}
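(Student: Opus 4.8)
The plan is to follow the strategy of the proof of Corollary~\ref{cor:R1:A}: derive the shape of $R_1(V,W)$ and the value of the Cartan entry from Proposition~\ref{pro:1}, and then verify the three scalar relations for the pair $(\rho',\sigma')$ by a short direct computation.

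First I would check that the hypotheses of Proposition~\ref{pro:1} are satisfied. The assumption $\rho(z^2)\sigma(\epsilon g^2)=1$ is literally $\rho(z)^2\sigma(\epsilon g^2)=1$, the assumption $\rho(g)=-1$ is given, and from $\sigma(\epsilon)=1$ and $\sigma(z)=-1$ we get $\sigma(\epsilon z)=-1$, so $(\sigma(\epsilon z)+1)(\sigma(\epsilon z^2)-1)=0$. Hence Proposition~\ref{pro:1}(1) applies and shows that all $(\ad V)^m(W)$, $m\in\N_0$, are absolutely simple or zero, so the reflection $R_1(V,W)$ is defined; and Proposition~\ref{pro:1}(2) gives $a_{1,2}^{(V,W)}=-2$, so that $R_1(V,W)=(V^*,X_2^{V,W})$. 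Since $\sigma(\epsilon)=1$ and $\charK\ne3$ we are in case (2b) of Proposition~\ref{pro:1} (equivalently, case (2) of Lemma~\ref{lem:1:X2}): $X_2^{V,W}\simeq M(g^2z,\sigma_2)$, where $\sigma_2$ is the two-dimensional absolutely irreducible representation of $G$ with $\sigma_2(g^2)=\rho(z)^{-2}$ and $\sigma_2(z)=\rho(z)^2\sigma(z)=-\rho(z)^2$. By Lemma~\ref{lem:simpleG3modules}, a two-dimensional absolutely irreducible representation of $G$ is determined up to isomorphism by the scalars it assigns to $g^2$ and $z$; since $\sigma'$ has precisely these scalars, $\sigma_2\simeq\sigma'$ and $X_2^{V,W}\simeq M(z',\sigma')$ with $z'=g^2z$. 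That $V^*\simeq M(g^{-1},\rho')=M(g',\rho')$ with $\rho'$ dual to $\rho$ is immediate from the definition of the reflection.

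It then remains to verify the relations for $(\rho',\sigma')$. As in Corollary~\ref{cor:R1:A}, $\rho'(g')=\rho(g)=-1$. Since $\zeta$ and $\gamma^2$ are central in $\Gamma_3$, their images $z$ and $g^2$ are central in $G$, so $z'=g^2z$ is central and $\sigma'$ takes scalar values on $g^2$, $z$ and $z'$. Using $\rho(g)=-1$ one computes $\rho'(z')=\rho'(g^2)\rho'(z)=\rho(g)^{-2}\rho(z)^{-1}=\rho(z)^{-1}$, hence $\rho'(z')^2\sigma'(g'^2)=\rho(z)^{-2}\sigma'(g^2)^{-1}=\rho(z)^{-2}\rho(z)^{2}=1$; and $\sigma'(z')=\sigma'(g^2)\sigma'(z)=\rho(z)^{-2}\cdot(-\rho(z)^2)=-1$. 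This gives all three identities and completes the argument.

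The argument has no serious obstacle; the only point needing care is that here $\sigma'$ is two-dimensional, so $X_2^{V,W}$ cannot be identified by a character table as in Corollary~\ref{cor:R1:A} but must instead be pinned down via Lemma~\ref{lem:simpleG3modules} by the single pair of scalars $(\sigma'(g^2),\sigma'(z))$, and one must use the centrality of $g^2z$ in $G$ for the expression $\sigma'(z')$ to make sense as a scalar.
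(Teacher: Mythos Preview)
Your proposal is correct and follows essentially the same approach as the paper, which simply says the proof is ``similar to the proof of Corollary~\ref{cor:R1:A}.'' You have spelled out the details appropriately: invoking Proposition~\ref{pro:1}(2b) for the structure of $X_2^{V,W}$, using Lemma~\ref{lem:simpleG3modules} to identify the two-dimensional representation $\sigma'$ by its scalars on $g^2$ and $z$, and then verifying the three relations by direct computation.
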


\begin{proof}
	It is similar to the proof of Corollary \ref{cor:R1:A}.
\end{proof}

\begin{cor}
	\label{cor:R2:B}
	Let $V,W\in \ydG $ with $V\simeq M(g,\rho)$ and $W\simeq M(\epsilon z,\sigma)$,
	where $\rho$ is a character of $G^g$ and $\sigma$ is a character of
	$G^{\epsilon }$.
	Assume that $\charK\ne3$ and
	\[
		\rho(g)=-1,\quad
		\rho(z)^2\sigma(\epsilon g^2)=1,\quad 
		\sigma(\epsilon)=1,\quad
		\sigma(z)=-1.
	\]
	Further, let $g''=gz$, $\epsilon''=\epsilon^{-1}$, $z''= z^{-1}$, let
  $\rho''$ be the character of $G^g$ given by $\rho''(z)=-\rho(z)$ and
	$\rho''(g)=\rho(z)^{-1}$, and let $\sigma''$ be the character of
	$G^{\epsilon }$ dual to $\sigma$. Then 
  $a_{2,1}^{(V,W)}=-1$ and
	\[
		R_2(V,W)=\left(X_1^{W,V},W^*\right)
	\]
	with $X_1^{W,V}\simeq M(g'',\rho'')$, $W^*\simeq M(\epsilon''z'',\sigma'')$,
	and 
	\begin{align*}
    \rho''(g'')=-1,\quad \rho''(z'')^2\sigma''(\epsilon''g''^2)=1,\quad
		\sigma''(\epsilon'')=1,\quad \sigma''(z'')=-1.
	\end{align*}
\end{cor}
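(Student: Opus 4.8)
The approach mirrors the proof of Corollary~\ref{cor:R2:A}: one reads off $a_{2,1}^{(V,W)}$ and the shape of $R_2(V,W)$ from Proposition~\ref{pro:1}, and then verifies the four displayed identities for $(\rho'',\sigma'')$ by a direct character computation. First I would check that the present hypotheses fall under Proposition~\ref{pro:1}(1): indeed $\rho(z)^2\sigma(\epsilon g^2)=1$ and $\rho(g)=-1$ hold by assumption, and $\sigma(\epsilon z)=\sigma(\epsilon)\sigma(z)=-1$, so $(\sigma(\epsilon z)+1)(\sigma(\epsilon z^2)-1)=0$. Hence $(\ad W)^m(V)$ is absolutely simple or zero for all $m$, and by Proposition~\ref{pro:1}(2) the case $\sigma(\epsilon z)=-1$ gives $a_{2,1}^{(V,W)}=-1$. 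By the description of the reflection $R_2$ this yields $R_2(V,W)=(X_1^{W,V},W^*)$, where $W^*\simeq M(\epsilon^{-1}z^{-1},\sigma'')$ is the dual of $W$ and $X_1^{W,V}\simeq M(gz,\rho_1)$ with (using $\sigma(\epsilon)=1$, $\sigma(z)=-1$) $\rho_1(z)=\rho(z)\sigma(z)=-\rho(z)$ and $\rho_1(g)=\rho(z)^{-1}\sigma(\epsilon)=\rho(z)^{-1}$; thus $\rho_1=\rho''$ and $X_1^{W,V}\simeq M(g'',\rho'')$, as claimed.

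It then remains to check the four equations for $(\rho'',\sigma'')$. Writing $\sigma''(x)=\sigma(x)^{-1}$, one gets immediately $\sigma''(\epsilon'')=\sigma(\epsilon)^{-1}=1$, $\sigma''(z'')=\sigma(z)^{-1}=-1$, and $\rho''(g'')=\rho''(gz)=\rho''(g)\rho''(z)=\rho(z)^{-1}(-\rho(z))=-1$. For the remaining identity I would use that $g$ and $z$ commute, so $g''^2=g^2z^2$ with $g^2\in G^\epsilon$, and compute
\begin{align*}
\rho''(z'')^2\sigma''(\epsilon''g''^2)
&=\rho''(z)^{-2}\,\sigma(\epsilon^{-1}g^2z^2)^{-1}\\
&=\rho(z)^{-2}\,\sigma(\epsilon)\,\sigma(g^2)^{-1}\,\sigma(z)^{-4}.
\end{align*}
Substituting $\sigma(\epsilon)=1$ and $\sigma(z)^{-4}=1$ (since $\sigma(z)=-1$) reduces the right-hand side to $(\rho(z)^2\sigma(g^2))^{-1}=(\rho(z)^2\sigma(\epsilon g^2))^{-1}=1$ by hypothesis, completing the verification.

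I do not expect a genuine obstacle here: as in Corollaries~\ref{cor:R1:A}--\ref{cor:R1:B}, the statement is really a corollary of Proposition~\ref{pro:1} together with elementary arithmetic with one-dimensional characters. The only steps needing a bit of attention are matching up the three characters and the groups on which they are defined --- in particular verifying that $g''^2$ lies in the common domain $G^\epsilon$ of $\sigma$ and $\sigma''$ --- and tracking the signs and exponents arising from inverting group elements and passing to dual characters; the collapse of $\sigma(z)^{-4}$ to $1$ is precisely what makes the last identity balance.
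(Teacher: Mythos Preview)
Your proof is correct and follows precisely the approach the paper indicates (``similar to the proof of Corollary~\ref{cor:R2:A}''): you invoke Proposition~\ref{pro:1} to obtain $a_{2,1}^{(V,W)}=-1$ and the structure of $X_1^{W,V}$, then verify the four character identities by the same direct computation as in Corollary~\ref{cor:R2:A}, using $\sigma(\epsilon)=1$ and $\sigma(z)=-1$ in place of the conditions there.
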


\begin{proof}
	It is similar to the proof of Corollary \ref{cor:R2:A}.
\end{proof}

\section{Reflections of the second pair}
\label{section:2}

In this section we have to deal with an irreducible representation of $G$ of
degree two. Therefore we assume that $\K $ is algebraically closed.
This will not be relevant for our classification of Nichols algebras.

Let $G$ be a non-abelian group,
and let $g,\epsilon ,z\in G$. Assume that there is an
epimorphism $\Gamma_3\to G$ with $\gamma \mapsto g$,
$\nu \mapsto \epsilon $, $\zeta \mapsto z$.
Let $V,W\in \ydG $ such that $V\simeq M(g,\rho )$ and $W\simeq M(z,\sigma)$, where
$\rho $ is a character of $G^g$ and
$\sigma$ is a two-dimensional irreducible representation of $G$.
Let $\lambda $ be an eigenvalue of $\sigma (\epsilon )$ and let
$w$ be a corresponding eigenvector.
Then, by Lemma~\ref{lem:simpleG3modules}, $\charK\neq3$ and
$1+\lambda +\lambda ^2=0$. Hence $\epsilon gw=\lambda ^{-1}gw$,
$\lambda ^{-1}\not=\lambda $, and
$\{w,gw\}$ is a basis of $W_z=W$. 

\begin{rem}
    \label{rem:2:G_on_W}
    By the above discussion, we obtain the following table for the action of $G$
    on $W$:
    \begin{center}
        \begin{tabular}{c|cc}
            $W$ & $w$ & $gw$\tabularnewline
            \hline
            $\epsilon $ & $\lambda w$ & $\lambda ^2 gw$ \tabularnewline
            $z$ &  $\sigma(z)w$ & $\sigma(z)gw$ \tabularnewline
            $g$ &  $gw$ & $\sigma(g^2)w$ \tabularnewline
        \end{tabular}
    \end{center}
\end{rem}

Now we compute the modules $(\ad V)^m(W)$ for $m\in\N$. First we obtain that
$X_1^{V,W}=\varphi_1(V\otimes W)=\K G\varphi_1(v\otimes w)$ and
\begin{equation}
	\label{eq:2:phi1(v,w)}
	\varphi_1(v\otimes w)=v\otimes(w-\rho(z)gw).
\end{equation}
Hence $w'\coloneqq\varphi_1(v\otimes w)\in(V\otimes W)_{gz}$ is non-zero. 

\begin{lem}
	\label{lem:2:X1}
	The Yetter-Drinfeld module
  $X_1^{V,W}$ is simple if and only if
  $\rho(z)^2\sigma(g^2)=1$.  In this case,
  $X_1^{V,W}\simeq M(gz,\sigma_1)$, where $\sigma _1$ is the character
	of $G^{g}$ with
	\begin{align*}
		\sigma_1(z)=\rho(z)\sigma(z),\quad
		\sigma_1(g)=-\rho(gz^{-1}).
	\end{align*}
	A basis for $X_1^{V,W}$ is given by $\{w',\epsilon w',\epsilon^2 w'\}$. The
	degrees of these basis vectors are $gz$, $g\epsilon z$ and $g\epsilon ^2z$,
	respectively. 
\end{lem}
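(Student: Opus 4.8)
The plan is to follow exactly the same pattern as in the proof of Lemma~\ref{lem:1:X1}. By \cite[Lemma\,1.7]{MR2732989} we know that $X_1^{V,W}=\K G\varphi_1(v\otimes w)=\K Gw'$, so absolute simplicity of $X_1^{V,W}$ amounts to understanding when $\K Gw'$ is absolutely simple. First I would record that $w'\in(V\otimes W)_{gz}$ is nonzero, which is immediate from \eqref{eq:2:phi1(v,w)} since $v\otimes w$ and $v\otimes gw$ are linearly independent. Next I would compute $gw'$ using the action tables in Remark~\ref{rem:1:G_on_V} and Remark~\ref{rem:2:G_on_W}: applying $g$ to $w'=v\otimes w-\rho(z)v\otimes gw$ gives $gw'=gv\otimes gw-\rho(z)gv\otimes g^2w=\rho(g)\epsilon^2 v\otimes gw-\rho(gz)\sigma(g^2)\epsilon^2 v\otimes w$. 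Likewise I would record the action of $\epsilon$ and $z$: from $\epsilon w'=\epsilon v\otimes(\lambda w-\rho(z)\lambda^2 gw)$ one sees $\{w',\epsilon w',\epsilon^2 w'\}$ spans a $\K G$-submodule, and $zw'=\rho(z)\sigma(z)w'$ since $z$ is central and acts by $\rho(z)$ on $V$, by $\sigma(z)$ on $W$.

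The key step is the simplicity criterion. Since $V$ and $W$ are (absolutely) simple, $\K Gw'$ is absolutely simple exactly when it behaves like $M(gz,\sigma_1)$ for a character $\sigma_1$ of $G^{gz}=\langle g\rangle Z(G)$; concretely, since $v,\epsilon v,\epsilon^2 v$ are eigenvectors for $z$ but permuted by $\epsilon$, the submodule $\K Gw'$ has dimension $3$ with basis $\{w',\epsilon w',\epsilon^2 w'\}$ of degrees $gz,g\epsilon z,g\epsilon^2 z$ precisely when $gw'\in\K w'$. Comparing the two expressions for $gw'$ above with $w'=v\otimes w-\rho(z)v\otimes gw$ in the basis $\{\epsilon^2 v\otimes w,\epsilon^2 v\otimes gw\}$ versus $\{v\otimes w,v\otimes gw\}$ — note $gw'$ lands in the $\epsilon^2 v$ component whereas $w'$ is in the $v$ component, so one must instead compare $gw'$ with $\epsilon^2 w'$ or equivalently check proportionality of the two coefficient vectors — one finds $gw'\in\K w'$ (more precisely $gw'=-\rho(gz^{-1})\epsilon w'$ or the analogous relation) if and only if the ratio $\rho(g):-\rho(gz)\sigma(g^2)$ matches $1:-\rho(z)$, i.e.\ $\rho(g)\cdot\rho(z)=\rho(gz)\sigma(g^2)$, which simplifies to $\rho(z)^2\sigma(g^2)=1$. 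In that case reading off the scalar by which $g$ acts gives $\sigma_1(g)=-\rho(gz^{-1})$, and $\sigma_1(z)=\rho(z)\sigma(z)$ was already computed; these are consistent on $G^{gz}$ since $g^2\in Z(G)$ forces $\sigma_1(g)^2=\sigma_1(g^2)=\rho(g^2)\sigma(g^2)\cdot(\text{unit})$, a check I would include.

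The main obstacle I anticipate is bookkeeping rather than conceptual: one must be careful that $gw'$ does \emph{not} lie in $\K w'$ literally (their degrees differ, $g\cdot gz\cdot g^{-1}=g z\neq gz$ only if... in fact $gz$ is central here, so degrees do agree), so the proportionality test is genuinely between two vectors in the same graded component $(V\otimes W)_{gz}$, and I must express both $gw'$ and $w'$ in a common basis of that two-dimensional space before equating ratios. A second small point: absolute simplicity (as opposed to plain simplicity over an algebraically closed field) is automatic here because $G^{gz}$ is abelian and $\sigma_1$ is a character, so the module stays simple after any base field extension; I would note this parallels the reasoning already used in Lemma~\ref{lem:1:X1}. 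Finally, the statements about the basis $\{w',\epsilon w',\epsilon^2 w'\}$ and their degrees follow from the $\epsilon$-action computed in the first paragraph, exactly as in Lemma~\ref{lem:1:X1}, so I would simply say "the remaining claims are clear."
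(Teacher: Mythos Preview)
Your overall strategy is the same as the paper's: show $X_1^{V,W}=\K Gw'$ with $w'\in(V\otimes W)_{gz}$, and test simplicity by checking whether $gw'\in\K w'$. However, your execution contains a genuine computational error that muddles the argument. From the action table in Remark~\ref{rem:1:G_on_V} one has $g\cdot v=\rho(g)v$, \emph{not} $\rho(g)\epsilon^2 v$ as you wrote. With the correct action,
\[
gw'=\rho(g)v\otimes gw-\rho(gz)\sigma(g^2)\,v\otimes w,
\]
which lives in the \emph{same} two-dimensional space $\K v\otimes w\oplus\K v\otimes gw$ as $w'$; there is no need to compare against $\epsilon w'$ or $\epsilon^2 w'$, and your paragraph wrestling with ``$gw'$ lands in the $\epsilon^2 v$ component'' is a symptom of this mistake. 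Once the error is fixed, the proportionality test is straightforward: $gw'=c\,w'$ forces $c=-\rho(gz)\sigma(g^2)$ (from the $v\otimes w$ coefficient) and $\rho(g)=-c\rho(z)$ (from the $v\otimes gw$ coefficient), hence $\rho(z)^2\sigma(g^2)=1$ and $c=-\rho(gz^{-1})$.

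Note also that your own cross-multiplication ``$\rho(g)\cdot\rho(z)=\rho(gz)\sigma(g^2)$'' is vacuous since $\rho(gz)=\rho(g)\rho(z)$; it yields only $\sigma(g^2)=1$, not the claimed condition. This is a bookkeeping slip in the ordering of coefficients, but it means that as written your derivation does not actually establish $\rho(z)^2\sigma(g^2)=1$. The paper's proof is essentially the two-line computation above; once you correct $gv=\rho(g)v$, the rest of your sketch (the $z$-action, the basis $\{w',\epsilon w',\epsilon^2 w'\}$, and the degree bookkeeping) is fine.
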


\begin{proof}
  Since $X_1^{V,W}=\K Gw'$ and $w'\in (V\otimes W)_{gz}$,
  $X_1^{V,W}$ is simple if and only if
	$gw'=\K w'$. Since 
	\[
		gw'=gv\otimes (gw-\rho(z)g^2w)=\rho (g)v\otimes (-\rho (z)\sigma (g^2)w
    +gw),
	\]
	the latter is equivalent to $gw'=-\rho (gz)\sigma (g^2)w'$,
  $\rho (z)^2\sigma (g^2)=1$.
	From this the claim follows.
\end{proof}

\begin{rem}
	\label{rem:2:G_on_X1}
	The action of $G$ on $X_1^{V,W}$ can be displayed in a table similar to the
  one in Remark~\ref{rem:1:G_on_V}, where $v$ has to be replaced by $w'$ and
  $\rho $ has to be replaced by $\sigma _1$.
\end{rem}

\begin{lem}
	\label{lem:2:X2}
  Assume that $\rho(z)^2\sigma(g^2)=1$. Then $X_2^{V,W}\not=0$, and $X_2^{V,W}$
  is simple if and
	only $\rho(g)=-1$. In this case, $X_2^{V,W}\simeq M(\epsilon g^2z,\sigma_2)$, where
	$\sigma_2$ is the character of $G^{\epsilon}$ given by 
	\begin{align*}
		\sigma_2(\epsilon)=1,\quad
		\sigma_2(z)=\rho(z)^2\sigma(z),\quad
		\sigma_2(g^2)=\rho(z)^{-2}.
	\end{align*}
	The set $\{w'',gw''\}$ is a basis of $X_2^{V,W}$. The degrees of these vectors are
	$\epsilon g^2z$ and $\epsilon^2 g^2z$, respectively. 
\end{lem}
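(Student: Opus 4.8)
The plan is to imitate the arguments of Lemmas~\ref{lem:1:phi1}, \ref{lem:1:phi2} and \ref{lem:1:X2}, taking into account that now $\deg\sigma=2$ and that the eigenvalue $\lambda$ of $\sigma(\epsilon)$ is a primitive cube root of unity, so that $1+\lambda+\lambda^2=0$. First I would record the values of $\varphi_1$ that enter the computation of $\varphi_2$. Since $\varphi_1$ is $\K G$-linear, $\varphi_1(v\otimes w)=w'$ by \eqref{eq:2:phi1(v,w)}, and $gw'=-\rho(gz)\sigma(g^2)w'=-\rho(z)^{-1}w'$ by the hypothesis $\rho(z)^2\sigma(g^2)=1$ and Lemma~\ref{lem:2:X1}. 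Acting with $g$, $\epsilon$ and $\epsilon^2$ on the equation $\varphi_1(v\otimes w)=w'$ and using the tables of Remarks~\ref{rem:2:G_on_W}, \ref{rem:1:G_on_V} and \ref{rem:2:G_on_X1} then expresses $\varphi_1$ on $v\otimes gw$, $\epsilon v\otimes w$, $\epsilon^2 v\otimes w$, $\epsilon v\otimes gw$ and $\epsilon^2 v\otimes gw$ as explicit multiples of $w'$, $\epsilon w'$ and $\epsilon^2 w'$.

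Next I would compute $X_2^{V,W}=\varphi_2(V\otimes X_1^{V,W})$. By the list of conjugacy classes \eqref{eq:classes} of $\Gamma_3$, the set $g^G\times(gz)^G$ splits into exactly two orbits under the diagonal $G$-action, represented by $(g,gz)$ and $(g\epsilon^2,gz)$, so \cite[Lemma\,1.7]{MR2732989} gives $X_2^{V,W}=\K G\{\varphi_2(v\otimes w'),\varphi_2(\epsilon^2 v\otimes w')\}$, with the two generators of degree $g^2z$ and $\epsilon g^2z$ respectively; these elements of $G$ are not conjugate. Applying the recursion $\varphi_2=\id-c_{X_1^{V,W},V}c_{V,X_1^{V,W}}+(\id\otimes\varphi_1)c_{1,2}$, together with Lemma~\ref{lem:2:X1}, \eqref{eq:2:phi1(v,w)} and the $\varphi_1$-values above, I expect $\varphi_2(v\otimes w')$ to come out as $(1+\rho(g))$ times a manifestly nonzero vector of $V^{\otimes2}\otimes W$, and $\varphi_2(\epsilon^2 v\otimes w')=:w''$ to be nonzero, the latter precisely because of $1+\lambda+\lambda^2=0$. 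The same identity yields $\epsilon w''=w''$, while $z,g^2\in Z(G)$ give $zw''=\rho(z)^2\sigma(z)w''$ and, when $\rho(g)=-1$, $g^2w''=\rho(z)^{-2}w''$. Since $gw''$ has degree $\epsilon^2g^2z\ne\epsilon g^2z$, the pair $\{w'',gw''\}$ is linearly independent and spans a $2$-dimensional Yetter-Drinfeld submodule; it is absolutely simple because $G^{\epsilon g^2z}=G^\epsilon=\langle\epsilon,z,g^2\rangle$ is abelian and acts on $w''$ through the character $\sigma_2$ described in the statement.

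Finally I would put the pieces together. As $0\ne\K Gw''\subseteq X_2^{V,W}$, we get $X_2^{V,W}\ne0$. Because $g^2z$ and $\epsilon g^2z$ are not conjugate, $X_2^{V,W}$ is absolutely simple if and only if its homogeneous part of degree $g^2z$, namely $\K G\varphi_2(v\otimes w')$, vanishes, that is, if and only if $\rho(g)=-1$; in that case $X_2^{V,W}=\K Gw''\simeq M(\epsilon g^2z,\sigma_2)$ with basis $\{w'',gw''\}$ of degrees $\epsilon g^2z$ and $\epsilon^2g^2z$. The main obstacle is the bookkeeping in the two applications of $\varphi_2$: keeping track of all the scalars $\rho(g)$, $\rho(z)$, $\sigma(z)$, $\sigma(g^2)$, $\lambda$ passing through the three braidings and through $\varphi_1$. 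The one conceptual point that distinguishes this from the Section~\ref{section:1} computation is the systematic use of $1+\lambda+\lambda^2=0$, which forces $w''\ne0$ unconditionally and $\epsilon w''=w''$, and hence makes the case distinction on $\sigma(\epsilon)$ of Lemma~\ref{lem:1:X2} disappear here.
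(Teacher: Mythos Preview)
Your approach is essentially the same as the paper's: decompose $X_2^{V,W}$ according to the two $G$-orbits on $g^G\times(gz)^G$, compute $\varphi_2(v\otimes w')$ and $\varphi_2(\epsilon^2v\otimes w')$ via the recursion for $\varphi_2$, and read off the character of $G^\epsilon$ on $w''$. Two small corrections to your expectations: the paper obtains $\varphi_2(v\otimes w')=(1+\rho(g))^2\,v\otimes w'$, not $(1+\rho(g))$ times a nonzero vector (same vanishing condition, of course); and the identity $\epsilon w''=w''$ does not follow from $1+\lambda+\lambda^2=0$ alone but also uses $\rho(g)=-1$ --- from $w''=\epsilon^2v\otimes w'+\rho(g)^2 v\otimes\epsilon w'-\rho(g)\epsilon v\otimes\epsilon^2 w'$ one sees that $\epsilon w''\in\K w''$ only when $\rho(g)^3=-1$. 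Neither point affects the argument, since both $\sigma_2(\epsilon)=1$ and the basis statement are only claimed after specializing to $\rho(g)=-1$.
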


\begin{proof}
	Observe that $X_2^{V,W}$ is the direct sum
  $\K G\varphi_2(v\otimes w')\oplus \K G\varphi_2(\epsilon^2v\otimes w')$
  of two Yetter-Drinfeld submodules.
  Acting with $g$ on Equation
	\eqref{eq:2:phi1(v,w)} and using Lemma \ref{lem:2:X1} we obtain that
	\begin{equation}
		\label{eq:2:phi1(v,gw)}
		\varphi_1(v\otimes gw)=-\rho(z)^{-1}w'.
	\end{equation}
	A direct computation shows that
	\begin{align*}
		\varphi_2(v\otimes w') = v\otimes w'&+\rho(g^2)v\otimes w'\\
		&+\rho(g)v\otimes\varphi_1(v\otimes w)-\rho(gz)v\otimes\varphi_1(v\otimes gw).
	\end{align*}
	Then $\varphi_2(v\otimes w')=(1+\rho(g))^2v\otimes w'$. To compute
	$\varphi_2(\epsilon^2v\otimes w')$ we act with $\epsilon^2$ on Equations
	\eqref{eq:2:phi1(v,w)} and \eqref{eq:2:phi1(v,gw)} to obtain
	\begin{align*}
		\varphi_1(\epsilon^2v\otimes w)=\lambda \epsilon^2w',\quad
		\varphi_1(\epsilon^2v\otimes gw)=-\lambda ^2\rho(z)^{-1}\epsilon^2w'.
	\end{align*}
	Then one computes
	\begin{align*}
		\varphi_2(\epsilon^2v\otimes w')=\epsilon^2v\otimes w'+\rho(g)^2v\otimes \epsilon w'
		&+\rho(g)\epsilon v\otimes\varphi_1(\epsilon^2v\otimes w)\\
		&-\rho(z)\rho(g)\epsilon v\otimes\varphi_1(\epsilon^2v\otimes gw),
	\end{align*}
	and hence 
	\begin{align}
		\label{eq:2:phi2(e2v,x1)}
		\varphi_2(\epsilon^2v\otimes w')=\epsilon^2v\otimes w'
		+\rho(g^2)v\otimes\epsilon w'
		-\rho(g)\epsilon v\otimes\epsilon^2w'.
	\end{align}
	Therefore $w''\coloneqq\varphi_2(\epsilon^2v\otimes w')\in(V\otimes V\otimes
	W)_{\epsilon g^2z}$ is non-zero. We conclude that
  $\K G\varphi_2(v\otimes w')=0$, that is,
	$\rho(g)=-1$. In this case one easily obtains
  from Equation \eqref{eq:2:phi2(e2v,x1)}
  that
	$\epsilon w''=w''$. The equations
	$g^2w''=\rho(z)^{-2}w''$ and $zw''=\rho(z)^2\sigma(z)w''$ are clear
  since $g^2,z\in Z(G)$ and $\sigma(g^2)=\rho (z)^{-2}$.
\end{proof}

\begin{lem}
	\label{lem:2:X3}
	Assume that $\rho(z)^2\sigma(g^2)=1$ and $\rho(g)=-1$. Then $X_3^{V,W}=0$. 
\end{lem}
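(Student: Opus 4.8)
The plan is to mimic the proof of Lemma~\ref{lem:1:X3}. By Lemma~\ref{lem:2:X2} we have $X_2^{V,W}\simeq M(\epsilon g^2z,\sigma_2)$ with $\sigma_2(\epsilon)=1$, so $\epsilon$ acts trivially on $X_2^{V,W}$; in particular $\epsilon w''=w''$ and $\epsilon\triangleright gw''=gw''$, while $g$ interchanges the basis $\{w'',gw''\}$. Since the degree support of $V\otimes X_2^{V,W}$ is the single conjugacy class $\{g^3z,\epsilon g^3z,\epsilon^2g^3z\}$ and $g$ moves $w''$ to $gw''$, the $\K G$-module $V\otimes X_2^{V,W}$ is generated by $v\otimes w''$; hence, by \cite[Lemma\,1.7]{MR2732989} and the $\K G$-linearity of $\varphi_3$, one gets $X_3^{V,W}=\K G\,\varphi_3(v\otimes w'')$, and it suffices to prove $\varphi_3(v\otimes w'')=0$.

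To set up the evaluation I would recall from the proof of Lemma~\ref{lem:2:X2} that, since $\rho(g)=-1$, Equation~\eqref{eq:2:phi2(e2v,x1)} reads $w''=\epsilon^2v\otimes w'+v\otimes\epsilon w'+\epsilon v\otimes\epsilon^2w'$, and that $\varphi_2(v\otimes w')=(1+\rho(g))^2\,v\otimes w'=0$. Next I would compute the two ``mixed'' values $\varphi_2(v\otimes\epsilon w')$ and $\varphi_2(v\otimes\epsilon^2w')$ using only the $\K G$-linearity of $\varphi_2$ together with the actions of $G$ on $X_1^{V,W}$ (Remark~\ref{rem:2:G_on_X1}) and on $X_2^{V,W}$: from $\epsilon^2\triangleright(v\otimes\epsilon w')=\epsilon^2v\otimes w'$ one obtains $\varphi_2(v\otimes\epsilon w')=\epsilon\triangleright w''=w''$, and from $g\triangleright(\epsilon^2v\otimes w')=-\sigma_1(g)\,\epsilon v\otimes w'$ together with $\epsilon\triangleright(v\otimes\epsilon^2w')=\epsilon v\otimes w'$ one obtains $\varphi_2(v\otimes\epsilon^2w')=-\rho(z)\,gw''$ (here $\sigma_1(g)=-\rho(gz^{-1})=\rho(z)^{-1}$ by Lemma~\ref{lem:2:X1} and $\rho(g)=-1$).

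Finally I would substitute into the defining formula $\varphi_3=\id-c_{V^{\otimes2}\otimes W,V}\,c_{V,V^{\otimes2}\otimes W}+(\id\otimes\varphi_2)c_{1,2}$. The double-braiding term sends $v\otimes w''$ to $\rho(z)\,\epsilon^2v\otimes gw''$ (using $\rho(g^2)=1$ and $\delta(gw'')=\epsilon^2g^2z$), while after carrying out the $c_{V,V}$-braidings on the three summands of $w''$ one finds $(\id\otimes\varphi_2)c_{1,2}(v\otimes w'')=-\epsilon v\otimes\varphi_2(v\otimes w')-v\otimes\varphi_2(v\otimes\epsilon w')-\epsilon^2v\otimes\varphi_2(v\otimes\epsilon^2w')$. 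Plugging in the values above, the term $-v\otimes\varphi_2(v\otimes\epsilon w')=-v\otimes w''$ cancels the identity term, and $-\epsilon^2v\otimes\varphi_2(v\otimes\epsilon^2w')=\rho(z)\,\epsilon^2v\otimes gw''$ cancels the double-braiding term, so $\varphi_3(v\otimes w'')=0$ and therefore $X_3^{V,W}=0$.

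The main obstacle is the bookkeeping in the last two paragraphs: tracking the braidings $c_{V,V}$ and $c_{V,W}$ through the three pure-tensor summands of $w''$, and correctly evaluating $\varphi_2$ on the mixed arguments $v\otimes\epsilon w'$ and $v\otimes\epsilon^2w'$, which (unlike in Lemma~\ref{lem:1:X3}) do not all vanish. The structural reason everything collapses is $\sigma_2(\epsilon)=1$: because $\epsilon$ acts trivially on $X_2^{V,W}$, the surviving $\varphi_2$-contributions are forced into $\K w''+\K gw''$ and cancel exactly against the identity and double-braiding terms.
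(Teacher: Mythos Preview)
Your proof is correct and follows essentially the same route as the paper: reduce to showing $\varphi_3(v\otimes w'')=0$, obtain $\varphi_2(v\otimes w')=0$, $\varphi_2(v\otimes\epsilon w')=w''$, and $\varphi_2(v\otimes\epsilon^2w')=-\rho(z)\,gw''$ by acting with $\epsilon$ and $g\epsilon$ on $w''=\varphi_2(\epsilon^2v\otimes w')$, and then check that the identity, double-braiding, and $(\id\otimes\varphi_2)c_{1,2}$ contributions in $\varphi_3(v\otimes w'')$ cancel pairwise. Your write-up just makes more explicit the reasons behind $X_3^{V,W}=\K G\,\varphi_3(v\otimes w'')$ and the intermediate $\varphi_2$-values, but the argument is the same as the paper's.
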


\begin{proof}
	Act with $\epsilon$ and with $g\epsilon $ on
  $w''=\varphi_2(\epsilon^2v\otimes w')$
  and use Lemmas~\ref{lem:2:X2} and \ref{lem:2:X1}
	to obtain that $w''=\varphi_2(v\otimes\epsilon w')$ and
	$gw''=-\rho(z)^{-1}\varphi_2(v\otimes\epsilon^2w')$, respectively.
	Now we calculate 
	\begin{align*}
		 \varphi_3(v\otimes w'')=v\otimes w''&-\rho(z)\epsilon^2 v\otimes gw''
		 -\epsilon v\otimes\varphi_2(v\otimes w')\\
		 &-v\otimes\varphi_2(v\otimes\epsilon w')-\epsilon^2v\otimes\varphi_2(v\otimes\epsilon^2 w'),
	\end{align*}
  and hence $\varphi_3(v\otimes w'')=0$. Since $X_3^{V,W}=\K G\varphi
  _3(v\otimes w'')$, the proof of the lemma is completed.
\end{proof}

Now we compute the modules $(\ad W)^n(V)$ for $n\in\N$. For $n\in\N$ we write
$\varphi_n=\varphi_n^{W,V}$.

First note that
\[
	X_1^{W,V}=\varphi_1(W\otimes V)=\K G\varphi_1(w\otimes v).
\]
Further,
\begin{equation}
	\label{eq:2:y1}
	v_1'\coloneqq\varphi_1(w\otimes v)=(w-\rho(z)gw)\otimes v\in(W\otimes V)_{gz}
\end{equation}
is non-zero. 

\begin{lem}
	\label{lem:2:Y1} 
	Assume that $\rho(z)^2\sigma(g^2)=1$. Then $X_1^{W,V}$ is simple and
	$X_1^{W,V}\simeq M(gz,\rho_1)$, where $\rho_1$ is the character of $G^{gz}$ defined
	by \[
		\rho_1(g)=-\rho(gz^{-1}),\quad
		\rho_1(z)=\rho(z)\sigma(z).
	\]
	The set $\{v_1',\epsilon v_1',\epsilon^2 v_1'\}$ is a basis of $X_1^{W,V}$. The
	degrees of these basis vectors are $gz$, $g\epsilon z$, and $g\epsilon ^2z$,
	respectively.
\end{lem}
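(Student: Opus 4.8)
The plan is to deduce the statement from Lemma~\ref{lem:2:X1} by transporting structure along the braiding, exactly as was done for Lemma~\ref{lem:1:Y1}. Recall that $\varphi_1^{V,W}=\id-c_{W,V}c_{V,W}$ and $\varphi_1^{W,V}=\id-c_{V,W}c_{W,V}$. The bare associativity identity $c_{V,W}\,\varphi_1^{V,W}=\varphi_1^{W,V}\,c_{V,W}$, together with the bijectivity of $c_{V,W}\colon V\otimes W\to W\otimes V$, shows that $c_{V,W}$ restricts to a bijection $X_1^{V,W}\to X_1^{W,V}$; and since $c_{V,W}$ is a morphism in $\ydG$, this bijection is an isomorphism of Yetter-Drinfeld modules. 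Hence $X_1^{W,V}$ is simple if and only if $X_1^{V,W}$ is, which by Lemma~\ref{lem:2:X1} holds precisely when $\rho(z)^2\sigma(g^2)=1$; and under this hypothesis $X_1^{W,V}\simeq X_1^{V,W}\simeq M(gz,\sigma_1)$, so one puts $\rho_1=\sigma_1$. As $G^{gz}=G^g=\langle g,z\rangle$ is abelian, $\rho_1$ is a character, with $\rho_1(g)=-\rho(gz^{-1})$ and $\rho_1(z)=\rho(z)\sigma(z)$.

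It then remains to justify the displayed basis and the degrees. By \cite[Lemma~1.7]{MR2732989} we have $X_1^{W,V}=\K G v_1'$, and by Equation~\eqref{eq:2:y1} the vector $v_1'=\varphi_1(w\otimes v)$ lies in $(W\otimes V)_{gz}$ and is non-zero. Under the isomorphism $X_1^{W,V}\simeq M(gz,\rho_1)$ the $gz$-homogeneous component is one-dimensional and generates the module, so $v_1'$ corresponds to a generator; consequently $v_1'$, $\epsilon v_1'$, $\epsilon^2 v_1'$ is a basis of $X_1^{W,V}$, just as for the module $M(g,\rho)$ of Remark~\ref{rem:1:G_on_V} with $v$ replaced by $v_1'$. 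The degree of $\epsilon^j v_1'$ equals $\epsilon^j(gz)\epsilon^{-j}$, and the relation $\gamma\nu=\nu^2\gamma$ (which yields $\epsilon g\epsilon^{-1}=g\epsilon$ in $G$) together with the centrality of $z$ gives the degrees $gz$, $g\epsilon z$, $g\epsilon^2 z$.

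There is essentially no hard step here: the proof is a routine transport of structure along $c_{V,W}$ plus bookkeeping of degrees, and the only points needing a moment's care are checking that the braiding intertwines $\varphi_1^{V,W}$ and $\varphi_1^{W,V}$ (so that it actually carries $X_1^{V,W}$ onto $X_1^{W,V}$) and the dimension count ensuring that the three displayed vectors span all of $X_1^{W,V}$. Should a self-contained argument in the style of Lemma~\ref{lem:2:X1} be preferred, one instead computes $gv_1'$ directly from Equation~\eqref{eq:2:y1} and Remark~\ref{rem:2:G_on_W}, finds $gv_1'=-\rho(gz)\sigma(g^2)v_1'$ exactly when $\rho(z)^2\sigma(g^2)=1$, and reads off $\rho_1(z)=\rho(z)\sigma(z)$ from $v_1'\in W\otimes V$ and $z\in Z(G)$.
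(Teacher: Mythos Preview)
Your proof is correct and follows essentially the same approach as the paper, which simply notes that the claim follows from Lemma~\ref{lem:2:X1} (via the isomorphism $X_1^{V,W}\simeq X_1^{W,V}$ given by $c_{V,W}$, exactly as in Lemma~\ref{lem:1:Y1}). You have just spelled out in detail what the paper leaves implicit: the intertwining identity $c_{V,W}\varphi_1^{V,W}=\varphi_1^{W,V}c_{V,W}$ and the bookkeeping for the basis and degrees.
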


\begin{proof}
	This follows from Lemma \ref{lem:2:X1}. 
\end{proof}

\begin{lem}
	\label{lem:2:Y2}
	Assume that $\rho(z)^2\sigma(g^2)=1$ and $\rho(g)=-1$. Then the following
	hold:
	\begin{enumerate}
		\item $X_2^{W,V}=0$ if and only if $\sigma(z)=-1$. 
		\item $X_2^{W,V}$ is simple if and only if $\sigma(z)=1$ and
			$\sigma(z)\ne-1$. In this case, $X_2^{W,V}\simeq M(gz^2,\rho_2)$,
      where $\rho_2$ is the character of $G^g$ with
			\begin{align*}
				\rho_2(g)=-\rho(z)^{-2},\quad \rho_2(z)=\rho(z).
			\end{align*}
	\end{enumerate}
\end{lem}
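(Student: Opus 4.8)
The plan is to follow the template of Lemma~\ref{lem:1:Y2}. Since $W\simeq M(z,\sigma)$ and $z$ is central, every homogeneous element of $W$ has degree $z$, so the diagonal $G$-action on $\supp W\times\supp X_1^{W,V}=\{z\}\times (gz)^G$ is transitive; hence \cite[Lemma\,1.7]{MR2732989} gives $X_2^{W,V}=\K G\,\varphi_2(w\otimes v_1')$, where $v_1'=\varphi_1(w\otimes v)=(w-\rho(z)gw)\otimes v$ is the generator of $X_1^{W,V}$ from \eqref{eq:2:y1}. So the whole statement reduces to evaluating $\varphi_2(w\otimes v_1')$ and then, when it is nonzero, analysing the Yetter-Drinfeld module it generates.

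First I would compute $\varphi_2(w\otimes v_1')$ by unraveling $\varphi_2=\id-c_{W\otimes V,W}c_{W,W\otimes V}+(\id\otimes\varphi_1)c_{1,2}$ on $w\otimes v_1'\in W\otimes W\otimes V$. The three inputs are: $z$ acts on $X_1^{W,V}$ as the scalar $\rho_1(z)=\rho(z)\sigma(z)$ and $gz$ acts on $w$ as $\sigma(z)gw$ (Lemma~\ref{lem:2:Y1} and Remark~\ref{rem:2:G_on_W}); $\varphi_1(w\otimes v)=v_1'$; and $c_{W,W}(w\otimes w)=\sigma(z)w\otimes w$, $c_{W,W}(w\otimes gw)=\sigma(z)gw\otimes w$. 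Collecting terms, all three contributions turn out to be proportional to $(w-\rho(z)\sigma(z)gw)\otimes v_1'$, and one gets
\[
\varphi_2(w\otimes v_1')=(1+\sigma(z))\,(w-\rho(z)\sigma(z)gw)\otimes v_1'.
\]
Set $v_1''=(w-\rho(z)\sigma(z)gw)\otimes(w-\rho(z)gw)\otimes v$; this is a nonzero homogeneous element of $W^{\otimes2}\otimes V$ of degree $gz^2$, and $X_2^{W,V}=\K G\varphi_2(w\otimes v_1')$. Part (1) is then immediate: $X_2^{W,V}=0$ iff $1+\sigma(z)=0$, i.e. iff $\sigma(z)=-1$.

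For part (2) I would assume $\sigma(z)\ne-1$, so $X_2^{W,V}=\K G v_1''$ with $v_1''$ homogeneous of degree $gz^2$. Here $(gz^2)^G$ has three elements, so $G^{gz^2}$ has index $3$ in $G$, whence $G^{gz^2}=G^g=\langle g,z\rangle$, which is abelian. Since $\K$ is algebraically closed, $\K G v_1''$ is absolutely simple if and only if $G^g$ stabilises the line $\K v_1''$. Now $z$ does so automatically, acting diagonally on $W^{\otimes2}\otimes V$ by the scalar $\sigma(z)^2\rho(z)$. For $g$, a diagonal computation using $\sigma(g^2)=\rho(z)^{-2}$ (which is $\rho(z)^2\sigma(g^2)=1$) gives
\[
g v_1''=-\sigma(z)\rho(z)^{-2}\,(w-\rho(z)\sigma(z)^{-1}gw)\otimes(w-\rho(z)gw)\otimes v,
\]
which lies in $\K v_1''$ iff $\rho(z)\sigma(z)^{-1}=\rho(z)\sigma(z)$, i.e. iff $\sigma(z)^2=1$. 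Together with $\sigma(z)\ne-1$ this forces $\sigma(z)=1$; and when $\sigma(z)=1$ the two displays specialise to $g v_1''=-\rho(z)^{-2}v_1''$ and $z v_1''=\rho(z)v_1''$, identifying $X_2^{W,V}\simeq M(gz^2,\rho_2)$ with $\rho_2$ as in the statement. (If $\charK=2$, where $1=-1$, the condition ``$\sigma(z)=1$ and $\sigma(z)\ne-1$'' is vacuous, consistently with the fact that $X_2^{W,V}$ is then never simple.)

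The bookkeeping in the $\varphi_2$ computation is routine. The one point needing care is the necessity half of (2): that $\sigma(z)^2=1$ is not merely sufficient but also necessary for simplicity of $\K G v_1''$. This rests on the standard fact that a cyclic Yetter-Drinfeld module generated by a single homogeneous element whose stabiliser is abelian is absolutely simple exactly when that stabiliser acts through a one-dimensional character; so if $\sigma(z)^2\ne1$, the vectors $v_1''$ and $g v_1''$ are linearly independent of the same degree $gz^2$ inside $X_2^{W,V}$, which rules out simplicity.
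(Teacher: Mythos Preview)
Your proof is correct and follows essentially the same approach as the paper: reduce to the single generator $\varphi_2(w\otimes v_1')$, compute it as $(1+\sigma(z))(w-\rho(z)\sigma(z)gw)\otimes v_1'$, and then test whether $g$ fixes the line $\K v_1''$ to decide simplicity. The only cosmetic difference is that you expand $v_1''$ fully in $W^{\otimes 2}\otimes V$ and act by $g$ on each tensor factor, whereas the paper keeps $v_1''\in W\otimes X_1^{W,V}$ and uses the eigenvalue $\rho_1(g)=\rho(z)^{-1}$ from Lemma~\ref{lem:2:Y1}; both routes lead to the same condition $\sigma(z)^2=1$.
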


\begin{proof}
	Since $X_2^{W,V}=\varphi_2(W\otimes X_1^{W,V})=\K G\varphi_2(w\otimes v_1')$,
  we need
	to compute $\varphi_2(w\otimes v_1')$. Using Equation \eqref{eq:2:y1} we
	obtain that
	\begin{align*}
		\varphi_2(w\otimes v_1') &= 
		w\otimes v_1'-c_{X_1^{W,V},W}c_{W,X_1^{W,V}}(w\otimes v_1')+(\id\otimes\varphi_1)c_{1,2}(w\otimes v_1')\\
		&=(1+\sigma(z))(w\otimes v_1'-\rho(z)\sigma(z)gw\otimes v_1'),
	\end{align*}
	and hence the first claim follows. Now assume that $\sigma(z)\ne-1$ and let
	$v_1''\coloneqq(w-\rho(z)\sigma(z)gw)\otimes v_1'$. Then
	$v_1''\in (X_2^{W,V})_{gz^2}$ is non-zero. Further,
  using Lemma \ref{lem:2:Y1} we obtain that
	\begin{align*}
		gv_1'' &= (gw-\rho(z)\sigma(z)g^2w)\otimes gv_1'
		=\rho(z)^{-1}(gw-\rho(z)\sigma(g^2z)w)\otimes v_1'.
	\end{align*}
	Observe that $gv_1''=\K v_1''$ if and only if
  $\sigma(z^2)=1$, and then $gv_1''=-\sigma (g^2z)v_1''$.
  Thus $X_2^{W,V}$ is simple if and only if
	$\sigma(z)\ne-1$ and $\sigma(z)=1$. In this case, since $\sigma(z)=1$, we
	conclude that $X_2^{W,V}\simeq M(gz^2,\rho_2)$, where $\rho_2(z)=\rho(z)$ and
	$\rho_2(g)=-\rho(z)^{-2}$. 
\end{proof}

Now we define $y_n=(w-\rho(z)gw)^{\otimes n}\otimes v$ for all $n\geq0$.

\begin{lem}
	\label{lem:2:Yn}
	Assume that $\rho(z)^2\sigma(g^2)=1$, $\rho(g)=-1$, $\sigma(z)=1$, and   
	$\sigma(z)\ne-1$. Let $n\ge 1$.
  Then $\varphi _n(w\otimes y_{n-1})=ny_n$ and
  $X_n^{W,V}=n!\K Gy_n$. Moreover, if $n!\not=0$ then
  $X_n^{W,V}\simeq M(gz^n,\rho_n)$, where $\rho_n$
	is the character on $G^{g}$ given by
	\begin{align*}
		\rho_n(g)=(-1)^{n+1}\rho(z)^{-n},\quad
		\rho_n(z)=\rho(z).
	\end{align*}
\end{lem}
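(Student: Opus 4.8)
The plan is an induction on $n$, carried out simultaneously for the three assertions of the lemma together with the auxiliary eigenvalue equations
$gy_n=(-1)^{n+1}\rho(z)^{-n}y_n$ and $zy_n=\rho(z)y_n$, which hold in $W^{\otimes n}\otimes V$ for every $n\ge 0$ (no characteristic assumption is needed for these). The eigenvalue equations come first and are the easiest: since $z\in Z(G)$ acts on each factor $w-\rho(z)gw$ by $\sigma(z)=1$ and on $v$ by $\rho(z)$, we get $zy_n=\rho(z)y_n$; and since $G$ acts diagonally on a tensor product, $gy_{n+1}=g(w-\rho(z)gw)\otimes gy_n$, where $g(w-\rho(z)gw)=gw-\rho(z)g^2w=gw-\rho(z)^{-1}w=-\rho(z)^{-1}(w-\rho(z)gw)$ by the hypothesis $\rho(z)^2\sigma(g^2)=1$. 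So the scalar picks up a factor $-\rho(z)^{-1}$ at each step, starting from $gy_0=gv=\rho(g)v=-v$.

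For $n=1$ there is essentially nothing to compute: Equation~\eqref{eq:2:y1} says $\varphi_1(w\otimes y_0)=\varphi_1(w\otimes v)=v_1'=y_1$, hence $X_1^{W,V}=\K G\varphi_1(w\otimes v)=\K Gy_1$, and Lemma~\ref{lem:2:Y1} together with $\rho(g)=-1$ and $\sigma(z)=1$ identifies $X_1^{W,V}$ with $M(gz,\rho_1)$ for the claimed $\rho_1$.

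For the inductive step I would expand $\varphi_{n+1}(w\otimes y_n)$ using the recursion $\varphi_{n+1}=\id-c_{W^{\otimes n}\otimes V,W}\,c_{W,W^{\otimes n}\otimes V}+(\id\otimes\varphi_n)c_{1,2}$ of Lemma~\ref{lem:X_n} (with $V$ and $W$ interchanged). Because $w\in W_z$ with $z$ central and $\sigma(z)=1$, the middle term collapses: $c_{W,W^{\otimes n}\otimes V}(w\otimes y_n)=\rho(z)\,y_n\otimes w$, and since $y_n$ is homogeneous of degree $gz^n$ and $(gz^n)w=gw$, applying $c_{W^{\otimes n}\otimes V,W}$ gives $\rho(z)gw\otimes y_n$. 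For the last term, $c_{1,2}$ (acting on the first two $W$-factors) sends $w\otimes(w-\rho(z)gw)$ to $(w-\rho(z)gw)\otimes w$, so $(\id\otimes\varphi_n)c_{1,2}(w\otimes y_n)=(w-\rho(z)gw)\otimes\varphi_n(w\otimes y_{n-1})=(w-\rho(z)gw)\otimes ny_n=ny_{n+1}$ by the induction hypothesis. Altogether $\varphi_{n+1}(w\otimes y_n)=(w-\rho(z)gw)\otimes y_n+ny_{n+1}=(n+1)y_{n+1}$. For the module statement: if $n!=0$ in $\K$, then $X_n^{W,V}=0$ by hypothesis, so $X_{n+1}^{W,V}=0=(n+1)!\,\K Gy_{n+1}$; if $n!\neq0$, then $X_n^{W,V}=\K Gy_n$ with $y_n$ homogeneous of degree $gz^n$, the diagonal $G$-action on $z^G\times(gz^n)^G$ is transitive since $z$ is central, so \cite[Lemma~1.7]{MR2732989} yields $X_{n+1}^{W,V}=\K G\varphi_{n+1}(w\otimes y_n)=(n+1)!\,\K Gy_{n+1}$. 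Finally, when $(n+1)!\neq0$ the element $y_{n+1}$ is a nonzero homogeneous generator of $X_{n+1}^{W,V}$ of degree $gz^{n+1}$, and by the eigenvalue equations $G^{gz^{n+1}}=G^g=\langle g,z\rangle$ acts on $\K y_{n+1}$ through the character $\rho_{n+1}$; since $M(gz^{n+1},\rho_{n+1})$ is a simple Yetter-Drinfeld module surjecting onto $\K Gy_{n+1}$, the two are isomorphic.

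None of these steps is genuinely hard — they are all short braided computations — so the main obstacle is really just bookkeeping: keeping track of which tensor factor each copy of $c$ acts on in the recursion for $\varphi_{n+1}$, and consistently using $\sigma(z)=1$ and $\sigma(g^2)=\rho(z)^{-2}$ so that the scalars cancel. The one point deserving a word of care is the application of \cite[Lemma~1.7]{MR2732989}, which needs the chosen generator of $X_n^{W,V}$ to be homogeneous of the appropriate degree; here that generator is $y_n$ of degree $gz^n$, and this causes no trouble precisely because $z$ is central, so $z^G\times(gz^n)^G$ is a single diagonal orbit.
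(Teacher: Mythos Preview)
Your argument is correct and follows essentially the same route as the paper's proof: first compute the $g$- and $z$-eigenvalues of $y_n$ directly from $g(w-\rho(z)gw)=-\rho(z)^{-1}(w-\rho(z)gw)$, then prove $\varphi_{n+1}(w\otimes y_n)=(n+1)y_{n+1}$ by expanding the recursion for $\varphi_{n+1}$ and using $\sigma(z)=1$. Your treatment is in fact slightly more explicit than the paper's in two places --- you separate out the case $n!=0$ in the induction, and you justify $X_{n+1}^{W,V}=\K G\,\varphi_{n+1}(w\otimes y_n)$ via transitivity of the diagonal action on $z^G\times(gz^n)^G$ --- whereas the paper subsumes these under ``the remaining claims are clear.''
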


\begin{proof}
	It is clear that $y_n\in(W^{\otimes n}\otimes V)_{gz^n}$ for all $n\in\N$.
  Moreover,
  \[ g(w-\rho (z)gw)=gw-\rho (z)^{-1}w=-\rho (z)^{-1}(w-\rho (z)gw), \]
  and hence $gy_n=(-1)^{n+1}\rho (z)^{-n}y_n$ for all $n\in \N$.
  To prove the other claims
	we proceed by induction on $n$. For $n=1$ the claim holds by
  Lemma~\ref{lem:2:Y1}. So assume
  that the claim is valid for some $n\ge 1$. Then
  \[ X_{n+1}^{W,V}=\varphi _{n+1}(w\otimes X_n^{W,V})
     =\varphi _{n+1}(w\otimes n!y_n). \]
  Since $\sigma(z)=1$, we obtain that 
	\begin{align*}
		\varphi_{n+1}(w\otimes y_n) = w\otimes y_n-\rho (z)gw\otimes y_n
		+(w-\rho(z)gw)\otimes\varphi_n(w\otimes y_{n-1}).
	\end{align*}
  Since $\varphi _n(w\otimes y_{n-1})=ny_n$ by induction hypothesis, the
  latter equation implies that $\varphi_{n+1}(w\otimes y_n)=(n+1)y_{n+1}$.
  Therefore $X_{n+1}^{W,V}=(n+1)!\K Gy_{n+1}$.
  The remaining claims are clear.
\end{proof}

In the remaining part of this section we do not need and do not use any
assumption on the field $\K $.

\begin{pro} \label{pro:2}
	Let $V,W\in \ydG $ such that $V\simeq M(g,\rho)$,
  where $\rho$ is a character of $G^g$,
  and $W\simeq M(z,\sigma)$,
  where $\sigma$ is an absolutely irreducible representation of $G$ of degree two.
	Then $\charK\ne3$ and the following hold:
	\begin{enumerate}
		\item $(\ad V)^m(W)$ and $(\ad W)^m(V)$ are absolutely simple or zero for
      all $m\in\N_0$ if and only if
			\begin{align*}
				\rho(z)^2\sigma(g^2)=1, \quad
				\rho(g)=-1, \quad
				\sigma(z)^2=1.
			\end{align*}
		\item Assume that the conditions on $\rho $, $\sigma $ in (1) hold.
      Then 
      the Cartan matrix of $(V,W)$ satisfies
      $a_{1,2}^{(V,W)}=-2$ and
      $X_2^{V,W}\simeq M(\epsilon g^2z,\sigma_2)$, where
			$\sigma_2$ is the character of $G^\epsilon$ given by
			\[
				\sigma_2(\epsilon)=1,\quad 
				\sigma_2(z)=\rho(z)^2\sigma(z),\quad
				\sigma_2(g^2)=\sigma(g^2).
			\]
			Moreover,
			\[
			a_{2,1}^{(V,W)}=\begin{cases}
				-1 & \text{if $\sigma(z)=-1$},\\
				1-p & \text{if $\sigma(z)=1$ and $\charK=p \ge 5$},
			\end{cases}
			\]
			and $X_m^{W,V}\simeq M(gz^n,\rho_m)$, where $m=-a_{2,1}^{(V,W)}$ and 
			$\rho_m$ is the character of $G^g$ given by 
			\[
				\rho_{m}(g)=(-1)^{m+1}\rho(z)^{-m},\quad
				\rho_{m}(z)=\rho(z)\sigma(z).
			\]
	\end{enumerate}
\end{pro}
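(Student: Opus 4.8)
The plan is to obtain Proposition~\ref{pro:2} by assembling the lemmas of this section, after reducing to the case of an algebraically closed base field. Since $W\simeq M(z,\sigma)$ with $z$ central and $\sigma$ absolutely irreducible of degree two, the underlying $\K G$-module of $W$ is $\sigma$, a two-dimensional simple module; applying Lemma~\ref{lem:simpleG3modules} to $\overline{\K}\otimes_\K W$ over $\overline{\K}$ then yields $\charK\ne 3$. For the remaining assertions I would first observe, using the explicit description in Lemma~\ref{lem:X_n}, that the modules $X_n^{V,W}=(\ad V)^n(W)$ and $X_n^{W,V}=(\ad W)^n(V)$ are given by formulas that commute with the field extension $-\otimes_\K\overline{\K}$, while $\overline{\K}\otimes V\simeq M(g,\rho)$ and $\overline{\K}\otimes W\simeq M(z,\sigma)$ with $\sigma$ irreducible of degree two over $\overline{\K}$. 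Hence each module in the two chains is absolutely simple (resp. zero) if and only if its base change to $\overline{\K}$ is simple (resp. zero), and the conditions $\rho(z)^2\sigma(g^2)=1$, $\rho(g)=-1$, $\sigma(z)^2=1$, as well as the claimed structure of $X_2^{V,W}$, $X_1^{W,V}$ and $X_m^{W,V}$, are insensitive to this base change. So it suffices to argue over $\overline{\K}$, where Lemmas~\ref{lem:2:X1}--\ref{lem:2:Yn} apply directly.

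For part~(1), the forward direction: by \eqref{eq:2:phi1(v,w)}, $X_1^{V,W}=\K Gw'$ with $w'\ne 0$, so if $(\ad V)(W)$ is simple then $\rho(z)^2\sigma(g^2)=1$ by Lemma~\ref{lem:2:X1}; granting this, $X_2^{V,W}\ne 0$ by Lemma~\ref{lem:2:X2}, so if it is simple then $\rho(g)=-1$; and then Lemma~\ref{lem:2:Y2} shows $X_2^{W,V}$ can be simple or zero only when $\sigma(z)\in\{1,-1\}$, i.e. $\sigma(z)^2=1$. Conversely, assuming the three equations, Lemmas~\ref{lem:2:X1}, \ref{lem:2:X2} and \ref{lem:2:X3} give that $X_1^{V,W}$ and $X_2^{V,W}$ are simple and $X_m^{V,W}=0$ for $m\ge 3$, Lemma~\ref{lem:2:Y1} gives that $X_1^{W,V}$ is simple, and for $X_m^{W,V}$ with $m\ge 2$ one invokes Lemma~\ref{lem:2:Y2} (giving $X_2^{W,V}=0$ if $\sigma(z)=-1$) or Lemma~\ref{lem:2:Yn} (if $\sigma(z)=1$, then $X_n^{W,V}=n!\,\K Gy_n$, which is zero when $n!=0$ in $\K$ and simple, isomorphic to $M(gz^n,\rho_n)$, otherwise). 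In every case all modules in both chains are simple or zero.

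For part~(2): since $X_1^{V,W},X_2^{V,W}\ne 0$ and $X_3^{V,W}=0$, the definition of the Cartan matrix gives $a_{1,2}^{(V,W)}=-2$, and $X_2^{V,W}\simeq M(\epsilon g^2z,\sigma_2)$ with the stated $\sigma_2$ by Lemma~\ref{lem:2:X2} (using $\sigma_2(g^2)=\rho(z)^{-2}=\sigma(g^2)$). For $a_{2,1}^{(V,W)}$ I would split into two cases. If $\sigma(z)=-1$, then $X_1^{W,V}\ne 0$ and $X_2^{W,V}=0$ by Lemmas~\ref{lem:2:Y1} and \ref{lem:2:Y2}, so $a_{2,1}^{(V,W)}=-1$ and $X_1^{W,V}\simeq M(gz,\rho_1)$, which is the asserted $\rho_m$ at $m=1$. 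If $\sigma(z)=1$ and $\charK=p\ge 5$, then Lemma~\ref{lem:2:Yn} gives $X_n^{W,V}\simeq M(gz^n,\rho_n)\ne 0$ for $1\le n\le p-1$ and $X_p^{W,V}=p!\,\K Gy_p=0$, so $a_{2,1}^{(V,W)}=1-p$ with $X_{p-1}^{W,V}\simeq M(gz^{p-1},\rho_{p-1})$, matching the stated formula. (The remaining possibility $\sigma(z)=1$, $\charK=0$ yields $X_m^{W,V}\ne 0$ for all $m$ and lies outside the two listed cases; and in characteristic $2$ the hypothesis $\sigma(z)^2=1$ forces $\sigma(z)=1=-1$, which falls under the first case.)

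I do not expect a genuine obstacle here: once the base-change reduction is in place the argument is pure bookkeeping over the lemmas. The only steps that require attention are that reduction itself and, in the branch $\sigma(z)=1$, tracking exactly when the integer $n!$ vanishes in $\K$ so that the correct finite value $1-p$ is read off the chain of $X_m^{W,V}$.
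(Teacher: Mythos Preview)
Your proposal is correct and follows essentially the same approach as the paper: reduce to an algebraically closed field via compatibility of $X_n^{V,W}$ and $X_n^{W,V}$ with base change, invoke Lemma~\ref{lem:simpleG3modules} for $\charK\ne3$, and then read off both directions of~(1) and all of~(2) from Lemmas~\ref{lem:2:X1}--\ref{lem:2:Yn}. Your write-up is in fact more explicit about the forward implication and the case analysis for $a_{2,1}^{(V,W)}$ than the paper's proof, which simply cites the lemmas.
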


\begin{proof}
  Since $W$ is absolutely simple and $z\in Z(G)$, the representation $\sigma $
  is absolutely irreducible. Hence $\charK \not=3$ and $\sigma (1+\epsilon
  +\epsilon ^2)=0$ by Lemma~\ref{lem:simpleG3modules}.

  Since $(\ad (\mathbb{L}\otimes _\K V))^m(\mathbb{L}\otimes _\K W)\simeq
  \mathbb{L} \otimes _\K (\ad V)^m(W)$ for all $m\in \N $ and all field extensions
  $\mathbb{L}$ of $\K$, for (1) we may assume that $\K $ is algebraically
  closed. Thus (1) follows from Lemmas~\ref{lem:2:X1}, \ref{lem:2:X2},
  \ref{lem:2:X3}, \ref{lem:2:Y1}, \ref{lem:2:Y2}, and \ref{lem:2:Yn}.
  Further, under the conditions on $\rho $ and $\sigma $ in (1)
  we obtain that $a_{12}^{(V,W)}=-2$ by Lemmas~\ref{lem:2:X2} and \ref{lem:2:X3},
  and the structure of $X_2^{V,W}$ is given by Lemma~\ref{lem:2:X2}.
  Similarly, the value of $a_{21}^{(V,W)}$ and the structure of $X_m^{W,V}$
  are obtained from Lemmas~\ref{lem:2:Y2} and \ref{lem:2:Yn}.
\end{proof}

\begin{cor}
	\label{cor:R1:D}
	Assume that $\charK\ne3$.
	Let $V,W\in \ydG $ such that $V\simeq M(g,\rho)$ and
        $W\simeq M(z,\sigma)$,
        where $\rho$ is a character of $G^g$,
        and $\sigma$ is an absolutely irreducible representation of $G$ of degree two.
	Assume that
	\[
		\rho(z)^2\sigma(g^2)=1,\quad
		\rho(g)=-1,\quad 
		\sigma(z)=-1.
	\]
	Further, let $g'=g^{-1}$, $\epsilon'=\epsilon$, $z'=g^2z$,  $\rho'$ be the
	character of $G^{g'}=G^g$ dual to $\rho$, and $\sigma'$ be
	the character of $G^\epsilon $ given by $\sigma'(\epsilon)=1$,
	$\sigma'(z)=-\rho(z)^2$, $\sigma'(g^2)=\sigma(g^2)$.  Then
  $a_{1,2}^{(V,W)}=-2$ and
	\[
		R_1(V,W)=\left(V^*,X_2^{V,W}\right)
	\]
	with $V^*\simeq M(g',\rho')$, $X_2^{V,W}\simeq M(\epsilon'z',\sigma')$, and 
	\begin{align*}
		\rho'(g')=-1,\quad \sigma'(z')=-1,\quad
		\rho'(z')^2\sigma'(\epsilon'g'^2)=1,\quad
		\sigma'(\epsilon ')=1.
	\end{align*}
\end{cor}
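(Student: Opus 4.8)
The plan is to imitate the proof of Corollary~\ref{cor:R1:A}, replacing the input from Proposition~\ref{pro:1} by the analogous input from Proposition~\ref{pro:2}. First I would observe that the hypotheses $\rho(z)^2\sigma(g^2)=1$, $\rho(g)=-1$, $\sigma(z)=-1$ are exactly (a special case of) the conditions in Proposition~\ref{pro:2}(1) (note $\sigma(z)^2=1$), so that $(\ad V)^m(W)$ is absolutely simple or zero for all $m\in\N_0$, the Cartan entry is $a_{1,2}^{(V,W)}=-2$, and $X_2^{V,W}\simeq M(\epsilon g^2z,\sigma_2)$ with $\sigma_2(\epsilon)=1$, $\sigma_2(z)=\rho(z)^2\sigma(z)$, $\sigma_2(g^2)=\sigma(g^2)$. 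Using $\sigma(z)=-1$ one has $\sigma_2(z)=-\rho(z)^2$, hence $\sigma_2=\sigma'$, and the support element is $\epsilon g^2z=\epsilon'z'$; thus $X_2^{V,W}\simeq M(\epsilon'z',\sigma')$.

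Next I would apply the definition of the reflection $R_1$ together with Lemma~\ref{lem:X_n}, which identifies $(\ad V)^n(W)$ with $X_n^{V,W}$: since $-a_{1,2}^{(V,W)}=2$, we get $R_1(V,W)=\bigl(V^*,(\ad V)^2(W)\bigr)=(V^*,X_2^{V,W})$, and by the construction of the dual Yetter-Drinfeld module $V^*\simeq M(g^{-1},\rho')=M(g',\rho')$ with $\rho'$ dual to $\rho$ on $G^{g'}=G^g$. It then remains to verify the four scalar identities. Since $\rho'$ is dual to $\rho$, it is clear that $\rho'(g')=\rho(g)=-1$. A direct calculation gives $\sigma'(\epsilon')=\sigma'(\epsilon)=1$ and $\sigma'(z')=\sigma'(g^2z)=\sigma'(g^2)\sigma'(z)=\sigma(g^2)\cdot(-\rho(z)^2)=-(\rho(z)^2\sigma(g^2))=-1$. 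Finally, using $\rho(g)=-1$ one computes $\rho'(z')=\rho'(g^2z)=\rho(g)^{-2}\rho(z)^{-1}=\rho(z)^{-1}$, whence $\rho'(z')^2\sigma'(\epsilon'g'^2)=\rho(z)^{-2}\sigma(g^2)^{-1}=(\rho(z)^2\sigma(g^2))^{-1}=1$.

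There is no serious obstacle: everything reduces to Proposition~\ref{pro:2} and a handful of character evaluations. The only point requiring care is the bookkeeping of the identification $X_2^{V,W}\simeq M(\epsilon'z',\sigma')$, i.e.\ checking that the character $\sigma_2$ handed over by Proposition~\ref{pro:2} coincides with the prescribed $\sigma'$ once the relation $\sigma(z)=-1$ is used, and that the chosen triple $(g',\epsilon',z')$ records the degrees of $V^*$ and $X_2^{V,W}$ correctly.
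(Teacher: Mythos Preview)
Your proposal is correct and follows exactly the same route as the paper's proof: invoke Proposition~\ref{pro:2} to obtain $a_{1,2}^{(V,W)}=-2$ and the description of $X_2^{V,W}$, then verify the four scalar identities by direct evaluation of the characters $\rho'$ and $\sigma'$ on the new generators $g',\epsilon',z'$. The paper's proof is terser (it computes only $\sigma'(z')$ explicitly and says ``similarly one proves the other formulas''), but your expanded version matches it step for step.
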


\begin{proof}
	By Proposition~\ref{pro:2} we obtain that $a_{1,2}^{(V,W)}=-2$ and hence
	the description of $R_1(V,W)$ follows. Then
	\begin{align*}
		\sigma'(z')=\sigma'(g^2)\sigma'(z)=-\sigma(g^2)\rho(z^2)=-1.
	\end{align*}
	Similarly one proves the other formulas. 
\end{proof}

\begin{cor}
	\label{cor:R2:D}
	Assume that $\charK\ne3$.
	Let $V,W\in \ydG $ such that $V\simeq M(g,\rho)$ and
        $W\simeq M(z,\sigma)$,
        where $\rho$ is a character of $G^g$,
        and $\sigma$ is an absolutely irreducible representation of $G$ of degree two.
	Assume that
	\[
	\rho(z)^2\sigma(g^2)=1,\quad 
	\rho(g)=-1,\quad
	\sigma(z)=-1. 
	\]
	Let $g''=gz$, $\epsilon''=\epsilon^{-1}$, $z''= z^{-1}$, $\rho''$ be
	the character of $G^g$ given by $\rho''(g)=\rho(z)^{-1}$
	and $\rho''(z)=-\rho(z)$, and
	$\sigma''$ be the degree two representation of $G$ dual to
  $\sigma$. Then $a_{2,1}^{(V,W)}=-1$ and
	\[
		R_2(V,W)=\left(X_1^{W,V},W^*\right)
	\]
	with $X_1^{W,V}\simeq M(g'',\rho'')$, $W^*\simeq M(z'',\sigma '')$,
  and 
	\begin{align*}
    \rho''(z'')^2\sigma''(g''^2)=1,\quad
    \rho''(g'')=-1,\quad
		\sigma''(z'')=-1.
	\end{align*}
\end{cor}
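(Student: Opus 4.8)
The plan is to mimic exactly the proof of Corollary~\ref{cor:R2:A}, reading off the value $a_{2,1}^{(V,W)}=-1$ from Proposition~\ref{pro:2}(2) (the case $\sigma(z)=-1$), which immediately gives the description $R_2(V,W)=(X_1^{W,V},W^*)$. By Lemma~\ref{lem:2:Y1}, under the standing assumption $\rho(z)^2\sigma(g^2)=1$ we have $X_1^{W,V}\simeq M(gz,\rho_1)$ with $\rho_1(g)=-\rho(gz^{-1})$ and $\rho_1(z)=\rho(z)\sigma(z)$; since $\sigma(z)=-1$ this means $\rho_1(z)=-\rho(z)$ and $\rho_1(g)=-\rho(g)\rho(z)^{-1}=\rho(z)^{-1}$ using $\rho(g)=-1$, which matches the stated $\rho''$. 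So the first thing to verify is simply that $g''=gz$ carries the Yetter-Drinfeld module $X_1^{W,V}$, i.e. that $(gz)^{G}$ is its support and $\rho''=\rho_1$; this is essentially Lemma~\ref{lem:2:Y1} together with the identification of $gz$ as the relevant grouplike.

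Next I would identify $W^*=M(z,\sigma)^*$. Since $W=M(z,\sigma)$ with $z$ central, the dual module is $M(z^{-1},\sigma'')$ where $\sigma''$ is the degree-two representation of $G$ dual to $\sigma$; hence $W^*\simeq M(z'',\sigma'')$ with $z''=z^{-1}$, $\epsilon''=\epsilon^{-1}$ (the latter being forced by the reflection formalism, cf.\ the analogous step in Corollary~\ref{cor:R2:A}). The remaining content is the three scalar identities. For $\sigma''(z'')$: $\sigma''(z^{-1})=\sigma(z^{-1})^{-1}$ is not quite right since $\sigma$ is two-dimensional — rather, on the central element $z$, $\sigma(z)=-1$ acts as a scalar, so $\sigma''(z^{-1})=\sigma(z)^{-1}=-1$, giving $\sigma''(z'')=-1$. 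For $\rho''(g'')$: $\rho''(g'')=\rho''(gz)=\rho''(g)\rho''(z)=\rho(z)^{-1}\cdot(-\rho(z))=-1$. For the mixed identity $\rho''(z'')^2\sigma''(g''^2)=1$: compute $\rho''(z'')^2=\rho''(z^{-1})^2=\rho(z)^{-2}$ (the sign squares away), and $\sigma''(g''^2)=\sigma''((gz)^2)=\sigma''(g^2z^2)=\sigma''(g^2)\sigma''(z^2)=\sigma(g^2)^{-1}\cdot\sigma(z)^{-2}=\sigma(g^2)^{-1}$, using that $g^2,z$ are central so $\sigma$ restricted to them is scalar and the dual inverts the scalar, together with $\sigma(z)^2=1$. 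Then $\rho''(z'')^2\sigma''(g''^2)=\rho(z)^{-2}\sigma(g^2)^{-1}=(\rho(z)^2\sigma(g^2))^{-1}=1$ by hypothesis.

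The one point needing slight care — and the only genuine obstacle — is being careful that $\sigma$ and $\sigma''$ are two-dimensional representations, not characters, so "dual" means the contragredient, and the scalar identities above only make sense because they are evaluated on central elements ($z$, $g^2$, and hence $g''^2=g^2z^2$), on which $\sigma$ acts by scalars by Schur's lemma (using that $W$ is absolutely simple, as noted in the proof of Proposition~\ref{pro:2}). Since every element appearing in the three displayed identities of the corollary lies in $Z(G)=\langle z,\gamma^2\rangle$, this is harmless, but it is why I would not simply say "$\sigma''=\sigma^{-1}$". With that understood, the proof is a routine transcription of Corollary~\ref{cor:R2:A}, and I would write: "It is similar to the proof of Corollary~\ref{cor:R2:A}", perhaps with the two or three scalar computations above spelled out for the reader's convenience.
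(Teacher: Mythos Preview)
Your proposal is correct and takes essentially the same approach as the paper, which simply writes ``It is similar to the proof of Corollary~\ref{cor:R1:D}.'' You invoke Proposition~\ref{pro:2}(2) and Lemma~\ref{lem:2:Y1} exactly as required, and your added care about $\sigma$ being two-dimensional (so that the scalar identities only make sense on central elements via Schur's lemma) is a genuine point that the paper leaves implicit; your verification of the three displayed identities is accurate.
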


\begin{proof}
	It is similar to the proof of Corollary~\ref{cor:R1:D}.
\end{proof}

\section{Reflections of the third pair}
\label{section:3}

Let $V,W\in \ydG $ such that $V\simeq M(g,\rho )$,
where $\rho $ is a character of $G^g$, and
$W\simeq M(z,\sigma)$, where $\sigma$ is a character of $G$.
Let $w\in W=W_{z}$ with $w\ne0$. Then $\{w\}$ is a
basis of $W$. Since $g\epsilon=\epsilon^{-1}g$ and $\epsilon^3=1$, we obtain
that
\[
	gw=\sigma(g)w,\quad
	\epsilon w=w,\quad
	zw=\sigma(z)w.
\]

We first compute the modules $(\ad V)^m(W)$ for $m\in\N$. 

\begin{lem}
	\label{lem:3:X1}
	The Yetter-Drinfeld module $X_1^{V,W}$ is non-zero if and only if
  $\rho(z)\sigma(g)\not=1$. In this case, $X_1^{V,W}$ is
	absolutely simple and $X_1^{V,W}\simeq M(gz,\sigma_1)$,
  where $\sigma_1$ is the
	character of $G^{g}$ given by 
	\[
		\sigma_1(g)=\rho(g)\sigma(g),\quad
		\sigma_1(z)=\rho(z)\sigma(z).
	\]
	Let $w'=v\otimes w$. Then $\{w',\epsilon w', \epsilon^2w'\}$ is a basis
	of $X_1^{V,W}$. The degrees of these basis vectors are $gz$, $g\epsilon z$, and
	$g\epsilon ^2z$, respectively.
\end{lem}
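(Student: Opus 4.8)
The plan is to compute the element $\varphi_1(v\otimes w)$ by hand and then recognize the Yetter-Drinfeld module it generates. Recall from \cite[Lemma\,1.7]{MR2732989} that $X_1^{V,W}=\K G\varphi_1(v\otimes w)$, where $\varphi_1=\id-c_{W,V}c_{V,W}$. Since $v\in V_g$ acts on $w$ via $gw=\sigma(g)w$, and $w\in W_z$ acts on $v$ via $zv=\rho(z)v$ (as $z\in G^g$), one gets $c_{W,V}c_{V,W}(v\otimes w)=\rho(z)\sigma(g)\,v\otimes w$, hence
\[
\varphi_1(v\otimes w)=(1-\rho(z)\sigma(g))\,v\otimes w.
\]
This already yields the nonvanishing criterion: $X_1^{V,W}\ne0$ if and only if $\rho(z)\sigma(g)\ne1$, and in that case $X_1^{V,W}=\K G w'$ with $w'=v\otimes w$.

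Next I would pin down the grading and the $G$-action on $\K G w'$. The degree of $w'$ is the product of the degrees of $v$ and $w$, namely $gz$; using the relation $g\epsilon=\epsilon^2g$ of $\Gamma_3$ one sees that $\epsilon w'$ and $\epsilon^2 w'$ have degrees $g\epsilon z$ and $g\epsilon^2 z$, which are pairwise distinct, so $\{w',\epsilon w',\epsilon^2 w'\}$ is linearly independent. Since $z$ is central, $zw'=\rho(z)\sigma(z)w'$, and from $gv=\rho(g)v$ (as in Remark~\ref{rem:1:G_on_V}) together with $gw=\sigma(g)w$ we obtain $gw'=\rho(g)\sigma(g)w'$; moving $\epsilon$ past $g$ via $g\epsilon=\epsilon^2g$ then shows that $g$ preserves $\K w'+\K\epsilon w'+\K\epsilon^2 w'$. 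Hence this $3$-dimensional space is all of $\K G w'=X_1^{V,W}$.

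Finally I would identify $X_1^{V,W}$ with $M(gz,\sigma_1)$. By \eqref{eq:classes} the set $\{gz,g\epsilon z,g\epsilon^2 z\}$ is exactly the conjugacy class of $gz$, and since $z$ is central one has $G^{gz}=G^g=\langle g,z\rangle$, which is abelian; therefore the character $\sigma_1$ of $G^{gz}$ with $\sigma_1(g)=\rho(g)\sigma(g)$ and $\sigma_1(z)=\rho(z)\sigma(z)$ is absolutely irreducible, and $M(gz,\sigma_1)$ is absolutely simple of dimension $[G:G^{gz}]\deg\sigma_1=3$. The isotypic component $(X_1^{V,W})_{gz}=\K w'$ carries precisely the character $\sigma_1$ by the computations above, so the canonical Yetter-Drinfeld morphism $M(gz,\sigma_1)\to X_1^{V,W}$ sending the standard generator to $w'$ is a surjection between objects of equal dimension, hence an isomorphism. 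There is no serious obstacle here: the whole argument is a short direct computation, and the only points needing care are the bookkeeping with $g\epsilon=\epsilon^2g$ when computing the degrees and the $g$-action, and invoking the correct description of the conjugacy class and centralizer of $gz$.
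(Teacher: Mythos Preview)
Your proof is correct and follows essentially the same route as the paper: compute $\varphi_1(v\otimes w)=(1-\rho(z)\sigma(g))\,v\otimes w$ to obtain the nonvanishing criterion, then verify $gw'=\rho(g)\sigma(g)w'$ and $zw'=\rho(z)\sigma(z)w'$ to identify $\K Gw'$ with $M(gz,\sigma_1)$. The paper's argument is simply more terse, omitting the explicit dimension count and linear-independence check that you spell out.
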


Again, the action of $G$ on $X_1^{V,W}$ can be displayed in a table similar to the
one in Remark~\ref{rem:1:G_on_V}, where $v$ has to be replaced by $w'$ and
$\rho $ has to be replaced by $\sigma _1$.

\begin{proof}
	Write $X_1^{V,W}=\varphi_1(V\otimes W)=\K G\varphi_1(v\otimes w)$ and compute
	\[
		\varphi_1(v\otimes w)=v\otimes w-c_{W,V}c_{V,W}(v\otimes w)
    =(1-\rho (z)\sigma(g))v\otimes w.
	\]
	Then $w'=v\otimes w\in(V\otimes W)_{gz}$ is non-zero and $X_1^{V,W}=0$ if
  and only if $\rho(z)\sigma(g)=1$.
  Assume that $\rho(z)\sigma(g)\ne1$. Then
	\[
		gw'=\rho(g)\sigma(g)w',\quad
		zw'=\rho(z)\sigma(z)w'.
	\]
	Hence $X_1^{V,W}=\K Gw'\simeq M(gz,\sigma_1)$, and the lemma follows.
\end{proof}

\begin{lem}
	\label{lem:3:X2}
	Assume that $\rho(z)\sigma(g)\ne1$. Then $X_2^{V,W}\neq 0$.
  Moreover, $X_2^{V,W}$ is absolutely simple if and
	only if
  \[ \rho (g)=-1,\quad (3)_{-\rho(z)\sigma(g)}=0 \quad \text{or}\quad
    \rho (gz)\sigma (g)=1,\quad (3)_{-\rho (g)}=0.
  \]
  In both cases, 
	$X_2^{V,W}\simeq M(\epsilon g^2z,\sigma_2)$, where $\sigma_2$ is the
  character of $G^\epsilon$ with
	\begin{align*}
		\sigma_2(\epsilon)=\rho(g)(1-\rho(z)\sigma(g)),\quad
		\sigma_2(g^2)=\rho(g^4)\sigma(g^2),\quad
		\sigma_2(z)=\rho(z^2)\sigma(z).
	\end{align*}
	Let $w''=\varphi_2(\epsilon^2v\otimes w')$. Then
	a basis of $X_2^{V,W}$ is given by $\{w'',gw''\}$. The degrees of these basis
	vectors are $\epsilon g^2z$ and $\epsilon^2g^2z$ respectively.
\end{lem}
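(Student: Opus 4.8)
The plan is to compute $X_2^{V,W}=\varphi_2(V\otimes X_1^{V,W})$ explicitly, following the pattern of Lemmas~\ref{lem:1:X2} and~\ref{lem:2:X2}. First, as in~\eqref{eq:1:X2:decomp}, the pairs $(g,gz)$ and $(g\epsilon^2,gz)$ represent the two orbits of $g^G\times(gz)^G$ under the diagonal action of $G$, so
\[
  X_2^{V,W}=\K G\varphi_2(v\otimes w')+\K G\varphi_2(\epsilon^2v\otimes w').
\]
Here $\varphi_2(v\otimes w')$ will be homogeneous of the central degree $g^2z$, whereas $\varphi_2(\epsilon^2v\otimes w')$ will be homogeneous of degree $\epsilon g^2z$, which is not central since $\epsilon$ is not; so the two summands have disjoint supports and the sum is direct.

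Next I would compute the two generators. Using $\varphi_2=\id-c_{V\otimes W,V}c_{V,V\otimes W}+(\id\otimes\varphi_1)c_{1,2}$, the actions of $G$ on $V$ and on $X_1^{V,W}$ (Lemma~\ref{lem:3:X1}), and $\varphi_1(v\otimes w)=(1-\rho(z)\sigma(g))\,v\otimes w$ (from the proof of Lemma~\ref{lem:3:X1}), one finds
\[
  \varphi_2(v\otimes w')=(1+\rho(g))(1-\rho(gz)\sigma(g))\,v\otimes w',
\]
and, writing $e_{jk}=\epsilon^jv\otimes\epsilon^kv\otimes w$ and abbreviating $A=\rho(g)^2\rho(z)\sigma(g)$, $B=\rho(g)(1-\rho(z)\sigma(g))$,
\[
  w''\coloneqq\varphi_2(\epsilon^2v\otimes w')=e_{2,0}-A\,e_{0,1}+B\,e_{1,2}.
\]
Since the coefficient of $e_{2,0}$ is $1$, $w''\ne0$, so $X_2^{V,W}\supseteq\K Gw''\ne0$, proving the first assertion. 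Moreover, if $(1+\rho(g))(1-\rho(gz)\sigma(g))\ne0$ then $X_2^{V,W}$ is a direct sum of two nonzero Yetter-Drinfeld modules and is not simple; so I may henceforth assume $(1+\rho(g))(1-\rho(gz)\sigma(g))=0$, in which case $X_2^{V,W}=\K Gw''$.

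The remaining point is to decide when $\K Gw''$ is absolutely simple. Since $w''$ has degree $\epsilon g^2z$, $G^{\epsilon g^2z}=G^\epsilon=\langle\epsilon,z,g^2\rangle$, and $z,g^2$ are central, $\K Gw''$ is absolutely simple iff $\epsilon w''\in\K w''$, exactly as argued in Lemma~\ref{lem:3:X1}. As $\epsilon$ permutes $e_{2,0},e_{0,1},e_{1,2}$ cyclically, $\epsilon w''=Be_{2,0}+e_{0,1}-Ae_{1,2}$, and comparing coefficients shows that $\epsilon w''\in\K w''$ iff $A=-B^2$ and $B^3=1$, and then $\epsilon w''=Bw''$ (note $B\ne0$ since $\rho(z)\sigma(g)\ne1$ by hypothesis). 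I then combine this with $(1+\rho(g))(1-\rho(gz)\sigma(g))=0$: if $\rho(g)=-1$, set $t=\rho(z)\sigma(g)$, so $A=t$, $B=t-1$, and $A=-B^2$ becomes $t^2-t+1=0$, i.e. $(3)_{-\rho(z)\sigma(g)}=0$, which in turn forces $(t-1)^3=1$; if instead $\rho(gz)\sigma(g)=1$, set $s=\rho(g)$, so $\rho(z)\sigma(g)=s^{-1}$, $A=s$, $B=s-1$, and $A=-B^2$ becomes $s^2-s+1=0$, i.e. $(3)_{-\rho(g)}=0$, again forcing $B^3=1$. This gives the stated equivalence. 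Finally, in the simple case $\sigma_2(\epsilon)=B$, while $\sigma_2(g^2)=\rho(g^4)\sigma(g^2)$ and $\sigma_2(z)=\rho(z^2)\sigma(z)$ are read off from the scalar action of the central elements $g^2,z$ on each $e_{jk}$; and $\{w'',gw''\}$ is a basis of $X_2^{V,W}$ because $\deg w''=\epsilon g^2z\ne\epsilon^2g^2z=\deg gw''$.

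The genuinely delicate point, apart from the bookkeeping in the two $\varphi_2$-computations, is the last step: recognizing that the two conditions $A=-B^2$ and $B^3=1$ collapse to a single third-root-of-unity condition in each of the two cases $\rho(g)=-1$ and $\rho(gz)\sigma(g)=1$, which is what produces the clean criterion in the statement.
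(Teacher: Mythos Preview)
Your proof is correct and follows essentially the same approach as the paper: decompose $X_2^{V,W}$ according to the two $G$-orbits on $g^G\times(gz)^G$, compute $\varphi_2(v\otimes w')=(1+\rho(g))(1-\rho(gz)\sigma(g))\,v\otimes w'$ and $w''=\varphi_2(\epsilon^2v\otimes w')$ explicitly, and then test when $\epsilon w''\in\K w''$. The only cosmetic difference is that you package the coefficients into $A,B$ and derive the uniform conditions $A=-B^2$, $B^3=1$ before splitting into the two cases, whereas the paper performs the case-split $\rho(g)=-1$ versus $\rho(gz)\sigma(g)=1$ immediately and checks the eigenvector condition in each case separately; the computations are identical.
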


\begin{proof}
	Write $X_2^{V,W}=\K G\varphi_2(v\otimes w')\oplus
  \K G\varphi_2(\epsilon^2v\otimes w')$. Then compute
	\begin{align*}
		\varphi_2(v\otimes w') &=
                (\id-c_{X_1^{V,W},V}c_{V,X_1^{V,W}})(v\otimes w')
                +(\id\otimes\varphi_1)c_{1,2}(v\otimes v\otimes w)\\
		&= (1+\rho(g))(1-\rho(gz)\sigma(g))v\otimes w',
	\end{align*}
	and, using $\varphi_1(\epsilon^2v\otimes w)=(1-\rho(z)\sigma(g))\epsilon^2w'$, 
	\begin{equation*}
		\begin{aligned}
			w'' &= \varphi_2(\epsilon^2v\otimes w')\\
			&= (\id-c_{X_1^{V,W},V}c_{V,X_1^{V,W}})(\epsilon^2v\otimes w')
      +(\id\otimes\varphi_1)c_{1,2}(\epsilon ^2v\otimes v\otimes w)\\
			&= \epsilon^2v\otimes w'-\rho(g^2z)\sigma(g)v\otimes\epsilon w'
      +\rho(g)(1-\rho(z)\sigma(g))\epsilon v\otimes\epsilon^2w'.
		\end{aligned}
	\end{equation*}
	Hence $w''\in (V\otimes X_1^{V,W})_{\epsilon g^2z}$ is non-zero.
	Therefore $w''$ is absolutely simple
	if and only if $(1+\rho(g))(1-\rho(gz)\sigma(g))=0$ and $\epsilon
	w''=\K w''$. Since
	\[
		\epsilon w''=v\otimes\epsilon w'
    -\rho(g^2z)\sigma(g)\epsilon v\otimes\epsilon^2w'+\rho(g)(1-\rho(z)\sigma(g))\epsilon^2v\otimes w',
	\]
	in the case $\rho (g)=-1$ we have $\epsilon w''=\K w''$ if and only if
  $(3)_{-\rho (z)\sigma (g)}=0$,
  and then $\epsilon w''=(\rho (z)\sigma (g)-1)w''$.
  Similarly, if $\rho (gz)\sigma (g)=1$, then
	$\epsilon w''=\K w''$ if and only if
  $(3)_{-\rho (g)}=0$,
  and then $\epsilon w''=\rho (g)(1-\rho (z)\sigma (g))w''$.
	The rest is clear.
\end{proof}

\begin{lem} \label{lem:3:X3}
	Assume that $\rho(z)\sigma(g)\ne1$ and that $X_2^{V,W}$ is absolutely simple.
        Then $X_3^{V,W}\ne0$ if and only if $\rho(g)\ne-1$. In this case, $\charK \not=3$,
	$\rho(gz)\sigma(g)=1$, $(3)_{-\rho (g)}=0$, the Yetter-Drinfeld module $X_3^{V,W}$
        is absolutely simple, and $X_3^{V,W}\simeq M(g^3z,\sigma_3)$, where
	\begin{align*}
		\sigma_3(g)=\sigma(g), \quad \sigma_3(z)=\rho(z)^3\sigma(z).
	\end{align*}
\end{lem}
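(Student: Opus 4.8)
The plan is to compute $X_3^{V,W}=(\ad V)(X_2^{V,W})$ directly, branching on the value of $\rho(g)$. First I would record the side conditions. If $\rho(g)\ne -1$, then, since $X_2^{V,W}$ is absolutely simple, the criterion of Lemma~\ref{lem:3:X2} can only be met through its second alternative, so $\rho(gz)\sigma(g)=1$ and $(3)_{-\rho(g)}=0$; as $(3)_{-\rho(g)}=1-\rho(g)+\rho(g)^2$ equals $(1+\rho(g))^2$ in characteristic $3$, the hypothesis $\rho(g)\ne -1$ already forces $\charK\ne 3$ (and $\rho(g)^3=-1$). If instead $\rho(g)=-1$, then the absolute simplicity of $X_2^{V,W}$ and Lemma~\ref{lem:3:X2} force $(3)_{-\rho(z)\sigma(g)}=0$. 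In both cases $\varphi_2(v\otimes w')=(1+\rho(g))(1-\rho(gz)\sigma(g))v\otimes w'=0$, so the summand $\K G\varphi_2(v\otimes w')$ of $X_2^{V,W}$ vanishes, $X_2^{V,W}=\K G w''$, and $w''=\varphi_2(\epsilon^2v\otimes w')$ is the explicit three-term element from the proof of Lemma~\ref{lem:3:X2}.

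Next I would reduce to one generator. By \cite[Lemma~1.7]{MR2732989}, $X_3^{V,W}$ is generated as a $\K G$-module by $\varphi_3$ applied to orbit representatives of $\supp V\times\supp X_2^{V,W}$ under the diagonal $G$-action; using the conjugation relations one checks that, since $\supp X_2^{V,W}=(\epsilon g^2z)^G$ has two elements and $\supp V$ has three, the product is a single orbit, so $X_3^{V,W}=\K G\varphi_3(\epsilon^2v\otimes w'')$ with this generator lying in $(V^{\otimes 3}\otimes W)_{g^3z}$. Then I would expand $\varphi_3=\id-c_{X_2^{V,W},V}c_{V,X_2^{V,W}}+(\id\otimes\varphi_2)c_{1,2}$ on $\epsilon^2v\otimes w''$: the double braiding is read off from the degrees and actions in Lemmas~\ref{lem:3:X1} and~\ref{lem:3:X2}, and the term $(\id\otimes\varphi_2)c_{1,2}$ requires the values of $\varphi_2$ on the vectors $\epsilon^jv\otimes\epsilon^kw'$, which follow by $G$-equivariance of $\varphi_2$ from $\varphi_2(\epsilon^2v\otimes w')=w''$, $\varphi_2(v\otimes w')=0$, and $\varphi_1(v\otimes w)=(1-\rho(z)\sigma(g))w'$ of Lemma~\ref{lem:3:X1}.

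Carrying out this computation I expect the following. If $\rho(g)=-1$, then, using $(3)_{-\rho(z)\sigma(g)}=0$, all contributions cancel and $\varphi_3(\epsilon^2v\otimes w'')=0$, hence $X_3^{V,W}=0$. If $\rho(g)\ne-1$, then, using $\rho(gz)\sigma(g)=1$, $(3)_{-\rho(g)}=0$ and $\charK\ne 3$, the element $w''':=\varphi_3(\epsilon^2v\otimes w'')$ is a nonzero homogeneous vector of degree $g^3z$, so $X_3^{V,W}\ne 0$; together these give the equivalence. In the nonzero case, since $z\in Z(G)$ acts on $V$ and $W$ by the scalars $\rho(z)$ and $\sigma(z)$, one has $zw'''=\rho(z)^3\sigma(z)w'''$, while applying the $G$-action table of $V$ and the relation $gw=\sigma(g)w$ to the explicit formula for $w'''$ gives $gw'''=\sigma(g)w'''$. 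Because $g^2,z\in Z(G)$ one has $G^{g^3z}=G^{g^3}=\langle g^3z\rangle Z(G)$, so, $V$ and $W$ being absolutely simple, $X_3^{V,W}=\K Gw'''$ is absolutely simple and isomorphic to $M(g^3z,\sigma_3)$, where $\sigma_3$ is the character of $G^{g^3z}$ with $\sigma_3(g)=\sigma(g)$ and $\sigma_3(z)=\rho(z)^3\sigma(z)$.

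The main obstacle is the explicit evaluation of $\varphi_3(\epsilon^2v\otimes w'')$: one must keep track of the braiding on $V\otimes X_2^{V,W}$ together with the nested occurrence of $\varphi_2$, and then verify that the resulting terms either cancel completely (when $\rho(g)=-1$) or reassemble into one nonzero homogeneous vector of degree $g^3z$ whose $g$-eigenvalue is exactly $\sigma(g)$ (when $\rho(g)\ne-1$); confirming that $g$-eigenvalue is the delicate point.
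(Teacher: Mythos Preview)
Your plan is correct and matches the paper's proof almost exactly: reduce to the single generator $\varphi_3(\epsilon^2v\otimes w'')$ via the orbit argument, expand $\varphi_3$ recursively using the values of $\varphi_2$ obtained by $G$-equivariance from $\varphi_2(\epsilon^2v\otimes w')=w''$ and $\varphi_2(v\otimes w')=0$, and then read off the $g$- and $z$-eigenvalues. One small remark: the paper carries out the computation uniformly (without branching on $\rho(g)$) and obtains
\[
\varphi_3(\epsilon^2v\otimes w'')=(1+\rho(g))\bigl(\epsilon^2v\otimes w''-\rho(g^3z)(1-\rho(z)\sigma(g))^2\,\epsilon v\otimes gw''\bigr),
\]
so the vanishing for $\rho(g)=-1$ comes directly from the prefactor $1+\rho(g)$, not from the relation $(3)_{-\rho(z)\sigma(g)}=0$ as you anticipate; you will see this once you actually perform the expansion.
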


\begin{proof}
	Acting with $\epsilon g$ on $w''=\varphi_2(\epsilon^2v\otimes w')$ we obtain that
	\begin{equation*}
		\varphi_2(\epsilon^2v\otimes\epsilon w')=\sigma(g)^{-1}(1-\rho(z)\sigma(g))^2gw''.
	\end{equation*}
	Further,
	$\varphi_2(\epsilon^2v\otimes\epsilon^2w')=\epsilon^2\varphi_2(v\otimes
	w')=0$.  A direct calculation using these formulas and the expression for
	$w''$ yields	
	\begin{multline*}
		\varphi_3(\epsilon^2v\otimes w'')
		=(1+\rho(g))(\epsilon^2v\otimes w''-\rho(g^3z)(1-\rho(z)\sigma(g))^2\epsilon v\otimes gw'').
	\end{multline*}
	Thus $X_3^{V,W}=0$ if and only if $\rho (g)=-1$.
        Assume now that $\rho(g)\ne-1$. Since $X_2^{V,W}$ is absolutely simple,
	Lemma~\ref{lem:3:X2} implies that
	$\rho(gz)\sigma(g)=1$ and $(3)_{-\rho(g)}=0$. Then $\charK \not=3$, since otherwise
        $(1+\rho (g))^2=0$, a contradiction to $\rho (g)\not=-1$.
        Let $w'''=(1+\rho (g))^{-1}\varphi _3(\epsilon ^2v\otimes w'')$. Then
	\begin{align*}
		w'''=\epsilon^2v\otimes w''+\rho(g^2z)\epsilon v\otimes gw'',
	\end{align*}
	and hence $w'''\in(V\otimes X_2^{V,W})_{g^3z}$ is non-zero, $X_3^{V,W}=\K Gw'''$, 
	$gw'''=\sigma(g)w'''$, and $zw'''=\rho(z)^3\sigma(z)w'''$.
	Therefore $X_3^{V,W}\simeq M(g^3z,\sigma_3)$, where $\sigma _3$ is the
        character of $G^g$ with $\sigma_3(g)=\sigma(g)$ and
	$\sigma_3(z)=\rho(z)^3\sigma(z)$.
\end{proof}

\begin{lem}
	\label{lem:3:X4}
	Assume that $\charK \not=3$, $\rho (gz)\sigma (g)=1$, and $(3)_{-\rho (g)}=0$.
        Then $X_4^{V,W}\not=0$, and $X_4^{V,W}$ is absolutely simple if
	and only if $\charK=2$. In this case, $X_4^{V,W}\simeq M(g^4z,\sigma_4)$,
	where $\sigma_4$ is the character of $G$ given by
	\begin{align*}
		\sigma_4(g)=\rho(g)\sigma(g),\quad
		\sigma_4(\epsilon)=1,\quad
		\sigma_4(z)=\rho(z)^4\sigma(z),
	\end{align*}
	and $X_5^{V,W}=0$. 
\end{lem}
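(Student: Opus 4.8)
The plan is to extend the computation of the modules $X_n^{V,W}$ one step beyond Lemma~\ref{lem:3:X3}, following the same pattern. First I would record what the hypotheses give: from $(3)_{-\rho(g)}=1-\rho(g)+\rho(g)^2=0$ together with $\charK\neq3$ one gets that $\rho(g)$ is a primitive sixth root of unity, so $\rho(g)\neq\pm1$, $\rho(g)^3=-1$, $\rho(g)^{-1}=1-\rho(g)=-\rho(g)^2$ and $2-\rho(g)\neq0$; and $\rho(gz)\sigma(g)=1$ gives $\rho(z)\sigma(g)=\rho(g)^{-1}\neq1$ as well as $\rho(g^2z)\sigma(g)=\rho(g)$. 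Hence the hypotheses of Lemmas~\ref{lem:3:X1}, \ref{lem:3:X2} and \ref{lem:3:X3} all hold, $X_3^{V,W}\simeq M(g^3z,\sigma_3)$ is absolutely simple with generator $w'''=\epsilon^2v\otimes w''+\rho(g^2z)\epsilon v\otimes gw''$, one has $\varphi_3(\epsilon^2v\otimes w'')=(1+\rho(g))w'''$, and the $G$-action on the basis $\{w''',\epsilon w''',\epsilon^2w'''\}$ of $X_3^{V,W}$ is the one determined by $\sigma_3$ as in Lemma~\ref{lem:3:X3}.

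Next I would compute $X_4^{V,W}=\varphi_4(V\otimes X_3^{V,W})$. Since $G^g=G^{g^3z}=\langle g,z\rangle$ and $g\epsilon g^{-1}=\epsilon^2$, the nontrivial cosets $\epsilon G^g$ and $\epsilon^2G^g$ lie in one double coset, so the diagonal action of $G$ on $g^G\times(g^3z)^G$ has exactly two orbits, represented by $(g,g^3z)$ and $(g\epsilon,g^3z)$; by \cite[Lemma\,1.7]{MR2732989} one gets $X_4^{V,W}=\K G\varphi_4(v\otimes w''')+\K G\varphi_4(\epsilon v\otimes w''')$. As in the proof of Lemma~\ref{lem:3:X3}, I would first express $\varphi_3(v\otimes w'')$, $\varphi_3(v\otimes gw'')$, $\varphi_3(\epsilon v\otimes w'')$ and $\varphi_3(\epsilon v\otimes gw'')$ in terms of $w'''$ and $\epsilon^jw'''$ by acting with suitable group elements on $\varphi_3(\epsilon^2v\otimes w'')=(1+\rho(g))w'''$, and then substitute these, together with the braiding tables of $V$ and $X_3^{V,W}$, into $\varphi_4=\id-c_{X_3^{V,W},V}c_{V,X_3^{V,W}}+(\id\otimes\varphi_3)c_{1,2}$. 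After simplifying with $1-\rho(g)+\rho(g)^2=0$ and $\rho(gz)\sigma(g)=1$, the answers are $\varphi_4(v\otimes w''')=(2-\rho(g))(1+\epsilon+\epsilon^2)(v\otimes w''')$ and $\varphi_4(\epsilon v\otimes w''')=2\rho(g)(\epsilon v\otimes w'''-v\otimes\epsilon^2w''')$. Writing $w^{(4)}=\varphi_4(v\otimes w''')$, the first formula shows $w^{(4)}\neq0$ and $\epsilon w^{(4)}=w^{(4)}$, $gw^{(4)}=\rho(g)\sigma(g)w^{(4)}$, $zw^{(4)}=\rho(z)^4\sigma(z)w^{(4)}$, so $\K Gw^{(4)}\simeq M(g^4z,\sigma_4)$ with the stated character $\sigma_4$, for every $\charK\neq3$. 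Since $g^4z=(g^2)^2z$ is central, $\K Gw^{(4)}$ is concentrated in degree $g^4z$ while $\K G\varphi_4(\epsilon v\otimes w''')$ is concentrated in degrees $\epsilon g^4z$ and $\epsilon^2g^4z$; hence $X_4^{V,W}=\K Gw^{(4)}\oplus\K G\varphi_4(\epsilon v\otimes w''')$. Therefore $X_4^{V,W}\neq0$ always, and $X_4^{V,W}$ is absolutely simple if and only if the second summand vanishes, that is, $\varphi_4(\epsilon v\otimes w''')=0$, that is, $\charK=2$; in that case $X_4^{V,W}=\K Gw^{(4)}\simeq M(g^4z,\sigma_4)$, which proves the formulas for $\sigma_4$.

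Finally I would treat $X_5^{V,W}$ in the case $\charK=2$. Then $X_4^{V,W}=\K w^{(4)}$ is one-dimensional of central degree $g^4z$, so $g^G\times(g^4z)^G=g^G$ is a single orbit and $X_5^{V,W}=\K G\varphi_5(v\otimes w^{(4)})$; it suffices to show $\varphi_5(v\otimes w^{(4)})=0$. In characteristic $2$ the three coefficients of $w^{(4)}$ coincide, so $w^{(4)}=\rho(g)\sum_{j=0}^{2}\epsilon^jv\otimes\epsilon^jw'''$. The braiding $c_{V^{\otimes 4}\otimes W,V}c_{V,V^{\otimes 4}\otimes W}$ acts on $v\otimes w^{(4)}$ by the scalar $\rho(g)^4\rho(z)\sigma_4(g)=-\rho(g)$, and in $(\id\otimes\varphi_4)c_{1,2}(v\otimes w^{(4)})$ only the $j=0$ summand survives, because $v\otimes\epsilon w'''$ and $v\otimes\epsilon^2w'''$ lie in $\K G(\epsilon v\otimes w''')$ and are therefore killed by $\varphi_4$ in characteristic $2$; a short computation then gives $\varphi_5(v\otimes w^{(4)})=(1+\rho(g)+\rho(g)^2)(v\otimes w^{(4)})$, which is zero because $1+\rho(g)+\rho(g)^2=1-\rho(g)+\rho(g)^2=(3)_{-\rho(g)}=0$ in characteristic $2$. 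Hence $X_5^{V,W}=0$.

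The step I expect to be the main obstacle is the explicit evaluation of $\varphi_4$ on $v\otimes w'''$ and on $\epsilon v\otimes w'''$: since $w'''$ is a two-term expression built from $w''$, itself built from $w'=v\otimes w$, this means unwinding four layers of the recursion and of the braidings while tracking many powers of $\rho(g)$, $\rho(z)$, $\sigma(g)$ and $\sigma(z)$, and then collapsing them via $1-\rho(g)+\rho(g)^2=0$ and $\rho(gz)\sigma(g)=1$. The delicate point is to bring $\varphi_4(\epsilon v\otimes w''')$ into the stated form so that the overall factor $2$ becomes visible, and to verify that $\varphi_4(v\otimes w''')$ collapses to the manifestly $\epsilon$-invariant vector $(2-\rho(g))(1+\epsilon+\epsilon^2)(v\otimes w''')$.
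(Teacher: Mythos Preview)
Your proposal is correct and follows essentially the same route as the paper. The paper chooses the orbit representatives $(v,w''')$ and $(\epsilon^2v,w''')$ instead of your $(v,w''')$ and $(\epsilon v,w''')$, and writes the resulting scalars as $1+\rho(g)^{-1}$ and $1+\rho(g)+\rho(g)^2$, which under $(3)_{-\rho(g)}=0$ equal your $2-\rho(g)$ and $2\rho(g)$; the argument for $X_5^{V,W}=0$ is likewise the same up to the normalization $w''''=(2-\rho(g))^{-1}w^{(4)}$.
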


\begin{proof}
	The assumptions imply that $\rho (z)\sigma (g)=\rho (g)^{-1}\not=1$,
        and $\rho (g)\not=-1$ since $\charK \not=3$. Therefore
        $X_n^{V,W}$ is absolutely simple for all $n\in \{1,2,3\}$ by Lemmas~\ref{lem:3:X1},
        \ref{lem:3:X2}, and \ref{lem:3:X3}.
	By looking at the support of $V\otimes X_3^{V,W}$ we also know that
        $X_4^{V,W}=\K G\varphi_4(\epsilon^2v\otimes w''')\oplus \K G\varphi_4(v\otimes w''')$.
	
	We first obtain that
	\begin{multline*}
	  \varphi_4(v\otimes w''') = v\otimes w'''-c_{X_3,V}\,c_{V,X_3}(v\otimes w''')
          +\rho(g)\epsilon v\otimes\varphi_3(v\otimes w'')\\
	  +\rho(g^3z)\epsilon^2v\otimes\varphi_3(v\otimes gw'').
	\end{multline*}
	Now act with $\epsilon$ and $g\epsilon$ on 
	\begin{equation}
		\label{eq:phi3(e2v,x2)}
		\varphi_3(\epsilon^2v\otimes w'')=(1+\rho(g))w'''
	\end{equation}
	to obtain that
	\begin{align*}
		&\varphi_3(v\otimes w'')=\rho(g)^{-2}(1+\rho(g))\epsilon w''',\\
		&\varphi_3(v\otimes gw'')=\rho(g)^{-3}(1+\rho(g))\sigma(g)\epsilon^2w''',
	\end{align*}
	respectively. Then
	\begin{align} \label{eq:varphi4:3}
		\varphi_4(v\otimes w''')=(1+\rho(g)^{-1})(v\otimes w'''+\epsilon v\otimes\epsilon w'''
                +\epsilon^2v\otimes\epsilon^2w''')
	\end{align}
	and hence $\varphi_4(v\otimes w''')\in(V\otimes X_3^{V,W})_{g^4z}$ is non-zero.

	Act with $\epsilon g$ on Equation \eqref{eq:phi3(e2v,x2)} and use that
	 $\sigma_2(\epsilon)=\rho(g^2)$. Thus
	\begin{align*}
		\varphi_3(\epsilon^2v\otimes gw'')=(1+\rho(g))\rho (g)\sigma(g)\epsilon w'''.
	\end{align*}
	Now compute
	\begin{align*}
		\varphi_4(\epsilon^2v\otimes w''')=&\;\epsilon^2v\otimes w'''
                -\rho(g^2)v\otimes \epsilon w'''\\
                &\;+\rho(g)\epsilon^2v\otimes\varphi_3(\epsilon^2v\otimes w'')
		-\rho(z)v\otimes\varphi_3(\epsilon^2v\otimes gw'')\\
		=&\;(1+\rho(g)+\rho(g)^2)(\epsilon^2v\otimes w'''-v\otimes \epsilon w''').
	\end{align*}
        Thus $X_4^{V,W}$ is absolutely simple if and only if $(3)_{\rho (g)}=0$ (in which case $\charK=2$)
        and $\K G\varphi _4(v\otimes w''')$ is absolutely simple.
	Let
        $$w''''=v\otimes w'''+\epsilon v\otimes \epsilon w'''+\epsilon^2v\otimes\epsilon^2w'''.$$
        Then
	$\varphi_4(v\otimes w''')=(1+\rho(g)^{-1})w''''$
        by Equation~\eqref{eq:varphi4:3}, $gw''''=\rho (g)\sigma (g)w''''$, and hence
        $X_4^{V,W}\simeq M(g^4z,\sigma_4)$, where $\sigma_4$ is the
	character of $G$ given by $\sigma_4(\epsilon)=1$,
	$\sigma_4(g)=\rho(g)\sigma(g)$, and $\sigma_4(z)=\rho(z)^4\sigma(z)$. 

	Now we prove that $X_5^{V,W}=0$. Observe that
        $$X_5^{V,W}=\varphi_5(V\otimes X_4^{V,W}) =\K G\varphi_5(v\otimes w'''').$$
        By acting with $\epsilon$ and $g\epsilon $ on
	$\varphi_4(\epsilon^2v\otimes w''')=0$ we obtain that $\varphi_4(v\otimes\epsilon
	w''')=0$ and $\varphi_4(v\otimes\epsilon^2w''')=0$, respectively. A direct calculation yields
        that
	\begin{align*}
		\varphi_5(v\otimes w'''')=(1+\rho(g))v\otimes w''''
                +\rho(g)v\otimes\varphi_4(v\otimes w''')=0,
	\end{align*}
	which proves the claim.
\end{proof}

Now we compute the modules $(\ad W)^m(V)$ for $m\geq1$. As before, we write
$\varphi_n=\varphi_n^{W,V}$. 

\begin{lem} \label{lem:3:Yn}
	Let $y_0=v$ and let $y_n=w\otimes y_{n-1}$ for all $n\geq1$.
        Then $y_n\in(W^{\otimes n}\otimes V)_{gz^n}$ and
        $\K Gy_n\simeq M(gz^n,\rho_n)$,
        where $\rho_n$ is the character of $G^g$ given by
	\begin{align*}
		\rho_n(g)=\rho(g)\sigma(g)^n, \quad
		\rho_n(z)=\rho(z)\sigma(z)^n,
	\end{align*}
	and 
	$\varphi_n(w\otimes y_{n-1})=\gamma_n y_n$ for all $n\in\N$, where
	\[
		\gamma_n=(n)_{\sigma(z)}(1-\rho (z)\sigma(gz^{n-1})).
	\]
	Moreover, 
	\[
		X_n^{W,V}\simeq\K G((\gamma_1\cdots\gamma_n)y_n)
	\]
	for all $n\in\N_0$. 
\end{lem}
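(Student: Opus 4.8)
The plan is to prove all the assertions simultaneously by induction on $n$. The base case $n=0$ is trivial: $X_0^{W,V}=V\simeq M(g,\rho)$, $y_0=v$, and the empty product of the $\gamma_i$ is $1$, so $X_0^{W,V}=\K G(1\cdot y_0)$. For the statement about $\K Gy_n$ (which does not really need induction), note that $y_n=w^{\otimes n}\otimes v$ is nonzero and homogeneous of degree $z^ng=gz^n$, since $z$ is central; hence $G^{gz^n}=G^g=\langle g,z\rangle$. Because $G$ acts on $W=W_z$ through the character $\sigma$, while $gv=\rho(g)v$ and $zv=\rho(z)v$, one gets $g\cdot y_n=\rho(g)\sigma(g)^ny_n$ and $z\cdot y_n=\rho(z)\sigma(z)^ny_n$. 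The vectors $y_n,\epsilon y_n,\epsilon^2y_n$ are nonzero of pairwise distinct degrees $gz^n,g\epsilon z^n,g\epsilon^2z^n$ and span $\K Gy_n$ (as $G^g$ acts by scalars on $y_n$ and the three cosets of $G^g$ account for all three degrees), so the canonical $\K G$-linear surjection $M(gz^n,\rho_n)\to\K Gy_n$ between $3$-dimensional modules is an isomorphism.

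Next I would establish the formula $\varphi_n(w\otimes y_{n-1})=\gamma_ny_n$ by induction, using the recursion of Lemma~\ref{lem:X_n} in the form $\varphi_{n+1}=\id-c_{W^{\otimes n}\otimes V,W}\,c_{W,W^{\otimes n}\otimes V}+(\id\otimes\varphi_n)c_{1,2}$ on $W^{\otimes(n+1)}\otimes V$. Writing $y_n=w\otimes y_{n-1}$ and using that $w$ has central degree $z$ and $y_n$ has degree $gz^n$: the first summand contributes $y_{n+1}$; the middle summand contributes $-\rho(z)\sigma(z)^n\sigma(gz^n)\,y_{n+1}=-\rho(z)\sigma(g)\sigma(z)^{2n}y_{n+1}$; and since $c_{W,W}(w\otimes w)=\sigma(z)(w\otimes w)$, the third summand equals $\sigma(z)\,w\otimes\varphi_n(w\otimes y_{n-1})=\sigma(z)\gamma_ny_{n+1}$ by the induction hypothesis. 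Thus $\varphi_{n+1}(w\otimes y_n)=\bigl(1-\rho(z)\sigma(g)\sigma(z)^{2n}+\sigma(z)\gamma_n\bigr)y_{n+1}$, and it remains to check the scalar identity
\[
1-\rho(z)\sigma(g)\sigma(z)^{2n}+\sigma(z)\gamma_n=(n+1)_{\sigma(z)}\bigl(1-\rho(z)\sigma(gz^n)\bigr)=\gamma_{n+1},
\]
which, setting $q=\sigma(z)$, follows at once from $1+q(n)_q=(n+1)_q$ and $q^n+(n)_q=(n+1)_q$. The base of this sub-induction is $n=1$: here $\varphi_1=\id-c_{V,W}c_{W,V}$, and a direct computation gives $\varphi_1(w\otimes v)=(1-\rho(z)\sigma(g))\,w\otimes v=\gamma_1y_1$.

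Finally, to identify $X_n^{W,V}$ itself I would argue as in the proofs of the earlier $X_n$-lemmas: the diagonal $G$-action on $z^G\times(gz^{n-1})^G$ is transitive, since $z$ is central and $(gz^{n-1})^G=\{g\epsilon^jz^{n-1}\mid 0\le j\le2\}$ has three elements, so by \cite[Lemma\,1.7]{MR2732989} the module $X_n^{W,V}$ is generated over $\K G$ by $\varphi_n(w\otimes\xi)$ for any homogeneous generator $\xi$ of $X_{n-1}^{W,V}$ of degree $gz^{n-1}$. If $\gamma_1\cdots\gamma_{n-1}\neq0$, take $\xi=(\gamma_1\cdots\gamma_{n-1})y_{n-1}$, the generator supplied by the induction hypothesis; the $\varphi$-formula then yields $X_n^{W,V}=\K G\bigl((\gamma_1\cdots\gamma_n)y_n\bigr)$, which is $\K Gy_n\simeq M(gz^n,\rho_n)$ when $\gamma_n\neq0$ and is $0$ when $\gamma_n=0$. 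If instead $\gamma_1\cdots\gamma_{n-1}=0$, then $X_{n-1}^{W,V}=0$ by the induction hypothesis, hence $X_n^{W,V}=\varphi_n(W\otimes0)=0=\K G\bigl((\gamma_1\cdots\gamma_n)y_n\bigr)$. This closes the induction. The proof is essentially a bookkeeping exercise: the only points requiring a little care are keeping the braiding conventions and the three summands of the $\varphi$-recursion straight, and verifying the orbit condition needed to apply \cite[Lemma\,1.7]{MR2732989} — the latter being the main (though minor) obstacle.
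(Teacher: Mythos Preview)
Your proof is correct and follows essentially the same approach as the paper: an induction on $n$ using the recursion for $\varphi_n$ from Lemma~\ref{lem:X_n}, with the base case $\varphi_1(w\otimes v)=(1-\rho(z)\sigma(g))y_1$. You have simply spelled out in detail the steps the paper summarizes as ``the remaining claims are then easily shown,'' in particular the identification of $\K Gy_n$ with $M(gz^n,\rho_n)$, the scalar identity yielding $\gamma_{n+1}$, and the transitivity argument invoking \cite[Lemma\,1.7]{MR2732989} to conclude that $X_n^{W,V}=\K G\bigl((\gamma_1\cdots\gamma_n)y_n\bigr)$.
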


\begin{proof}
    We prove by induction on $n$ that $\varphi_n(w\otimes y_{n-1})=\gamma_n y_n$
    for all $n\ge 1$. The remaining claims are then easily shown.

    It is clear that $\varphi _1(w\otimes v)=w\otimes v-gw\otimes zv 
    =(1-\rho (z)\sigma (g))y_1$.
    Let now $n\ge 1$. Then
    \begin{align*}
      \varphi_{n+1}(w\otimes y_n)=w\otimes y_n-gz^nw\otimes zy_n
      +\sigma(z)w\otimes\varphi_n(w\otimes y_{n-1}),
    \end{align*}
    and hence induction hypothesis implies that
	\begin{align*}
		\varphi_{n+1}(w\otimes y_n)=(n+1)_{\sigma(z)}(1-\rho (z)\sigma(gz^n))y_{n+1}.
	\end{align*}
    This proves the claimed formula.
\end{proof}

\begin{pro} \label{pro:3}
	Let $V,W\in \ydG $ such that $V\simeq M(g,\rho)$,
  where $\rho$ is a character of $G^g$,
  and $W\simeq M(z,\sigma)$,
  where $\sigma$ is a character of $G$.
	Then the following hold:
	\begin{enumerate}
		\item $(\ad V)^m(W)$ and $(\ad W)^m(V)$ are absolutely simple or zero for
			all $m\in\N_0$ if and only if 
			\begin{enumerate}
				\item $\rho(z)\sigma(g)=1$, or
				\item $\rho(g)=-1$ and $(3)_{-\rho(z)\sigma(g)}=0$, or
				\item $\rho(gz)\sigma(g)=1$, $(3)_{\rho(g)}=0$, and $\charK=2$.
			\end{enumerate}
		\item Assume that the equations in one of (1a), (1b), (1c) hold. Then 
        the Cartan matrix of the pair $(V,W)$ satisfies
			\begin{align*}
				a_{1,2}^{(V,W)}=\begin{cases}
					0 & \text{in the case (1a)},\\
					-2 & \text{in the case (1b)},\\
					-4 & \text{in the case (1c)}.
				\end{cases}
			\end{align*}
			Moreover, 
			\begin{enumerate}
				\item if $a_{1,2}^{(V,W)}=-2$ then $X_2^{V,W}\simeq M(\epsilon
					g^2z,\sigma_2)$, where $\sigma_2$ is the character
                                        of $G^\epsilon$ given by
					$\sigma_2(\epsilon)=-\rho(z)^{-1}\sigma(g)^{-1}$,
					$\sigma_2(z)=\rho(z)^2\sigma(z)$ and $\sigma_2(g^2)=\sigma(g^2)$, 
				\item if $a_{1,2}^{(V,W)}=-4$ then $X_4^{V,W}\simeq M(g^4z,\sigma_4)$,
					where $\sigma_4$ is the character of $G$ given by
					$\sigma_4(g)=\rho(g)\sigma(g)$,
					$\sigma_4(\epsilon)=1$, and $\sigma_4(z)=\rho(z)^4\sigma(z)$.
			\end{enumerate}
		\item Assume that the equations in one of (1a), (1b), (1c) hold and let
      $m\in\N_0$.
      Then $a_{2,1}^{(V,W)}=-m$ if
			and only if $\gamma_{m+1}=0$ and $\gamma_1\cdots\gamma_m\ne0$, where
			\[
			\gamma_k=(k)_{\sigma(z)}(1-\rho (z)\sigma(gz^{k-1}))
			\]
			for all $k\in\N_0$. In this case,
			$X_m^{W,V}\simeq M(gz^m,\rho_m)$, where $\rho_m$ is the
			character of $G^g$ given by 
			\[
				\rho_m(g)=\rho (g)\sigma(g)^m,\quad 
				\rho_m(z)=\rho (z)\sigma(z)^m.
			\]
	\end{enumerate}
\end{pro}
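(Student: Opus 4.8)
The plan is to assemble this Proposition from Lemmas~\ref{lem:3:X1}--\ref{lem:3:Yn} of the present section, treating the two chains $(\ad V)^m(W)\simeq X_m^{V,W}$ and $(\ad W)^m(V)\simeq X_m^{W,V}$ separately.

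I would first dispose of the modules $X_m^{W,V}$, which are the easy part. By Lemma~\ref{lem:3:Yn}, $X_n^{W,V}$ equals $\K G\bigl((\gamma_1\cdots\gamma_n)y_n\bigr)$, so it is either zero or isomorphic to $M(gz^n,\rho_n)$; in particular it is absolutely simple or zero no matter what $\rho$ and $\sigma$ are, hence it contributes no condition to part~(1). For part~(3), note $X_0^{W,V}=V\neq0$ and $X_n^{W,V}\neq0$ exactly when $\gamma_1\cdots\gamma_n\neq0$; therefore $a_{2,1}^{(V,W)}=-m$ if and only if $\gamma_1\cdots\gamma_m\neq0$ and $\gamma_{m+1}=0$, and in that case Lemma~\ref{lem:3:Yn} gives $X_m^{W,V}\simeq M(gz^m,\rho_m)$ with $\rho_m$ as stated.

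Next I would follow the chain $X_m^{V,W}$ through Lemmas~\ref{lem:3:X1}--\ref{lem:3:X4}. If $X_1^{V,W}=0$, which by Lemma~\ref{lem:3:X1} means $\rho(z)\sigma(g)=1$, every higher module vanishes, so we are in case~(1a) and $a_{1,2}^{(V,W)}=0$. Otherwise $\rho(z)\sigma(g)\neq1$, $X_1^{V,W}$ is absolutely simple, and $X_2^{V,W}\neq0$ by Lemma~\ref{lem:3:X2}; absolute simplicity of $X_2^{V,W}$ then forces one of the two alternatives in Lemma~\ref{lem:3:X2}. If $\rho(g)=-1$, then Lemma~\ref{lem:3:X3} gives $X_3^{V,W}=0$, the chain stops, $a_{1,2}^{(V,W)}=-2$, and the leftover condition is $(3)_{-\rho(z)\sigma(g)}=0$, i.e.\ case~(1b); here $X_2^{V,W}$ is given by Lemma~\ref{lem:3:X2}, and the only identity needing a word is $\sigma_2(\epsilon)=\rho(g)(1-\rho(z)\sigma(g))=-\rho(z)^{-1}\sigma(g)^{-1}$, which comes from dividing the relation $1-\rho(z)\sigma(g)+\rho(z)^2\sigma(g)^2=0$ by $\rho(z)\sigma(g)$. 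If instead $\rho(g)\neq-1$, the relevant alternative of Lemma~\ref{lem:3:X2} forces $\rho(gz)\sigma(g)=1$ and $(3)_{-\rho(g)}=0$; then Lemma~\ref{lem:3:X3} shows $X_3^{V,W}$ is absolutely simple and $\charK\neq3$, so Lemma~\ref{lem:3:X4} applies, $X_4^{V,W}$ is absolutely simple iff $\charK=2$, and in that case $X_5^{V,W}=0$; this is case~(1c), with $a_{1,2}^{(V,W)}=-4$ and $X_4^{V,W}$ as in Lemma~\ref{lem:3:X4}. The converse of part~(1) is then immediate: each of (1a), (1b), (1c) renders every $X_m^{V,W}$ absolutely simple or zero by exactly these lemmas.

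I expect the main obstacle to be the bookkeeping of this last case distinction: one must account for the degenerate overlap of the two alternatives of Lemma~\ref{lem:3:X2} --- the possibility $\rho(g)=-1$ together with $\rho(gz)\sigma(g)=1$, which collapses into (1b) in characteristic~$3$ --- and for the characteristic constraints forced along the way ($\charK\neq3$ implicit in (1b) and (1c), and $\charK=2$ in (1c)), so as to conclude that (1a), (1b), (1c) is precisely the solution set of part~(1). Everything else --- the values of $a_{1,2}^{(V,W)}$, read off as the length of the $X_m^{V,W}$ chain, together with the remaining scalars ($\sigma_2(g^2)=\sigma(g^2)$ using $\rho(g)^4=1$, $\sigma_2(z)=\rho(z)^2\sigma(z)$, and the three scalars of $\sigma_4$) --- is a direct citation of Lemmas~\ref{lem:3:X2} and~\ref{lem:3:X4}.
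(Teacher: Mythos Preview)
Your proposal is correct and follows essentially the same approach as the paper: assemble the proposition from Lemmas~\ref{lem:3:X1}--\ref{lem:3:X4} for the $X_m^{V,W}$ chain and from Lemma~\ref{lem:3:Yn} for the $X_m^{W,V}$ chain, the latter giving part~(3) immediately. The paper's own proof is in fact much terser than yours (it merely cites the lemmas for (1) and says (2) is ``easy to get from the same lemmas'', not addressing (3) explicitly), so your added detail on the case analysis and on part~(3) is welcome. One small slip in your closing paragraph: there is no constraint $\charK\neq3$ implicit in case~(1b); that case is valid in all characteristics (cf.\ Theorem~\ref{thm:P1andP4b}), and the $\charK\neq3$ arises only when one passes through Lemma~\ref{lem:3:X3} with $\rho(g)\neq-1$, i.e.\ on the way to~(1c).
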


\begin{proof}
  By Lemmas~\ref{lem:3:X1}, \ref{lem:3:X2}, \ref{lem:3:X3}, and
  \ref{lem:3:X4}, $(\ad V)^m(W)$ is absolutely simple or zero for all $m\in \N $
  if and only if $\rho (z)\sigma (g)=1$ or $\rho (g)=-1$, $(3)_{-\rho
  (z)\sigma (g)}=0$, or $\rho (gz)\sigma (g)=1$, $(3)_{-\rho (g)}=0$, $\charK
  \not=3$, $\charK=2$. 
  By Lemma~\ref{lem:3:Yn} the Yetter-Drinfeld modules $(\ad W)^m(V)$ for $m\ge
  0$ are absolutely simple or zero. This proves (1). Then (2) is easy to get
  from the same lemmas.
\end{proof}

\begin{cor}
	\label{cor:R1:E}
	Let $V,W\in \ydG $ such that $V\simeq M(g,\rho)$,
  where $\rho$ is a character of $G^g$,
  and $W\simeq M(z,\sigma)$,
  where $\sigma$ is a character of $G$, and
	\[
		\rho(g)=-1,\quad
		\rho(z)\sigma(gz)=1,\quad
		1-\rho(z)\sigma(g)+\rho(z^2)\sigma(g^2)=0.
	\]
	Further, let $g'=g^{-1}$, $\epsilon'=\epsilon$, $z'= g^2z$, $\rho'$ be the
	character of $G^g$ dual to $\rho$, and $\sigma'$ the
	character of $G^\epsilon $ given by
	$\sigma'(\epsilon)=-\rho(z)^{-1}\sigma(g)^{-1}$,
	$\sigma'(z)=\rho(z)^2\sigma(z)$, $\sigma'(g^2)=\sigma(g^2)$.  Then 
  $a_{1,2}^{(V,W)}=-2$ and
	\[
		R_1(V,W)=\left(V^*,X_2^{V,W}\right),
	\]
	with $V^*\simeq M(g',\rho')$, $X_2^{V,W}\simeq M(\epsilon'z',\sigma')$, and
	\begin{align*}
		\rho'(g')=\sigma'(\epsilon'z')=-1,\quad
    \rho'(z')^2\sigma'(\epsilon'g'^2)=1,\quad
		1+\sigma'(\epsilon')+\sigma'(\epsilon')^2=0.
	\end{align*}
\end{cor}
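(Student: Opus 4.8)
The plan is to follow the template of Corollaries~\ref{cor:R1:A} and~\ref{cor:R1:D}: extract the Cartan entry $a_{1,2}^{(V,W)}$ and the structure of $X_2^{V,W}$ from the analysis already carried out in Proposition~\ref{pro:3}, and then verify the three output equations for $(\rho',\sigma')$ by a direct character computation.

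First I would observe that, since $\rho$ is a character and hence $\rho(z^2)\sigma(g^2)=\rho(z)^2\sigma(g)^2$, the third hypothesis $1-\rho(z)\sigma(g)+\rho(z^2)\sigma(g^2)=0$ is precisely $(3)_{-\rho(z)\sigma(g)}=0$. Together with $\rho(g)=-1$, this places $(V,W)$ in case~(1b) of Proposition~\ref{pro:3}. Hence $(\ad V)^m(W)$ and $(\ad W)^m(V)$ are absolutely simple or zero for all $m$, so $(V,W)$ admits the reflection $R_1$; moreover $a_{1,2}^{(V,W)}=-2$ and $X_2^{V,W}\simeq M(\epsilon g^2z,\sigma_2)$ with $\sigma_2(\epsilon)=-\rho(z)^{-1}\sigma(g)^{-1}$, $\sigma_2(z)=\rho(z)^2\sigma(z)$, $\sigma_2(g^2)=\sigma(g^2)$, i.e.\ $\sigma_2=\sigma'$. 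Recalling that $R_1(V,W)=(V^*,(\ad V)^{-a_{1,2}^{(V,W)}}(W))$, this gives $R_1(V,W)=(V^*,X_2^{V,W})$, with $\epsilon'z'=\epsilon g^2z$ and hence $\epsilon'=\epsilon$, $z'=g^2z$ as in the statement. The identification $V^*\simeq M(g^{-1},\rho')=M(g',\rho')$ with $\rho'$ dual to $\rho$ is the standard description of the dual of $M(g,\rho)$; in particular $\rho'(g')=\rho(g)^{-1}=-1$.

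Next I would prove the three equations for $(\rho',\sigma')$. The one auxiliary fact needed is that $(3)_t=0$ forces $t^3=1$ (since $t^3-1=(t-1)(3)_t$), so from $(3)_{-\rho(z)\sigma(g)}=0$ one obtains $(\rho(z)\sigma(g))^3=-1$. Evaluating the characters via $\rho'(h)=\rho(h)^{-1}$ on $\langle g,z\rangle$ and via the displayed formula for $\sigma'$ on $G^\epsilon=\langle\epsilon,z,g^2\rangle$, a short computation gives $\sigma'(\epsilon'z')=-\rho(z)\sigma(g)\sigma(z)=-\rho(z)\sigma(gz)$, which equals $-1$ by the second hypothesis; $\rho'(z')^2\sigma'(\epsilon'g'^2)=-(\rho(z)\sigma(g))^{-3}=1$ by the cubic relation; and $1+\sigma'(\epsilon')+\sigma'(\epsilon')^2=(3)_{(-\rho(z)\sigma(g))^{-1}}=0$, since $(3)_s=0$ is equivalent to $(3)_{s^{-1}}=0$. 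Putting these together completes the proof.

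I do not expect a genuine obstacle: the statement is essentially a corollary of Proposition~\ref{pro:3} together with bookkeeping, and it is closely parallel to Corollaries~\ref{cor:R1:A} and~\ref{cor:R1:D}. The only points demanding attention are recognizing that the third hypothesis encodes $(3)_{-\rho(z)\sigma(g)}=0$ and extracting the cubic relation from it, and tracking which hypothesis feeds each output equation: the cubic relation for $\rho'(z')^2\sigma'(\epsilon'g'^2)=1$ and for $(3)_{\sigma'(\epsilon')}=0$, and $\rho(z)\sigma(gz)=1$ for $\sigma'(\epsilon'z')=-1$, with $\rho(g)=-1$ used throughout to kill powers of $\rho(g)$.
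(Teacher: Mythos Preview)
Your proof is correct and follows essentially the same approach as the paper: invoke Proposition~\ref{pro:3} to obtain $a_{1,2}^{(V,W)}=-2$ and the structure of $X_2^{V,W}$, then verify the three output identities by direct character computation, exactly parallel to Corollary~\ref{cor:R1:A}. The paper's proof is terser (it simply says ``the rest is similar to the proof of Corollary~\ref{cor:R1:A}''), but your more explicit verification, including the observation that $(3)_{-\rho(z)\sigma(g)}=0$ forces $(\rho(z)\sigma(g))^3=-1$, is precisely what that sentence is abbreviating.
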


\begin{proof}
	Using Proposition \ref{pro:3} one obtains that $a_{1,2}^{(V,W)}=-2$. The rest
	of the proof is similar to the proof of Corollary \ref{cor:R1:A}.
\end{proof}

\begin{cor} \label{cor:R2:E}
	Let $V,W\in \ydG $ such that $V\simeq M(g,\rho)$,
  where $\rho$ is a character of $G^g$,
  and $W\simeq M(z,\sigma)$,
  where $\sigma$ is a character of $G$, and
	\[
		\rho(g)=-1,\quad
		\rho(z)\sigma(gz)=1,\quad
		1-\rho(z)\sigma(g)+\rho(z^2)\sigma(g^2)=0.
	\]
	Let $g''=gz$. $\epsilon''=\epsilon^{-1}$, $z''=z^{-1}$,  $\rho''$ be the
	character of $G^g$ given by $\rho''(g)=-\sigma(g)$,
	$\rho''(z)=\rho(z)\sigma(z)$, and $\sigma''$
  be the character of $G$ dual to $\sigma$. Then $a_{2,1}^{(V,W)}=-1$ and
	\[
		R_2(V,W)=\left(X_1^{W,V},W^*\right),
	\]
	with 
	$X_1^{W,V}\simeq M(g'',\rho'')$,
	$W^*\simeq M(z'',\sigma'')$, and 	
	\begin{align*}
	  \rho''(g'')=-1,\quad \rho''(z'')\sigma''(g''z'')=1,\quad
          (3)_{-\rho''(z'')\sigma''(g'')}=0.
	\end{align*}
\end{cor}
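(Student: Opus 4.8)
The plan is to imitate the proofs of the earlier reflection corollaries, notably Corollaries~\ref{cor:R2:A} and~\ref{cor:R2:D}: first determine the Cartan entry $a_{2,1}^{(V,W)}$ using Proposition~\ref{pro:3}, then read off the shape of $R_2(V,W)$, and finally verify the three displayed identities by elementary computations with the relevant one-dimensional characters.

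First I would note that the hypotheses place us in case~(1b) of Proposition~\ref{pro:3}: indeed $\rho(g)=-1$, and the relation $1-\rho(z)\sigma(g)+\rho(z^2)\sigma(g^2)=0$ is exactly $(3)_{-\rho(z)\sigma(g)}=0$. This excludes case~(1c), which needs $\rho(g)\ne-1$, and it excludes case~(1a), since $\rho(z)\sigma(g)=1$ would force $(3)_{-\rho(z)\sigma(g)}=(3)_{-1}=1\ne0$. By Proposition~\ref{pro:3}(3), $a_{2,1}^{(V,W)}=-m$, where $m$ is the least index with $\gamma_{m+1}=0$ and $\gamma_1\cdots\gamma_m\ne0$, for $\gamma_k=(k)_{\sigma(z)}(1-\rho(z)\sigma(gz^{k-1}))$. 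Here $\gamma_1=1-\rho(z)\sigma(g)\ne0$ by the remark just made, whereas $\gamma_2=(2)_{\sigma(z)}(1-\rho(z)\sigma(gz))=0$ because $\rho(z)\sigma(gz)=1$ by assumption. Hence $a_{2,1}^{(V,W)}=-1$, and by the definition of the reflection $R_2$ this gives $R_2(V,W)=(X_1^{W,V},W^*)$.

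Next I would identify the two summands: by Proposition~\ref{pro:3}(3), $X_1^{W,V}\simeq M(gz,\rho_1)$ with $\rho_1(g)=\rho(g)\sigma(g)=-\sigma(g)$ and $\rho_1(z)=\rho(z)\sigma(z)$, so $X_1^{W,V}\simeq M(g'',\rho'')$; and $W^*\simeq M(z^{-1},\sigma'')=M(z'',\sigma'')$ with $\sigma''$ the character dual to $\sigma$. The only computational step left is to check the three relations. Using $g''z''=g$ and $\rho(z)\sigma(gz)=1$ one gets $\rho''(g'')=\rho''(g)\rho''(z)=-\sigma(g)\rho(z)\sigma(z)=-\rho(z)\sigma(gz)=-1$ and $\rho''(z'')\sigma''(g''z'')=\rho''(z)^{-1}\sigma''(g)=(\rho(z)\sigma(z))^{-1}\sigma(g)^{-1}=(\rho(z)\sigma(gz))^{-1}=1$; and, again from $\rho(z)\sigma(gz)=1$ (equivalently $\rho(z)^{-1}=\sigma(gz)=\sigma(g)\sigma(z)$), one obtains $-\rho''(z'')\sigma''(g'')=-\rho(z)^{-1}\sigma(g)^{-1}\sigma(z)^{-2}=-\sigma(z)^{-1}=-\rho(z)\sigma(g)$, so $(3)_{-\rho''(z'')\sigma''(g'')}=(3)_{-\rho(z)\sigma(g)}=0$. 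There is no genuine obstacle here: the whole argument reduces to Proposition~\ref{pro:3} together with bookkeeping for one-dimensional characters, exactly as in the companion corollaries, and the only point needing a moment's care is verifying that the hypotheses land in case~(1b) rather than~(1a) or~(1c).
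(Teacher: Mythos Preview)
Your proposal is correct and follows essentially the same approach as the paper: the paper's proof simply observes that $\rho(z)\sigma(g)\ne1$ (which you deduce from $(3)_{-\rho(z)\sigma(g)}=0$), invokes Proposition~\ref{pro:3} to get $a_{2,1}^{(V,W)}=-1$, and then says the rest is as in Corollary~\ref{cor:R2:A}. Your write-up is a fully spelled-out version of exactly this, including the character computations that the paper leaves implicit.
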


\begin{proof}
	Since $\rho (z)\sigma (g)\not=1$,
  Proposition \ref{pro:3} implies that $a_{2,1}^{(V,W)}=-1$. The rest
	of the proof is similar to the proof of Corollary \ref{cor:R2:A}.
\end{proof}

\section{Computing the reflections}
\label{section:reflections}

\subsection{Pairs of Yetter-Drinfeld modules}

Let $G$ be a non-abelian epimorphic image of $\Gamma_3$. We first identify pairs
$(V,W)$ of Yetter-Drinfeld modules over $G$ with a Cartan matrix of
finite type. More precisely, we define classes $\wp _i$, where $1\le i\le 6$,
of pairs of absolutely simple Yetter-Drinfeld modules over $G$, such that
all pairs in these classes can be treated simultaneously with respect to
reflections.

\begin{defin} (\emph{The classes $\wp_i$ for $1\le i\le 6$ of pairs of
  Yetter-Drinfeld modules}) \label{def:wp}
  Let $V$ and $W$ be Yetter-Drinfeld modules over $G$ and let $i\in \N $ with
  $1\le i\le 6$. We say that
  $(V,W)\in \wp _i$, if there exist $g,\epsilon ,z\in G$
  such that the following hold.
  \begin{enumerate}
    \item There is a group epimorphism
    \[ \Gamma _3\to G,\qquad \gamma \mapsto g,\quad \nu \mapsto \epsilon ,\quad
       \zeta \mapsto z.
    \]
    \item $V\simeq M(g,\rho )$ for some character $\rho $ of $G$, and
      either $W\simeq M(\epsilon z,\sigma )$ for some character $\sigma $ of
      $G^\epsilon $ or $W\simeq M(z,\sigma )$ for some absolutely irreducible
      representation $\sigma $ of $G$.
    \item $W$, $\rho $, $\sigma $, and $\K $ satisfy the conditions in
      the $i$-th row of Table~\ref{tab:finitetype}.
  \end{enumerate}
\end{defin}

	\begin{table}[h]
    \caption{Conditions on the classes $\wp_i$, $1\le i\le 6$}
\begin{center}
		\begin{tabular}{|c|c|c|c|}
			\hline 
			& $[W]$ & Conditions on $\rho$ and $\sigma$ & $\charK$ \tabularnewline
			\hline
			$\wp_1$ & $M(\epsilon z,\sigma)$ & $\rho(g)=\sigma(\epsilon z)=-1$ & \tabularnewline
			&&$\rho(z^{2})\sigma(\epsilon g^{2})=1$, $(3)_{\sigma(\epsilon)}=0$ & \tabularnewline
			\hline 
			$\wp_2$ & $M(\epsilon z,\sigma)$ & $\rho(g)=\sigma(z)=-1$& $\ne3$\tabularnewline
			&&$\rho(z^{2})\sigma(\epsilon g^{2})=\sigma(\text{\ensuremath{\epsilon})=1}$ & \tabularnewline 
			\hline 
			$\wp_3$ & $M(z,\sigma)$ & $\rho(g)=\sigma(z)=-1$ & $\ne3$\tabularnewline
			&& $\rho(z^{2})\sigma(g^{2})=1$, $\deg\sigma=2$ & \tabularnewline
			\hline 
			$\wp_4$ & $M(z,\sigma)$ & $\rho(g)=-1$, $\rho(z)\sigma(gz)=1$ & \tabularnewline
			&& $(3)_{-\rho(z)\sigma(g)}=0$, $\deg\sigma=1$ &  \tabularnewline
			\hline 
			$\wp_5$ & $M(z,\sigma)$ & $\rho(g)=\sigma(z)=1$,
        $(3)_{\rho(z)\sigma(g)}=0$& $2$ \tabularnewline
			&& $\deg\sigma=1$ & \tabularnewline
			\hline 
			$\wp_6$ & $M(z,\sigma)$ & $\rho(g)=\sigma(z)=-1$, $(3)_{-\rho(z)\sigma(g)}=0$ & $\ne2,3$ \tabularnewline
			&& $\deg\sigma=1$ & \tabularnewline
			\hline 
		\end{tabular}
		\label{tab:finitetype}
\end{center}
	\end{table}

\begin{pro}
	\label{pro:pairs}
  Let $V,W$ be absolutely simple Yetter-Drinfeld modules over $G$.
  Suppose that $c_{W,V}c_{V,W}\ne\id_{V\otimes W}$, the pair
	$(V,W)$ admits all reflections, the Weyl groupoid of $(V,W)$ is finite,
  and $|\supp V|\ge |\supp W|$.
	Then the Cartan matrix of $(V,W)$ is of finite type if and only if
	$(V,W)$ belongs to one of the classes $\wp _i$ for $1\le i\le 5$
  in Table \ref{tab:finitetype}. In this case
  $a_{1,2}^{(V,W)}=-2$ and $a_{2,1}^{(V,W)}=-1$.
\end{pro}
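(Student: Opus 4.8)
The plan is to identify the pair $(V,W)$ by reducing it to one of the three pair types analyzed in Sections~\ref{section:1}--\ref{section:3} and then reading off the Cartan matrix from Propositions~\ref{pro:1}, \ref{pro:2}, \ref{pro:3}. First I would pass to an algebraic closure of $\K$: absolute simplicity of $V$ and $W$, the Yetter-Drinfeld modules $(\ad V)^m(W)$ and $(\ad W)^m(V)$ of Lemma~\ref{lem:X_n}, the Cartan matrix, the reflections, and the conditions in Table~\ref{tab:finitetype} defining the classes $\wp_i$ are all insensitive to base-field extension, so no generality is lost.

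Next I would determine the possible shapes of $V$ and $W$. Every absolutely simple Yetter-Drinfeld module over $G$ is of the form $M(\delta,\chi)$ with $\delta$ running over a conjugacy class of $G$ and $\chi$ an absolutely irreducible representation of $G^\delta$; by \eqref{eq:classes} (whose class sizes survive in $G$ because $\epsilon\ne 1$ has order $3$) together with Lemma~\ref{lem:simpleG3modules}, after a suitable choice of the epimorphism $\Gamma_3\to G$ the degree $\delta$ is one of $z$, $\epsilon z$, or $g$, the centralizer being $G$ in the first case (so $\deg\chi\le 2$) and abelian in the other two (so $\deg\chi=1$). The key reduction is that necessarily $V\simeq M(g,\rho)$ and $W\simeq M(\epsilon z,\sigma)$ or $W\simeq M(z,\sigma)$; no permutation of $V,W$ is needed since the $g$-type support has the maximal size $3$ and $|\supp V|\ge|\supp W|$. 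If this failed, both $\supp V$ and $\supp W$ would lie in the abelian subgroup $\langle\epsilon\rangle Z(G)$, the braiding of $V\oplus W$ would factor through it, and one checks case by case that no such pair meets all the standing hypotheses, because some iterated reflection fails to consist of absolutely simple modules or the sequence of Cartan matrices does not close up to a finite Weyl groupoid. I expect this case analysis, disposing of all configurations in which neither summand has degree of maximal size (including the case where both $V$ and $W$ are of $g$-type), to be the main obstacle.

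With $V\simeq M(g,\rho)$, I would invoke Proposition~\ref{pro:1}, \ref{pro:2}, or \ref{pro:3} according to whether $W\simeq M(\epsilon z,\sigma)$, $W\simeq M(z,\sigma)$ with $\deg\sigma=2$, or $W\simeq M(z,\sigma)$ with $\deg\sigma=1$. Since $(V,W)$ admits all reflections, Lemma~\ref{lem:X_n} forces $(\ad V)^m(W)$ and $(\ad W)^m(V)$ to be absolutely simple or zero for all $m$, which is exactly part~(1) of the relevant proposition; hence $a_{1,2}^{(V,W)}\in\{0,-2,-4\}$ in the last case and $a_{1,2}^{(V,W)}=-2$ in the first two. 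The value $0$ is impossible because $X_1^{V,W}=(\id-c_{W,V}c_{V,W})(V\otimes W)\ne 0$, and $a_{1,2}^{(V,W)}=-4$ gives $|a_{1,2}^{(V,W)}a_{2,1}^{(V,W)}|\ge 4$, incompatible with finite type; thus $a_{1,2}^{(V,W)}=-2$. Finiteness of the Cartan matrix then forces $a_{2,1}^{(V,W)}=-1$, and the resulting constraints on $\rho$, $\sigma$, $\K$ obtained from part~(2) of the propositions identify $(V,W)$ as lying in $\wp_1$ or $\wp_2$ (case $M(\epsilon z,\sigma)$, according to whether $\sigma(\epsilon)=1$ or $(3)_{\sigma(\epsilon)}=0$), in $\wp_3$ (case $\deg\sigma=2$), or in $\wp_4$, $\wp_5$, or $\wp_6$ (case $\deg\sigma=1$).

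It remains to discard $\wp_6$. For a pair in $\wp_6$ one has $a_{1,2}^{(V,W)}=-2$, so $R_1(V,W)=(V^*,X_2^{V,W})$ with $X_2^{V,W}$ of the type treated in Section~\ref{section:1}; inserting the formulas of Proposition~\ref{pro:3}(2) into Proposition~\ref{pro:1} shows that this reflected pair again has $a_{1,2}=-2$ but now $a_{2,1}=-5$, so its Cartan matrix $\left(\begin{smallmatrix}2&-2\\-5&2\end{smallmatrix}\right)$ is not of finite type; hence the Weyl groupoid of a $\wp_6$ pair is infinite, contradicting the hypothesis. This establishes the forward implication and that $a_{1,2}^{(V,W)}=-2$, $a_{2,1}^{(V,W)}=-1$ in every surviving case. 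For the converse, if $(V,W)\in\wp_i$ with $1\le i\le 5$, then whichever of Propositions~\ref{pro:1}, \ref{pro:2}, \ref{pro:3} applies yields directly $a_{1,2}^{(V,W)}=-2$ and $a_{2,1}^{(V,W)}=-1$, so the Cartan matrix is $\left(\begin{smallmatrix}2&-2\\-1&2\end{smallmatrix}\right)$, of type $B_2$, hence of finite type.
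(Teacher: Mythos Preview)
Your outline follows the same architecture as the paper's proof—reduce to the three pair types of Sections~\ref{section:1}--\ref{section:3}, read off the Cartan matrix from Propositions~\ref{pro:1}, \ref{pro:2}, \ref{pro:3}, and then eliminate $\wp_6$—but there are two places where your argument is incomplete.

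\medskip
\textbf{The support reduction.} You propose to show $V\simeq M(g,\rho)$ and $W\simeq M(\epsilon z,\sigma)$ or $M(z,\sigma)$ by a case analysis you do not carry out, and you flag this yourself as the main obstacle. The paper bypasses this entirely: it invokes \cite[Thm.\,7.3]{examples}, which under the standing hypotheses forces the quandle $\supp(V\oplus W)$ to be $Z_3^{3,1}$ or $Z_3^{3,2}$. This gives immediately $|\supp V|=3$, $|\supp W|\in\{1,2\}$, and the desired form of $V$ and $W$ with a suitable choice of $g,\epsilon,z$. Your sketch via ``the braiding factors through the abelian subgroup $\langle\epsilon\rangle Z(G)$'' is not enough: pairs of diagonal type can certainly have finite Weyl groupoid, and the case where both $V$ and $W$ have support of size~$3$ is not touched by that argument at all. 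Without the quandle classification you would have to redo a nontrivial piece of \cite{examples}.

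\medskip
\textbf{Excluding $\wp_6$.} Your argument reflects with $R_1$ and obtains a pair of the type treated in Section~\ref{section:1} with Cartan matrix $\left(\begin{smallmatrix}2&-2\\-5&2\end{smallmatrix}\right)$; you then conclude that the Weyl groupoid is infinite. That last step does not follow: for Cartan schemes of rank two a single object can carry a Cartan matrix with $|a_{12}a_{21}|\ge 4$ while the Weyl groupoid is still finite—indeed the class $\wp_5''$ in this very paper has Cartan matrix $\left(\begin{smallmatrix}2&-4\\-1&2\end{smallmatrix}\right)$ and finite Weyl groupoid (Lemma~\ref{lem:Cartan3objects}). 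The paper instead reflects with $R_2$: for $(V',W')=R_2(V,W)$ one finds $\rho_1(g'')=\rho(z)\sigma(g)\notin\{1,-1\}$ and $\charK\ne2$, so none of the alternatives in Proposition~\ref{pro:3}(1) hold; hence some $(\ad V')^m(W')$ is neither absolutely simple nor zero, contradicting \cite[Thm.\,2.5]{MR2732989}. If you want to stay with $R_1$ you must either iterate further or appeal to the classification of finite rank-two Cartan schemes in \cite{MR2525553}; the paper's $R_2$ route is shorter.

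\medskip
A minor point: Lemma~\ref{lem:X_n} only supplies the explicit model $X_m^{V,W}$ for $(\ad V)^m(W)$; the statement that these are absolutely simple or zero under the hypotheses is \cite[Thm.\,2.5]{MR2732989}, which is what the paper cites.
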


\begin{proof}
  {}From \cite[Thm.\,7.3]{examples} we know that the quandle
  $\supp(V\oplus W)$ is isomorphic to $Z_3^{3,1}$ or $Z_3^{3,2}$.
  Since $|\supp V|\ge |\supp W|$, we conclude that
  $|\supp V|=3$ and $|\supp W|\le 2$. Let $g,g'\in \supp V$ with $g'\neq g$
  and let $\epsilon =g'{}^{-1}g$. Then $\epsilon \not=1$ and $\epsilon ^3=1$
  by \eqref{eq:classes}. Since $V$ is absolutely simple,
  there exists a character $\rho $ of $G$ such that $V\simeq M(g,\rho )$.
  Since $W$ is absolutely simple and $\supp (V\oplus W)$ generates $G$,
  there exists $z\in Z(G)$ such that
  either $W\simeq M(\epsilon z,\sigma )$ for a character $\sigma $ of
  $G^\epsilon $, or $W\simeq M(z,\sigma )$ for some absolutely irreducible
  representation $\sigma $ of $G$. Then $\deg \sigma \le 2$ by
  Lemma~\ref{lem:simpleG3modules}.
 
	Since the pair $(V,W)$ admits all reflections and the Weyl groupoid of
  $(V,W)$ is finite, we conclude from \cite[Theorem\,2.5]{MR2732989}
  that $(\ad V)^m(W)$ and $(\ad W)^m(V)$ are
  absolutely simple or zero for all $m\ge 1$. The condition
  $c_{W,V}c_{V,W}\not=\id _{V\otimes W}$
  just means that the Cartan matrix of $(V,W)$
  is not of type $A_1\times A_1$.
  Propositions~\ref{pro:1}, \ref{pro:2}, and \ref{pro:3} imply that
	the Cartan matrix of $(V,W)$ is of finite type if and only if
	$(V,W)$ belongs to one of the classes $\wp _i$ for $1\le i\le 6$
  in Table \ref{tab:finitetype}. Moreover, in this case $a_{1,2}^{(V,W)}=-2$
  and $a_{2,1}^{(V,W)}=-1$.

  We have to exclude the class $\wp _6$. To do so, it suffices to prove that
  if $(V,W)\in \wp _6$ then not all assumptions of the proposition are
  fulfilled.

  Assume that $(V,W)$ belongs to $\wp _6$. By Proposition~\ref{pro:3},
  \[ R_2(V,W)=( (\ad W)(V),W^*)
  \simeq (M(gz,\rho _1),M(z^{-1},\sigma ^{-1})),\]
  where $\rho _1$ is the character of $G^g$ with
  $\rho _1(g)=\rho (g)\sigma (g)$ and $\rho _1(z)=\rho (z)\sigma (z)$.
  Let $(V',W')=R_2(V,W)$.
  We obtain that $\rho _1(gz)=\rho (z)\sigma (g)\notin \{1,-1\}$,
  since $(3)_{-\rho (z)\sigma (g)}=0$ and $\charK \not=3$. Since also $\charK
  \not=2$ by assumption on $\wp _6$,
  Proposition~\ref{pro:3} implies that
  not all of $(\ad V')^m(W')$ and $(\ad W')^m(V')$ for $m\ge 1$
  are absolutely simple or
  zero. This is a
  contradiction to the assumption that $(V,W)$ admits all reflections and the
  Weyl groupoid of $(V,W)$ is finite.
\end{proof}

Let $(V,W)$ be a pair of Yetter-Drinfeld modules over $G$.  
If $(V,W)\in \wp _1$, then $R_1(V,W)\in \wp _4$ and $R_2(V,W)\in \wp _1$ by
Corollaries~\ref{cor:R1:A} and \ref{cor:R2:A}.  On the other hand, if $(V,W)\in
\wp _4$, then $R_1(V,W)\in \wp _1$ and $R_2(V,W)\in \wp _4$ by
Corollaries~\ref{cor:R1:E} and \ref{cor:R2:E}.  We display this fact in the
following graph: \begin{equation} \xymatrix{{\wp_1}\ar@{-}[r]^{R_1} & \wp_4}
\end{equation} We omit $R_2$ in the graph, since it fixes the classes
$\wp _1$ and $\wp _4$.  Since
the Cartan matrices of $(V,W)$ are the same for all reflections of $(V,W)$ in
the classes $\wp _1$ and $\wp _4$, we conclude the following.

\begin{lem} \label{lem:wp1wp4standard}
  Let $(V,W)$ be a pair in $\wp _1$ or $\wp _4$. Then $(V,W)$
  admits all reflections and the Weyl groupoid of $(V,W)$
  is standard of type $B_2$.
\end{lem}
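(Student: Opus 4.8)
The plan is to exploit that the union $\wp_1 \cup \wp_4$ is stable under the reflections $R_1$ and $R_2$. The key input is already packaged in Corollaries~\ref{cor:R1:A}, \ref{cor:R2:A}, \ref{cor:R1:E} and \ref{cor:R2:E}: if $(V',W')$ lies in $\wp_1$, then $R_1(V',W')$ and $R_2(V',W')$ are defined, with $R_1(V',W')\in\wp_4$ and $R_2(V',W')\in\wp_1$; symmetrically, if $(V',W')\in\wp_4$, then $R_1(V',W')\in\wp_1$ and $R_2(V',W')\in\wp_4$. The only verification needed here is that the conditions on the reflected characters $\rho'$, $\sigma'$ (resp. $\rho''$, $\sigma''$) spelled out in those corollaries are literally the defining conditions of the target class in Table~\ref{tab:finitetype}; this is immediate once one rewrites $(3)_t = 1+t+t^2$. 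I would also note that all Yetter-Drinfeld modules occurring in $\wp_1$ and $\wp_4$ are of the form $M(u,\tau)$ with $\tau$ one-dimensional, hence absolutely simple, so one never leaves the class of pairs of absolutely simple modules.

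Next I would record, again from the same four corollaries, that every pair in $\wp_1\cup\wp_4$ has Cartan matrix with $a_{1,2}=-2$ and $a_{2,1}=-1$, i.e.\ the Cartan matrix of type $B_2$. Now ``$(V,W)$ admits all reflections'' means that $R_{i_k}\cdots R_{i_1}(V,W)$ is defined for every word $R_{i_1},\dots,R_{i_k}$ in $R_1, R_2$. Since $(V,W)\in\wp_1\cup\wp_4$, this set is stable under $R_1$ and $R_2$, and each of its members admits both single reflections, a straightforward induction on $k$ shows that all iterated reflections of $(V,W)$ are defined. Hence $(V,W)$ admits all reflections, and the Cartan matrix at every object of the Weyl groupoid of $(V,W)$ equals the fixed Cartan matrix of type $B_2$.

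Finally, I would invoke that a Weyl groupoid all of whose Cartan matrices coincide with one fixed Cartan matrix $C$ of finite type is, by definition, the standard Weyl groupoid of type $C$, and is in particular finite, its Weyl group being the (finite) Weyl group of $C$ --- for $B_2$ the dihedral group of order $8$. This gives that the Weyl groupoid of $(V,W)$ is standard of type $B_2$. I do not expect a genuine obstacle: all the structural content was extracted in Sections~\ref{section:1} and \ref{section:3}, and what remains is merely to assemble Corollaries~\ref{cor:R1:A}, \ref{cor:R2:A}, \ref{cor:R1:E} and \ref{cor:R2:E} together with the trivial induction, the only mild bookkeeping being the identification of the two descriptions of the class $\wp_4$.
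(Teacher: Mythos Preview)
Your proposal is correct and follows essentially the same approach as the paper: the paper also derives the lemma directly from Corollaries~\ref{cor:R1:A}, \ref{cor:R2:A}, \ref{cor:R1:E} and \ref{cor:R2:E}, observing that $\wp_1\cup\wp_4$ is stable under $R_1$ and $R_2$ and that the Cartan matrix is constant (equal to the $B_2$ matrix) along all reflections. Your write-up is slightly more explicit about the induction and the identification of the $\wp_4$-conditions, but the argument is the same.
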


Similarly to the previous paragraph, Corollaries~\ref{cor:R1:B},
\ref{cor:R2:B}, \ref{cor:R1:D}, and \ref{cor:R2:D} imply that the reflections
of the pairs $(V,W)$ in $\wp _2$ and $\wp _3$ can be displayed in the following
graph:
\begin{equation}
	\xymatrix{\wp_2\ar@{-}[r]^{R_1} & \wp_3}     
\end{equation}

\begin{lem} \label{lem:wp2wp3standard}
  Let $(V,W)$ be a pair in $\wp _2$ or $\wp _3$. Then $(V,W)$
  admits all reflections and the Weyl groupoid of $(V,W)$
  is standard of type $B_2$.
\end{lem}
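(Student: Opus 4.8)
The argument runs exactly parallel to the proof of Lemma~\ref{lem:wp1wp4standard}. The key input is that Corollaries~\ref{cor:R1:B}, \ref{cor:R2:B}, \ref{cor:R1:D}, and \ref{cor:R2:D} together show that $R_1$ interchanges the classes $\wp_2$ and $\wp_3$, while $R_2$ preserves each of $\wp_2$ and $\wp_3$; this is the content of the displayed graph. In particular each of these four corollaries produces a pair which again lies in $\wp_2\cup\wp_3$, and the reflections $R_1$ and $R_2$ are defined on every such pair: membership in $\wp_2$ or $\wp_3$ forces precisely the hypotheses of Proposition~\ref{pro:1} or Proposition~\ref{pro:2} under which $(\ad V)^m(W)$ and $(\ad W)^m(V)$ are absolutely simple or zero for all $m\in\N_0$, which is what is needed for $R_1(V,W)$ and $R_2(V,W)$ to be well defined.

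First I would show, by induction on the length of a word in $R_1,R_2$, that every pair obtained from $(V,W)$ by a finite sequence of reflections again belongs to $\wp_2\cup\wp_3$: the base case is the hypothesis on $(V,W)$, and the inductive step is a single application of one of the four corollaries above. This proves that $(V,W)$ admits all reflections.

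Next I would observe that, by the same corollaries (equivalently by Proposition~\ref{pro:1}(2) and Proposition~\ref{pro:2}(2)), every pair in $\wp_2\cup\wp_3$ has Cartan matrix with entries $a_{1,2}=-2$ and $a_{2,1}=-1$, that is, the Cartan matrix of type $B_2$; in particular $c_{W,V}c_{V,W}\ne\id_{V\otimes W}$, so the pair is not of type $A_1\times A_1$. Since, by the previous paragraph, every object of the Weyl groupoid of $(V,W)$ is a pair in $\wp_2\cup\wp_3$, all Cartan matrices attached to this Weyl groupoid coincide with the $B_2$ Cartan matrix. By definition this means the Weyl groupoid of $(V,W)$ is standard of type $B_2$; in particular it is finite, being isomorphic to the Weyl group of $B_2$.

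I do not expect a genuine obstacle: the proof is pure bookkeeping, combining the closure of $\wp_2\cup\wp_3$ under the reflection functors with the constancy of the Cartan matrix along the Weyl groupoid. The only point requiring a little care is to check that $R_1$ and $R_2$ are actually defined at each stage, but as noted this is automatic from the defining conditions of $\wp_2$ and $\wp_3$ together with Propositions~\ref{pro:1} and \ref{pro:2}.
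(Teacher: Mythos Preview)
Your proof is correct and follows exactly the same approach as the paper: the paper's argument for this lemma is simply the sentence ``Similarly to the previous paragraph'' together with the displayed graph, relying on Corollaries~\ref{cor:R1:B}, \ref{cor:R2:B}, \ref{cor:R1:D}, \ref{cor:R2:D} to show closure of $\wp_2\cup\wp_3$ under reflections and constancy of the Cartan matrix. You have merely spelled out in full the bookkeeping that the paper leaves implicit.
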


The reflections of the pairs $(V,W)\in \wp _5$ show a more complicated pattern.

Let us assume that $\charK=2$.
Lemmas~\ref{lem:R1:F}, \ref{lem:R2:F}, \ref{lem:R1:F1}, \ref{lem:R2:F1},
\ref{lem:R1:F2}, and \ref{lem:R2:F2} below
imply that the reflections of the pairs
$(V,W)$ in $\wp _5$ can be displayed in the following graph:
\begin{equation}
\xymatrix{\wp_5'\ar@{-}[r]^{R_1} & \wp_5\ar@{-}[r]^{R_2} & \wp_5''}     
\end{equation}
The classes $\wp_5'$ and $\wp_5''$ are defined in the same way as $\wp _5$
in Definition~\ref{def:wp}, except that in the last line one refers to the
conditions in Table~\ref{tab:pair5}. 
Since 
the class $\wp _5$ is non-empty only if
$\charK=2$, in the definitions of $\wp_5'$ and $\wp_5''$ we also assume that
$\charK =2$.

    \begin{table}[h]
        \caption{Conditions on the classes $\wp_5$, $\wp _5'$, $\wp _5''$}
\begin{center}
        \begin{tabular}{|r|c|c|}
            \hline 
            & $[W]$ &
            \rule{0pt}{2.3ex}
            Conditions on $\rho$ and $\sigma$ \tabularnewline
            \hline
            $\wp_5$ & $M(z,\sigma)$ &
            \rule{0pt}{2.3ex}
            $\rho(g)=\sigma(z)=1$,
            $(3)_{\rho(z)\sigma(g)}=0$, $\deg\sigma=1$ \tabularnewline
            \hline 
            $\wp_5'$ & $M(\epsilon z,\sigma)$ & 
            \rule{0pt}{2.3ex}
            $(3)_{\sigma(\epsilon)}=0$, $\sigma(z)=\sigma(\epsilon)$,
            $\rho(z^2)\sigma(\epsilon g^2)=1$, $\rho(g)=1$ \tabularnewline
            \hline
            \rule{0pt}{2.3ex}
            $\wp_5''$ & $M(z,\sigma)$ &  
            $(3)_{\rho(g)}=0$, $\sigma(z)=1$,
            $\rho(gz)\sigma(g)=1$, $\deg\sigma=1$
            \tabularnewline 
            \hline
        \end{tabular}
        \label{tab:pair5}
\end{center}
    \end{table}

\begin{lem}
	\label{lem:R1:F}
	Let $(V,W)\in \wp _5$ and $g,\epsilon ,z,\rho
  ,\sigma $ as in Definition~\ref{def:wp} such that
  $V\simeq M(g,\rho)$ and $W\simeq M(z,\sigma)$.
	Let $g'=g^{-1}$, $\epsilon'=\epsilon$, $z'=g^2z$, $\rho'$ be the character
  of $G^g$ dual to $\rho$, and $\sigma'$ be the
	character of $G^\epsilon $ given by
  $\sigma'(\epsilon)=-\rho(z)^{-1}\sigma(g)^{-1}$,
  $\sigma'(z)=\rho(z)^2$, $\sigma'(g^2)=\sigma(g^2)$. Then
  $a_{1,2}^{(V,W)}=-2$ and
	\[
		R_1(V,W)=\left(V^*,X_2^{V,W}\right)
	\]
	with $V^*\simeq M(g',\rho')$, $X_2^{V,W}\simeq M(\epsilon'z',\sigma')$, and 
	\begin{align*}
		\rho'(g')=1,\quad \rho'(z'^2)\sigma'(\epsilon'g'^2)=1,\quad
		\sigma'(z')=\sigma'(\epsilon'),\quad
		(3)_{\sigma'(\epsilon')}=0.
	\end{align*}
  In particular, $R_1(V,W)\in \wp _5'$.
\end{lem}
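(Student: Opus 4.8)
The plan is to read everything off Proposition~\ref{pro:3}, since a pair in $\wp _5$ is exactly one of the pairs treated in Section~\ref{section:3}. First I would check that the defining conditions of $\wp _5$ put $(V,W)$ in case~(1b) of Proposition~\ref{pro:3}: in characteristic $2$ one has $\rho (g)=1=-1$ and $(3)_{-\rho (z)\sigma (g)}=(3)_{\rho (z)\sigma (g)}=0$, so the hypotheses of~(1b) hold; case~(1a) is excluded because $(3)_{\rho (z)\sigma (g)}=0$ and $\charK \neq 3$ force $\rho (z)\sigma (g)\neq 1$, and case~(1c) is excluded because $(3)_{\rho (g)}=(3)_1\neq 0$ in characteristic $2$. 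Hence Proposition~\ref{pro:3} yields that each $(\ad V)^m(W)$ and $(\ad W)^m(V)$ is absolutely simple or zero, that $a_{1,2}^{(V,W)}=-2$, and that $X_2^{V,W}\simeq M(\epsilon g^2z,\sigma _2)$ with $\sigma _2(\epsilon )=-\rho (z)^{-1}\sigma (g)^{-1}$, $\sigma _2(z)=\rho (z)^2\sigma (z)$, $\sigma _2(g^2)=\sigma (g^2)$. Using $\sigma (z)=1$ these are precisely the values of $\sigma '$ in the statement, and $\epsilon g^2z=\epsilon 'z'$ for $\epsilon '=\epsilon $, $z'=g^2z$. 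Since $a_{1,2}^{(V,W)}=-2$, the reflection is $R_1(V,W)=(V^*,(\ad V)^2(W))=(V^*,X_2^{V,W})$ with $V^*\simeq M(g',\rho ')$, $g'=g^{-1}$, which gives the claimed description of $R_1(V,W)$.

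Next I would verify the four identities for $\rho '$ and $\sigma '$ by a direct computation whose only input is that $q:=\rho (z)\sigma (g)$ is a primitive cube root of unity. Indeed, $(x-1)(3)_x=x^3-1$ together with $(3)_q=0$ and $q\neq 1$ give $q^3=1$; consequently $q^2=q^{-1}$ and $(3)_{q^{-1}}=q^{-2}(3)_q=0$. Now $\rho '(g')=\rho (g)=1$; $\sigma '(\epsilon ')=\sigma '(\epsilon )=-\rho (z)^{-1}\sigma (g)^{-1}=-q^{-1}=q^{-1}$ in characteristic $2$, so $(3)_{\sigma '(\epsilon ')}=(3)_{q^{-1}}=0$; $\sigma '(z')=\sigma '(g^2z)=\sigma '(g^2)\sigma '(z)=\sigma (g)^2\rho (z)^2=q^2=q^{-1}=\sigma '(\epsilon ')$; and, using $z'^2=g^4z^2$, $g'^2=g^{-2}$, $\rho (g)=1$ and $\sigma (g^2)=\sigma (g)^2$, one computes $\rho '(z'^2)\sigma '(\epsilon 'g'^2)=\sigma '(\epsilon )\,(\rho (z)\sigma (g))^{-2}=-q^{-1}q^{-2}=-q^{-3}=-1=1$ in characteristic $2$. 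The characteristic-$2$ hypothesis is genuinely used in the last two identities; in odd characteristic the signs would spoil them.

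Finally I would observe that the reflected triple $(g',\epsilon ',z')$ again defines a group epimorphism $\Gamma _3\to G$ (one checks $g^{-1}\epsilon g=\epsilon ^{-1}$ and that $g^2z$ is central, so the relations $\gamma \nu =\nu ^2\gamma $, $\zeta \gamma =\gamma \zeta $, $\zeta \nu =\nu \zeta $, $\nu ^3=1$ remain valid, and $g',\epsilon ',z'$ still generate $G$), and that $X_2^{V,W}\simeq M(\epsilon 'z',\sigma ')$ for a character $\sigma '$ of $G^{\epsilon '}$ (so in particular $\deg\sigma '=1$). Comparing the four identities established above with the row for $\wp _5'$ in Table~\ref{tab:pair5} (and recalling $\charK =2$) shows that $R_1(V,W)=(V^*,X_2^{V,W})$ lies in $\wp _5'$, which completes the argument. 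I expect no real obstacle: the whole proof is bookkeeping with the relation $q^3=1$, and the only points requiring care are keeping the reflection convention $R_1(V,W)=(V^*,(\ad V)^{-a_{1,2}^{(V,W)}}(W))$ consistent with the earlier corollaries and tracking the signs, which is exactly where characteristic $2$ enters.
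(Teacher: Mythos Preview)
Your proof is correct and follows exactly the approach of the paper, which simply invokes Proposition~\ref{pro:3} to obtain $a_{1,2}^{(V,W)}=-2$ and the description of $R_1(V,W)$, and then says the remaining claims are easy to check. You have carried out those checks in full detail, including the verification that $(V,W)$ falls under case~(1b) rather than (1a) or (1c), the computation of the four identities via $q^3=1$, and the confirmation that $(g',\epsilon',z')$ again presents $G$ as a quotient of $\Gamma_3$; none of this is spelled out in the paper, but it is precisely what is meant by ``easy to check''.
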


\begin{proof}
	Using Proposition \ref{pro:3} we obtain that $a_{1,2}^{(V,W)}=-2$ and hence
	the description of $R_1(V,W)$ together with the isomorphisms regarding $V^*$
  and $X_2^{V,W}$ follow from the same Proposition. The remaining claims are
  easy to check.
\end{proof}

\begin{lem}
	\label{lem:R1:F1}
	Let $(V,W)\in \wp _5'$ and $g,\epsilon ,z,\rho ,\sigma $
  as in Definition~\ref{def:wp} such that
  $V\simeq M(g,\rho)$ and $W\simeq M(\epsilon z,\sigma)$.
	Let $g'=g^{-1}$, $\epsilon'=\epsilon $, $z'=g^2z$, $\rho'$ be the
	character of $G^g$ dual to $\rho$, and $\sigma'$ be the
	character of $G$ given by $\sigma'(\epsilon)=1$,
	$\sigma'(g)=\rho(z)^{-1}\sigma(\epsilon)$, $\sigma'(z)=\rho(z)^2\sigma(z)$.
	Then 
  $a_{1,2}^{(V,W)}=-2$ and
	\[
		R_1(V,W)=\left(V^*,X_2^{V,W}\right)
	\]
	with $V^*\simeq M(g',\rho')$, $X_2^{V,W}\simeq M(z',\sigma')$, and 
	\begin{align*}
		\rho'(g')=\sigma'(z')=1,\quad
    1+\rho'(z')\sigma'(g')+(\rho'(z')\sigma'(g'))^2=0.
	\end{align*}
  In particular, $R_1(V,W)\in \wp _5$.
\end{lem}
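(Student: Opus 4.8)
The plan is to derive the statement from Proposition~\ref{pro:1}, since a pair $(V,W)\in\wp_5'$ is exactly of the form considered there: $V\simeq M(g,\rho)$ with $\rho$ a character of $G$ and $W\simeq M(\epsilon z,\sigma)$ with $\sigma$ a character of $G^\epsilon$. First I would check that the hypotheses of Proposition~\ref{pro:1}(1) hold. Since $\charK=2$ we have $1=-1$, so the $\wp_5'$ condition $\rho(g)=1$ is the condition $\rho(g)=-1$; the condition $\rho(z)^2\sigma(\epsilon g^2)=1$ is one of the defining conditions of $\wp_5'$; and from $\sigma(z)=\sigma(\epsilon)$ together with $\sigma(\epsilon)^3=1$ one computes $\sigma(\epsilon z^2)=\sigma(\epsilon)\sigma(z)^2=\sigma(\epsilon)^3=1$, hence $(\sigma(\epsilon z)+1)(\sigma(\epsilon z^2)-1)=0$. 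Therefore $(\ad V)^m(W)$ and $(\ad W)^m(V)$ are absolutely simple or zero for all $m$, and part (2) of Proposition~\ref{pro:1} becomes available.

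Next, Proposition~\ref{pro:1}(2) gives $a_{1,2}^{(V,W)}=-2$ at once, so $R_1(V,W)=(V^*,X_2^{V,W})$ with $V^*\simeq M(g',\rho')$, $g'=g^{-1}$, by the definition of the reflection $R_1$. Since $(3)_{\sigma(\epsilon)}=0$ we are in case (2a) of that proposition, which gives $X_2^{V,W}\simeq M(g^2z,\sigma_2)$, where $\sigma_2$ is the character of $G$ with $\sigma_2(\epsilon)=1$, $\sigma_2(g)=-\rho(z)^{-1}\sigma(\epsilon)$, and $\sigma_2(z)=\rho(z)^2\sigma(z)$. Setting $z'=g^2z$ and using $\charK=2$ to rewrite $-\rho(z)^{-1}\sigma(\epsilon)=\rho(z)^{-1}\sigma(\epsilon)$, we see that $\sigma_2$ is precisely the character $\sigma'$ in the statement, so $X_2^{V,W}\simeq M(z',\sigma')$, as claimed.

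It then remains to verify the displayed identities, which amounts to checking $R_1(V,W)\in\wp_5$. This is a direct computation with the character values: $\rho'(g')=\rho(g)=1$; $\sigma'(z')=\sigma'(g)^2\sigma'(z)=\sigma(\epsilon)^2\sigma(z)=\sigma(\epsilon)^3=1$; and $\rho'(z')\sigma'(g')=\rho(z)^{-1}\cdot\rho(z)\sigma(\epsilon)^{-1}=\sigma(\epsilon)^{-1}$, whence $1+\rho'(z')\sigma'(g')+(\rho'(z')\sigma'(g'))^2=\sigma(\epsilon)^{-2}\bigl(1+\sigma(\epsilon)+\sigma(\epsilon)^2\bigr)=0$, i.e.\ $(3)_{\rho'(z')\sigma'(g')}=0$; together with $\deg\sigma'=1$ and $\charK=2$ this places $R_1(V,W)$ in $\wp_5$. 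I do not expect a genuine obstacle here; the only points that need attention are reconciling the $\charK=2$ hypothesis $\rho(g)=1$ with the form $\rho(g)=-1$ required by Proposition~\ref{pro:1}, keeping straight which case (2a vs.\ 2b) of that proposition applies, and being consistent about the dual-character convention $\rho'(x)=\rho(x)^{-1}$ when evaluating $\rho'$ at $g'$ and $z'=g^2z$.
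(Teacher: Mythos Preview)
Your proposal is correct and follows precisely the approach the paper intends: the paper's own proof reads ``It is similar to the proof of Lemma~\ref{lem:R1:F}, but one needs Proposition~\ref{pro:1},'' and you have accurately supplied those details, verifying the hypotheses of Proposition~\ref{pro:1} via the $\charK=2$ identification $1=-1$, invoking case~(2a) (which is forced since $(3)_{\sigma(\epsilon)}=0$ and $\charK=2$ preclude $\sigma(\epsilon)=1$), and checking the $\wp_5$ conditions for $R_1(V,W)$ by direct computation. One minor slip: $\rho$ is a character of $G^g$, not of $G$, but this does not affect the argument.
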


\begin{proof}
	It is similar to the proof of Lemma~\ref{lem:R1:F}, but one needs
  Proposition~\ref{pro:1}.
\end{proof}

\begin{lem} \label{lem:R1:F2}
	Let $(V,W)\in \wp _5''$ and $g,\epsilon ,z,\rho ,\sigma $
  as in Definition~\ref{def:wp} such that
  $V\simeq M(g,\rho)$ and $W\simeq M(z,\sigma)$.
	Let $g'=g^{-1}$, $\epsilon'=\epsilon $, $z'=g^4z$, $\rho'$ be the
	character of $G^g$ dual to $\rho$, and $\sigma'$ be
	the  character of $G$ given by $\sigma'(g)=\rho(g)\sigma(g)$,
	$\sigma'(z)=\rho(z)^4\sigma(z)$, $\sigma'(\epsilon)=1$.  Then 
  $a_{1,2}^{(V,W)}=-4$ and
	\[
		R_1(V,W)=\left(V^*,X_4^{V,W}\right)
	\]
	with $V^*\simeq M(g',\rho')$, $X_4^{V,W}\simeq M(z',\sigma')$, and 
	\begin{align*}
	  \rho'(g'z')\sigma'(g')=1, \quad 1+\rho'(g')+\rho'(g')^2=0,\quad
          \sigma'(z')=1.
	\end{align*} 
  In particular, $R_1(V,W)\in \wp _5''$.
\end{lem}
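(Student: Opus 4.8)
The plan is to follow the same pattern as the proofs of Lemmas~\ref{lem:R1:F} and \ref{lem:R1:F1}, but invoking Proposition~\ref{pro:3} in its case (1c). The hypothesis $(V,W)\in\wp_5''$ means $\deg\sigma=1$, $\rho(g)$ satisfies $(3)_{\rho(g)}=0$, $\sigma(z)=1$, and $\rho(gz)\sigma(g)=1$; since $(3)_{\rho(g)}=0$ forces $\rho(g)\ne 1$ and, in characteristic~$2$, $\rho(g)\ne -1=1$, we are precisely in the situation of Proposition~\ref{pro:3}(1c). First I would note that case (1c) of Proposition~\ref{pro:3} gives $a_{1,2}^{(V,W)}=-4$, so that $R_1(V,W)=(V^*,X_4^{V,W})$. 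By the definition of the reflection $R_1$ we have $V^*\simeq M(g^{-1},\rho')=M(g',\rho')$, and by Proposition~\ref{pro:3}(2b) we have $X_4^{V,W}\simeq M(g^4z,\sigma_4)=M(z',\sigma')$ with $\sigma_4=\sigma'$ the character of $G$ given by $\sigma'(g)=\rho(g)\sigma(g)$, $\sigma'(\epsilon)=1$, $\sigma'(z)=\rho(z)^4\sigma(z)$. This establishes the displayed identification of $R_1(V,W)$ and the isomorphism classes of its two entries.

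Next I would verify the three asserted relations among $\rho'$, $\sigma'$ by direct substitution, using $z'=g^4z$, $g'=g^{-1}$, and that $g$ and $z$ commute with everything relevant (both $g^2$ and $z$ are central). For $\sigma'(z')$: $\sigma'(z')=\sigma'(g^4z)=\sigma'(g)^4\sigma'(z)=(\rho(g)\sigma(g))^4\rho(z)^4\sigma(z)=(\rho(gz)\sigma(g))^4=1^4=1$, using the hypothesis $\rho(gz)\sigma(g)=1$. For $\rho'(g')$: since $\rho'$ is dual to $\rho$, $\rho'(g')=\rho'(g^{-1})=\rho(g)^{-1}$, so $1+\rho'(g')+\rho'(g')^2=\rho(g)^{-2}(\rho(g)^2+\rho(g)+1)=\rho(g)^{-2}(3)_{\rho(g)}=0$ by hypothesis. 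For $\rho'(g'z')\sigma'(g')$: we compute $g'z'=g^{-1}g^4z=g^3z$, so $\rho'(g'z')=\rho(g)^{-1}(\rho'(g^2))(\rho'(z))$; since $\rho'$ dual to $\rho$ gives $\rho'(g^2)=\rho(g^2)^{-1}=\rho(g)^{-2}$ and $\rho'(z)=\rho(z)^{-1}$, hence $\rho'(g'z')=\rho(g)^{-3}\rho(z)^{-1}$, and $\sigma'(g')=\sigma'(g^{-1})=\sigma'(g)^{-1}=(\rho(g)\sigma(g))^{-1}$. Therefore $\rho'(g'z')\sigma'(g')=\rho(g)^{-4}\rho(z)^{-1}\sigma(g)^{-1}=\rho(g)^{-4}(\rho(gz)\sigma(g))^{-1}=\rho(g)^{-4}$. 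In characteristic~$2$ the relation $(3)_{\rho(g)}=0$ means $\rho(g)^2+\rho(g)+1=0$, hence $\rho(g)^3=1$ (multiply by $\rho(g)-1$, valid since $\rho(g)\ne 1$), so $\rho(g)^{-4}=\rho(g)^{-1}=\rho(g)^2$, which is a primitive cube root of unity; in particular $\rho'(g'z')\sigma'(g')=\rho(g)^{-4}$, and one checks $\rho'(g'z')\sigma'(g')\ne 1$, but more to the point the required relation in the definition of $\wp_5''$ applied to $(V^*,X_4^{V,W})$ is $\rho'(g'z')\sigma'(g')=1$ --- wait, I would recompute carefully: using $\rho(g)^3=1$ we get $\rho(g)^{-4}=\rho(g)^{-1}\cdot\rho(g)^{-3}=\rho(g)^{-1}$, which need not be $1$; so I would re-examine whether the exponent should be $g^3z$ or whether $z'=g^4z$ contributes differently, keeping in mind the statement asserts $\rho'(g'z')\sigma'(g')=1$, so the bookkeeping must be redone until it closes — this is the delicate step.

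Finally, I would conclude that the triple $(g',\epsilon',z')$ together with the characters $\rho'$, $\sigma'$ witnesses $R_1(V,W)\in\wp_5''$: the group epimorphism $\Gamma_3\to G$ is unchanged up to the substitution $g\mapsto g'=g^{-1}$ (which still generates together with $\epsilon$ and $z$, since $\epsilon$ and $z$ are fixed and $g^{-1}$ generates the same subgroup as $g$ together with the central elements), $V^*\simeq M(g',\rho')$ with $\rho'$ a character of $G$, $X_4^{V,W}\simeq M(z',\sigma')$ with $\sigma'$ a character of $G$ (so $\deg\sigma'=1$), and the three verified relations $\rho'(g')^2+\rho'(g')+1=0$, $\sigma'(z')=1$, $\rho'(g'z')\sigma'(g')=1$ are exactly the conditions in the $\wp_5''$-row of Table~\ref{tab:pair5} (with $\charK=2$ being part of the standing assumption on $\wp_5''$). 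The main obstacle I anticipate is purely the exponent arithmetic in the last relation: tracking the powers of $g$ and $z$ through $z'=g^4z$ and the dual character, and using the cube-root-of-unity identity $\rho(g)^3=1$ in characteristic~$2$ at the right moment, so that $\rho'(g'z')\sigma'(g')$ simplifies to $1$ rather than to some other cube root of unity; everything else is a routine substitution mirroring the earlier corollaries.
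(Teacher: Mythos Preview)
Your approach is exactly the one the paper uses (its proof just says ``similar to the proof of Lemma~\ref{lem:R1:F}''), and the overall structure is correct. The only issue is the arithmetic you yourself flagged; it closes cleanly once two small slips are corrected.

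First, for a one-dimensional character the dual satisfies $\rho'(x)=\rho(x)^{-1}$, so $\rho'(g')=\rho'(g^{-1})=\rho(g^{-1})^{-1}=\rho(g)$, not $\rho(g)^{-1}$. This does not affect $(3)_{\rho'(g')}=0$, since $\rho(g)$ and $\rho(g)^{-1}$ are both primitive cube roots of unity, but it is worth getting right.

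Second, in your computation of $\rho'(g'z')\sigma'(g')$ you reached the correct intermediate expression $\rho(g)^{-4}\rho(z)^{-1}\sigma(g)^{-1}$ but then misidentified $\rho(z)^{-1}\sigma(g)^{-1}$ with $(\rho(gz)\sigma(g))^{-1}$; the latter has an extra $\rho(g)^{-1}$. The right substitution is: from $\rho(gz)\sigma(g)=1$ one gets $\rho(z)\sigma(g)=\rho(g)^{-1}$, hence $\rho(z)^{-1}\sigma(g)^{-1}=\rho(g)$, and therefore
\[
  \rho'(g'z')\sigma'(g')=\rho(g)^{-4}\cdot\rho(g)=\rho(g)^{-3}=1,
\]
using $\rho(g)^3=1$ (which follows from $(3)_{\rho(g)}=0$ and $\rho(g)\ne 1$ in characteristic $2$). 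With these fixes the three relations hold and the conclusion $R_1(V,W)\in\wp_5''$ follows exactly as you outlined.
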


\begin{proof}
	It is similar to the proof of Lemma~\ref{lem:R1:F}.
\end{proof}

\begin{lem}
	\label{lem:R2:F}
	Let $(V,W)\in \wp _5$ and $g,\epsilon ,z,\rho ,\sigma $
  as in Definition~\ref{def:wp} such that
  $V\simeq M(g,\rho)$ and $W\simeq M(z,\sigma)$.
	Let $g''=gz$, $\epsilon''=\epsilon^{-1}$, $z''=z^{-1}$, $\rho''$ be the
  character of $G^g$ given by $\rho''(g)=\sigma(g)$,
	$\rho''(z)=\rho(z)$, and $\sigma''$ be the
  character of $G$ dual to $\sigma$.  Then
  $a_{2,1}^{V,W}=-1$ and
	\[
	R_2(V,W)=\left( X_1^{W,V},W^* \right)
	\]
	with $X_1^{W,V}\simeq M(g'',\rho'')$, $W^*\simeq M(z'',\sigma'')$, and 
	\begin{align*}
		\rho''(g''z'')\sigma''(g'')=1, \quad
    1+\rho''(g'')+\rho''(g'')^2=0,\quad
    \sigma''(z'')=1.
	\end{align*} 
  In particular, $R_2(V,W)\in \wp _5''$.
\end{lem}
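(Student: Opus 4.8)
The plan is to derive everything from Proposition~\ref{pro:3}, exactly as in the proofs of Corollaries~\ref{cor:R2:A} and \ref{cor:R2:E}. Since $(V,W)\in\wp_5$ we have $V\simeq M(g,\rho)$ and $W\simeq M(z,\sigma)$ with $\rho(g)=\sigma(z)=1$, $(3)_{\rho(z)\sigma(g)}=0$ and $\charK=2$. First I would check that the pair falls under Proposition~\ref{pro:3}: because $\charK=2$ we have $\rho(g)=1=-1$ and $(3)_{-\rho(z)\sigma(g)}=(3)_{\rho(z)\sigma(g)}=0$, so case (1b) of Proposition~\ref{pro:3} applies; in particular $(\ad V)^m(W)$ and $(\ad W)^m(V)$ are absolutely simple or zero for all $m$, and the formulas of part~(3) are available.

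Next I would compute the scalars $\gamma_k=(k)_{\sigma(z)}(1-\rho(z)\sigma(gz^{k-1}))$ of Proposition~\ref{pro:3}(3). Since $\sigma(z)=1$ we get $(k)_{\sigma(z)}=(k)_1=k\cdot 1_\K$, so in characteristic $2$ one has $\gamma_2=(2)_1(1-\rho(z)\sigma(gz))=0$. On the other hand $\gamma_1=1-\rho(z)\sigma(g)$, and $(3)_{\rho(z)\sigma(g)}=0$ forces $\rho(z)\sigma(g)\ne 1$ (otherwise $(3)_1=1\ne 0$ in $\charK=2$), hence $\gamma_1\ne 0$. Thus $\gamma_1\ne 0$ and $\gamma_2=0$, which by Proposition~\ref{pro:3}(3) gives $a_{2,1}^{(V,W)}=-1$, and therefore $R_2(V,W)=(X_1^{W,V},W^*)$ with $X_1^{W,V}\simeq M(gz,\rho_1)$, where $\rho_1(g)=\rho(g)\sigma(g)=\sigma(g)$ and $\rho_1(z)=\rho(z)\sigma(z)=\rho(z)$; noting $G^{gz}=G^g$, this is exactly $M(g'',\rho'')$. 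Since $z\in Z(G)$, the dual $W^*$ is $M(z^{-1},\sigma^{-1})=M(z'',\sigma'')$.

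It remains to exhibit $g'',\epsilon'',z''$ realizing $R_2(V,W)$ as a member of $\wp_5''$, i.e. to check the defining group relations and the three displayed equations. For the relations I would verify that $\gamma\mapsto g''=gz$, $\nu\mapsto\epsilon''=\epsilon^{-1}$, $\zeta\mapsto z''=z^{-1}$ respects the presentation of $\Gamma_3$; the only nontrivial identity is $\gamma\nu=\nu^2\gamma$, and $g''\epsilon''=gz\epsilon^{-1}=g\epsilon^{-1}z=\epsilon g z=\epsilon''^2 g''$ follows from $g\epsilon=\epsilon^{-1}g$ together with centrality of $z$. For the conditions of Table~\ref{tab:pair5}: $\sigma''(z'')=\sigma^{-1}(z^{-1})=\sigma(z)=1$; next $\rho''(g'')=\rho''(gz)=\sigma(g)\rho(z)$, so $1+\rho''(g'')+\rho''(g'')^2=(3)_{\rho(z)\sigma(g)}=0$; and $\rho''(g''z'')\sigma''(g'')=\rho''(g)\sigma(g)^{-1}=\sigma(g)\sigma(g)^{-1}=1$. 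All of this is a direct calculation once Proposition~\ref{pro:3} is in hand, so there is no real obstacle; the only point requiring a little care is the characteristic-$2$ observation that $(2)_{\sigma(z)}=0$ makes $\gamma_2$ vanish automatically, which is what pins down $a_{2,1}^{(V,W)}=-1$.
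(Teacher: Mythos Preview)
Your proof is correct and follows exactly the route the paper takes: invoke Proposition~\ref{pro:3} (here via case (1b), using $\charK=2$ to identify $-1$ with $1$), read off $a_{2,1}^{(V,W)}=-1$ and the structure of $X_1^{W,V}$ and $W^*$, and then verify by direct calculation the $\Gamma_3$-epimorphism for $g'',\epsilon'',z''$ and the defining conditions of $\wp_5''$. The paper merely says ``similar to the proof of Lemma~\ref{lem:R1:F}''; you have supplied the details, including the key observation that $(2)_{\sigma(z)}=0$ in characteristic~$2$ forces $\gamma_2=0$.
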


\begin{proof}
	It is similar to the proof of Lemma~\ref{lem:R1:F}.
\end{proof}

\begin{lem}
	\label{lem:R2:F1}
	Let $(V,W)\in \wp _5'$ and $g,\epsilon ,z,\rho ,\sigma $
  as in Definition~\ref{def:wp} such that
  $V\simeq M(g,\rho)$ and $W\simeq M(\epsilon z,\sigma)$.
	Let $g''=gz^2$, $\epsilon''=\epsilon^{-1}$, $z''=z^{-1}$, $\rho''$ be the
	character of $G^{g}$ given by
  $\rho''(g)=\rho(z)^{-2}\sigma(z)^{-1}$, 
  $\rho''(z)=\rho(z)\sigma(z)^2$,
	and $\sigma''$ be the
  character of $G$ dual to $\sigma$.  Then $a_{2,1}^{(V,W)}=-2$ and
	\[
	R_2(V,W)=\left( X_2^{W,V}, W^*\right)
	\]
	with $X_2^{W,V}\simeq M(g'',\rho'')$, $W^*\simeq M(z'',\sigma'')$, and 
	\begin{align*}
		\rho''(g'')=1, \quad
		\rho''(z''^2)\sigma''(\epsilon''g''^2)=1, \quad
		\sigma''(z'')=\sigma''(\epsilon''), \quad
		(3)_{\sigma''(\epsilon'')}=0.
	\end{align*} 
  In particular, $R_2(V,W)\in \wp _5'$.
\end{lem}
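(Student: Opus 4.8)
The plan is to obtain the lemma as a direct application of Proposition~\ref{pro:1}. A pair $(V,W)\in\wp_5'$ has $V\simeq M(g,\rho)$ and $W\simeq M(\epsilon z,\sigma)$ with $\sigma$ a character of $G^{\epsilon z}=G^\epsilon$, which is exactly the configuration studied in Section~\ref{section:1}. So the first step will be to check that the equations in Proposition~\ref{pro:1}(1) hold for $(V,W)$: the equation $\rho(z)^2\sigma(\epsilon g^2)=1$ is part of the definition of $\wp_5'$; the equation $\rho(g)=-1$ follows from $\rho(g)=1$ together with $\charK=2$; and for $(\sigma(\epsilon z)+1)(\sigma(\epsilon z^2)-1)=0$ I would note that $(3)_{\sigma(\epsilon)}=0$ forces $\sigma(\epsilon)\neq1$ (otherwise $(3)_{\sigma(\epsilon)}=3=1$ in characteristic $2$), so $\sigma(\epsilon)$ is a primitive third root of unity, and then $\sigma(z)=\sigma(\epsilon)$ gives $\sigma(\epsilon z^2)=\sigma(\epsilon)\sigma(z)^2=\sigma(\epsilon)^3=1$.

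Next I would run the case distinction of Proposition~\ref{pro:1}(2): since $\sigma(z)=\sigma(\epsilon)$ and $(3)_{\sigma(\epsilon)}=0$, we fall in the second branch, so $a_{2,1}^{(V,W)}=-2$ and $R_2(V,W)=(X_2^{W,V},W^*)$ with $X_2^{W,V}\simeq M(gz^2,\rho_2)$. Reading off $\rho_2$ from the proposition with $m=2$ and simplifying with $\charK=2$ (which kills the sign $(-1)^{m+1}$) and $\sigma(z)^3=1$ (so $\sigma(z)^{11}=\sigma(z)^2=\sigma(z)^{-1}$) identifies $\rho_2$ with the character $\rho''$ of $G^g$ in the statement; note $G^{g''}=G^{gz^2}=G^g$ since $z$ is central. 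Likewise $W^*\simeq M((\epsilon z)^{-1},\sigma'')=M(\epsilon''z'',\sigma'')$, and I would record that $(g'',\epsilon'',z'')$ defines a group epimorphism $\Gamma_3\to G$ (the relations of $\Gamma_3$ survive, e.g. $g''\epsilon''g''^{-1}=g\epsilon^{-1}g^{-1}=\epsilon^{-2}=(\epsilon'')^2$ and $(\epsilon'')^3=1$, and $gz^2,\epsilon^{-1},z^{-1}$ still generate $G$).

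It then remains to verify that $R_2(V,W)$ again belongs to $\wp_5'$, that is, the four conditions in the $\wp_5'$ row of Table~\ref{tab:pair5} for $\rho''$ and $\sigma''$. Here I would just compute directly: $\rho''(g'')=\rho''(g)\rho''(z)^2=\sigma(z)^3=1$; $\sigma''(z'')=\sigma(z)=\sigma(\epsilon)=\sigma''(\epsilon'')$; $(3)_{\sigma''(\epsilon'')}=(3)_{\sigma(\epsilon)}=0$; and the last identity $\rho''(z''^2)\sigma''(\epsilon''g''^2)=1$ reduces, after substituting $g''^2=g^2z^4$, $\sigma(z)=\sigma(\epsilon)$ and the original constraint $\rho(z)^2\sigma(\epsilon)\sigma(g^2)=1$, to $\sigma(\epsilon)^3=1$. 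This delivers all assertions of the lemma.

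I do not anticipate a genuine obstacle: the argument is bookkeeping once Proposition~\ref{pro:1} is available. The only points that need a little attention are the collapse of signs in characteristic $2$ (which is precisely why the defining condition of $\wp_5'$ reads $\rho(g)=1$ rather than $\rho(g)=-1$, so that a $\wp_5'$-pair is still covered by the Section~\ref{section:1} analysis) and the reductions of powers of the order-three scalars $\sigma(\epsilon)$ and $\sigma(z)$ modulo $3$.
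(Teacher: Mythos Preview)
Your proposal is correct and follows essentially the same approach as the paper: the paper's proof is the one-liner ``similar to the proof of Lemma~\ref{lem:R1:F1}'', which in turn invokes Proposition~\ref{pro:1}, exactly as you do. Your computations (the characteristic-$2$ collapse $\rho(g)=1=-1$, the reduction $\sigma(z)^{11}=\sigma(z)^{-1}$, and the verification of the four $\wp_5'$ conditions for $(\rho'',\sigma'')$) are all accurate, and you also correctly record $W^*\simeq M(\epsilon''z'',\sigma'')$, fixing what appears to be a typo in the displayed statement.
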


\begin{proof}
	It is similar to the proof of Lemma~\ref{lem:R1:F1}.
\end{proof}

\begin{lem}
	\label{lem:R2:F2}
	Let $(V,W)\in \wp _5''$ and $g,\epsilon ,z,\rho ,\sigma $
  as in Definition~\ref{def:wp} such that
  $V\simeq M(g,\rho)$ and $W\simeq M(z,\sigma)$.
  Let $g''=gz$, $\epsilon''=\epsilon ^{-1}$, $z''=z^{-1}$,
  $\rho''$ be the character of $G^g$ given by $\rho''(g)=\rho(g)\sigma(g)$,
	$\rho''(z)=\rho(z)\sigma(z)$, and $\sigma''$ be the character of
  $G$ dual to $\sigma$.  Then $a_{2,1}^{(V,W)}=-1$ and
	\[
		R_2(V,W)=\left( X_1^{W,V},W^* \right)
	\]
	with $X_1^{W,V}\simeq M(g'',\rho'')$, $W^*\simeq M(z'',\sigma'')$, and 
	\begin{align*}
		\rho''(g'')=\sigma ''(z'')=1, \quad
    (3)_{\rho ''(z'')\sigma ''(g'')}=0.
	\end{align*} 
  In particular, $R_2(V,W)\in \wp _5$.
\end{lem}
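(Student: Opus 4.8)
The plan is to follow the same pattern as the proof of Lemma~\ref{lem:R1:F}, invoking Proposition~\ref{pro:3} throughout, but now tracking the reflection $R_2$ rather than $R_1$.

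First I would unwind the hypothesis $(V,W)\in\wp_5''$: by Definition~\ref{def:wp} and the last row of Table~\ref{tab:pair5} this means $\charK=2$, $\deg\sigma=1$, $\sigma(z)=1$, $\rho(gz)\sigma(g)=1$, and $(3)_{\rho(g)}=0$. From $(3)_{\rho(g)}=0$ and $\charK=2$ one gets $\rho(g)\ne1$ (otherwise $(3)_{\rho(g)}=1_\K\ne0$), hence also $\rho(z)\sigma(g)=\rho(g)^{-1}\ne1$. These are precisely the conditions of case (1c) of Proposition~\ref{pro:3}, so that proposition applies: all $(\ad V)^m(W)$ and $(\ad W)^m(V)$ are absolutely simple or zero, and $R_2(V,W)$ is well-defined.

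Next I would read off $a_{2,1}^{(V,W)}$ from part~(3) of Proposition~\ref{pro:3}. With $\gamma_k=(k)_{\sigma(z)}\bigl(1-\rho(z)\sigma(gz^{k-1})\bigr)$ and $\sigma(z)=1$, the factor $(k)_{\sigma(z)}$ equals $k\cdot 1_\K$, which in characteristic $2$ vanishes exactly for even $k$. Hence $\gamma_2=0$ while $\gamma_1=1-\rho(z)\sigma(g)=1-\rho(g)^{-1}\ne0$, so $a_{2,1}^{(V,W)}=-1$. Then $R_2(V,W)=\bigl(X_1^{W,V},W^*\bigr)$ with $W^*\simeq M(z^{-1},\sigma^{-1})=M(z'',\sigma'')$, and, again by part~(3), $X_1^{W,V}\simeq M(gz,\rho_1)=M(g'',\rho'')$, where $\rho_1(g)=\rho(g)\sigma(g)$ and $\rho_1(z)=\rho(z)\sigma(z)$, matching the claimed $\rho''$.

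Finally I would check the displayed scalar identities and conclude membership in $\wp_5$. Since $z$ is central and $\epsilon^3=1$, the triple $(g'',\epsilon'',z'')=(gz,\epsilon^{-1},z^{-1})$ still satisfies the relations of $\Gamma_3$, so it defines an epimorphism $\Gamma_3\to G$ as required by Definition~\ref{def:wp}. For the characters one computes directly, using $\sigma(z)=1$ and $\rho(gz)\sigma(g)=1$, that
\[
\rho''(g'')=\rho''(g)\rho''(z)=\rho(g)\sigma(g)\rho(z)\sigma(z)=\rho(gz)\sigma(g)\,\sigma(z)=1,
\]
that $\sigma''(z'')=\sigma(z)=1$, and that $\rho''(z'')\sigma''(g'')=\rho(z)^{-1}\sigma(g)^{-1}=\bigl(\rho(z)\sigma(g)\bigr)^{-1}=\rho(g)$, whence $(3)_{\rho''(z'')\sigma''(g'')}=(3)_{\rho(g)}=0$. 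Together with $\charK=2$ and $\deg\sigma''=1$, this exhibits $R_2(V,W)$ as a member of $\wp_5$. The only genuinely delicate point in the argument is the characteristic-$2$ arithmetic of the $q$-integers $(k)_q$ — it is exactly what forces $a_{2,1}^{(V,W)}=-1$ instead of a more negative value; the rest is the same routine scalar bookkeeping as in Lemma~\ref{lem:R1:F}.
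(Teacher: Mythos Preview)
Your proof is correct and follows exactly the approach the paper intends: the paper's own proof is the one-liner ``It is similar to the proof of Lemma~\ref{lem:R1:F},'' and you have faithfully carried out that template, invoking Proposition~\ref{pro:3} to identify case~(1c), reading off $a_{2,1}^{(V,W)}=-1$ from the vanishing of $\gamma_2=(2)_{\sigma(z)}(\cdots)=0$ in characteristic~$2$, and then verifying the $\wp_5$ conditions for $(g'',\epsilon'',z'',\rho'',\sigma'')$ by direct scalar computation. Your observation that the characteristic-$2$ arithmetic of $(k)_1$ is what pins down $a_{2,1}=-1$ is exactly the point, and the remaining checks (including that $(g'',\epsilon'',z'')$ still satisfy the $\Gamma_3$ relations) are the routine bookkeeping the paper leaves implicit.
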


\begin{proof}
	It is similar to the proof of Lemma~\ref{lem:R1:F}.
\end{proof}

\section{Nichols algebras over $\Gamma_3$}
\label{section:NicholsG3}

\subsection{Simple Yetter-Drinfeld modules}

In this section, let $G$ be a non-abelian epimorphic image of $\Gamma _3$ and 
let $g,\epsilon ,z\in G$ such that
the group epimorphism $\Gamma _3\to G$ maps 
$\gamma \mapsto g$, $\nu \mapsto \epsilon $ and $\zeta \mapsto z$.

Let $P=\{0\}\cup \{p\in \N\,|\,p\text{ is prime}\}$.
Let $h_2=3$, $h_3=2$ and $h_p=6$ for all $p\in P\setminus \{2,3\}$,
and $h'_3=2$, $h'_p=6$ for all $p\in P\setminus \{3\}$. 
For all $1\le i\le 8$ let $\Y_i$
be the class of Yetter-Drinfeld modules $U$ over $G$ such that
there exist $x,g,\epsilon,z\in G$ and an absolutely irreducible
representation $\tau $ of $G^x$ such that $U\simeq M(x,\tau )$ and
$x$, $\tau $, and $\K $ satisfy the conditions in Table~\ref{tab:Yconditions}.
In Table~\ref{tab:Yconditions} we also provide the Hilbert series of the
Nichols algebras of the Yetter-Drinfeld modules in the classes
$\Y_i$ for all $1\le i\le 8$ and the references to these Hilbert series.

    \begin{table}
        \caption{Classes of Yetter-Drinfeld modules}
\begin{center}
        \begin{tabular}{r|l|l|c|l|l}
            & $x$ & $\tau $ & $\charK$ & Hilbert series & Ref.
	    \tabularnewline
            \hline
            $\Y_1$ & $g $ & $\tau (g )=-1$  && $(2)^2_t(3)_t$
             & \cite{MR1800714}
	    \tabularnewline
            $\Y_2$ & $g $ & $(3)_{\tau (g)}=0$ & $2$ & $(3)_t(4)_t(6)_t(6)_{t^2}$
             & \cite[Prop.\,32]{MR2891215}
	    \tabularnewline
            $\Y_3$ & $z $ & $\deg\tau =1$, $\tau (z )=-1$ && $(2)_t$
	    & \cite[\S3.4]{MR0506406}
	    \tabularnewline
            $\Y_4$ & $z $ & $\deg\tau =1$, $(3)_{-\tau (z)}=0$ & $p\in P$ &
	    $(h_p)_t$ & \cite[\S3.4]{MR0506406}
	    \tabularnewline
            $\Y_5$ & $z $ & $\deg\tau =2$, $\tau (z )=-1$ && $(2)_{t}^2$
	    & \cite[\S3.4]{MR0506406}
	    \tabularnewline
            $\Y_6$ & $\epsilon z $ & $\tau (z )=\tau (\epsilon )$,
	    $(3)_{\tau (\epsilon )}=0$ && $(3)_t^2$
	    & \cite[\S3.4]{MR0506406}
	    \tabularnewline
            $\Y_7$ & $\epsilon z $ & $\tau (\epsilon )=1$, $\tau (z )=-1$ & &
	    $(2)_t^2$ & \cite[\S3.4]{MR0506406}
	    \tabularnewline
            $\Y_8$ & $\epsilon z $ & $\tau (\epsilon z )=-1$,
             $(3)_{\tau (\epsilon )}=0$ & $p\in P$
             & $(2)_t(h'_p)_t$ & \cite[Prop.\,2.11]{MR1800709}
	     \tabularnewline
        \end{tabular}
        \label{tab:Yconditions}
\end{center}
    \end{table}

\subsection{}

In this subsection we collect the examples of Nichols algebras related to the
graph
\[
	\xymatrix{\wp_1\ar@{-}[r]^{R_1} & \wp_4}     
\]

\begin{thm}
    \label{thm:P1andP4a}
		Let $p=\charK $. Let $V,W\in \ydG $ such that $V\simeq M(g,\rho)$,
    where $\rho$ is a character of
		$G^g$, and $W\simeq M(\epsilon z,\sigma)$, where $\sigma$ is a character
    of $G^\epsilon $. Assume that $(V,W)\in \wp _1$, that is,
    \begin{align*}
      \rho(g)=-1,\quad \rho(z^2)\sigma(\epsilon g^2)=1,\quad
      1+\sigma(\epsilon)+\sigma(\epsilon)^2=0,\quad \sigma(\epsilon z)=-1.
    \end{align*}
    Then $W\in\Y_8$, $(\ad V)(W)\in\Y_1$,
    $(\ad V)^2(W)\in\Y_4$, $V\in\Y_1$, and 
    \[
    	\NA(V\oplus W)\simeq \NA(W)\otimes\NA\left( (\ad V)(W) \right) \otimes\NA\left( (\ad V)^2(W) \right)\otimes\NA(V)
    \]
    as $\N_0^2$-graded vector spaces in $\ydG$. In particular, the Hilbert series 
    of $\NA(V\oplus W)$ is
    \[
		 \mathcal{H}(t_1,t_2)=(2)_{t_2}(h'_p)_{t_2}(2)^2_{t_1t_2}
     (3)_{t_1t_2}(h_p)_{t_1^2t_2}(2)_{t_1}^2(3)_{t_1}
    \]
    and
		\[
			\dim\NA(V\oplus W)=\begin{cases}
				10368 & \text{if $\charK\not\in\{2,3\}$},\\
				5184 & \text{if $\charK=2$},\\
				1152 & \text{if $\charK=3$}.
			\end{cases}
		\]
\end{thm}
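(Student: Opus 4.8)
The plan is to combine the reflection data already obtained for pairs in $\wp_1$ with the general structure of Nichols algebras of pairs that admit all reflections and have a finite Weyl groupoid.

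First I would record what is known about $(V,W)$. Since $(V,W)\in\wp_1$ we have $\rho(g)=-1$, $\rho(z)^2\sigma(\epsilon g^2)=1$, $\sigma(\epsilon z)=-1$ and $(3)_{\sigma(\epsilon)}=0$, so Proposition~\ref{pro:1} applies: all modules $(\ad V)^m(W)$ and $(\ad W)^m(V)$ with $m\in\N_0$ are absolutely simple or zero, the Cartan matrix of $(V,W)$ satisfies $a^{(V,W)}_{1,2}=-2$ and $a^{(V,W)}_{2,1}=-1$, and by Lemmas~\ref{lem:1:X3} and~\ref{lem:1:Y2} one has $(\ad V)^3(W)=0$ and $(\ad W)^2(V)=0$. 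By Lemma~\ref{lem:wp1wp4standard} the pair $(V,W)$ admits all reflections and its Weyl groupoid is standard of type $B_2$; hence the positive roots of the Weyl groupoid at $(V,W)$ are $\alpha_2$, $\alpha_1+\alpha_2$, $2\alpha_1+\alpha_2$, $\alpha_1$, and in the convex order induced by the reflection $R_1$ the attached root modules are $W$, $(\ad V)(W)=X_1^{V,W}$, $(\ad V)^2(W)=X_2^{V,W}$ and $V$. The key input is then the PBW-type factorization of the Nichols algebra of a pair with finite root system along a reduced expression of the longest element of the Weyl groupoid, which gives
\[
\NA(V\oplus W)\simeq\NA(W)\otimes\NA\bigl((\ad V)(W)\bigr)\otimes\NA\bigl((\ad V)^2(W)\bigr)\otimes\NA(V)
\]
as $\N_0^2$-graded objects of $\ydG$.

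Next I would match the four root modules with the classes of Table~\ref{tab:Yconditions}. The module $W\simeq M(\epsilon z,\sigma)$ with $\deg\sigma=1$ satisfies $\sigma(\epsilon z)=-1$ and $(3)_{\sigma(\epsilon)}=0$, which are exactly the defining conditions of $\Y_8$, and $V\simeq M(g,\rho)$ with $\rho(g)=-1$ lies in $\Y_1$. For $X_1^{V,W}$: by Lemma~\ref{lem:1:X1} it is isomorphic to $M(gz,\sigma_1)$ with $\sigma_1(g)=-\rho(gz^{-1})\sigma(\epsilon)$ and $\sigma_1(z)=\rho(z)\sigma(z)$; since $z$ is central and $\epsilon^3=1$, the assignment $\gamma\mapsto gz$, $\nu\mapsto\epsilon$, $\zeta\mapsto z$ is again a group epimorphism $\Gamma_3\to G$, and $\sigma_1(gz)=\sigma_1(g)\sigma_1(z)=\sigma(\epsilon)\sigma(z)=\sigma(\epsilon z)=-1$ because $\rho(g)=-1$; hence $X_1^{V,W}\in\Y_1$. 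For $X_2^{V,W}$: since $1+\sigma(\epsilon)+\sigma(\epsilon)^2=0$, Proposition~\ref{pro:1}(2)(a) gives $X_2^{V,W}\simeq M(g^2z,\sigma_2)$ with $\deg\sigma_2=1$, $\sigma_2(\epsilon)=1$, $\sigma_2(g)=-\rho(z)^{-1}\sigma(\epsilon)$ and $\sigma_2(z)=\rho(z)^2\sigma(z)$; the assignment $\gamma\mapsto g$, $\nu\mapsto\epsilon$, $\zeta\mapsto g^2z$ is again an epimorphism $\Gamma_3\to G$ (note $g^2z\in Z(G)$), and $\sigma_2(g^2z)=\rho(z)^{-2}\rho(z)^2\sigma(z)=\sigma(z)$. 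From $\sigma(\epsilon z)=-1$ and $\sigma(\epsilon)^3=1$ we get $\sigma(z)=-\sigma(\epsilon)^2$, hence $1-\sigma(z)+\sigma(z)^2=1+\sigma(\epsilon)^2+\sigma(\epsilon)=0$, i.e.\ $(3)_{-\sigma_2(g^2z)}=0$; therefore $X_2^{V,W}\in\Y_4$.

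Finally I would read off the conclusion from Table~\ref{tab:Yconditions}. With $p=\charK$, the Hilbert series of $\NA(W)$, $\NA((\ad V)(W))$, $\NA((\ad V)^2(W))$ and $\NA(V)$ are $(2)_t(h'_p)_t$, $(2)_t^2(3)_t$, $(h_p)_t$ and $(2)_t^2(3)_t$ respectively. Since these four modules sit in $\N_0^2$-degrees $(0,1)$, $(1,1)$, $(2,1)$ and $(1,0)$, substituting $t\mapsto t_2$, $t_1t_2$, $t_1^2t_2$, $t_1$ into the product of these series gives
\[
\mathcal{H}(t_1,t_2)=(2)_{t_2}(h'_p)_{t_2}(2)^2_{t_1t_2}(3)_{t_1t_2}(h_p)_{t_1^2t_2}(2)_{t_1}^2(3)_{t_1},
\]
and evaluating at $t_1=t_2=1$ yields $\dim\NA(V\oplus W)=2^5\cdot3^2\cdot h_ph'_p=288\,h_ph'_p$, which is $10368$ for $p\notin\{2,3\}$ (then $h_p=h'_p=6$), $5184$ for $p=2$ (then $h_2=3$, $h'_2=6$) and $1152$ for $p=3$ (then $h_3=h'_3=2$). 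The only genuinely nontrivial ingredient is the factorization of $\NA(V\oplus W)$ along the Weyl groupoid; all the remaining steps are routine computations with characters and with the Hilbert series collected in Table~\ref{tab:Yconditions}.
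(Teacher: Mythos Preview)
Your argument is correct and follows the same route as the paper: invoke Lemma~\ref{lem:wp1wp4standard} to get a standard $B_2$ Weyl groupoid, apply the PBW-type factorization (which the paper cites as \cite[Cor.\,2.7(2), Thm.\,2.6]{MR2732989}), identify the four root modules using Lemmas~\ref{lem:1:X1} and~\ref{lem:1:X2}, and read the Hilbert series off Table~\ref{tab:Yconditions}. One harmless slip: when you compute $\sigma_2(g^2z)$ you dropped the factor $\sigma(\epsilon)^2$ from $\sigma_2(g)^2=\rho(z)^{-2}\sigma(\epsilon)^2$, so the correct value is $\sigma_2(g^2z)=\sigma(\epsilon)^2\sigma(z)=-\sigma(\epsilon)$ rather than $\sigma(z)$; since $(3)_{\sigma(\epsilon)}=0$ this still gives $(3)_{-\sigma_2(g^2z)}=0$ and hence $X_2^{V,W}\in\Y_4$ as claimed.
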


\begin{proof}
	The Cartan scheme of $(V,W)$ is standard of type $B_2$ by
  Lemma~\ref{lem:wp1wp4standard}.
  The Yetter-Drinfeld modules $(\ad V)(W)$ and $(\ad V)^2(W)$ are in the
  claimed classes because of Lemmas~\ref{lem:1:X1} and \ref{lem:1:X2}.
  Then the claims
	concerning the decomposition and the Hilbert series of $\NA(V\oplus W)$
	follow from \cite[Cor. 2.7(2)]{MR2732989}, \cite[Thm. 2.6]{MR2732989} and
	Table \ref{tab:Yconditions}.
\end{proof}

\begin{thm}
    \label{thm:P1andP4b}
		Let $p=\charK $. Let $V,W\in \ydG $ such that $V\simeq M(g,\rho)$,
    where $\rho$ is a character of
		$G^g$, and $W\simeq M(z,\sigma)$, where $\sigma$ is a character
    of $G$. Assume that $(V,W)\in \wp _4$, that is,
    \begin{align*}
        \rho(g)=-1,\quad \rho(z)\sigma(gz)=1,\quad
        1-\rho(z)\sigma(g)+(\rho(z)\sigma(g))^2=0.
    \end{align*}
		Then $W\in\Y_4$, $(\ad V)(W)\in\Y_1$,
    $(\ad V)^2(W)\in\Y_8$, $V\in\Y_1$, and 
    \[
    \NA(V\oplus W)\simeq \NA(W)\otimes\NA\left( (\ad V)(W) \right) \otimes\NA\left( (\ad V)^2(W) \right)\otimes\NA(V)
    \]
    as $\N_0^2$-graded vector spaces in $\ydG$. In particular, the Hilbert series 
    of $\NA(V\oplus W)$ is
    \[
    \mathcal{H}(t_1,t_2)=(h_p)_{t_2}(2)_{t_1t_2}^2(3)_{t_1t_2}
    (2)_{t_1^2t_2}(h'_p)_{t_1^2t_2}(2)_{t_1}^2(3)_{t_1}
    \]
    and
		\[
			\dim\NA(V\oplus W)=\begin{cases}
				10368 & \text{if $\charK\not\in\{2,3\}$},\\
				5184 & \text{if $\charK=2$},\\
				1152 & \text{if $\charK=3$}.
			\end{cases}
		\]
\end{thm}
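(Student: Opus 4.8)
The plan is to mimic the proof of Theorem~\ref{thm:P1andP4a}, using that $(V,W)\in\wp_4$ is exactly the ``third pair'' situation analyzed in Section~\ref{section:3} (here $W\simeq M(z,\sigma)$ with $\deg\sigma=1$), and in particular that the conditions defining $\wp_4$ put us into case~(1b) of Proposition~\ref{pro:3}. First I would record that, by Lemma~\ref{lem:wp1wp4standard}, the Cartan scheme of $(V,W)$ is standard of type $B_2$; since $a_{1,2}^{(V,W)}=-2$ and $a_{2,1}^{(V,W)}=-1$, the four positive roots are $\alpha_2$, $\alpha_1+\alpha_2$, $2\alpha_1+\alpha_2$, $\alpha_1$, with root modules $W$, $(\ad V)(W)$, $(\ad V)^2(W)$, $V$, respectively.

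Next I would place each of the four root modules in one of the classes $\Y_i$ of Table~\ref{tab:Yconditions}. Since $\rho(g)=-1$, we get $V\in\Y_1$ immediately. For $W\simeq M(z,\sigma)$ I would check the defining condition of $\Y_4$, namely $(3)_{-\sigma(z)}=0$: setting $x=\rho(z)\sigma(g)$, the relation $\rho(z)\sigma(gz)=1$ gives $\sigma(z)=x^{-1}$, and multiplying the identity $1-x+x^2=0$ by $x^{-2}$ yields $1-x^{-1}+x^{-2}=0$; hence $W\in\Y_4$. For $(\ad V)(W)=X_1^{V,W}$, Lemma~\ref{lem:3:X1} gives $X_1^{V,W}\simeq M(gz,\sigma_1)$ with $\sigma_1(gz)=\rho(g)\sigma(g)\rho(z)\sigma(z)=-\rho(z)\sigma(gz)=-1$; since $z$ is central, $gz$ can play the role of the $\gamma$-image, so $(\ad V)(W)\in\Y_1$. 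For $(\ad V)^2(W)=X_2^{V,W}$, Proposition~\ref{pro:3}(2a) gives $X_2^{V,W}\simeq M(\epsilon g^2z,\sigma_2)$ with $\sigma_2(\epsilon)=-(\rho(z)\sigma(g))^{-1}=-x^{-1}$; then $\sigma_2(\epsilon g^2z)=\sigma_2(\epsilon)\sigma_2(g^2)\sigma_2(z)=-x^{-1}\sigma(g^2)\rho(z)^2\sigma(z)=-\rho(z)\sigma(gz)=-1$, and $(3)_{\sigma_2(\epsilon)}=(3)_{-x^{-1}}=0$ by the same manipulation as for $W$; since $z$ is central, $\epsilon$ and $g^2z$ can play the roles of $\nu$ and $\zeta$, so $(\ad V)^2(W)\in\Y_8$.

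Finally, since the Cartan scheme is standard of type $B_2$, I would invoke \cite[Cor.~2.7(2)]{MR2732989} and \cite[Thm.~2.6]{MR2732989}, exactly as in the proof of Theorem~\ref{thm:P1andP4a}, to obtain the isomorphism $\NA(V\oplus W)\simeq\NA(W)\otimes\NA((\ad V)(W))\otimes\NA((\ad V)^2(W))\otimes\NA(V)$ of $\N_0^2$-graded objects in $\ydG$. Reading off the Hilbert series of the four factors from Table~\ref{tab:Yconditions} and weighting the $m$-th factor by its $\N_0^2$-degree (so the variables are $t_2$, $t_1t_2$, $t_1^2t_2$, $t_1$, respectively) gives the stated $\mathcal{H}(t_1,t_2)$, and specializing at $t_1=t_2=1$ gives $\dim\NA(V\oplus W)=2^2\cdot3\cdot2\cdot2^2\cdot3\cdot h_ph'_p=288\,h_ph'_p$, which equals $10368$, $5184$, $1152$ according to whether $p\notin\{2,3\}$, $p=2$, or $p=3$. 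I expect no serious obstacle: everything runs parallel to Theorem~\ref{thm:P1andP4a}, and the only genuine computation is the verification that $\sigma$, $\sigma_1$, $\sigma_2$ satisfy the defining relations of $\Y_4$, $\Y_1$, $\Y_8$.
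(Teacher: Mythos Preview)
Your proposal is correct and follows essentially the same approach as the paper: the paper's proof simply says that it is similar to that of Theorem~\ref{thm:P1andP4a}, refers to Lemmas~\ref{lem:3:X1} and \ref{lem:3:X2} for the structure of $(\ad V)^m(W)$, and notes that alternatively one could deduce the claim from Theorem~\ref{thm:P1andP4a} via $R_1(V,W)\in\wp_1$. Your write-up is a faithful and more detailed execution of the first of these two routes.
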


\begin{proof}
	It is similar to the proof of Theorem \ref{thm:P1andP4a}.
  For the structure of $(\ad V)^m(W)$ for $m=1,2$ see Lemmas~\ref{lem:3:X1}
	and \ref{lem:3:X2}. Alternatively, one can deduce the claim from
	Theorem~\ref{thm:P1andP4a} since $R_1(V,W)\in \wp_1$ and $R_1^2(V,W)\simeq
(V,W)$.
\end{proof}

\subsection{}
Now we list the examples of Nichols algebras related to the graph
\[
	\xymatrix{\wp_2\ar@{-}[r]^{R_1} & \wp_3}     
\]
\begin{thm}
    \label{thm:P2andP3a}
		Let $V,W\in \ydG $ such that $V\simeq M(g,\rho)$,
    where $\rho$ is a character of
		$G^g$, and $W\simeq M(\epsilon z,\sigma)$, where $\sigma$ is a character
    of $G^\epsilon $. Assume that $\charK\ne3$ and
    $(V,W)\in \wp _2$, that is,
    \begin{align*}
        \rho(g)=\sigma(z)=-1,\quad
        \rho(z^2)\sigma(\epsilon g^2)=\sigma(\epsilon)=1.
    \end{align*}
    Then $W\in\Y_7$, $(\ad V)(W)\in\Y_1$,
    $(\ad V)^2(W)\in\Y_5$, $V\in\Y_1$, and 
    \[
    \NA(V\oplus W)\simeq \NA(W)\otimes\NA\left( (\ad V)(W) \right) \otimes\NA\left( (\ad V)^2(W) \right)\otimes\NA(V)
    \]
    as $\N_0^2$-graded vector spaces in $\ydG$. In particular, the Hilbert series 
    of $\NA(V\oplus W)$ is
    \[
    \mathcal{H}(t_1,t_2)=(2)^2_{t_2}(2)^2_{t_1t_2}(3)_{t_1t_2}(2)^2_{t_1^2t_2}(2)^2_{t_1}(3)_{t_1}
    \]
    and $\dim\NA(V\oplus W)=2^83^2=2304$. 
\end{thm}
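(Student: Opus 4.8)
The plan is to follow the proof of Theorem~\ref{thm:P1andP4a} almost verbatim, only with the role of Section~\ref{section:1} specialized to the present values of $\rho$ and $\sigma$. First I would note that, since $(V,W)\in\wp_2$, Lemma~\ref{lem:wp2wp3standard} applies: the pair admits all reflections, its Weyl groupoid is standard of type $B_2$, and $a_{1,2}^{(V,W)}=-2$, $a_{2,1}^{(V,W)}=-1$. Writing $\alpha_1$, $\alpha_2$ for the simple roots attached to $V$ and $W$, the positive roots are therefore $\alpha_2$, $\alpha_1+\alpha_2$, $2\alpha_1+\alpha_2$, $\alpha_1$, and the corresponding nonzero root modules are $W$, $(\ad V)(W)$, $(\ad V)^2(W)$, $V$. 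Moreover, since the relations defining $\wp_2$ imply those in part~(1) of Proposition~\ref{pro:1} (using $\sigma(\epsilon z)=\sigma(\epsilon)\sigma(z)=-1$, so that $\sigma(\epsilon z)+1=0$), all these modules are absolutely simple, and $(\ad V)^m(W)=0$ for $m\ge 3$, $(\ad W)^m(V)=0$ for $m\ge 2$.

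Next I would identify each root module with a class $\Y_i$ from Table~\ref{tab:Yconditions}. Directly from the hypotheses, $W\simeq M(\epsilon z,\sigma)$ with $\sigma(\epsilon)=1$, $\sigma(z)=-1$ lies in $\Y_7$, and $V\simeq M(g,\rho)$ with $\rho(g)=-1$ lies in $\Y_1$. For the two intermediate modules I would invoke Lemmas~\ref{lem:1:X1} and \ref{lem:1:X2}: since $\rho(z^2)\sigma(\epsilon g^2)=1$ and $\rho(g)=-1$ we get $(\ad V)(W)\simeq M(gz,\sigma_1)$, and since moreover $\sigma(\epsilon)=1$ and $\charK\ne 3$ we are in case~(2) of Lemma~\ref{lem:1:X2}, so $(\ad V)^2(W)\simeq M(g^2z,\sigma_2)$ with $\deg\sigma_2=2$. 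Feeding the formulas for $\sigma_1$, $\sigma_2$ into the relations of $\wp_2$ gives $\sigma_1(gz)=\rho(g)=-1$ and $\sigma_2(g^2z)=\sigma(z)=-1$; since $gz$ and $g^2z$ are again the images of $\gamma$ and $\zeta$ under automorphisms of $\Gamma_3$, this shows $(\ad V)(W)\in\Y_1$ and $(\ad V)^2(W)\in\Y_5$.

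Finally I would conclude exactly as in Theorem~\ref{thm:P1andP4a}: since the Cartan scheme of $(V,W)$ is standard of type $B_2$ and all root modules are absolutely simple, \cite[Thm.\ 2.6]{MR2732989} together with \cite[Cor.\ 2.7(2)]{MR2732989} yields the $\N_0^2$-graded decomposition $\NA(V\oplus W)\simeq\NA(W)\otimes\NA((\ad V)(W))\otimes\NA((\ad V)^2(W))\otimes\NA(V)$, the tensor factors being ordered along the convex order $\alpha_2,\ \alpha_1+\alpha_2,\ 2\alpha_1+\alpha_2,\ \alpha_1$ of the positive roots. The Hilbert series is then the product of the four Hilbert series of Table~\ref{tab:Yconditions} with the variable specialized to $t_2$, $t_1t_2$, $t_1^2t_2$, $t_1$ respectively, i.e. $(2)_{t_2}^2(2)_{t_1t_2}^2(3)_{t_1t_2}(2)_{t_1^2t_2}^2(2)_{t_1}^2(3)_{t_1}$, and evaluating at $t_1=t_2=1$ gives $\dim\NA(V\oplus W)=2^83^2=2304$. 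The only step requiring genuine care is the identification of $(\ad V)^m(W)$ with $\Y_1$ and $\Y_5$: one must keep track of the central twists (the extra factors of $z$ in the degrees $gz$, $g^2z$) and check $\sigma_1(gz)=-1$, $\sigma_2(g^2z)=-1$; everything else is a direct appeal to Section~\ref{section:1} and the quoted results of \cite{MR2732989}. (Alternatively, one could first establish this case and then obtain the $\wp_3$-case via the reflection $R_1$, since $R_1(V,W)\in\wp_3$ and $R_1^2(V,W)\simeq(V,W)$.)
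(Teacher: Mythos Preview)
Your proof is correct and follows essentially the same approach as the paper: invoke Lemma~\ref{lem:wp2wp3standard} for the standard $B_2$ Cartan scheme, identify the four root modules in the classes $\Y_7,\Y_1,\Y_5,\Y_1$ via Lemmas~\ref{lem:1:X1} and \ref{lem:1:X2}, and conclude using \cite[Thm.~2.6 and Cor.~2.7(2)]{MR2732989} together with Table~\ref{tab:Yconditions}. Your explicit verification that $\sigma_1(gz)=-1$ and $\sigma_2(g^2z)=\sigma(z)=-1$ is a helpful addition that the paper leaves implicit.
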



\begin{proof}
	The Cartan scheme of $(V,W)$ is standard of type $B_2$ by
  Lemma~\ref{lem:wp2wp3standard} and  the decomposition
	of $\NA(V\oplus W)$ follows from \cite[Cor. 2.7(2) and Thm.  2.6]{MR2732989}.
	It is clear that $V\in\mathcal{Y}_1$ and $W\in\mathcal{Y}_7$. Using
  Lemmas~\ref{lem:1:X1} and \ref{lem:1:X2} we obtain that
  $(\ad V)(W)\in\mathcal{Y}_1$ and $(\ad V)^2(W)\in\mathcal{Y}_5$. Now a direct
	calculation using Table \ref{tab:Yconditions} yields the Hilbert series of
	$\NA(V\oplus W)$. 
\end{proof}

\begin{thm}
    \label{thm:P2andP3b}
		Let $V,W\in \ydG $ such that $V\simeq M(g,\rho)$,
    where $\rho$ is a character of
		$G^g$, and $W\simeq M(z,\sigma)$, where $\sigma$ is
    an absolutely irreducible representation of $G$ of degree two.
    Assume that $\charK\not=3$ and $(V,W)\in \wp _3$, that is,
    \begin{align*}
        \rho(g)=\sigma(z)=-1,\quad \rho(z^2)\sigma(g^2)=1.
    \end{align*}
    Then $W\in\Y_5$, $(\ad V)(W)\in\Y_1$,
    $(\ad V)^2(W)\in\Y_7$, $V\in\Y_1$, and 
    \[
	    \NA(V\oplus W)\simeq \NA(W)\otimes\NA\left( (\ad V)(W) \right) \otimes\NA\left( (\ad V)^2(W) \right)\otimes\NA(V)
    \]
    as $\N_0^2$-graded vector spaces in $\ydG$. In particular, the Hilbert series 
    of $\NA(V\oplus W)$ is
    \[
			\mathcal{H}(t_1,t_2)=(2)_{t_2}^2(2)_{t_1t_2}^2(3)_{t_1t_2}(2)^2_{t_1^2t_2}(2)^2_{t_1}(3)_{t_1}
    \]
    and $\dim\NA(V\oplus W)=2^83^2=2304$. 
\end{thm}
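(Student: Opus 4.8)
The plan is to follow the proof of Theorem~\ref{thm:P2andP3a} line by line, the only change being that the relevant reflection data now comes from Section~\ref{section:2} (where $W\simeq M(z,\sigma)$ with $\sigma$ two-dimensional) instead of Section~\ref{section:1}. First I would observe that $(V,W)\in\wp _3$, so by Lemma~\ref{lem:wp2wp3standard} the pair $(V,W)$ admits all reflections and its Weyl groupoid is standard of type $B_2$, with $a_{1,2}^{(V,W)}=-2$ and $a_{2,1}^{(V,W)}=-1$. Hence the positive roots are $\alpha_2$, $\alpha_1+\alpha_2$, $2\alpha_1+\alpha_2$, $\alpha_1$; moreover $(\ad V)^3(W)=X_3^{V,W}=0$ by Lemma~\ref{lem:2:X3} and $(\ad W)^2(V)=X_2^{W,V}=0$ by Lemma~\ref{lem:2:Y2} since $\sigma(z)=-1$. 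Then \cite[Cor.~2.7(2) and Thm.~2.6]{MR2732989} yield the claimed isomorphism
\[
  \NA(V\oplus W)\simeq\NA(W)\otimes\NA\big((\ad V)(W)\big)\otimes\NA\big((\ad V)^2(W)\big)\otimes\NA(V)
\]
of $\N_0^2$-graded objects in $\ydG$, where the four factors carry the degrees $\alpha_2$, $\alpha_1+\alpha_2$, $2\alpha_1+\alpha_2$, $\alpha_1$, respectively.

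Next I would locate the four root modules among the classes $\Y_i$. From the definitions it is immediate that $V\simeq M(g,\rho)\in\Y_1$ (as $\deg\rho=1$, $\rho(g)=-1$) and $W\simeq M(z,\sigma)\in\Y_5$ (as $\deg\sigma=2$, $\sigma(z)=-1$). By Lemma~\ref{lem:2:X1}, $(\ad V)(W)=X_1^{V,W}\simeq M(gz,\sigma_1)$ with $\sigma_1(z)=\rho(z)\sigma(z)$ and $\sigma_1(g)=-\rho(gz^{-1})$; using the epimorphism $\Gamma_3\to G$, $\gamma\mapsto gz$, $\nu\mapsto\epsilon$, $\zeta\mapsto z$ (legitimate since $z$ is central and $gz$ still inverts $\epsilon$), one gets $\sigma_1(gz)=-\rho(g)\sigma(z)=-1$, so $(\ad V)(W)\in\Y_1$. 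By Lemma~\ref{lem:2:X2}, $(\ad V)^2(W)=X_2^{V,W}\simeq M(\epsilon g^2z,\sigma_2)$ with $\sigma_2(\epsilon)=1$, $\sigma_2(g^2)=\rho(z)^{-2}$, $\sigma_2(z)=\rho(z)^2\sigma(z)$; since $g^2z\in Z(G)$ one may instead use the epimorphism $\gamma\mapsto g$, $\nu\mapsto\epsilon$, $\zeta\mapsto g^2z$, and then $\sigma_2(\epsilon)=1$ and $\sigma_2(g^2z)=\sigma(z)=-1$, so $(\ad V)^2(W)\in\Y_7$.

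Finally I would read the Hilbert series off Table~\ref{tab:Yconditions}: substituting $t\mapsto t_2$ into the series $(2)_t^2$ of $\Y_5$, $t\mapsto t_1t_2$ into $(2)_t^2(3)_t$ of $\Y_1$ (for $(\ad V)(W)$), $t\mapsto t_1^2t_2$ into $(2)_t^2$ of $\Y_7$, and $t\mapsto t_1$ into $(2)_t^2(3)_t$ of $\Y_1$ (for $V$), according to the degrees above, and multiplying, gives $\mathcal{H}(t_1,t_2)=(2)_{t_2}^2(2)_{t_1t_2}^2(3)_{t_1t_2}(2)^2_{t_1^2t_2}(2)^2_{t_1}(3)_{t_1}$; evaluation at $t_1=t_2=1$ gives $\dim\NA(V\oplus W)=2^8 3^2=2304$. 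Alternatively, since $R_1(V,W)\in\wp _2$ by Corollary~\ref{cor:R1:D} and $R_1^2(V,W)\simeq(V,W)$, the statement can be deduced from Theorem~\ref{thm:P2andP3a}. I expect no serious obstacle; the only points demanding care are the bookkeeping of the degrees of the root modules and the harmless change of central generator used to exhibit $X_1^{V,W}$ and $X_2^{V,W}$ as members of $\Y_1$ and $\Y_7$.
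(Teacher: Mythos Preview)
Your proposal is correct and follows essentially the same approach as the paper, which simply says the proof is similar to that of Theorem~\ref{thm:P2andP3a} using Lemmas~\ref{lem:2:X1} and \ref{lem:2:X2}, and also notes the alternative via $R_1(V,W)\in\wp_2$. You actually spell out more detail than the paper does, in particular the change of epimorphism needed to place $X_1^{V,W}$ and $X_2^{V,W}$ into $\Y_1$ and $\Y_7$, but the structure and the cited inputs are identical.
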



\begin{proof}
    It is similar to the proof of Theorem \ref{thm:P2andP3a}. Here one has to
    use Lemmas \ref{lem:2:X1} and \ref{lem:2:X2}.  Alternatively, one can
    deduce the claim from Theorem~\ref{thm:P2andP3a} since $R_1(V,W)\in \wp_2$
    and $R_1^2(V,W)\simeq (V,W)$.
\end{proof}

\subsection{}
Now we list the examples related to the following graph:
\[
	\xymatrix{\wp_5'\ar@{-}[r]^{R_1} & \wp_5\ar@{-}[r]^{R_2} & \wp_5''}     
\]

\begin{lem} \label{lem:Cartan3objects}
	Let $I=\{1,2\}$, $\mathcal{X}=\{a,b,c\}$, $r_1=(a\,b)\in\Sym_{\mathcal{X}}$, 
	$r_2=(b\,c)\in\Sym_{\mathcal{X}}$, 
	\begin{align*}
		A^a=\begin{pmatrix}
			2 & -2\\-2 & 2
		\end{pmatrix}, 
		&& A^b=\begin{pmatrix}
			2 & -2\\-1 & 2
		\end{pmatrix},
		&& A^c=\begin{pmatrix}
			2 & -4\\-1 & 2
		\end{pmatrix}.
	\end{align*}
	Then the following hold:
	\begin{enumerate}
		\item $\mathcal{C}=\mathcal{C}(I,\mathcal{X},(r_i)_{i\in
			I},(A^X)_{X\in\mathcal{X}})$ is a Cartan scheme.
		\item $\left(\Delta^{\re\,X}\right)_{X\in\mathcal{X}}$ forms a finite
			irreducible root system of type $\mathcal{C}$, and  
			\begin{align*}
				\Delta^{\re\,a}_+ &= \{\alpha_1,\alpha_2,\alpha_1+\alpha_2,2\alpha_1+\alpha_2,\alpha_1+2\alpha_2,2\alpha_1+3\alpha_2\},\\
				\Delta^{\re\,b}_+ &= \{\alpha_1,\alpha_2,\alpha_1+\alpha_2,2\alpha_1+\alpha_2,3\alpha_1+2\alpha_2,4\alpha_1+3\alpha_2\},\\
				\Delta^{\re\,c}_+ &= \{\alpha_1,\alpha_2,\alpha_1+\alpha_2,2\alpha_1+\alpha_2,3\alpha_1+\alpha_2,4\alpha_1+\alpha_2\}.
			\end{align*}
	\end{enumerate}
\end{lem}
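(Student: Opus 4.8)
The plan is to verify directly the relevant axioms: for part (1) those of a Cartan scheme, and for part (2) the axioms (R1)--(R4) defining a finite irreducible root system of type a given Cartan scheme, in the sense used in the classification of finite Weyl groupoids of rank two \cite{MR2525553}. In both cases everything reduces to a finite computation with the three matrices $A^a,A^b,A^c$ and the explicitly listed positive roots.

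For (1) I would note first that $r_1=(a\,b)$ and $r_2=(b\,c)$ are transpositions of $\mathcal{X}$, hence $r_1^2=r_2^2=\id$, and that each of $A^a,A^b,A^c$ has diagonal entries $2$ and negative integer off-diagonal entries, so each is a generalized Cartan matrix. The only remaining point is the compatibility $a^X_{ij}=a^{r_i(X)}_{ij}$ for all $i,j\in I$ and $X\in\mathcal{X}$, i.e.\ that the $i$-th row of $A^X$ coincides with the $i$-th row of $A^{r_i(X)}$: for $i=1$ this is automatic when $X=c$ (as $r_1(c)=c$) and for $X\in\{a,b\}$ it says that the first rows of $A^a$ and $A^b$ agree, both being $(2,-2)$; for $i=2$ it is automatic when $X=a$ and for $X\in\{b,c\}$ it says that the second rows of $A^b$ and $A^c$ agree, both being $(-1,2)$. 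This settles (1).

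For (2) I would set $\Delta^{\re\,X}=\Delta^{\re\,X}_+\cup(-\Delta^{\re\,X}_+)$ and check the root-system axioms in turn. Axiom (R1) is clear since each displayed set $\Delta^{\re\,X}_+$ is contained in $\N_0^2$, and (R2) holds because the only elements of $\Delta^{\re\,X}$ lying on $\Z\alpha_1$ or $\Z\alpha_2$ are $\pm\alpha_1$ and $\pm\alpha_2$. For (R3), recall that $s_i^X\in\Aut(\Z^2)$ is given by $s_i^X(\alpha_i)=-\alpha_i$ and $s_i^X(\alpha_j)=\alpha_j-a^X_{ij}\alpha_i$; it then remains to verify $s_i^X\bigl(\Delta^{\re\,X}_+\setminus\{\alpha_i\}\bigr)=\Delta^{\re\,r_i(X)}_+\setminus\{\alpha_i\}$ for the six pairs $(X,i)$. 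For example, at $X=a$ with $i=1$ one has $s_1^a(\alpha_1)=-\alpha_1$, $s_1^a(\alpha_2)=\alpha_2+2\alpha_1$, so $s_1^a$ sends $\alpha_2,\alpha_1+\alpha_2,2\alpha_1+\alpha_2,\alpha_1+2\alpha_2,2\alpha_1+3\alpha_2$ to $2\alpha_1+\alpha_2,\alpha_1+\alpha_2,\alpha_2,3\alpha_1+2\alpha_2,4\alpha_1+3\alpha_2$, which is exactly $\Delta^{\re\,b}_+\setminus\{\alpha_1\}$ since $r_1(a)=b$; the remaining five cases are treated identically. Since $|I|=2$, every real root lies in $\pm(\N_0\alpha_1+\N_0\alpha_2)$, so $m^X_{1,2}=|\Delta^{\re\,X}_+|=6$ for each $X$; moreover $r_1r_2$ acts on $\mathcal{X}$ as the $3$-cycle $(a\,b\,c)$, whence $(r_1r_2)^6=\id$, which yields (R4). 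Finally each $\Delta^{\re\,X}$ is finite (of cardinality $12$), the Cartan scheme is connected (every object is reached from $a$ via $r_1,r_2$), and each $A^X$ is indecomposable, so the root system is finite and irreducible, proving (2).

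The computation is routine; the only place requiring care is the bookkeeping in the six reflection checks for (R3), in particular correctly computing the images under $s_i^X$ of the non-standard roots such as $2\alpha_1+3\alpha_2$ and $4\alpha_1+\alpha_2$, since it is these that force the three listed positive-root sets to be mutually consistent. As an alternative one could instead deduce (2) by matching this root datum against the classification of finite Weyl groupoids of rank two \cite{MR2525553}, but the direct verification is shorter here.
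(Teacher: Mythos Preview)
Your proof is correct and follows essentially the same approach as the paper, which simply records that ``both claims follow from the definitions'' and points to \cite[Thm.\,6.1]{MR2498801} where this Cartan scheme already appears. You have merely made the routine axiom checks explicit; nothing substantive differs.
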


\begin{proof}
	Both claims follow from the definitions. The Cartan scheme $\mathcal{C}$
	appeared already in \cite[Thm.\,6.1]{MR2498801}.
\end{proof}

\begin{thm}
	\label{thm:P5}
	Let $V,W\in \ydG $ such that $V\simeq M(g,\rho)$,
  where $\rho$ is a character of
	$G^g$, and $W\simeq M(z,\sigma)$, where $\sigma$ is
  a character of $G$.
  Assume that $\charK=2$ and $(V,W)\in \wp _5$, that is,
	\begin{align*}
		\rho(g)=\sigma (z)=1,\quad 1+\rho(z)\sigma (g)+\rho (z)^2\sigma (g)^2=0.
	\end{align*}
	Then there
	exist Yetter-Drinfeld submodules
  $W_1\in \Y _1$, $W_2\in \Y _6$, $W_3\in \Y _1$, $W_4\in \Y _3$,
  $W_5\in \Y _2$, and $W_6\in \Y _3$
	of $\NA(V\oplus W)$ of degrees
  $\alpha_1$, $2\alpha_1+\alpha_2$, $3\alpha_1+2\alpha_2$, $4\alpha_1+3\alpha_2$,
	$\alpha_1+\alpha_2$, and $\alpha_2$, respectively, such that
	\[
	\NA(V\oplus W)\simeq \NA(W_6)\otimes\NA (W_5)\otimes \cdots\otimes\NA(W_1)
	\]
	as $\N_0^2$-graded vector spaces in $\ydG$. In particular, the Hilbert series 
	of $\NA(V\oplus W)$ is
	\[
  \mathcal{H}(t_1,t_2)=(2)_{t_2}(3)_{t_1t_2}(4)_{t_1t_2}(6)_{t_1t_2}
  (6)_{t_1^2t_2^2}(2)_{t_1^4t_2^3}
  (2)_{t_1^3t_2^2}^2(3)_{t_1^3t_2^2}(3)_{t_1^2t_2}^2
  (2)_{t_1}^2(3)_{t_1}
	\]
	and $\dim\NA(V\oplus W)=2^{10}3^7=2239488$. 
\end{thm}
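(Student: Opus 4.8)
The plan is to run the reflection-theoretic machinery of \cite{MR2732989}: identify the Weyl groupoid of $(V,W)$ with the finite groupoid attached to the Cartan scheme $\mathcal{C}$ of Lemma~\ref{lem:Cartan3objects}, invoke the PBW-type factorization of $\NA(V\oplus W)$ indexed by the six positive roots, and then match each of the six root modules with one of the classes $\Y_i$ of Table~\ref{tab:Yconditions}, whose Nichols algebras are already known. First I would record that, since $(V,W)\in\wp_5$, Proposition~\ref{pro:3} gives that $(\ad V)^m(W)$ and $(\ad W)^m(V)$ are absolutely simple or zero for all $m\in\N_0$ and that $a_{1,2}^{(V,W)}=-2$, $a_{2,1}^{(V,W)}=-1$, so the Cartan matrix of $(V,W)$ equals $A^b$. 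Combining this with Lemmas~\ref{lem:R1:F}, \ref{lem:R2:F}, \ref{lem:R1:F1}, \ref{lem:R2:F1}, \ref{lem:R1:F2} and \ref{lem:R2:F2}, which show that reflections of pairs in $\wp_5\cup\wp_5'\cup\wp_5''$ stay in $\wp_5\cup\wp_5'\cup\wp_5''$ and realize exactly the matrices $A^b,A^a,A^c$ together with the transpositions $r_1=(a\,b)$, $r_2=(b\,c)$, the Cartan scheme of $(V,W)$ is $\mathcal{C}$. By Lemma~\ref{lem:Cartan3objects} this scheme has a finite root system, so $(V,W)$ admits all reflections, the Weyl groupoid is finite, and at the object $b$ of $(V,W)$ the positive roots are $\alpha_2,\alpha_1+\alpha_2,4\alpha_1+3\alpha_2,3\alpha_1+2\alpha_2,2\alpha_1+\alpha_2,\alpha_1$ in a convex order.

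Next I would apply the factorization theorem for Nichols algebras of pairs admitting all reflections with finite Weyl groupoid, \cite[Thm.\,2.6]{MR2732989} and \cite[Cor.\,2.7(2)]{MR2732989} (used along the chain $\wp_5'\leftrightarrow\wp_5\leftrightarrow\wp_5''$ if one iterates the one-step version): there are absolutely simple Yetter-Drinfeld modules $W_6,\dots,W_1$, realized as subobjects of $\NA(V\oplus W)$ of $\N_0^2$-degrees equal to the six roots above in the stated order, with
\[
\NA(V\oplus W)\simeq\NA(W_6)\otimes\NA(W_5)\otimes\cdots\otimes\NA(W_1)
\]
as $\N_0^2$-graded objects in $\ydG$. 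Reading the reduced word $s_2s_1s_2s_1s_2s_1$ at $b$, one gets $W_6=W$, $W_5=X_1^{V,W}$, $W_2=X_2^{V,W}$ and $W_1=V$, whereas $W_4=X_4^{V_1,W_1}$ and $W_3=X_3^{V_1,W_1}$ for the reflected pair $(V_1,W_1)=R_2(V,W)\in\wp_5''$; by Lemma~\ref{lem:R2:F} one has $a_{1,2}^{(V_1,W_1)}=-4$, so $X_3^{V_1,W_1}$ and $X_4^{V_1,W_1}$ are nonzero and absolutely simple by Lemmas~\ref{lem:3:X3} and \ref{lem:3:X4}.

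Then I would identify the classes. Since $\charK=2$, the $\wp_5$ conditions $\rho(g)=1$ and $\sigma(z)=1$ read $\rho(g)=-1$ and $\sigma(z)=-1$, hence $V\in\Y_1$ and $W\in\Y_3$. For $W_5=X_1^{V,W}$, Lemma~\ref{lem:3:X1} gives $W_5\simeq M(gz,\sigma_1)$ with $\sigma_1(gz)=\rho(z)\sigma(g)$; as $gz$ is the image of $\gamma$ under another epimorphism $\Gamma_3\to G$ and $(3)_{\rho(z)\sigma(g)}=0$, this puts $W_5\in\Y_2$. For $W_2=X_2^{V,W}$, Lemma~\ref{lem:3:X2} gives $W_2\simeq M(\epsilon g^2z,\sigma_2)$, and using $1+\rho(z)\sigma(g)+\rho(z)^2\sigma(g)^2=0$ one gets $\sigma_2(g^2z)=\sigma_2(\epsilon)$ and $(3)_{\sigma_2(\epsilon)}=0$, so $W_2\in\Y_6$. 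For $(V_1,W_1)\in\wp_5''$, Lemmas~\ref{lem:3:X3} and \ref{lem:3:X4} describe $W_3\simeq M(g_1^3z_1,\sigma_3')$ and $W_4\simeq M(g_1^4z_1,\sigma_4')$; the $\wp_5''$ relations $\rho_1(g_1z_1)\sigma_1'(g_1)=1$, $(3)_{\rho_1(g_1)}=0$, $\sigma_1'(z_1)=1$ give $\sigma_3'(g_1^3z_1)=\sigma_4'(g_1^4z_1)=1=-1$, so $W_3\in\Y_1$ and $W_4\in\Y_3$. Substituting into each factor the Hilbert series of Table~\ref{tab:Yconditions} with $t\mapsto t_1^mt_2^n$ in degree $m\alpha_1+n\alpha_2$ and multiplying yields the asserted $\mathcal{H}(t_1,t_2)$; at $t_1=t_2=1$ this is $2\cdot(3\cdot4\cdot6\cdot6)\cdot2\cdot(2^2\cdot3)\cdot3^2\cdot(2^2\cdot3)=2^{10}3^7=2239488$.

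The hard part, I expect, is the last step together with the bookkeeping hidden in the factorization step: the two ``extra'' roots $3\alpha_1+2\alpha_2$ and $4\alpha_1+3\alpha_2$ are not of the form $(\ad V)^m(W)$ and show up only after reflecting into $\wp_5''$, so one must propagate the characters through $R_2$ correctly and keep track of which central twist turns $g_1^3z_1$ (resp.\ $g_1^4z_1$) into the $\gamma$-image (resp.\ a $\zeta$-image) of an epimorphism $\Gamma_3\to G$, matching it to the right class $\Y_i$. Once the reduced word and these twists are set up, the remaining checks are routine computations with the formulas of Sections~\ref{section:1} and \ref{section:3}.
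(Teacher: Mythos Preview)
Your approach is essentially the paper's, with two differences worth noting.

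First, for the PBW-type factorization you cite \cite[Thm.\,2.6, Cor.\,2.7(2)]{MR2732989}, which the paper invokes only in the standard $B_2$ cases (Theorems~\ref{thm:P1andP4a}--\ref{thm:P2andP3b}). For the present non-standard Weyl groupoid the paper instead appeals to \cite[Cor.\,6.16]{MR3096611}. That result also supplies, in one stroke, the identification you need: writing the longest element as $s_{i_1}\cdots s_{i_N}$, the root module at $s_{i_1}\cdots s_{i_{k-1}}(\alpha_{i_k})$ is isomorphic to the $i_k$-th entry of $R_{i_{k-1}}\cdots R_{i_1}(V,W)$. Your identification of $W_3,W_4$ as $X_3^{V_1,W_1},X_4^{V_1,W_1}$ for $(V_1,W_1)=R_2(V,W)$ is correct but goes through a different door (adjoint powers in the reflected pair rather than entries of iterated reflections), and strictly speaking also needs this more general reference to be justified; the hedge ``if one iterates the one-step version'' is not quite a proof.

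Second, once the factorization is in place, the paper's identification of each $W_k$ with a class $\Y_i$ is purely structural and avoids the explicit character computations you sketch. One simply observes from Tables~\ref{tab:pair5} and \ref{tab:Yconditions} that a pair in $\wp_5$ has components in $(\Y_1,\Y_3)$, a pair in $\wp_5'$ in $(\Y_1,\Y_6)$, and a pair in $\wp_5''$ in $(\Y_2,\Y_3)$. Tracking
\[
R_1(V,W)\in\wp_5',\ R_2R_1(V,W)\in\wp_5',\ R_1R_2R_1(V,W)\in\wp_5,\ R_2R_1R_2R_1(V,W)\in\wp_5'',\ R_1R_2R_1R_2R_1(V,W)\in\wp_5''
\]
along the word $s_1s_2s_1s_2s_1$ then reads off $W_1\in\Y_1$, $W_2\in\Y_6$, $W_3\in\Y_1$, $W_4\in\Y_3$, $W_5\in\Y_2$, $W_6\in\Y_3$ immediately. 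Your direct computations of $\sigma_1,\sigma_2,\sigma_3',\sigma_4'$ are correct and reach the same end, but the paper's route sidesteps exactly the bookkeeping you flag as the hard part.
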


\begin{proof}
  Let $V',W'$ be Yetter-Drinfeld modules over $G$. If $(V',W')\in \wp_5$, then
  $V'\in \Y _1$ and $W'\in \Y _3$ by Tables~\ref{tab:pair5} and
  \ref{tab:Yconditions}.
  Similarly, if $(V',W')\in \wp_5'$, then
  $V'\in \Y _1$ and $W'\in \Y _6$, and if
  $(V',W')\in \wp_5''$, then
  $V'\in \Y _2$ and $W'\in \Y _3$.

	By assumption, $(V,W)\in \wp _5$.  Lemmas~\ref{lem:R1:F}--\ref{lem:R2:F2} and
	Lemma~\ref{lem:Cartan3objects} imply that the pair $(V,W)$ admits all
	reflections and that the set of real roots $\Delta^{\re\,(V,W)}$ is finite.
	More precisely, $|\Delta ^{\re\,(V,W)}_+|=6$.  Hence, by
	\cite[Cor.\,6.16]{MR3096611}, there exist absolutely simple
	Yetter-Drinfeld submodules $W_i\in \ydG $ of $\NA (V\oplus W)$ with $1\le
	i\le 6$, such that
  \[ \NA (V\oplus W)\simeq \NA (W_6)\otimes \NA (W_5)\otimes \cdots \otimes \NA
  (W_1).
  \]
  By the same reference, we may also assume that
  \[ \deg W_{2i+1}=(s_1s_2)^i(\alpha _1)\quad \text{and}\quad
    \deg W_{2i+2}=(s_1s_2)^is_1(\alpha _2)
  \]
  for all $0\le i\le 2$, and
  if $1\le k\le 6$ and $\deg W_k=s_{i_1}\cdots s_{i_m}(\alpha _j)$
  for some $m\ge 0$ and $i_1,\dots ,i_m,j\in \{1,2\}$,
  then $W_k$ is isomorphic in $\ydG$ to the $j$-th entry
  of $R_{i_m}\cdots R_{i_1}(V,W)$. Since $\Delta _+^{\re\,(V,W)}$
  consists of the roots
  \begin{gather*}
    s_1(\alpha _2)=2\alpha _1+\alpha _2,\quad
    s_1s_2(\alpha _1)=3\alpha _1+2\alpha _2,\quad
    s_1s_2s_1(\alpha _2)=4\alpha _1+3\alpha _2,\\
    s_1s_2s_1s_2(\alpha _1)=\alpha _1+\alpha _2, \quad
    s_1s_2s_1s_2s_1(\alpha _2)=\alpha _2,\quad \text{and}\quad \alpha _1,
  \end{gather*}
  and since
  \begin{gather*}
    R_1(V,W)\in \wp _5',\quad
    R_2R_1(V,W)\in \wp _5',\quad
    R_1R_2R_1(V,W)\in \wp _5,\\
    R_2R_1R_2R_1(V,W)\in \wp _5'',\quad
    R_1R_2R_1R_2R_1(V,W)\in \wp _5'',
  \end{gather*}
  the first paragraph of the proof implies that
  $W_1\in \Y _1$, $W_2\in \Y _6$, $W_3\in \Y _1$, $W_4\in \Y _3$,
  $W_5\in \Y _2$, $W_6\in \Y _3$.
  Finally, a direct
	calculation using Table \ref{tab:Yconditions} yields the Hilbert series of
	$\NA(V\oplus W)$. 
\end{proof}

With similar proofs,
or by applying the reflections $R_1$ and $R_2$ to pairs in $\wp
_5$,
one obtains also the following two theorems on Nichols
algebras of $V\oplus W$, where $(V,W)\in \wp_5'$ or $(V,W)\in \wp _5''$.

\begin{thm}
	\label{thm:P5'}
	Let $V,W\in \ydG $ such that $V\simeq M(g,\rho)$,
  where $\rho$ is a character of
	$G^g$, and $W\simeq M(\epsilon z,\sigma)$, where $\sigma$ is
  a character of $G^\epsilon $.
  Assume that $\charK=2$ and $(V,W)\in \wp _5'$, that is,
	\begin{align*}
		\rho(g)=1,\quad \sigma (z)=\sigma (\epsilon ),\quad
    (3)_{\sigma (\epsilon)}=0,\quad \rho (z^2)\sigma (\epsilon g^2)=1.
	\end{align*}
	Then there
	exist Yetter-Drinfeld submodules
  $W_1\in \Y _1$, $W_2\in \Y _3$, $W_3\in \Y _2$, $W_4\in \Y _3$,
  $W_5\in \Y _1$, and $W_6\in \Y _6$
	of $\NA(V\oplus W)$ of degrees
  $\alpha_1$, $2\alpha_1+\alpha_2$, $\alpha_1+\alpha_2$, $2\alpha_1+3\alpha_2$,
	$\alpha_1+2\alpha_2$, and $\alpha_2$, respectively, such that
	\[
	\NA(V\oplus W)\simeq \NA(W_6)\otimes\NA (W_5)\otimes \cdots\otimes\NA(W_1)
	\]
	as $\N_0^2$-graded vector spaces in $\ydG$. In particular, the Hilbert series 
	of $\NA(V\oplus W)$ is
	\[
  \mathcal{H}(t_1,t_2)=
  (3)_{t_2}^2 
  (2)_{t_1t_2^2}^2(3)_{t_1t_2^2}
  (2)_{t_1^2t_2^3}
  (3)_{t_1t_2}(4)_{t_1t_2}(6)_{t_1t_2}(6)_{t_1^2t_2^2}
  (2)_{t_1^2t_2}
  (2)_{t_1}^2(3)_{t_1}
	\]
	and $\dim\NA(V\oplus W)=2^{10}3^7=2239488$. 
\end{thm}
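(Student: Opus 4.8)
The plan is to imitate the proof of Theorem~\ref{thm:P5}, using that $\wp_5'$ occupies the left-hand node of the reflection chain $\wp_5'\xrightarrow{R_1}\wp_5\xrightarrow{R_2}\wp_5''$. First I would recall the observation made at the beginning of the proof of Theorem~\ref{thm:P5}: reading off Tables~\ref{tab:pair5} and \ref{tab:Yconditions}, for a pair $(V',W')$ of Yetter-Drinfeld modules over $G$ one has $V'\in\Y_1$ and $W'\in\Y_6$ whenever $(V',W')\in\wp_5'$, $V'\in\Y_1$ and $W'\in\Y_3$ whenever $(V',W')\in\wp_5$, and $V'\in\Y_2$ and $W'\in\Y_3$ whenever $(V',W')\in\wp_5''$.

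Next I would combine the reflection computations of Lemmas~\ref{lem:R1:F}--\ref{lem:R2:F2} with Lemma~\ref{lem:Cartan3objects}. Since $(V,W)\in\wp_5'$, these show that $(V,W)$ admits all reflections, that its Cartan scheme is the finite scheme $\mathcal C$ of Lemma~\ref{lem:Cartan3objects} with $(V,W)$ sitting at the object $a$, and hence that $|\Delta^{\re\,(V,W)}_+|=|\Delta^{\re\,a}_+|=6$. Then \cite[Cor.\,6.16]{MR3096611} yields absolutely simple Yetter-Drinfeld submodules $W_1,\dots,W_6$ of $\NA(V\oplus W)$ such that $\NA(V\oplus W)\simeq\NA(W_6)\otimes\cdots\otimes\NA(W_1)$ as $\N_0^2$-graded objects in $\ydG$, and such that, if $\deg W_k=s_{i_1}\cdots s_{i_m}(\alpha_j)$ is the $k$-th root of $\Delta^{\re\,a}_+$ in the canonical order of that corollary, then $W_k$ is isomorphic to the $j$-th entry of $R_{i_m}\cdots R_{i_1}(V,W)$. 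Here the six degrees come out to be $\alpha_1$, $2\alpha_1+\alpha_2$, $\alpha_1+\alpha_2$, $2\alpha_1+3\alpha_2$, $\alpha_1+2\alpha_2$, $\alpha_2$ for $W_1,\dots,W_6$, i.e.\ precisely the elements of $\Delta^{\re\,a}_+$ listed in Lemma~\ref{lem:Cartan3objects}.

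The third step is to read off the classes. We have $(V,W)\in\wp_5'$ itself, and walking along the chain with Lemmas~\ref{lem:R1:F1}, \ref{lem:R2:F}, \ref{lem:R1:F2}, \ref{lem:R2:F2} and \ref{lem:R1:F} yields $R_1(V,W)\in\wp_5$, $R_2R_1(V,W)\in\wp_5''$, $R_1R_2R_1(V,W)\in\wp_5''$, $R_2R_1R_2R_1(V,W)\in\wp_5$ and $R_1R_2R_1R_2R_1(V,W)\in\wp_5'$; combined with the first step this gives $W_1\in\Y_1$, $W_2\in\Y_3$, $W_3\in\Y_2$, $W_4\in\Y_3$, $W_5\in\Y_1$ and $W_6\in\Y_6$. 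Finally I would substitute the degrees $(1,0)$, $(2,1)$, $(1,1)$, $(2,3)$, $(1,2)$, $(0,1)$ into the Hilbert series of the corresponding classes $\Y_i$ from Table~\ref{tab:Yconditions} and multiply, obtaining the stated $\mathcal H(t_1,t_2)$, and then evaluate at $t_1=t_2=1$ to get $\dim\NA(V\oplus W)=2^{10}3^7=2239488$.

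I do not expect a genuine obstacle, since every ingredient is already established; the work is essentially bookkeeping. The point that needs care is verifying that $(V,W)$ really sits at the object $a$ of $\mathcal C$ (and not $b$ or $c$), so that the length-$6$ factorization and the root degrees attached to the $W_k$ are the correct ones, and matching the ordering of the tensor factors in \cite[Cor.\,6.16]{MR3096611} to the reflection word so that each $W_k$ is identified with the right reflected pair, hence the right class $\Y_i$. An alternative would be to deduce the theorem from Theorem~\ref{thm:P5} by applying $R_1$, using $R_1^2=\id$ and the invariance of $\dim\NA$ under reflections; but since one would still have to transport the Hilbert series through the change of basis induced by $s_1$, this route is not clearly shorter, so I would favour the direct imitation above.
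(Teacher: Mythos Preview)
Your proposal is correct and follows exactly the approach the paper indicates: the paper does not spell out a separate proof for Theorem~\ref{thm:P5'} but simply states that it is obtained ``with similar proofs, or by applying the reflections $R_1$ and $R_2$ to pairs in $\wp_5$'', and your argument is precisely the direct imitation of the proof of Theorem~\ref{thm:P5} with $(V,W)$ placed at the object $a$ of the Cartan scheme in Lemma~\ref{lem:Cartan3objects}. Your identification of the reflection chain, the resulting classes $\Y_i$, and the alternative route via $R_1$ from Theorem~\ref{thm:P5} all match the paper.
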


\begin{thm}
	\label{thm:P5''}
	Let $V,W\in \ydG $ such that $V\simeq M(g,\rho)$,
  where $\rho$ is a character of
	$G^g$, and $W\simeq M(z,\sigma)$, where $\sigma$ is
  a character of $G$.
  Assume that $\charK=2$ and $(V,W)\in \wp _5''$, that is,
	\begin{align*}
	  (3)_{\rho (g)}=0,\quad \sigma (z)=1, \quad \rho(gz)\sigma (g)=1.
	\end{align*}
	Then there
	exist Yetter-Drinfeld submodules
  $W_1\in \Y _2$, $W_2\in \Y _3$, $W_3\in \Y _1$, $W_4\in \Y _6$,
  $W_5\in \Y _1$, and $W_6\in \Y _3$
	of $\NA(V\oplus W)$ of degrees
  $\alpha_1$, $4\alpha_1+\alpha_2$, $3\alpha_1+\alpha_2$, $2\alpha_1+\alpha_2$,
	$\alpha_1+\alpha_2$, and $\alpha_2$, respectively, such that
	\[
	\NA(V\oplus W)\simeq \NA(W_6)\otimes\NA (W_5)\otimes \cdots\otimes\NA(W_1)
	\]
	as $\N_0^2$-graded vector spaces in $\ydG$. In particular, the Hilbert series 
	of $\NA(V\oplus W)$ is
	\[
  \mathcal{H}(t_1,t_2)=
  (2)_{t_2}
  (2)_{t_1t_2}^2(3)_{t_1t_2}
  (3)_{t_1^2t_2}^2
  (2)_{t_1^3t_2}^2(3)_{t_1^3t_2}
  (2)_{t_1^4t_2}
  (3)_{t_1}(4)_{t_1}(6)_{t_1}(6)_{t_1^2}
	\]
	and $\dim\NA(V\oplus W)=2^{10}3^7=2239488$. 
\end{thm}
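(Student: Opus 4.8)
The plan is to run the argument of the proof of Theorem~\ref{thm:P5} essentially verbatim, now starting from a pair in $\wp_5''$ rather than in $\wp_5$; equivalently, one could apply $R_2$ to the pair, land in $\wp_5$ by Lemma~\ref{lem:R2:F2}, and quote Theorem~\ref{thm:P5}. I describe the first, self-contained route. By hypothesis $(V,W)\in\wp_5''$. Lemmas~\ref{lem:R1:F}--\ref{lem:R2:F2} show that every reflection of $(V,W)$ again lies in one of the classes $\wp_5$, $\wp_5'$, $\wp_5''$ and that these reflections move along the graph $\wp_5'$--$\wp_5$--$\wp_5''$ displayed above, with $R_1$ fixing the class $\wp_5''$ and $R_2$ sending $\wp_5''$ to $\wp_5$. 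Combined with Lemma~\ref{lem:Cartan3objects}, this shows that $(V,W)$ admits all reflections, that its Cartan scheme is the scheme $\mathcal{C}$ of Lemma~\ref{lem:Cartan3objects}, and that $(V,W)$ sits at the object $c$; in particular $a_{1,2}^{(V,W)}=-4$, $a_{2,1}^{(V,W)}=-1$, and $\Delta^{\re\,(V,W)}_+=\Delta^{\re\,c}_+$ is finite with six elements.

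Next I would invoke \cite[Cor.\,6.16]{MR3096611}: since $(V,W)$ admits all reflections and $\Delta^{\re\,(V,W)}_+$ is finite, there exist absolutely simple Yetter-Drinfeld submodules $W_1,\dots,W_6\in\ydG$ of $\NA(V\oplus W)$ such that
\[
\NA(V\oplus W)\simeq\NA(W_6)\otimes\NA(W_5)\otimes\cdots\otimes\NA(W_1)
\]
as $\N_0^2$-graded objects in $\ydG$, where the degrees $\deg W_1,\dots,\deg W_6$ are the six positive roots of $\Delta^{\re\,c}_+$ in the order fixed by a reduced decomposition of the longest element at the object $c$, namely
\[
\alpha_1,\quad 4\alpha_1+\alpha_2,\quad 3\alpha_1+\alpha_2,\quad 2\alpha_1+\alpha_2,\quad \alpha_1+\alpha_2,\quad \alpha_2,
\]
and where $W_k$ is isomorphic in $\ydG$ to the entry of the corresponding iterated reflection of $(V,W)$ singled out by that decomposition, exactly as in the proof of Theorem~\ref{thm:P5}.

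It remains to determine the classes $\Y_i$ containing the $W_k$ and to assemble the Hilbert series. As in the first paragraph of the proof of Theorem~\ref{thm:P5}, Tables~\ref{tab:pair5} and \ref{tab:Yconditions} give that a pair in $\wp_5$ has first entry in $\Y_1$ and second entry in $\Y_3$, a pair in $\wp_5'$ has entries in $\Y_1$ and $\Y_6$, and a pair in $\wp_5''$ has entries in $\Y_2$ and $\Y_3$. By Lemmas~\ref{lem:R1:F}--\ref{lem:R2:F2}, the iterated reflections $R_1(V,W)$, $R_2R_1(V,W)$, $R_1R_2R_1(V,W)$, $R_2R_1R_2R_1(V,W)$, $R_1R_2R_1R_2R_1(V,W)$ lie in $\wp_5''$, $\wp_5$, $\wp_5'$, $\wp_5'$, $\wp_5$, respectively. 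Reading off the relevant entry in each case yields $W_1\in\Y_2$, $W_2\in\Y_3$, $W_3\in\Y_1$, $W_4\in\Y_6$, $W_5\in\Y_1$, $W_6\in\Y_3$. Substituting into the Hilbert series of $\NA(W_k)$ listed in Table~\ref{tab:Yconditions} the monomial $t_1^{m}t_2^{n}$ for the variable $t$, where $\deg W_k=m\alpha_1+n\alpha_2$, and multiplying the six factors, produces the asserted Hilbert series; multiplying the six dimensions gives $\dim\NA(V\oplus W)=2^{10}3^7=2239488$.

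The argument is entirely parallel to that of Theorem~\ref{thm:P5}, so I expect no conceptual difficulty; the hard part, such as it is, is the bookkeeping. The step that most deserves care is matching the ordering of the positive roots supplied by \cite[Cor.\,6.16]{MR3096611} with the chain of iterated reflections and then with the substitutions in the Hilbert series. One should in particular keep in mind that, because $(V,W)$ now sits at the object $c$ with the asymmetric Cartan matrix $A^c$, the factor $\NA(W_1)$ attached to the simple root $\alpha_1$ is the ``large'' rank-one Nichols algebra in $\Y_2$ with Hilbert series $(3)_{t_1}(4)_{t_1}(6)_{t_1}(6)_{t_1^2}$, which is what distinguishes this case from Theorems~\ref{thm:P5} and \ref{thm:P5'}.
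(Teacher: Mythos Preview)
Your proposal is correct and follows exactly the approach the paper indicates: it runs the argument of Theorem~\ref{thm:P5} at the object $c$ of the Cartan scheme in Lemma~\ref{lem:Cartan3objects}, using Lemmas~\ref{lem:R1:F}--\ref{lem:R2:F2} to track the classes $\wp_5,\wp_5',\wp_5''$ along the reflection chain, and the paper itself says the proof is obtained ``with similar proofs, or by applying the reflections $R_1$ and $R_2$ to pairs in $\wp_5$.'' Your bookkeeping of the iterated reflections and of the resulting classes $\Y_i$ is accurate.
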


\section{Proof of Theorem \ref{thm:big}}
\label{section:bigproof}

Here we prove the main result of the paper, which is Theorem~\ref{thm:big}. 

	First we prove $(1)\Rightarrow(2)$. Since $\dim\NA(V\oplus W)<\infty$, the
	pair $(V,W)$ admits all reflections by \cite[Cor.\,3.18]{MR2766176} and the
	Weyl groupoid is finite by \cite[Prop.\,3.23]{MR2766176}.

	Now we prove simultaneously $(2)\Rightarrow(3)$ and $(3)\Rightarrow(1)$. By
  \cite[Thm.\,7.3]{examples}, the quandle $\supp(V\oplus W)$ is isomorphic to 
	\[
	Z_2^{2,2},Z_3^{3,1},Z_3^{3,2},Z_4^{4,2}\text{ or }Z_T^{4,1}
	\]
	and $G$ is a non-abelian epimorphic image of its enveloping group:
	\begin{center}
		\begin{tabular}{c|c c c c c}
			Quandle &
			$Z{}_{T}^{4,1}$ &
			$Z_{2}^{2,2}$ &
			$Z_{3}^{3,1}$ &
			$Z_{3}^{3,2}$ &
			$Z_{4}^{4,2}$
			\tabularnewline
			\hline 
			Enveloping group &
			$T$ &
			$\Gamma_{2}$ &
			$\Gamma_{3}$ &
			$\Gamma_{3}$ &
			$\Gamma_{4}$
		\end{tabular}
	\end{center}
  We consider each case separately. Suppose first that $G$ is an epimorphic
  image of $\Gamma_2$ and $\supp(V\oplus W)\simeq Z_2^{2,2}$.
  Hence, by \cite[\S4]{MR2732989}, we may assume that $V\simeq M(g,\rho)$ and
  $W\simeq M(h,\sigma)$. Now the claim follows from \cite[Thm.\,4.9]{MR2732989}. 

	Now assume that $G$ is a non-abelian epimorphic image of $\Gamma_3$.
  We first prove that (2) implies (3).
  By \cite[Prop.\,4.3]{partII},
  there is a pair $(V',W')$ of absolutely simple
  Yetter-Drinfeld modules over $G$,
  which represents an object of the Weyl groupoid of $(V,W)$,
  such that the Cartan matrix of $(V',W')$ is of finite type.
  The list of examples in Subsection~\ref{subsection:G3}
  contains precisely the pairs in
  the classes $\wp _i$ for $1\le i\le 5$ and $\wp _5'$, $\wp _5''$. Hence
  this list is stable under reflections
  by Lemmas~\ref{lem:wp1wp4standard}, \ref{lem:wp2wp3standard},
  and \ref{lem:R1:F}--\ref{lem:R2:F2}.
  Thus we may assume that the Cartan matrix of $(V,W)$ is of
  finite type.
  Now (3) follows from Proposition~\ref{pro:pairs}.

  The implication (3)$\Rightarrow $(1) follows from
  Theorems~\ref{thm:P1andP4a},
  \ref{thm:P1andP4b},
  \ref{thm:P2andP3a},
  \ref{thm:P2andP3b},
  \ref{thm:P5},
  \ref{thm:P5'}, and
  \ref{thm:P5''}.

	Assume now that $G$ is a non-abelian epimorphic image of $\Gamma_4$,
  but not of $\Gamma _2$, and that
  $\supp(V\oplus W)\simeq Z_4^{4,2}$.
  Without loss of generality we may
	assume that $|\supp V|=2$ and $|\supp W|=4$.
  Let $h\in \supp V$, $g\in \supp W$, and $\epsilon =hgh^{-1}g^{-1}$.
  Then $g^G=\{g,\epsilon g,\epsilon ^2g,\epsilon ^3g\}$ and $h^G=\{h,\epsilon
  ^{-1}h\}$. Since $\supp (V\oplus W)$ generates $G$, we conclude that $G$ is
  generated by $g$ and $h$, and $V\simeq M(h,\rho)$ and $W\simeq M(g,\sigma)$
  for some character $\rho $ of $G^h$ and some character $\sigma $ of $G^g$.
  Then the implications (2)$\Rightarrow $(3) and (3)$\Rightarrow $(1)
  follow from \cite[Thm.\,5.4]{examples}.

	Finally, suppose that $G$ is an epimorphic image of $T$ and
  $\supp(V\oplus W)\simeq Z_T^{4,1}$.
  We may assume that $|\supp V|=1$ and $|\supp W|=4$.
  Let $z\in \supp V$ and $x_1\in \supp W$. Then $z$ and $x_1^G$ generate $G$.
  Choose $x_2\in x_1^G\setminus \{x_1\}$. Then $T\to G$, $\chi _1\mapsto x_1$,
  $\chi _2\mapsto x_2$, $\zeta \mapsto z$ is an epimorphism of groups, and
  $V\simeq M(z,\rho)$ and $W\simeq M(x_1,\sigma)$ for some 
  absolutely irreducible representations $\rho $ of $G$ and
  $\sigma $ of $G^{x_1}$.
  Then the implications (2)$\Rightarrow$(3) and (3)$\Rightarrow $(1) follow
  from \cite[Thm.\,2.8]{examples}.

\bigskip
\footnotesize
\noindent\textit{Acknowledgments.}
Leandro Vendramin was supported by Conicet, UBACyT 20020110300037 and the
Alexander von Humboldt Foundation.



\normalsize
\baselineskip=17pt


\def\cprime{$'$}

\end{document}